\setlist[itemize]{topsep=3pt, partopsep=6pt, itemsep=1pt, parsep=1pt}
\tikzstyle{vertex}=[circle, draw, inner sep=2pt, minimum size=6pt]
\tikzstyle{filledvertex}=[circle, draw, fill, inner sep=2pt, minimum size=6pt]
\newtheorem{theorem}{Theorem}[section]
\newtheorem{conjecture}[theorem]{Conjecture}
\newtheorem{lemma}[theorem]{Lemma}
\newtheorem{claim}{Claim}
\newtheorem{subclaim}{Claim}[claim]
\newtheorem{corollary}[theorem]{Corollary}
\let\geq\geqslant
\let\leq\leqslant
\begin{document}
\begin{spacing}{1.18}

\title{Cycle lengths in graphs of given minimum degree}
\author{Yandong Bai
\thanks{School of Mathematics and Statistics, 
Xi’an-Budapest Joint Research Center for Combinatorics,
Northwestern Polytechnical University,
Xi'an, Shaanxi 710129, China;
Research and Development Institute of 
Northwestern Polytechnical University in Shenzhen,
Shenzhen, Guangdong 518057, China.
E-mail: {\tt bai@nwpu.edu.cn}.}
\quad Andrzej Grzesik
\thanks{Faculty of Mathematics and Computer Science, Jagiellonian University, {\L}ojasiewicza 6, 30-348 Krak\'{o}w, Poland. E-mail: {\tt andrzej.grzesik@uj.edu.pl}.}
\quad Binlong Li
\thanks{School of Mathematics and Statistics, 
Xi’an-Budapest Joint Research Center for Combinatorics,
Northwestern Polytechnical University,
Xi'an, Shaanxi 710129, China.
E-mail: {\tt binlongli@nwpu.edu.cn}.}
\quad Magdalena Prorok
\thanks{AGH University of Krakow, al. Mickiewicza 30, 30-059 Krak\'ow, Poland. 
E-mail: {\tt prorok@agh.edu.pl}.}
}
\date{}

\maketitle

\begin{abstract}
    We prove that if $G$ is a 2-connected graph with minimum degree at least $k\geqslant 4$, then 
    \begin{itemize}   
       \item [(1)] $G$ contains $k$ cycles whose lengths form an arithmetic progression with common difference one or two, unless $G\cong K_{k+1}$ or $K_{k,n-k}$;
       \item [(2)] $G$ contains cycles of lengths $\ell$ modulo $k$ for all even $\ell$, unless $G\cong K_{k+1}$ or $K_{k,n-k}$;
       \item [(3)] $G$ contains cycles of lengths $\ell$ modulo $k$ for all $\ell$, unless $G\cong K_{k+1}$ or $G$ is bipartite.
    \end{itemize}    
    In addition, we show that if $k$ is even and $G$ is 2-connected with minimum degree at least $k-1$ and order at least $k+2$, then $G$ contains cycles of lengths $\ell$ modulo $k$ for all even $\ell$. As a corollary, we determine the maximum number of edges in a graph that does not contain a cycle of length divisible by $k$ for all odd $k$.
\end{abstract}

\section{Introduction}

The distribution of cycle lengths in graphs of given basic parameters, such as minimum degree, edge number, etc, has been extensively studied in the literature and remains one of the most active and fundamental research areas in graph theory. 

For a cycle (or a path) $C$, its {\em length}, denoted by $|C|$, is the number of edges in $C$. Let $k\geqslant 1$ and $\ell\geqslant 0$ be two integers. An {\em $(\ell\bmod k)$-cycle} refers to a cycle whose length is congruent to $\ell$ modulo $k$. In a graph, $k$ cycles $C_1,\ldots, C_k$ are {\em admissible} if $|C_1|, \ldots, |C_k|$ form an arithmetic progression with common difference one or two; and $k$ paths $P_1,\ldots, P_k$ are {\em admissible} if $|P_1|\geqslant 2$ and $|P_1|, \ldots, |P_k|$ form an arithmetic progression with common difference one or two. In 2022, Gao, Huo, Liu and Ma \cite{G+22} established the following remarkable results.

\begin{theorem}[Gao, Huo, Liu and Ma \cite{G+22}]\label{thm: base}
    Let $G$ be a graph with minimum degree at least $k+1$ and $k\geqslant 3$. The following statements hold: 
    \begin{itemize}
        \item [(1)] $G$ contains $k$ admissible cycles.
        \item [(2)] $G$ contains cycles of lengths $\ell$ modulo $k$ for all even $\ell$.
        \item [(3)] $G$ contains cycles of lengths $\ell$ modulo $k$ for all $\ell$ if $G$ is non-bipartite and $2$-connected. 
    \end{itemize}
\end{theorem}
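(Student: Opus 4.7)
The plan is to deduce all three parts of Theorem~\ref{thm: base} from a single analysis of a longest path together with P\'osa-style rotations, and then to use parity-shift arguments to upgrade the admissible-cycles conclusion of part~(1) into the modular statements of parts~(2) and~(3).

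For part~(1), I would fix a longest path $P=v_0v_1\cdots v_m$ in $G$. By the maximality of $P$, every neighbor of $v_0$ lies on $P$, so the minimum-degree hypothesis yields positions $1=i_1<i_2<\cdots<i_{k+1}\leqslant m$ with each $v_0v_{i_j}\in E(G)$. For $j\geqslant 2$, the chord $v_0v_{i_j}$ closes a cycle of length $i_j+1$ with the prefix of $P$, and whenever the gap $i_{j+1}-i_j$ equals $1$ or $2$ the two consecutive cycles have lengths differing by $1$ or $2$. To extract $k$ admissible cycles one isolates a run of $k$ neighbors whose consecutive gaps are all $1$ or all $2$. When no such run exists directly among $i_1,\dots,i_{k+1}$, one enlarges the pool of candidate chords via P\'osa rotations: each chord $v_0v_{i_j}$ yields a new longest path with endpoint $v_{i_j-1}$, whose neighbors also lie on $P$; the union of these neighbor-sets, placed along $P$, is then dense enough to force the desired arithmetic progression.

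Part~(2) follows by translating the AP from part~(1) into residues modulo $k$. If the common difference is $1$, the $k$ lengths already cover every residue modulo $k$. If the common difference is $2$, they cover every residue of a single parity modulo $2k$, which already suffices when $k$ is odd; for even $k$ one produces a single additional cycle of the opposite parity -- for instance via a chord $v_0v_{i_j}$ with $i_j$ of the missing parity, which exists because the $k+1$ neighbors of $v_0$ cannot all share one parity -- and translates the AP to cover every even residue. For part~(3) the extra ingredients are $2$-connectivity and non-bipartiteness: fix an odd cycle $C^{*}$; by $2$-connectivity, $C^{*}$ is connected to $P$ by two internally disjoint paths, allowing $C^{*}$ to be spliced in as an ear into each of the admissible cycles. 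This shifts their lengths by a fixed odd constant, so the shifted family hits every odd residue modulo $k$ while the original family (by~(2)) hits every even residue; together they realise all residues.

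The main obstacle will be the admissible-cycles construction of part~(1). The danger is that the neighbors of $v_0$ might be spread out along $P$ with all gaps $i_{j+1}-i_j$ at least $3$, so that no $k$ of the resulting cycle lengths form an AP of common difference $1$ or $2$. Overcoming this requires a careful analysis of how many vertices can serve as endpoints of longest paths after iterating P\'osa rotations, and a pigeonhole argument on the union of their on-$P$ neighborhoods. The extra ``$+1$'' in the minimum-degree hypothesis $\delta(G)\geqslant k+1$ is precisely what enables this pigeonhole to produce the required length-$k$ arithmetic progression; it is also precisely this slack that the stability results announced in the abstract seek to remove.
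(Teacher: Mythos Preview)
First, note that Theorem~\ref{thm: base} is a cited result from~\cite{G+22}; the present paper does not prove it but uses it as a black box on which its stability theorems are built. So there is no ``paper's own proof'' to compare against here, and the actual proof in~\cite{G+22} proceeds via a delicate BFS-tree analysis rather than longest paths and P\'osa rotations.

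As for your sketch, the main gap is in part~(1). You correctly identify the obstacle: the $k+1$ neighbors of the endpoint $v_0$ on a longest path may be spaced with gaps $\geqslant 3$, so that no $k$ of the chord-cycles are admissible. Your proposed cure---iterate P\'osa rotations and apply pigeonhole to the union of the new endpoints' on-$P$ neighborhoods---is not an argument but a hope. P\'osa rotations produce many endpoints, but their neighborhoods can overlap heavily, and no simple counting forces a length-$k$ block of positions each within distance~$2$ of the next. The Gao--Huo--Liu--Ma proof does not go this way at all; it works with BFS layers and an auxiliary path lemma (essentially Theorem~\ref{thm: k adm paths} of the present paper), precisely because the longest-path/rotation approach does not close.

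Your reductions for parts~(2) and~(3) also have holes. In~(2) with $k$ even and common difference~$2$, the $k$ lengths hit only the $k/2$ residues of one parity modulo $k$; producing \emph{one} extra cycle of the opposite parity does not ``translate the AP'' and leaves $k/2-1$ even residues uncovered. In~(3), ``splicing in'' a fixed odd cycle $C^*$ via two paths need not shift every admissible cycle by the same odd constant: the two $(C^*,P)$-paths land at fixed vertices of $P$, but the admissible cycles use different chords $v_0v_{i_j}$, so the reroute through $C^*$ changes each cycle's length differently (and may not even produce a cycle if $C^*$ meets the chord-cycle). Both issues require genuinely new structural input, which is why~\cite{G+22} treats them with separate arguments rather than as corollaries of~(1).
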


\begin{theorem}[Gao, Huo, Liu and Ma \cite{G+22}]\label{thm: k-con-lmodk}
    Let $G$ be a $k$-connected graph. The following statements hold: 
    \begin{itemize}
        \item [(1)] If $k\geqslant 3$, then $G$ contains a cycle of length $0$ modulo $k$. 
        \item [(2)] If $k\geqslant 6$, then $G$ contains cycles of lengths $\ell$ modulo $k$ for all even $\ell\not\equiv 2$ modulo $k$.
    \end{itemize}
\end{theorem}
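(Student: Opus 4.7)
The plan is to exploit $k$-connectivity through a maximum-length structure (a longest path or a longest cycle) together with Menger's theorem, following a standard skeleton: first produce a pool of cycles whose lengths lie in a long integer interval, then show that modulo $k$ the pool covers every required residue. The two parts of the statement share this outline but diverge in how the residue step is executed.

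For part (1), I would begin with a longest path $P = v_0 v_1 \cdots v_\ell$ in $G$. Since $P$ is longest, $N(v_0) \subseteq V(P)$, and writing $N(v_0) = \{v_{i_1}, \ldots, v_{i_d}\}$ with $1 = i_1 < \cdots < i_d$ and $d \ge k$ yields $d$ cycles of distinct lengths $i_1, \ldots, i_d$ through $v_0$. If none of the first $k$ of these lengths is divisible by $k$, then by pigeonhole two are congruent modulo $k$, say $i_a \equiv i_b \pmod{k}$, and the ``chord cycle'' $v_0 v_{i_a} v_{i_a+1} \cdots v_{i_b} v_0$ must be replaced by one of adjusted length. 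This is where $k$-connectivity, and not just minimum degree, has to enter: Menger's theorem supplies $k$ internally disjoint paths linking segments of $P$ back to $v_0$ or $v_\ell$, and Pósa rotations generate further longest paths with new endpoints whose neighbor sets supply additional cycles. Careful bookkeeping of all these lengths in a sufficiently long arithmetic interval should then force some cycle to have length $0 \pmod{k}$.

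For part (2), the parity restriction and the bound $k \ge 6$ steer the argument toward an ear-decomposition / handle-exchange scheme. After choosing an initial cycle $C$ through $k$ prescribed vertices (possible by the classical theorem of Dirac that any $k$ vertices of a $k$-connected graph lie on a common cycle), I would repeatedly replace an arc of $C$ by a Menger handle; each such replacement shifts $|C|$ by a controlled amount. Restricting to handles of even length keeps the resulting cycles even, and a counting argument should show that every even residue class other than $2 \pmod{k}$ is realised. The forbidden value $\ell \equiv 2 \pmod{k}$ corresponds to an extremal near-bipartite obstruction that has to be separated out structurally; this is presumably also where the assumption $k \ge 6$ becomes essential.

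\emph{The main obstacle.} Producing many cycles of distinct lengths in a $k$-connected graph is classical; the hard part is guaranteeing that those lengths together realise every required residue modulo $k$. For part (1) this forces one to abandon the crude longest-path family in favour of a more symmetric pool obtained via Pósa rotations and a DFS-tree analysis in the spirit of Gao, Huo, Liu and Ma. For part (2), I expect the technical heart of the argument to lie in isolating the extremal configurations responsible for the $\ell \equiv 2 \pmod{k}$ exception and in verifying that $k \ge 6$ suffices to rule them out; this case analysis is the step I would budget the most effort for.
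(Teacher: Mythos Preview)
The paper does not prove this theorem. Theorem~\ref{thm: k-con-lmodk} is quoted from Gao, Huo, Liu and Ma \cite{G+22} as background, alongside Theorem~\ref{thm: base}; the present paper uses these results and then proves \emph{stability} versions (Theorems~\ref{thm: k adm cycles min degree k}--\ref{thm: l mod k min degree non-bi}) under the weaker hypothesis of minimum degree $k$ rather than $k+1$. So there is no ``paper's own proof'' of this statement to compare your proposal against.

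That said, your proposal is not a proof but a plan with self-diagnosed gaps, and the gaps are real. In part~(1), the longest-path argument you start with only gives cycles of lengths $i_j+1$ for the neighbours $v_{i_j}$ of $v_0$; having $k$ such lengths does not force a multiple of $k$ among them, and the ``chord cycle'' $v_0 v_{i_a}\cdots v_{i_b} v_0$ you form when $i_a\equiv i_b\pmod k$ has length $i_b-i_a+1\equiv 1\pmod k$, not $0$. You then say the crude family must be ``abandoned'' in favour of a DFS-tree analysis in the spirit of \cite{G+22}, which is correct but is exactly the content of the cited paper, not an independent argument. For part~(2), the handle-exchange scheme you sketch does not, as stated, control parities or residues tightly enough: replacing an arc of $C$ by a Menger handle changes $|C|$ by an amount you have no modular control over, and nothing in the outline explains why the missing residue is precisely $2\pmod k$ or why $k\geqslant 6$ is the threshold. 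The actual argument in \cite{G+22} goes through the admissible-path machinery (Theorem~\ref{thm: k adm paths} here) and a detailed structural analysis rather than ear decompositions; your outline does not supply a substitute for that.
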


   Theorem \ref{thm: base} (1) confirms a conjecture proposed by Liu and Ma \cite{LM18} in 2018. Theorem \ref{thm: base} (2) and (3) confirm two conjectures proposed by Thomassen \cite{T83} in 1983. Theorem \ref{thm: k-con-lmodk} (1) confirms a conjecture proposed by Dean~\cite{D88} in 1988. Note that the complete graph $K_{k+1}$ and the complete bipartite graph $K_{k,n-k}$ have minimum degree $k$ but do not contain $k$ admissible cycles. Note also that, for each integer $k$, the graph $K_{k+1}$ does not contain a $(2\bmod k)$-cycle; and for each odd $k$, the graph $K_{k,n-k}$ does not contain a $(2\bmod k)$-cycle. Thus, the minimum degree conditions in the above results are tight in general. 

   In this paper, we investigate the stability of Theorems \ref{thm: base} and \ref{thm: k-con-lmodk}. Regarding Theorem \ref{thm: base} (1) and (2), we prove the following two results.

\begin{theorem}\label{thm: k adm cycles min degree k}
    Let $G$ be a $2$-connected graph with minimum degree at least $k\geqslant 4$. Then $G$ contains $k$ admissible cycles, unless $G\cong K_{k+1}$ or $K_{k,n-k}$.
\end{theorem}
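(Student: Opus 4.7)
The plan is to adapt the proof of Theorem~\ref{thm: base}(1) from \cite{G+22} to the weaker hypothesis $\delta(G) \geq k$, tracking where the extra unit of minimum degree is used and showing that the only obstructions are $K_{k+1}$ and $K_{k,n-k}$.

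If $\delta(G) \geq k+1$, we apply Theorem~\ref{thm: base}(1) directly and are done. So assume $\delta(G) = k$, let $M = \{v \in V(G) : \deg_G(v) = k\}$, and fix $v \in M$ with $N(v) = \{u_1, \ldots, u_k\}$. The key observation is that any $u_i$-$u_j$ path $P$ in $G-v$ extends through $v$ to a cycle of length $|P|+2$ in $G$. Hence, to find $k$ admissible cycles through $v$, it suffices to produce, for some fixed pair $u_i, u_j \in N(v)$, $k$ admissible $u_i$-$u_j$ paths in $G-v$ whose lengths form an arithmetic progression of common difference $1$ or $2$. A path analogue of Theorem~\ref{thm: base}, which is also established in \cite{G+22}, supplies such paths provided $G-v$ is 2-connected with sufficiently large minimum degree. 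The difficulty is that $G-v$ need not satisfy $\delta(G-v) \geq k$: neighbors of $v$ lying in $M$ drop to degree $k-1$ in $G-v$. So we handle two reduction obstacles separately: the case $N(v) \cap M \neq \emptyset$, and the case in which $G-v$ is not 2-connected.

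When the reduction fails, $G$ is highly constrained and the task becomes one of extremal classification. If $N(v) \cap M$ is nonempty, iterating the low-degree condition along short paths through $M$ packs many degree-$k$ vertices together, and density arguments force $G$ to be a clique on $k+1$ vertices, i.e.\ $G \cong K_{k+1}$. If instead $G-v$ has a cut vertex, then together with $v$ it forms a 2-cut of $G$ whose block structure, combined with $\delta(G) = k$ and the 2-connectivity of $G$, propagates to a bipartite structure matching $K_{k,n-k}$. The main obstacle will be this extremal classification: various near-extremal graphs (such as $K_{k+1}$ with a subdivided edge, or $K_{k,n-k}$ with one missing edge plus a compensating edge) look superficially like the two excluded graphs but in fact do contain $k$ admissible cycles; ruling these out case by case requires a delicate cycle-and-chord analysis that exploits the 2-connectivity of $G$ to build the missing admissible cycle explicitly from a longest cycle through $v$ and a carefully chosen detour.
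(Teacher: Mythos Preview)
Your proposed reduction has two genuine gaps.

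First, the path count is off by one. In $G-v$ every vertex has degree at least $k-1$, not $k+1$. Theorem~\ref{thm: k adm paths} then yields only $k-2$ admissible $(u_i,u_j)$-paths; even Theorem~\ref{thm: k adm paths 2}, which allows one extra low-degree vertex, gives only $k-1$. Closing each such path through $v$ therefore produces at most $k-1$ admissible cycles, not $k$. You cannot recover the missing cycle by adding the $2$-path $u_i v u_j$, since it need not lie in the same arithmetic progression.

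Second, the extremal classification is false as stated. The claim that $N(v)\cap M\neq\emptyset$ forces $G\cong K_{k+1}$ fails for any $k$-regular $2$-connected graph other than $K_{k+1}$ and $K_{k,k}$; for instance $C_n^2$ with $n\geqslant 6$ is $4$-regular, $2$-connected, and neither $K_5$ nor $K_{4,4}$. Such a graph does contain $k$ admissible cycles, but not for the structural reason you assert---you would still have to \emph{find} them. Likewise, $G-v$ having a cut-vertex does not force $G\cong K_{k,n-k}$: identify a vertex of $K_{k+1}$ with a vertex of another large $2$-connected graph of minimum degree $k$ and then add one extra edge to restore $2$-connectivity; the result has $\delta=k$, is $2$-connected, has $G-v$ separable for suitable $v$, and is not complete bipartite.

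The paper takes an entirely different route. It first uses Lemma~\ref{lemma: high connectivity no k adm cycles} to show that a counterexample must have connectivity at least $(k+2)/2\geqslant 3$. If $G$ contains a triangle it quotes Theorem~\ref{thm: 2-con-K3-k-cycles}; if $G$ contains a $4$-cycle it proves a structural lemma about maximal $K_{s,t}$-type subgraphs (Lemmas~\ref{lemma: 2-con-bi-H} and~\ref{lemma: 3-con-grith4}) that either produces the cycles or forces $G\cong K_{k,n-k}$. When $G$ has girth at least $5$ and $k\geqslant 6$, the $3$-connectivity lets it invoke Theorems~\ref{thm: 3-con-bi} and~\ref{thm: 3-con-noK3-k-cycles} from~\cite{G+22} directly. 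The remaining cases $k\in\{4,5\}$ with girth $\geqslant 5$ require a lengthy ad~hoc analysis (Lemma~\ref{lemma: k adm cycles for k=4,5}) involving Petersen-like configurations and the degree-sum path theorem of~\cite{GLMX}. None of this resembles a deletion-of-a-low-degree-vertex argument.
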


\begin{theorem}\label{thm: l mod k min degree k}
    Let $G$ be a $2$-connected graph with minimum degree at least $k\geqslant 4$. Then $G$ contains cycles of lengths $\ell$ modulo $k$ for all even $\ell$, unless $G\cong K_{k+1}$ or $K_{k,n-k}$.
\end{theorem}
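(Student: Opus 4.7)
Since Theorem~\ref{thm: base}(2) already gives the conclusion whenever $\delta(G)\geq k+1$, I may assume $\delta(G)=k$ throughout. Applying Theorem~\ref{thm: k adm cycles min degree k} then yields one of two outcomes: either $G\cong K_{k+1}$ or $G\cong K_{k,n-k}$ and we fall into the permitted exception, or $G$ contains $k$ admissible cycles $C_1,\ldots,C_k$ with lengths $a,a+d,\ldots,a+(k-1)d$ for some integer $a\geq 3$ and $d\in\{1,2\}$. The plan is to read off the conclusion from these $k$ cycles by a residue analysis in the easy sub-cases and to handle a single residual sub-case by a separate structural argument.

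Most sub-cases are dispatched by elementary arithmetic on the residues $|C_i|\pmod k$. If $d=1$, the lengths form $k$ consecutive integers and their residues cover all of $\{0,1,\ldots,k-1\}$; in particular every even residue appears. If $d=2$ and $k$ is odd, then $\gcd(2,k)=1$ implies that $a,a+2,\ldots,a+2(k-1)$ are $k$ distinct residues modulo $k$, so again all residues are attained. If $d=2$, $k$ is even, and $a$ is even, then the residues form exactly $\{0,2,\ldots,k-2\}$, i.e.\ the full set of even residues modulo $k$. In each of these three cases the conclusion is immediate.

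The only remaining sub-case is $d=2$, $k$ even, and $a$ odd, so that $C_1,\ldots,C_k$ are all odd cycles whose residues cover precisely the odd residues modulo $k$. Here $G$ is necessarily non-bipartite, and a short ear-decomposition observation shows that $G$ must also contain at least one even cycle: the first ear attached to an odd cycle always produces two new cycles of opposite parities. The remaining task is to leverage this even cycle, together with the family $\{C_i\}$, to realise every even residue modulo $k$. I expect this step to be the main obstacle: with $\delta(G)=k$ leaving no slack beyond Theorem~\ref{thm: k adm cycles min degree k}, one cannot simply rerun the $k$-admissible-cycles engine on an even sub-family. My plan is to splice the even cycle with the odd cycles $C_i$ along shared paths or short ears, controlling the resulting cycle lengths through 2-connectivity and the minimum-degree condition, and to show that any obstruction to covering all even residues modulo $k$ forces $G$ into one of the excluded graphs, yielding a contradiction.
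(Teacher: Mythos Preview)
Your reduction via Theorem~\ref{thm: k adm cycles min degree k} and the residue analysis in the first three sub-cases is correct and matches exactly what the paper does. The genuine gap is the last sub-case: $k$ even, $d=2$, $a$ odd, i.e.\ $G$ non-bipartite with $k$ admissible odd cycles. Here you offer only a plan (``splice the even cycle with the odd cycles $C_i$ along shared paths or short ears''), and this plan does not work as stated. Having one even cycle together with odd cycles hitting all odd residues modulo $k$ gives you no control over which \emph{even} residues you can realise; an ear modification changes a single cycle length by some unspecified amount, and there is no mechanism to sweep through the $k/2$ even residue classes. You would essentially have to re-prove a strong structural statement about non-bipartite graphs from scratch.

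The paper handles this case by completely different means, and in two separate regimes. For $k\geq 6$ it first invokes Lemma~\ref{lemma: high connectivity no l mod k cycle} to push the connectivity up to at least $(k+2)/2\geq 4$, and then appeals to two external structural theorems: if $G$ has a triangle, Theorem~\ref{thm: 2-con-K3-k-cycles} gives $k$ cycles of consecutive lengths (or $G\cong K_{k+1}$); if $G$ is triangle-free, Theorem~\ref{thm: 3-con-noK3-k-cycles} gives either $k$ consecutive cycle lengths (when $k\geq 8$) or cycles of all lengths modulo $k$ (when $k=6$). Either way the non-bipartite case is killed outright. For $k=4$ neither theorem applies, and the paper instead proves from scratch that every $2$-connected graph on at least $6$ vertices with $\delta\geq 3$ has a $(2\bmod 4)$-cycle (Lemma~\ref{lemma: 2 mod 4 cycle}), which in turn rests on a delicate auxiliary lemma (Lemma~\ref{lemma: two paths differing by 2 mod 4}) with a lengthy case analysis. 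Your ear-splicing idea is not a substitute for either of these ingredients.
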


    Theorems \ref{thm: k adm cycles min degree k} and \ref{thm: l mod k min degree k} do not hold for $k=3$, as the Petersen graph is a counterexample. By results of Chen and Saito \cite{CS94}, and Dean et al. \cite{D+91}, the Petersen graph is the unique counterexample for $k=3$ in the context of Theorem \ref{thm: l mod k min degree k} (see Theorem \ref{thm: ChenSaitoDeanetal} in Section \ref{section: even ell}). We do not know whether there are additional counterexamples other than the Petersen graph for $k=3$ when considering Theorem \ref{thm: k adm cycles min degree k}. 

    By Theorem \ref{thm: l mod k min degree k}, we can get that the statements in Theorem \ref{thm: k-con-lmodk} still hold for 2-connected graphs with minimum degree at least $k$ for every $k\geqslant 4$.

\begin{corollary}\label{corollary: l mod k}
    Let $G$ be a $2$-connected graph with minimum degree at least $k\geqslant 4$. Then $G$ contains cycles of lengths $\ell$ modulo $k$ for all even $\ell\not\equiv 2$ modulo $k$.
\end{corollary}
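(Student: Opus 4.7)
The plan is to apply Theorem \ref{thm: l mod k min degree k} directly and then verify the conclusion by hand on the two exceptional graphs. If $G$ is a $2$-connected graph with $\delta(G)\ge k\ge 4$ and $G$ is isomorphic to neither $K_{k+1}$ nor $K_{k,n-k}$, then Theorem \ref{thm: l mod k min degree k} already produces a cycle of length $\ell$ modulo $k$ for \emph{every} even $\ell$, which is strictly stronger than what the corollary requires. Thus the entire task reduces to inspecting the two exceptional cases.

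For $G\cong K_{k+1}$, the graph is pancyclic, so its set of cycle lengths is exactly $\{3,4,\ldots,k+1\}$. Reducing these values modulo $k$ yields the residue set $\{0,1,3,4,\ldots,k-1\}$, which omits only the residue $2$. Hence every residue other than $2$, and in particular every even $\ell\not\equiv 2\pmod k$, is realized.

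For $G\cong K_{k,n-k}$, the minimum-degree hypothesis $\delta(G)\ge k$ forces $n-k\ge k$, so the set of cycle lengths is exactly $\{4,6,8,\ldots,2k\}$. When $k$ is even, these reduce modulo $k$ to $\{0,2,4,\ldots,k-2\}$, which already covers every even residue except $2$, so in particular every even $\ell\not\equiv 2\pmod k$ is represented. When $k$ is odd, multiplication by $2$ is a bijection modulo $k$, so $\{2\cdot 2,\,2\cdot 3,\,\ldots,\,2\cdot k\}\pmod k=\{0,1,\ldots,k-1\}\setminus\{2\}$, and again every required residue appears.

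Essentially nothing is hard here once Theorem \ref{thm: l mod k min degree k} is in hand; the only mildly delicate point is the residue calculation for $K_{k,n-k}$ in the odd-$k$ case, for which the multiplication-by-$2$ bijection argument gives a clean one-line justification.
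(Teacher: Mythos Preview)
Your proof is correct and follows exactly the approach the paper intends: apply Theorem~\ref{thm: l mod k min degree k} and check the two exceptions by hand. One small wording slip: for $K_{k,n-k}$ with $k$ even you correctly compute the residue set as $\{0,2,4,\ldots,k-2\}$, which in fact covers \emph{all} even residues (including $2$), so your phrase ``except $2$'' is unnecessary---but this only strengthens, not weakens, the conclusion.
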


    Note that $K_{k-1,n-k+1}$ does not contain a cycle of length 0 modulo $k$ if $k$ is odd. Thus, the minimum degree in Corollary \ref{corollary: l mod k} cannot be reduced for odd $k$. But for even $k$, we can improve the result as follows.

\begin{theorem}\label{thm: l mod k min degree k-1}
    Let $G$ be a $2$-connected graph with minimum degree at least $k-1$ and order at least $k+2$, where $k\geqslant 4$ is even. Then $G$ contains cycles of lengths $\ell$ modulo $k$ for all even $\ell$.
\end{theorem}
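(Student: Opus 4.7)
I would split the argument into two cases according to the value of $\delta(G)$.

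\emph{Case 1: $\delta(G)\geq k$.} Here I would apply Theorem~\ref{thm: l mod k min degree k} directly. The potential exceptions are $G\cong K_{k+1}$ and $G\cong K_{k,n-k}$. The former is ruled out by $|V(G)|\geq k+2$. For the latter, $\delta(G)\geq k$ forces $n-k\geq k$, so $G$ contains cycles of every even length from $4$ to $2k$; reducing $2i\bmod k$ for $i=2,\ldots,k$ and using that $k$ is even, one sees that every even residue in $\{0,2,\ldots,k-2\}$ is attained, so the conclusion still holds in this case.

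\emph{Case 2: $\delta(G)=k-1$.} Here Theorem~\ref{thm: l mod k min degree k} does not apply directly, and new input is needed. My plan is to reduce to Case~1 through a gadget construction. For each vertex $v$ of degree exactly $k-1$ in $G$, I would add a new vertex $v'$ adjacent to $v$ and to the $k-1$ vertices of $N_G(v)$. In the resulting supergraph $G^+$, the degree of each original low-degree vertex increases to at least $k$, while every added gadget vertex has degree $k$. One checks that $G^+$ remains $2$-connected, has $|V(G^+)|\geq k+2$, and is neither $K_{k+1}$ nor a complete bipartite graph (e.g.\ because each gadget $v'$ is adjacent to $v$ and to the mutually adjacent neighbors of $v$, preventing a bipartition). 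Thus Theorem~\ref{thm: l mod k min degree k} gives cycles of every even length modulo $k$ in $G^+$. To transfer such a cycle back to $G$, I would use the observation that in any cycle $C^+$ through a gadget vertex $v'$, its two neighbors on $C^+$ lie in $\{v\}\cup N_G(v)$ and therefore are at distance at most $2$ in $G$; so one can replace the length-$2$ subpath through $v'$ by an edge or by a length-$2$ subpath in $G$ and obtain a cycle of $G$ whose length differs from $|C^+|$ by at most $1$. Because $k$ is even, carefully tracking these $\pm 1$ shifts across the residue classes preserves the property that every even residue modulo $k$ is represented.

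\emph{Main obstacle.} The main technical difficulty will be controlling the translation step when several gadget vertices are traversed by a single cycle $C^+$ of $G^+$: the $\pm 1$ length shifts can accumulate, and substituting the $2$-paths through $v$ may cause vertex collisions with other portions of $C^+$. Handling this robustly will likely require a case analysis based on the pattern of gadget vertices along $C^+$, together with local rerouting using the $2$-connectivity of $G$. A secondary obstacle is the boundary regime $|V(G)|=k+2$, where there is almost no room outside the closed neighborhood of any low-degree vertex and potential local obstructions such as near-$K_k$ or near-$K_{k-1,n-k+1}$ substructures must be eliminated individually; I expect the hypothesis that $k$ is even to play a decisive role here in excluding the residual bad configurations.
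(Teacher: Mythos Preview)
Your Case~1 is correct. The problem is Case~2: the gadget construction cannot be made to work, and the difficulty you flag as the ``main obstacle'' is not a technicality but a genuine gap.

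When a cycle $C^+\subseteq G^+$ passes through a gadget vertex $v'$ with cycle-neighbours $a,b\in\{v\}\cup N_G(v)$, you propose replacing $av'b$ by the edge $ab$ (length drops by~$1$) or by the $2$-path $avb$ (length unchanged). Two things go wrong. First, if $a,b\in N_G(v)$ and $v$ already lies on $C^+$, neither substitution is available in general, since $ab$ need not be an edge of $G$. Second, and more seriously, $C^+$ may traverse many gadget vertices, and each ``edge'' substitution shifts the length by $-1$. The resulting cycle in $G$ then has length $|C^+|-t$ for an uncontrolled $t\geq 0$; in particular its parity, and hence its residue modulo $k$, is uncontrolled. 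Knowing that $G^+$ has a cycle of every even length modulo $k$ therefore says nothing about which residues survive in $G$. Your remark ``because $k$ is even \dots\ preserves the property that every even residue is represented'' has no content here: a single $-1$ shift already destroys parity.

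The paper's argument is entirely different and does not augment $G$. The key idea is to pass to a maximum bipartite spanning subgraph $H$ of $G$, which satisfies $\delta(H)\geq\lceil(k-1)/2\rceil=k/2$. In a bipartite graph, $k/2$ admissible cycles are $k/2$ cycles of consecutive even lengths, and these cover all even residues modulo $k$. So it suffices to find $k/2$ admissible cycles inside (a suitable $2$-connected piece of) $H$, for which one applies Theorem~\ref{thm: k adm cycles min degree k} with parameter $k/2$ (and Theorem~\ref{thm: ChenSaitoDeanetal} when $k=6$). When $H$ is not $2$-connected, Lemma~\ref{lemma: high connectivity no l mod k cycle} gives that $G$ is $3$-connected; one then analyses the block structure of $H$, links end blocks by short paths through $G$, and concatenates admissible path systems obtained from Theorems~\ref{thm: k adm paths} and~\ref{thm: k adm paths 2}. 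The case $k=4$ is handled separately via Theorem~\ref{thm: 0 mod 4 min deg 2} and Lemma~\ref{lemma: 2 mod 4 cycle}. This ``halve the degree and go bipartite'' reduction is the idea your plan is missing.
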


    Regarding Theorem \ref{thm: base} (3), we show the following result.

\begin{theorem}\label{thm: l mod k min degree non-bi}
    Let $G$ be a $2$-connected non-bipartite graph with minimum degree at least $k\geqslant 3$. Then $G$ contains cycles of lengths $\ell$ modulo $k$ for all $\ell$, unless $G\cong K_{k+1}$, or $k=3$ and $G$ is a Petersen graph.
\end{theorem}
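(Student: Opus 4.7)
The proof splits by the parity of $k$.

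When $k$ is odd, $\gcd(2,k)=1$ and so the even integers already cover every residue modulo $k$; it therefore suffices to produce cycles of every even length modulo $k$. For $k\ge 5$, Theorem \ref{thm: l mod k min degree k} achieves this, with the listed exceptions $K_{k+1}$ and $K_{k,n-k}$; the latter is bipartite and so ruled out by hypothesis, leaving only $K_{k+1}$. For $k=3$, the same conclusion comes from Theorem \ref{thm: ChenSaitoDeanetal} of Chen--Saito and Dean et al., whose only non-bipartite exceptions in the minimum degree $\ge 3$ regime are $K_4$ and the Petersen graph, matching our statement.

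When $k\ge 4$ is even, Theorem \ref{thm: l mod k min degree k} again gives cycles of every even residue modulo $k$, with $K_{k+1}$ the only remaining exception after ruling out the bipartite $K_{k,n-k}$. It remains to obtain a cycle of every odd residue. My first step would be to invoke Theorem \ref{thm: k adm cycles min degree k}, producing $k$ admissible cycles of lengths $a,a+d,\ldots,a+(k-1)d$ with $d\in\{1,2\}$. If $d=1$, these lengths exhaust all residues modulo $k$. If $d=2$ and $a$ is odd, then for even $k$ the multiset $\{a+2i\bmod k:0\le i<k\}$ covers all $k/2$ odd residues (each hit twice), and we are done. Only the case $d=2$ with $a$ even remains.

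In this residual case the admissible cycles are all even and contribute nothing new, so I would use non-bipartiteness to fix an odd cycle $C_o$ of length $m_o$ and apply a theta-graph argument. Whenever $C_o$ shares a path $P$ of length $p$ with an even cycle $C_e$ of length $m_e$, the two remaining branches of the resulting theta form an odd cycle of length $m_o+m_e-2p$. Keeping $p$ fixed (say $p=1$, a shared edge) and letting $m_e$ vary over every even residue modulo $k$ would yield odd cycles of every odd residue modulo $k$, provided that an even cycle of every even residue can be found through a prescribed edge of $C_o$. I would therefore aim to establish a rooted strengthening of Theorem \ref{thm: l mod k min degree k}: in a 2-connected graph of minimum degree $\ge k$, subject to the usual exceptions, for every edge $e$ there is an even cycle of every even residue modulo $k$ passing through $e$.

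\textbf{The main obstacle} is the rooted version. My approach would be to choose $e\in E(C_o)$ incident to a high-degree vertex and pass to an auxiliary graph $G'$ obtained by a careful local modification near $e$ (edge-contraction or vertex-deletion) preserving 2-connectedness and the minimum degree bound, so that Theorem \ref{thm: l mod k min degree k} or Theorem \ref{thm: l mod k min degree k-1} applies; the cycles of the right even residue in $G'$ should lift to cycles through $e$ in $G$. The delicate step is to verify that the only obstruction to this reduction is $G\cong K_{k+1}$, matching the exception listed in the theorem (the Petersen exception arising only for $k=3$, which is handled in the odd case).
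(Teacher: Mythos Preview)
Your treatment of the odd $k$ case is correct and coincides with the paper's argument: reduce to even residues via $\gcd(2,k)=1$, then invoke Theorem~\ref{thm: l mod k min degree k} for $k\ge 5$ and Theorem~\ref{thm: ChenSaitoDeanetal} for $k=3$, noting that $K_{k,n-k}$ is bipartite.

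For even $k$, your reduction via Theorem~\ref{thm: k adm cycles min degree k} to the case ``$d=2$ with $a$ even'' is valid, but your proposed resolution of that case has a genuine gap. The rooted strengthening you would need --- an even cycle of every even residue modulo $k$ through a \emph{prescribed} edge --- is not established in the paper, is not obviously true, and would require a from-scratch analysis at least as hard as the unrooted Theorem~\ref{thm: l mod k min degree k}. Your suggested mechanism (contract or delete near $e$, apply Theorem~\ref{thm: l mod k min degree k} or~\ref{thm: l mod k min degree k-1}, then lift) does not guarantee that the resulting cycles pass through $e$: a cycle in $G/e$ lifts to a cycle in $G$ through $e$ only when it uses the contracted vertex, and there is no control over that.

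The paper avoids this entirely by splitting on connectivity rather than trying to root cycles. If $G$ has a $2$-cut $\{x,y\}$, apply Theorem~\ref{thm: k adm paths} in each side to get $k-1$ admissible $(x,y)$-paths; if either side gives paths of consecutive lengths, concatenation yields $k$ consecutive cycles. Otherwise both sides give arithmetic progressions of step $2$; if some side contains an odd cycle it has $(x,y)$-paths of both parities, and if both sides are bipartite then non-bipartiteness of $G$ forces $x,y$ into opposite parts on one side and the same part on the other, so one side gives odd-length and the other even-length $(x,y)$-paths. In every case one concatenates to get enough consecutive odd cycles. If instead $G$ is $3$-connected, the paper invokes ready-made results: for $k=4$ the purpose-built Lemma~\ref{lemma: two consecutive odd cycles} (two cycles of lengths $1$ and $3$ modulo $4$), and for $k\ge 6$ Theorem~\ref{thm: 2-con-K3-k-cycles} (triangle case) or Theorem~\ref{thm: 3-con-noK3-k-cycles} (triangle-free case). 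This connectivity split is where your plan should be redirected.
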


   In view of graphs containing a cut-vertex such that each block is isomorphic to the complete graph $K_{k+1}$ (or $K_k$ when consider Theorem \ref{thm: l mod k min degree k-1}), one can see that 2-connectivity cannot be dropped in the above theorems. 

    By Theorem \ref{thm: l mod k min degree k}, we can also determine the maximum number of edges in an $n$-graph that does not contain a cycle of length 0 modulo $k$ for odd $k$. 
    
    Let $\mathcal{F}$ be a set of graphs. A graph is \textit{$\mathcal{F}$-free} if it contains no subgraph isomorphic to any member of $\mathcal{F}$. The classic Tur\'{a}n problem in graphs seeks to determine the maximum number of edges in $\mathcal{F}$-free graphs on $n$ vertices. This maximum number is referred to as the \textit{Tur\'{a}n number} for $\mathcal{F}$ and is commonly denoted by $ex(n,\mathcal{F})$, i.e.,
    \[
       ex(n,\mathcal{F})
       =\max \{|E(G)|:\ |V(G)|=n, ~G~\text{is}~\mathcal{F}\text{-free}\}.
    \]
    For two integers $k>\ell\geqslant 0$, denote the set of all $(\ell \bmod k)$-cycles by $\mathcal{C}_{\ell\bmod k}$. The problem of determining the Tur\'{a}n number for $(\ell \bmod k)$-cycles has a research history spanning nearly half a century. In 1976, Burr and Erd\H{o}s~\cite{Erd76} proposed this problem and conjectured that $ex(n,\mathcal{C}_{\ell \bmod k})=\varTheta(n)$ for $k>\ell\geqslant 0$ when $k\mathbb{Z}+\ell$ contains an even integer. In view of the complete balanced bipartite graphs, one can see that if $k\mathbb{Z}+\ell$ contains no even integers, then $ex(n,\mathcal{C}_{\ell \bmod k})\geqslant\lfloor n^2/4\rfloor$. Thus, the condition that $k\mathbb{Z}+\ell$ contains an even integer cannot be dropped. Bollob\'{a}s \cite{Bol77} confirmed the above conjecture and demonstrated that $ex(n,\mathcal{C}_{\ell \bmod k})\leqslant((k+1)^k-1)n/4k$. Subsequently, Erd\H{o}s proposed the problem of determining the smallest constant $c_{\ell,k}$ such that every $n$-vertex graph with at least $c_{\ell,k}\cdot n$ edges contains an $(\ell \bmod k)$-cycle. For $k>\ell\geqslant 3$, Sudakov and Verstra\"{e}te \cite{SV17} showed that 
    \[
       \frac{ex(k,C_{\ell})}{k}
       \leqslant c_{\ell,k}
       \leqslant 96\cdot \frac{ex(k,C_{\ell})}{k}.
    \]
    It follows that for even $\ell\geqslant 4$, determining $c_{\ell,k}$ is at least as hard as the famous extremal problem of determining the Tur\'{a}n number for the even cycle $C_{\ell}$. 

    To the best of our knowledge, precise values of $c_{\ell,k}$ are known for very few $k,\ell$. If $G$ contains no cycles, or alternatively, no (0 mod 1)-cycles, then one can see that $e(G)\leqslant n-1$ and the equality holds if and only if $G$ is a tree. It is not difficult to see that if $G$ contains no (0 mod 2)-cycles then $e(G)\leqslant 3(n-1)/2$ and, for $2|(n-1)$, the equality holds if and only if each block of $G$ is a triangle. The above two results imply that $c_{0,1}=1$ and $c_{0,2}=3/2$. The first non-trivial result on $c_{\ell,k}$ was obtained by Chen and Saito \cite{CS94}, who showed that $c_{0,3}=2$. It has been shown in \cite{CS94} that if $G$ contains no (0 mod 3)-cycles then $e(G)\leqslant 2(n-2)$ for $n \geqslant 3$ and the equality holds if and only if $G$ is isomorphic to $K_{2,n-2}$. 

\begin{theorem}[Chen and Saito \cite{CS94}]\label{thm: 0 mod 3}
   If $G$ is an $n$-vertex graph such that every vertex (with at most one exception) has degree at least $3$, then $G$ contains a $(0\bmod 3)$-cycle. As a corollary, for $n \geqslant 3$,
   \[
   ex(n,\mathcal{C}_{0\bmod 3})=2(n-2).
   \]
\end{theorem}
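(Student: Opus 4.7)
The plan is to prove the structural statement first and then deduce the Tur\'an number by induction. Assuming the structural claim, the Tur\'an bound follows by induction on $n$: the base case $n=3$ is immediate since a triangle is the only $(0\bmod 3)$-cycle on three vertices, forcing $e(G)\leq 2=2(n-2)$; and for $n\geq 4$, the contrapositive of the structural statement produces at least two vertices of degree at most $2$, so deleting one such vertex $v$ and applying the inductive hypothesis to the still $\mathcal C_{0\bmod 3}$-free graph $G-v$ gives $e(G)\leq e(G-v)+2\leq 2(n-3)+2=2(n-2)$. The matching lower bound is witnessed by $K_{2,n-2}$, whose only cycles have length $4\not\equiv 0\pmod 3$.

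For the structural statement, I argue by contradiction, taking $G$ to be a counterexample of minimum order, with $v^*$ the (possibly nonexistent) exceptional vertex of degree at most $2$. A block analysis reduces to the $2$-connected case: if $G$ has a cut-vertex, one can always choose an end-block $B$ so that the only vertex of $B$ of degree less than $3$ in $B$ is its unique cut-vertex (any end-block avoiding $v^*$ as an internal vertex works, which is possible since a cut-vertex $v^*$ lies in no block's interior, and a non-cut $v^*$ is internal to at most one block). Then $B$ satisfies the hypothesis, and $|V(B)|<|V(G)|$, so by minimality $B$ already contains a $(0\bmod 3)$-cycle inside $G$, a contradiction.

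Assuming $G$ is $2$-connected, take a longest cycle $C$ of length $c$ and suppose $c\not\equiv 0\pmod 3$. Since $G$ has minimum degree at least $3$ away from $v^*$, either $C$ admits a chord $xy$, or there is an ear (a path with endpoints on $C$ and internal vertices outside $C$). A chord yields two sub-cycles of lengths $c_1,c_2$ with $c_1+c_2=c+2$; an ear of length $p$ yields sub-cycles of lengths $p+c_1,p+c_2$ with $c_1+c_2=c$. Reading these lengths modulo $3$ and varying over all available chords and ears at different attachment points on $C$, one aims to cover every residue class modulo $3$ and thus produce a $(0\bmod 3)$-cycle. For instance, if $c\equiv 1\pmod 3$ any chord whose arcs have residues $(0,0)$ succeeds immediately; the remaining cases reduce to combining two attachments whose residue patterns are complementary.

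The main obstacle lies in the near-$3$-regular regime, where attachments to $C$ are sparse and the residues they produce may be restricted to a single bad pattern: concretely, every chord splits $C$ into arcs of residues $(1,2)$ or $(2,1)$ when $c\equiv 1\pmod 3$, or $(2,2)$ when $c\equiv 2\pmod 3$, and similarly for ears. Here one must invoke the maximality of $C$: any attempt to reroute along an ear or chord to obtain a strictly longer cycle is forbidden, which translates into tight inequalities on $p$ and on the arc-lengths between consecutive attachment points. Combining these extremality constraints with a careful separate treatment of the exceptional vertex $v^*$ should suffice to force the residue pattern to fail somewhere, yielding a $(0\bmod 3)$-cycle and completing the contradiction.
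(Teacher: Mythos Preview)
The paper does not prove this theorem at all: Theorem~\ref{thm: 0 mod 3} is quoted from Chen and Saito \cite{CS94} as a known result and is only used, not reproved. So there is no ``paper's own proof'' to compare against.

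Evaluating your attempt on its own: the deduction of the Tur\'an bound from the structural statement is correct and standard, as is the reduction to the $2$-connected case via end-blocks. The genuine gap is in your core argument for the $2$-connected case. You correctly identify the obstruction --- the possibility that every chord and every ear of a longest cycle $C$ hits $C$ in the single ``bad'' residue pattern --- but you do not resolve it. Saying that maximality of $C$ gives ``tight inequalities on $p$ and on the arc-lengths'' and that combining these ``should suffice'' is not a proof; it is precisely the hard part of Chen--Saito. Maximality of $C$ tells you only that an ear of length $p$ attached to an arc of length $a$ satisfies $p\leq a$, which by itself imposes no modular constraint. To finish one must analyze several attachments simultaneously (e.g.\ two ears or a chord plus an ear with interleaved endpoints) and show that the resulting $\varTheta$-graphs or longer detours force either a $(0\bmod 3)$-cycle or a cycle longer than $C$; this case analysis is substantial and is exactly what your last paragraph waves at without doing. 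As written, the argument does not close.
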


    Dean, Kaneko, Ota and Toft \cite{D+91} and, independently, Saito \cite{Sai92} showed that if $G$ contains no (2 mod 3)-cycles then $e(G)\leqslant 3(n-3)$ for $n\geqslant 5$, and the equality holds if and only if $G$ is isomorphic to $K_{3,n-3}$. Gao, Li, Ma and Xie \cite{GLMX} showed that if $G$ contains no (2 mod 4)-cycles then $e(G)\leqslant 5(n-1)/2$ and, for $4|(n-1)$, the equality holds if and only if each block of $G$ is isomorphic to $K_5$. Bai, Li, Pan and Zhang \cite{B+25} showed that if $G$ contains no (1 mod 3)-cycles then $e(G)\leqslant 5(n-1)/3$ and, for $9|(n-1)$, the equality holds if and only if each block of $G$ is isomorphic to the Petersen graph.

    For the case of $(0\bmod 4)$-cycles, Gy\H{o}ri et al. \cite{G+23} obtained the following result.  

\begin{theorem}[Gy\H{o}ri et al. \cite{G+23}]\label{thm: 0 mod 4}
    If $G$ is an $n$-vertex graph without $(0 \bmod 4)$-cycles, then $e(G)\leqslant \lfloor19(n-1)/12\rfloor$ and the bound is tight for all $n\geqslant 1$, i.e., 
    \[
     ex(n,\mathcal{C}_{0\bmod 4})=\left\lfloor\frac{19}{12}(n-1)\right\rfloor.
    \]  
\end{theorem}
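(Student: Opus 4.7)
The plan is to prove matching upper and lower bounds, with the heart of the argument being a block-by-block analysis. For the lower bound I would exhibit, for $n\equiv 1\pmod{12}$, a graph whose blocks are copies of a specific $2$-connected graph $H$ on $13$ vertices and $19$ edges with no $(0\bmod 4)$-cycle, glued in a tree-like fashion at shared cut vertices; gluing $t$ such blocks yields exactly $12t+1$ vertices and $19t$ edges. For other residues modulo $12$, I would pad with smaller admissible blocks such as $K_3$, $C_5$, $C_6$, or a single bridge to attain $\lfloor 19(n-1)/12\rfloor$ edges. Since cycles do not cross block boundaries, absence of $(0\bmod 4)$-cycles only needs to be checked inside each block, which is routine.

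For the upper bound I would reduce to the $2$-connected case via block decomposition: summing over the blocks $B_1,\ldots,B_t$ of a connected graph $G$ yields $n-1=\sum_i(|V(B_i)|-1)$ and $e(G)=\sum_i e(B_i)$, so it suffices to prove $e(B)\leqslant 19(|V(B)|-1)/12$ for every $2$-connected $(0\bmod 4)$-cycle-free $B$, after which integrality of $e(G)$ supplies the floor. For $|V(B)|\leqslant 5$, only $K_3$ and $C_5$ qualify (every other $2$-connected graph on at most $5$ vertices contains a $4$-cycle), and both satisfy the bound. For $|V(B)|\geqslant 6$, Theorem \ref{thm: l mod k min degree k-1} applied with $k=4$ yields a $(0\bmod 4)$-cycle whenever $\delta(B)\geqslant 3$, which forces $\delta(B)=2$.

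Take $v\in V(B)$ with $d(v)=2$ and neighbors $u,w$, and induct on $|V(B)|$ by suppressing $v$: delete $v$ and, if $uw\notin E(B)$, insert the edge $uw$. \textbf{The main obstacle} is twofold. First, when $uw\notin E(B)$, suppression preserves $2$-connectedness but shortens every cycle through $v$ by one, so any $u$--$w$ path in $B-v$ of length $\equiv 3\pmod 4$ generates a new $(0\bmod 4)$-cycle in $B'$; one must argue that if such a path exists then $B$ already contains enough structure to produce a $(0\bmod 4)$-cycle directly or to admit a stronger local estimate. Second, when $uw\in E(B)$ (so $uvw$ is a triangle), naively deleting $v$ removes two edges but only one vertex, which is not competitive with $19/12$, and $B-v$ need not be $2$-connected. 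These cases call for a finer local reduction — likely paired with a discharging argument in which degree-$2$ vertices redistribute fractional charge to higher-degree neighbors — and culminate in the classification of the extremal $2$-connected block $H$ on $13$ vertices, which is where the precise constant $19/12$ is pinned down and where the case analysis is heaviest.
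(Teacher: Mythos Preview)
The paper does not prove this theorem; it is quoted from Gy\H{o}ri et al.\ \cite{G+23} as background, so there is no in-paper proof to compare your proposal against.

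On the proposal itself: the reduction to $2$-connected blocks via $n-1=\sum_i(|V(B_i)|-1)$ is correct and standard, and invoking Theorem~\ref{thm: l mod k min degree k-1} (or, more directly, Theorem~\ref{thm: 0 mod 4 min deg 2} of Dean--Lesniak--Saito) to force $\delta(B)=2$ in every $2$-connected block on at least six vertices is a legitimate opening move. But your outline stops exactly where the content begins. You yourself flag the ``main obstacle'': suppressing a degree-$2$ vertex $v$ with neighbours $u,w$ can manufacture a $(0\bmod 4)$-cycle in $B'$ whenever $B-v$ has a $u$--$w$ path of length $\equiv 3\pmod 4$, and the triangle case $uw\in E(B)$ loses two edges for one vertex. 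You then wave at ``a finer local reduction \dots\ likely paired with a discharging argument'' and at the extremal $13$-vertex block, without doing any of it. That unresolved part \emph{is} the theorem; the constant $19/12$ and the extremal configurations emerge only from the detailed case analysis you have deferred. As written this is a reasonable plan of attack, not a proof, and the gap is precisely the step you labelled an obstacle.
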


It follows that all precise values of $c_{\ell,k}$ for $k\leqslant 4$ are fixed, as also shown in Table \ref{tab:clk}. However, prior to this work, not too much was known for the value of $c_{\ell,k}$ for large $k$. 

\begin{table}[ht]
    \renewcommand{\arraystretch}{1.5}
	\centering
	\begin{tabular}{l|l|l}
		\hline \hline
		$c_{0,2}=3/2$ (trivial)                   & ---                        & ---                                              \\ \hline
		$c_{0,3}=2$ (\cite{CS94})             & $c_{1,3}=5/3$ (\cite{B+25}) & $c_{2,3}=3$ (\cite{D+91,Sai92}) \\ \hline
		$c_{0,4}=19/12$ (\cite{G+23}) & ---                                      & $c_{2,4}=5/2$ (\cite{GLMX})       \\ \hline \hline
	\end{tabular}
	\caption{Precise values of $c_{\ell,k}$ for $k\leqslant 4$.}
	\label{tab:clk}
\end{table}

In this paper, we bound the maximum number of edges in a graph that does not contain a cycle of length $\ell$ modulo~$k$ for all integers $\ell$ and $k\geqslant 3$. 

\begin{theorem}\label{thm: general bound}
    Let $k\geq 3$ and $\ell\geq 0$ be integers such that $k> \ell$ and $k\mathbb{Z}+\ell$ contains an even integer. Then
    \begin{itemize}
        \item [(1)] $ex(n,\mathcal{C}_{\ell\bmod k})\leqslant (k-1)(n-k+1)$ if $n \geq 2k-1$ and $\ell\neq 2$,
        \item [(2)] $ex(n,\mathcal{C}_{2\bmod k})\leqslant k(n-k)$ if $n \geq 2k+1$.
    \end{itemize}
\end{theorem}

Note that for odd $k$, the complete bipartite graph $K_{{k-1},{n-k+1}}$ contains $(k-1)(n-k+1)$ edges and no ($0\bmod k$)-cycles, while $K_{{k},{n-k}}$ contains $k(n-k)$ edges and no ($2\bmod k$)-cycles, so the bound on the number of edges in Theorem~\ref{thm: general bound} is tight for odd $k$ and $\ell = 0$ or $2$. As a consequence, we deduce that $c_{0,k}=k-1$ and $c_{2,k}=k$ for odd $k$. 
Note also that for $n<2k-1$ the existence of a $(0\bmod k)$-cycle is equivalent to the existence of a $k$-cycle, while for $n<2k+1$ the existence of a $(2\bmod k)$-cycle is equivalent to the existence of a $(k+2)$-cycle. Since $ex(n,C_k)$ for odd $k$ is determined by F\"uredi and Gunderson \cite{FG15}, Theorem~\ref{thm: general bound} implies the following result. 

\begin{corollary}\label{thm:c0k c2k}
    Let $k\geqslant 3$ be an odd integer. Then
    \[
    ex(n,\mathcal{C}_{0 \bmod k})= 
    \begin{cases}
      \binom{n}{2}, & n \leqslant k-1; \\
      \binom{k-1}{2}+\binom{n-k+2}{2}, &  k \leqslant n \leqslant 2k-4; \\
      (k-1)(n-k+1), & n\geqslant 2k-3.
    \end{cases}
    \]
and
    \[
    ex(n,\mathcal{C}_{2 \bmod k})= 
    \begin{cases}
      \binom{n}{2}, & n \leqslant k+1; \\
      \binom{k+1}{2}+\binom{n-k}{2}, &  k+2 \leqslant n \leqslant 2k-1; \\
      k(n-k), & n\geqslant 2k.
    \end{cases}
    \] 
\end{corollary}

For $\ell=0$ and even $k$ we know $ex(n,\mathcal{C}_{0\bmod k})$ only when $k \leq 4$. For the remaining values of $k$ we state the following conjecture.

\begin{conjecture}\label{conj: 0 mod even k}
    Let $k\geqslant 6$ be an even integer and $n\geqslant k-1$ an integer. Then
    \[
        ex(n,\mathcal{C}_{0 \bmod k})\leqslant \frac{k-1}{2}(n-1),
    \]
    and graphs in which each block is isomorphic to $K_{k-1}$ form a unique class of extremal graphs when $n-1$ is divisible by $k-2$. 
\end{conjecture}

If $\ell=1$, then $k$ must be odd and we suppose the following.

\begin{conjecture}\label{conj: 1 mod odd k}
    Let $k\geqslant 5$ be an odd integer and $n\geqslant k$ an integer. Then
    \[
        ex(n,\mathcal{C}_{1 \bmod k})\leqslant \frac{k}{2}(n-1),
    \]
    and graphs in which each block is isomorphic to $K_{k}$ form a unique class of extremal graphs when $n-1$ is divisible by $k-1$.  
\end{conjecture}

For $\ell=2$ and even $k$ we know $ex(n,\mathcal{C}_{2\bmod k})$ only when $k \leq 4$. For the remaining values of $k$, Gao, Li, Ma and Xie~\cite{GLMX} conjectured that $c_{2,k}=(k+1)/2$, so we propose the following.

\begin{conjecture}\label{conj: 2 mod even k}
    Let $k\geqslant 6$ be an even integer and $n\geqslant k+1$ an integer. Then
    \[
        ex(n,\mathcal{C}_{2 \bmod k})\leqslant \frac{k+1}{2}(n-1),
    \]
    and graphs in which each block is isomorphic to $K_{k+1}$ form a unique class of extremal graphs when $n-1$ is divisible by $k$. 
\end{conjecture}

As we mentioned earlier, determining $c_{\ell,k}$ for general even $\ell$ is as hard as determining Tur\'an number for $C_\ell$. Therefore, we conjecture its value only for odd $\ell$, and consequently odd $k$. 

\begin{conjecture}
    Let $k > \ell\geqslant 3$ be two odd integers and $n \geq k+\ell$ an integer. Then
    \[
        ex(n,\mathcal{C}_{\ell\bmod k})\leqslant \frac{k+\ell-2}{2}\left(n-\frac{k+\ell-2}{2}\right),
    \]
    and a complete bipartite graph with $(k+\ell-2)/2$ vertices on one side is the unique extremal graph.
\end{conjecture}

   The rest of this paper is organized as follows. In Section \ref{section: pre}, we give some necessary notations and definitions as well as some useful results needed in our proofs. In Section~\ref{section: adm cycles} we prove Theorem~\ref{thm: k adm cycles min degree k}, in Section~\ref{section: even ell} Theorems~\ref{thm: l mod k min degree k} and \ref{thm: l mod k min degree k-1}, in Section~\ref{section: all ell} Theorem~\ref{thm: l mod k min degree non-bi}, while Section~\ref{section: edge bound} is devoted to the proof of Theorem~\ref{thm: general bound}.

\section{Notations and preliminaries}\label{section: pre}

    All graphs considered in this paper are finite, undirected and simple. Let $G$ be a graph and $v\in V(G)$. The {\em degree} of $v$, denoted by $d(v)$, is the number of its neighbors in $G$, whose set is denoted by $N(v)$. For a vertex set $X\subseteq V(G)$, we denote $N_X(v) = N(v) \cap X$ and $d_X(v) = |N_X(v)|$. Additionally, for a set $U \subseteq V(G)$, by $N_X(U)$ we denote the union of $N_X(v)$ over all $v \in U$. The minimum degree of a graph $G$, denoted by $\delta(G)$, is the minimum number among all the degrees of the vertices in~$G$. 
    
    A {\em bipartite} graph is a graph whose vertex set can be partitioned into two parts such that the two end-vertices of each edge of the graph are in different parts. For a vertex set $X\subseteq V(G)$, denote by $D[X]$ the subgraph of $G$ induced by $X$. For two disjoint vertex sets $X,Y\subseteq V(G)$, denote by $G(X,Y)$ the subgraph induced by all the edges between $X$ and $Y$ in $G$. 
    A {\em $\varTheta$-graph} is graph consisting of 3 paths having common end-vertices and disjoint internal vertices.
    
    For a cycle $C$ in a graph $G$ and two vertices $x, y \in V(C)$, by $d_C(x,y)$ we denote the distance between $x$ and $y$ in~$C$. If we fix an orientation of $C$, then by $\overrightarrow{C}[x,y]$ we denote the path from $x$ to $y$ along the orientation of $C$, and by $\overleftarrow{C}[x,y]$ the path from $y$ to $x$ along the orientation of $C$.
    For a path $P$ in $G$ and two vertices $x, y \in V(P)$, by $P[x,y]$ we denote the path from $x$ to $y$ contained in $P$. 
    
    A graph is {\em connected} if there exists a path between any two vertices in the graph. For a positive integer $k$, call a graph {\em $k$-connected} if it has order at least $k+1$ and the deletion of any set of at most $k-1$ vertices results in a connected subgraph. The {\em connectivity} of a graph $G$ is the maximum $k$ such that $G$ is $k$-connected. A {\em component} of a graph $G$ is a maximal connected subgraph of $G$. A subset $S$ of $V(G)$ is a {\em vertex cut} of $G$ if $G-S$ has more components than $G$. If a vertex cut consists of exactly one vertex $x$, then $x$ is a {\em cut-vertex} of $G$. A {\em block} of $G$ is a maximum connected subgraph of $G$ with no cut-vertex. In other words, a block is an isolated vertex, an edge or a maximal 2-connected subgraph. An {\em end block} of $G$ is a block of $G$ containing exactly one cut-vertex of $G$. An {\em inner-vertex} in an end block is a vertex distinct with the cut-vertex contained in the end block. 
    A graph is a {\em block-chain} if it has exactly two end blocks.
    We view a 2-connected graph as its unique end block. In particular, every vertex in a 2-connected graph is an inner-vertex. 

    For two vertices $x,y\in V(G)$, an \textit{$(x,y)$-path} is a path with two end-vertices $x,y$. For two subset $X,Y\subset V(G)$, an \textit{$(X,Y)$-path} is an $(x,y)$-path with $x\in X$, $y\in Y$, and all internal vertices in $V(G)\backslash(X\cup Y)$. An \textit{$X$-path} is a path with two end-vertices in $X$ and all internal vertices in $V(G)\backslash X$.
    For an integer $\ell$, we use $\ell$-path (cycle) to denote a path (cycle) with length $\ell$. The following results on admissible paths will be useful in our proofs.

\begin{theorem}[Gao, Huo, Liu and Ma \cite{G+22}]\label{thm: k adm paths}
    Let $k\geqslant 1$ be an integer and $G$ a graph with two distinct vertices $x,y$ such that $G+xy$ is $2$-connected. If $d(v)\geqslant k+1$ for each $v\in V(G)\backslash \{x,y\}$, then $G$ contains $k$ admissible $(x,y)$-paths.
\end{theorem}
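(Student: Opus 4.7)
The plan is to pass to the $2$-connected auxiliary graph $G' := G + xy$ and exploit the bijection between $(x,y)$-paths in $G$ of length $\ell$ and cycles in $G'$ of length $\ell+1$ through the edge $xy$. Under this correspondence, finding $k$ admissible $(x,y)$-paths is equivalent to finding $k$ cycles through the edge $xy$ in $G'$ whose lengths form an arithmetic progression of common difference $1$ or $2$, the shortest being at least $3$. The base case $k=1$ is immediate: by $2$-connectivity of $G'$ and Menger's theorem, there exists an $(x,y)$-path avoiding the edge $xy$, which is an $(x,y)$-path of length at least $2$ in $G$.

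For the general case, I would proceed by induction on $k$, strengthening the internal-vertex degree hypothesis by one at each step. Given admissible paths $P_1,\ldots,P_{k-1}$ of lengths $\ell,\ell+d,\ldots,\ell+(k-2)d$ with $d\in\{1,2\}$ furnished by the inductive hypothesis, I would fix a longest $(x,y)$-path $Q$ whose length extends the progression by at least one step and apply a P\'osa-type rotation at the $y$-endpoint of $Q$: replace a terminal edge with a chord to create a new $(x,y)$-path ending at an alternative vertex, of length differing from $|Q|$ in a controlled way. The condition $d(v) \geqslant k+1$ at each interior vertex guarantees a sufficient supply of rotation moves, and the $2$-connectivity of $G'$ then lets one reroute the alternative endpoints back to $y$ through vertex-disjoint paths, producing the missing $(x,y)$-path of length $\ell + (k-1)d$.

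The main obstacle is the parity bookkeeping dictated by the choice of common difference $d \in \{1,2\}$: the rotation scheme must toggle parity when $d=1$ and preserve it when $d=2$. Handling this uniformly requires splitting into cases according to whether $G$ admits a short $(x,y)$-walk of the opposite parity, which can be read off from the bipartition structure of the rotation-generated subgraph around $x$ and $y$. A subtle point is that $x$ and $y$ themselves are exempt from the degree bound; hence one must arrange the rotations so that every degree inequality invoked is at an interior vertex of $Q$, never at $x$ or $y$. Once the parity case analysis is set up correctly, the extra degree gained when passing from $k-1$ to $k$ provides exactly the additional rotation room needed to extend the progression by one further term.
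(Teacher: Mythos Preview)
This theorem is not proved in the present paper: it is quoted from \cite{G+22} and used as a black-box tool (see Section~\ref{section: pre}). There is therefore no ``paper's own proof'' to compare your proposal against.

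As a standalone attempt, the sketch has a genuine gap. The inductive step is circular: you write ``fix a longest $(x,y)$-path $Q$ whose length extends the progression by at least one step'', but the existence of such a $Q$ is precisely what must be established. P\'osa rotation on a longest path controls only the \emph{maximum} attainable length, not the fine arithmetic of which intermediate lengths are realized; obtaining $k$ lengths in arithmetic progression with common difference $1$ or $2$ requires much more than a single rotation-and-reroute. The phrase ``the extra degree gained when passing from $k-1$ to $k$ provides exactly the additional rotation room needed'' is an assertion, not an argument: you have not explained which vertex you rotate at, why the resulting path has the correct length $\ell+(k-1)d$, or why the rerouting through $2$-connectivity does not destroy the length control. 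The actual proof in \cite{G+22} is considerably more delicate, building the whole family of paths simultaneously via a structural analysis rather than extending an inductively given progression by one term.
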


    The following result shows that we can have one more vertex with degree less than $k+1$ for the existence of $k$ admissible paths in a 2-connected graph, which relaxes the degree condition in Theorem \ref{thm: k adm paths}. 

\begin{theorem}[Chiba, Ota and Yamashita \cite{COY22}]\label{thm: k adm paths 2}
    Let $k\geqslant 1$ be an integer and $G$ a graph on at least four vertices with $x,y,z\in V(G)$ such that $x\neq y$ and $G+xy$ is $2$-connected. If $d(v)\geqslant k+1$ for each $v\in V(G)\backslash \{x,y,z\}$, then $G$ contains $k$ admissible $(x,y)$-paths.
\end{theorem}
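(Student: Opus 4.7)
The strategy is induction on $|V(G)|$, reducing to Theorem~\ref{thm: k adm paths} through two cases based on whether the exceptional vertex $z$ is essential for the $2$-connectivity of $G+xy$. Begin with trivial reductions: if $z\notin V(G)$, $z\in\{x,y\}$, or $d_G(z)\geqslant k+1$, then Theorem~\ref{thm: k adm paths} applies directly to $G$. So assume throughout that $z\in V(G)\setminus\{x,y\}$ with $2\leqslant d_G(z)\leqslant k$; the lower bound follows from the $2$-connectivity of $G+xy$.

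\textit{Case 1: $(G-z)+xy$ is $2$-connected.} Because neighbors of $z$ may have degree exactly $k$ in $G-z$, Theorem~\ref{thm: k adm paths} with threshold $k$ need not apply, but with threshold $k-1$ it does, yielding $k-1$ admissible $(x,y)$-paths in $G-z$ (and hence in $G$). The task is then to augment these to $k$ admissible paths using $z$. Since $|N_G(z)|\geqslant 2$, any sufficiently long $(x,y)$-path in $G-z$ has many opportunities to detour via $z$: replacing a subpath $u_i\cdots u_j$ with $u_izu_j$ alters its length in a controlled way, and one should be able to produce a path whose length extends the arithmetic progression by one term at the top or the bottom, exploiting the freedom in choosing the common difference $c\in\{1,2\}$.

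\textit{Case 2: $(G-z)+xy$ is not $2$-connected.} Then $\{z,u\}$ is a $2$-cut of $G+xy$ for some $u\in V(G)$. The graph $G$ decomposes along $\{z,u\}$ into two pieces $H_1,H_2$ with $V(H_1)\cap V(H_2)=\{z,u\}$, and we may arrange that $x$ is internal to $H_1$ and $y$ is internal to $H_2$. Each $H_i+zu$ is $2$-connected, and only its vertices among $\{x,y,z,u\}$ can have degree less than $k+1$ in $H_i+zu$. Thus Theorem~\ref{thm: k adm paths 2} applies inductively inside $H_1+zu$ with endpoints $\{x,u\}$ and exceptional vertex $z$, yielding $k$ admissible $(x,u)$-paths there; meanwhile a $(u,y)$-path in $H_2+zu$ avoiding $z$ exists since $H_2+zu-z=H_2-z$ is connected by $2$-connectivity of $H_2+zu$. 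Concatenation, together with an appropriate substitution for any use of the virtual edge $zu$ (replacing the terminal step $\cdots\to z\to u$ by a $(z,u)$-path inside $H_2$ that avoids the already-used $(u,y)$-path, or re-routing through $z$'s neighbors in $H_1$), delivers $k$ admissible $(x,y)$-paths in $G$.

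The \textbf{main obstacle} is the augmentation step in Case~1: the detour through $z$ must extend the existing arithmetic progression by exactly one term on one end, while the length change produced by each detour depends on the positions of the neighbors of $z$ along the existing paths. Achieving the necessary fine-grained control likely requires re-running Theorem~\ref{thm: k adm paths} with a carefully chosen initial length and common difference, or a separate argument showing that at least one of the $k-1$ paths admits a length-$1$ or length-$2$ extension through $z$; controlling this simultaneously with preserving admissibility is the delicate core of the argument.
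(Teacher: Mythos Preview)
This theorem is not proved in the paper: it is quoted from Chiba, Ota and Yamashita \cite{COY22} and used as a black box, so there is no ``paper's own proof'' to compare your proposal against.

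As for the proposal itself, you have correctly identified that it is incomplete. Your Case~1 augmentation step is a genuine gap, not a minor detail: having $k-1$ admissible $(x,y)$-paths in $G-z$ and two neighbors of $z$ does not obviously yield a $k$th path that extends the arithmetic progression. The detour $u_i z u_j$ changes the length by $2-|P[u_i,u_j]|$, which depends on the particular path $P$ and on where $N(z)$ falls along it; there is no reason these changes should line up to give exactly the missing term, and Theorem~\ref{thm: k adm paths} gives you no control over which specific lengths occur or how the paths sit relative to $N(z)$. Case~2 is also shakier than you present it: your inductive call on $H_1+zu$ is only valid if $|V(H_1)|<|V(G)|$ and $|V(H_1)|\geqslant 4$, and when you concatenate with a $(u,y)$-path in $H_2-z$, any of the $k$ admissible $(x,u)$-paths that uses the virtual edge $zu$ needs a replacement inside $H_2$ that is \emph{internally disjoint} from the chosen $(u,y)$-path and of the correct length to preserve admissibility---you have not shown such a replacement exists.

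The actual proof in \cite{COY22} is substantially more involved than this sketch and does not proceed by this kind of direct reduction to Theorem~\ref{thm: k adm paths}; if you want to prove the result yourself rather than cite it, you should consult that paper.
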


    We first show the following two useful lemmas, which roughly state that, if a 2-connected graph $G$ with bounded minimum degree does not contain $k$ admissible cycles or a cycle of length $\ell$ modulo $k$ for some even $\ell$, then the connectivity of $G$ increases with $k$.

\begin{lemma}\label{lemma: high connectivity no k adm cycles}
    Let $G$ be a $2$-connected graph with minimum degree at least $k-r$, where $k\geqslant 2$ and $r\geqslant 0$ are integers. If $G$ does not contain $k$ admissible cycles, then the connectivity of $G$ is at least $(k+2)/2-r$.
\end{lemma}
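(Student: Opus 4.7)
The plan is to prove the contrapositive. Suppose $\kappa(G) = s$ with $s < (k+2)/2 - r$, equivalently $k \geq 2s + 2r - 1$, and I will deduce that $G$ contains $k$ admissible cycles. Fix a minimum vertex cut $S$ of $G$ with $|S| = s \geq 2$, and split the vertices of $G - S$ into two nonempty unions of components, $A$ and $B$. By the minimality of $S$, every vertex of $S$ has a neighbor in both $A$ and $B$.

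The idea is to produce many admissible $(x, y)$-paths in the reduced subgraphs $G_A' := G[A \cup \{x, y\}]$ and $G_B' := G[B \cup \{x, y\}]$ for a carefully chosen pair $x, y \in S$, and combine them. Since $A \cap B = \emptyset$ and the only vertices of $S$ present in $G_A'$ and $G_B'$ are $x, y$, any $(x, y)$-path in $G_A'$ shares only the endpoints with any $(x, y)$-path in $G_B'$, so their concatenation is a simple cycle of $G$. If $G_A'$ supplies $a$ admissible paths of lengths forming an arithmetic progression of common difference $d \in \{1, 2\}$, and $G_B'$ supplies $b$ admissible paths with the same common difference $d$, then pairing produces $a + b - 1$ admissible cycle lengths of common difference $d$.

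For each $v \in A$, the degree in $G_A'$ satisfies $d_{G_A'}(v) = d_G(v) - d_{S \setminus \{x, y\}}(v) \geq (k - r) - (s - 2) = k - r - s + 2$. Hence, provided $G_A' + xy$ is 2-connected, Theorem~\ref{thm: k adm paths} yields $K := k - r - s + 1$ admissible $(x, y)$-paths in $G_A'$, and analogously in $G_B'$. Combining them produces $2K - 1 = 2k - 2r - 2s + 1$ admissible cycles, which is at least $k$ under the hypothesis $k \geq 2s + 2r - 1$, yielding the required contradiction.

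The main technical obstacle is verifying the 2-connectivity of $G_A' + xy$ and $G_B' + xy$. For $s = 2$ this follows readily from the 2-connectivity of $G$ and the minimality of $S$ together with the fact that $x, y$ both have neighbors in $A$ and in $B$. For $s \geq 3$, this 2-connectivity can fail for a poor choice of partition (for instance, in $G = K_{s, n-s}$ with $A$ a single vertex, $G_A'$ is merely a star). I plan to address this by exploiting the fact that $S \setminus \{x, y\}$ has size $s - 2 < \kappa(G)$, so that $G - (S \setminus \{x, y\}) = G[A \cup B \cup \{x, y\}]$ is connected, together with the strong degree bound $d_G(v) \geq k - r \geq 2s + r - 1$ implied by the hypothesis, which forces every vertex outside $S$ to have many neighbors inside $A \cup B \cup \{x, y\}$. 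A judicious choice of the partition $(A, B)$ and of the pair $x, y$ should then guarantee the 2-connectivity of $G_A' + xy$ and $G_B' + xy$, and making this choice explicit is where I expect the delicate part of the proof to lie.
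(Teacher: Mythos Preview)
Your approach is exactly the paper's, and the step you flag as ``delicate'' is in fact a one-line observation. Take $A$ to be a \emph{single} component $Q_1$ of $G-S$ (there is no need to take unions), pick any two vertices $x,y\in S$, and suppose $w$ were a cut-vertex of $G[V(Q_1)\cup\{x,y\}]+xy$. Since $x,y$ each have a neighbour in $Q_1$ (minimality of $S$) and $Q_1$ is connected, $w\notin\{x,y\}$; hence $w\in V(Q_1)$, and the component of $(G[V(Q_1)\cup\{x,y\}]+xy)-w$ not containing $x,y$ is separated in $G$ from the rest of the graph by $(S\setminus\{x,y\})\cup\{w\}$, a cut of size $|S|-1$, contradicting minimality of $S$. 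No careful choice of $x,y$ or of the partition is required.

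Incidentally, your purported counterexample $K_{s,n-s}$ with $A$ a single vertex $a$ does not actually fail: $G_A'$ is then the path $xay$, and $G_A'+xy$ is a triangle, which is $2$-connected.
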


\begin{proof}
    Let $V_0$ be a minimum vertex cut of $G$ with $n_0=|V_0|\geqslant 2$. Assume the opposite that $n_0\leqslant (k+1)/2-r$. Let $Q_1,Q_2$ be two components of $G-V_0$ and $x,y$ two vertices in $V_0$. We claim that $G[V(Q_1)\cup \{x,y\}]+xy$ is 2-connected. If not, say $w$ is a cut-vertex in it, then $(V_0\backslash \{x,y\})\cup \{w\}$ is a smaller vertex cut of $G$, a contradiction. Note that every vertex other than $x,y$ in $G[V(Q_1)\cup \{x,y\}]$ has degree at least $k-r-n_0+2$. Note also that $k-r-n_0+2\geqslant 2$. By Theorem \ref{thm: k adm paths}, there exist $k-r-n_0+1$ admissible $(x,y)$-paths in $G[V(Q_1)\cup \{x,y\}]$. Similarly, we can get $k-r-n_0+1$ admissible $(x,y)$-paths in $G[V(Q_2)\cup \{x,y\}]$. Concatenating these admissible paths, we get $2(k-r-n_0+1)-1\geqslant k$ admissible cycles in $G$, a contradiction.
\end{proof}

\begin{lemma}\label{lemma: high connectivity no l mod k cycle}
    Let $G$ be a $2$-connected graph with minimum degree at least $k-r$, where $k\geqslant 2$ and $r\geqslant 0$ are integers. If $G$ does not contain cycles of length $\ell$ modulo $k$ for some even $\ell$, then the connectivity of $G$ is at least $(k+2)/2-r$. 
\end{lemma}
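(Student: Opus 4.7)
My plan is to mirror the proof of Lemma~\ref{lemma: high connectivity no k adm cycles} almost verbatim. Assume for contradiction that the connectivity of $G$ is strictly less than $(k+2)/2-r$, so that $G$ admits a minimum vertex cut $V_0$ with $n_0:=|V_0|\leqslant(k+1)/2-r$ (and in fact $n_0\leqslant k/2-r$ when $k$ is even, since $(k+2)/2-r$ is then an integer). Since $G$ is $2$-connected, $n_0\geqslant 2$, so I can fix $x,y\in V_0$ and two components $Q_1,Q_2$ of $G-V_0$. As in Lemma~\ref{lemma: high connectivity no k adm cycles}, the minimality of $V_0$ implies that $G_i:=G[V(Q_i)\cup\{x,y\}]+xy$ is $2$-connected and that every vertex of $V(Q_i)$ has degree at least $k-r-n_0+2$ in $G_i$, so Theorem~\ref{thm: k adm paths} yields $m:=k-r-n_0+1\geqslant(k+1)/2$ admissible $(x,y)$-paths in each $G_i$; concatenating one from each side produces a family of $m^2$ cycles in $G$.

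Next I would analyse the length set of these $m^2$ cycles according to the common differences $d_1,d_2\in\{1,2\}$ of the two admissible progressions, aiming to realise every even residue modulo $k$. If at least one of $d_1,d_2$ equals $1$, then the cycle lengths contain an interval of at least $2m-1\geqslant k$ consecutive integers, hitting every residue modulo $k$. If $d_1=d_2=2$ and the two starting parities coincide, the cycle lengths form an arithmetic progression with common difference $2$ and length $\geqslant 2m-1$ of a single parity; for $k$ odd this exhausts all residues modulo $k$, while for $k$ even the strengthened bound $2m-1\geqslant k+1>k/2$ makes the progression cover every residue of the chosen parity---in particular every even residue whenever the progression is even. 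In each of these situations, a cycle of length $\ell$ modulo $k$ is produced for the forbidden even $\ell$, a contradiction.

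The main obstacle is the remaining case $d_1=d_2=2$ with opposite starting parities, which is only possible for even $k$ and forces every cross-concatenation cycle to be odd, missing every even residue modulo $k$. A useful first reduction observes that this case collapses as soon as one side, say $G_1$, contains an $(x,y)$-path of parity opposite to its admissible family: pairing this path with the $m$ admissible paths on the other side yields $m\geqslant k/2+1>k/2$ even cycles in arithmetic progression of common difference $2$, which covers every even residue modulo $k$. Therefore the only genuinely delicate sub-case is when every $(x,y)$-path in $G_i$ has a fixed parity $p_i$ with $p_1\neq p_2$; this forces each $G_i$ to be bipartite with a prescribed placement of $\{x,y\}$ in its colour classes. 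In this sub-case I would produce the required even cycles internally to one of the $G_i$, exploiting its bipartite structure together with the minimum-degree hypothesis---for instance via a theta-subgraph through $\{x,y\}$, or by invoking Theorem~\ref{thm: k adm paths 2} (which permits one extra low-degree vertex) applied to an adjacent pair $u,v\in V(Q_i)$ to obtain admissible $(u,v)$-paths that close through the edge $uv$ into admissible even cycles of $G_i$. Verifying that this internal construction delivers an arithmetic progression of even cycles of length at least $k/2+1$, and hence covers every even residue modulo $k$, is the main technical hurdle of the lemma.
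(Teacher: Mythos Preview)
Your outline is structurally the same as the paper's proof and you have correctly isolated the only delicate case: $k$ even, the admissible families on both sides have common difference two, and their starting parities disagree so every cross-concatenation is odd. However, your plan for this case has two concrete gaps. First, you never treat the sub-case $n_0\geqslant 3$. The paper's move here is not an internal construction at all: once one shows that each $G[V(Q_i)\cup V_0]$ is bipartite (not merely $G_i=G[V(Q_i)\cup\{x,y\}]$), a pigeonhole on the colour classes of $V_0$ lets one \emph{re-choose} $x,y\in V_0$ so that $x,y$ lie in the same class of $G[V(Q_1)\cup V_0]$ iff they lie in the same class of $G[V(Q_2)\cup V_0]$; then the two admissible path families have matching parity and the concatenated cycles are even. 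Your internal Theorem~\ref{thm: k adm paths 2} idea does not obviously survive the degree loss of $n_0-2$ caused by the extra cut vertices.

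Second, even for $n_0=2$ your suggestion to apply Theorem~\ref{thm: k adm paths 2} to an adjacent pair $u,v\in V(Q_i)$ runs into trouble: in $G_i$ both $x$ and $y$ may have small degree, but Theorem~\ref{thm: k adm paths 2} only permits \emph{one} extra exception besides the two endpoints, so you cannot except both $x$ and $y$. The paper sidesteps this by taking one endpoint to be $x$ itself and the other to be a neighbour $z$ of $x$; then $x$ is already excepted as an endpoint and the single extra exception is spent on $y$ (or on the cut-vertex $y'$ when $G[V(Q_1)\cup\{x,y\}]$ is a block-chain, whose end block containing $x$ is then the $2$-connected host). This yields $k-r-1\geqslant k/2$ admissible $(x,z)$-paths which, together with the edge $xz$ in the bipartite host, give $\geqslant k/2$ consecutive even cycles. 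A minor slip elsewhere: the ``opposite starting parities'' case is not ``only possible for even $k$'' but only \emph{problematic} for even $k$, since for odd $k$ the length set still covers all residues by $\gcd(2,k)=1$.
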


\begin{proof}
    Let $V_0$ be a minimum vertex cut of $G$ with $n_0=|V_0|\geqslant 2$. Assume the opposite that $n_0\leqslant (k+1)/2-r$. Since $G$ does not contain an $(\ell\bmod k)$-cycle for some even $\ell$, we get that $G$ does not contain $k$ cycles of consecutive lengths. By Lemma \ref{lemma: high connectivity no k adm cycles}, $G$ contains $k$ admissible cycles of common difference two. If $G$ is bipartite, then we are done as now $G$ contains $k$ cycles of consecutive even lengths. So $G$ is non-bipartite. If $k$ is odd, then $G$ contains an $(\ell\bmod k)$-cycle for any even $\ell$, a contradiction. So $k$ is even and $n_0\leqslant k/2-r$. If $r\geqslant (k-2)/2$, then $(k+2)/2-r\leqslant 2$ and the assertion clearly holds. So assume that $r\leqslant (k-4)/2$. 
    
    Let $Q_1,Q_2$ be two components of $G-V_0$. Similar to the analysis in the proof of Lemma \ref{lemma: high connectivity no k adm cycles}, we have that for any two distinct vertices $x,y\in V_0$, there are $k-r-n_0+1$ admissible $(x,y)$-paths in $G[V(Q_i)\cup \{x,y\}]$, $i\in \{1,2\}$. If the $k-r-n_0+1$ admissible $(x,y)$-paths in $G[V(Q_i)\cup \{x,y\}]$ are of consecutive lengths, then $G$ have $k$ cycles of consecutive lengths, a contradiction. So we conclude that for each two vertices $x,y\in V_0$ and $i\in\{1,2\}$, $G[V(Q_i)\cup \{x,y\}]$ has $k-r-n_0+1\geqslant (k+2)/2$ admissible $(x,y)$-paths with common difference two. 
    
    If $G[V(Q_1)\cup V_0]$ contains an odd cycle $C$, then by $G$ being 2-connected there exist two disjoint paths from $C$ to $\{x,y\}\subseteq V_0$, implying that there are two $(x,y)$-paths in $G[V(Q_1)\cup V_0]$ with different parities. Note that $G[V(Q_2)\cup \{x,y\}]$ contains $(k+2)/2$ admissible $(x,y)$-paths of lengths with common difference two. It follows that $G$ contains $(k+2)/2$ cycles of consecutive even lengths, a contradiction. So we conclude that $G[V(Q_1)\cup V_0]$ is bipartite, and similarly, $G[V(Q_2)\cup V_0]$ is bipartite. 
    
    If $n_0\geqslant 3$, then there exist $x,y\in V_0$ such that $x,y$ are in the same part of $G[V(Q_1)\cup V_0]$ if and only if $x,y$ are in the same part of $G[V(Q_2)\cup V_0]$. This implies the existence of $k$ admissible even cycles, a contradiction. So we conclude that $n_0=2$. 
    
    Let $V_0=\{x,y\}$. Since $\delta(G)\geqslant k-r\geqslant (k+4)/2\geqslant 3$, we can assume w.l.o.g. that $x$ has at least two neighbors in $G[V(Q_1)\cup\{x,y\}]$. Note that $G[V(Q_1)\cup\{x,y\}]+xy$ is 2-connected. If $G[V(Q_1)\cup\{x,y\}]$ is not 2-connected, then it is a block-chain such that $x$ and $y$ are inner-vertices of two distinct end blocks. In this case we let $G_1$ be the end block of $G[V(Q_1)\cup\{x,y\}]$ containing $x$ (so $|V(G_1)|\geqslant 4$), $y'$ be the cut-vertex of $G[V(Q_1)\cup\{x,y\}]$ contained in $G_1$, and $z$ be a neighbor of $x$ in $G_1$. If $G[V(Q_1)\cup\{x,y\}]$ is 2-connected, then let $G_1=G[V(Q_1)\cup\{x,y\}]$, $y'=y$ and $z$ be a neighbor of $x$ in $G_1$. Now $G_1$ is bipartite and every vertex in $V(G_1)\backslash\{x,y',z\}$ has degree at least $k-r$ in $G_1$. By Theorem \ref{thm: k adm paths 2}, $G_1$ has $k-r-1$ admissible $(x,z)$-paths. Together with the edge $xz$, we can get $k-r-1\geqslant k/2$ admissible cycles, which are consecutive even cycles, a contradiction.
\end{proof}

The following simple lemma will be used many times in our proof.

\begin{lemma}\label{lemma: concatenating paths}
    Let $G$ be a graph, $H$ a subgraph of $G$ and $X,Y$ two disjoint subsets of $V(H)$. Assume that $A=\{a_1,\ldots,a_s\}$, $s\geq 1$, is a set of positive integers forming an arithmetic progression, such that for every $x\in X$, $y\in Y$, $i\in [1,s]$, $H$ contains an $(x,y)$-path of length $a_i$. Assume also that $B=\{b_1,\ldots,b_t\}$, $t\geq 1$, is a set of positive integers forming an arithmetic progression with $b_1\geqslant 2$, such that for every $j\in [1,t]$, $G$ contains an $(X,Y)$-path of length $b_j$ with all internal vertices in $V(G)\backslash V(H)$. 
    \begin{itemize}
        \item[(1)] If each of $A$ and $B$ has common difference $1$ or $2$, then $G$ has $s+t-1$ admissible cycles.
        \item[(2)] If the common differences of $A$ and $B$ are $2$ and $1$, respectively, then $G$ has $2s+t-2$ admissible cycles.
        \item[(3)] If the common differences of $A$ and $B$ are $1$ and $2$, respectively, then $G$ has $s+2t-2$ admissible cycles.
    \end{itemize}
\end{lemma}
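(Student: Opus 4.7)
The plan is to construct cycles of length $a_i+b_j$ for each $(i,j)\in[1,s]\times[1,t]$ by concatenating an external $(X,Y)$-path of length $b_j$ with an $H$-internal $(x,y)$-path of length $a_i$ sharing its endpoints, and then to read off admissible cycles from the image of $(i,j)\mapsto a_i+b_j$. Concretely, for each $j\in[1,t]$ I would fix an $(X,Y)$-path $P_j$ in $G$ of length $b_j$, with endpoints $x_j\in X$ and $y_j\in Y$ and all internal vertices in $V(G)\setminus V(H)$; then by hypothesis, for each $i\in[1,s]$ there is an $(x_j,y_j)$-path $Q_{i,j}$ in $H$ of length $a_i$. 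Because the internal vertices of $P_j$ lie outside $V(H)$ while $Q_{i,j}\subseteq H$, the two paths meet only at $\{x_j,y_j\}$, so $P_j\cup Q_{i,j}$ is a simple cycle of length $a_i+b_j$; the bound $b_j\geqslant 2$ guarantees these are proper cycles.

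The proof then reduces to analyzing the image of the integer-valued map $f(i,j)=\alpha(i-1)+\beta(j-1)$ on $[1,s]\times[1,t]$, where $\alpha,\beta\in\{1,2\}$ denote the common differences of $A$ and $B$, respectively. If $\alpha=\beta=d$, the image is $\{0,d,2d,\ldots,d(s+t-2)\}$, an arithmetic progression of $s+t-1$ terms with common difference $d$, which settles the equal-difference subcases of (1). If $(\alpha,\beta)=(2,1)$, I would verify that $f$ surjects onto $[0,2s+t-3]$: for any target $m$ in this interval, choose $j-1\in[0,t-1]$ with $j-1\equiv m\pmod 2$ and set $i-1=(m-(j-1))/2\in[0,s-1]$. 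This yields $2s+t-2$ consecutive integers and proves (2); the case $(\alpha,\beta)=(1,2)$ is symmetric and yields (3). The mixed-difference subcases of (1) are strictly weaker than (2) and (3) and follow at once by restricting to any sub-AP of $s+t-1$ consecutive integers.

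The argument presents no substantial obstacle: the only nontrivial point is the surjectivity bookkeeping for the mixed-difference cases, which is the brief parity computation described above. The remainder is direct verification that the constructed sums form an arithmetic progression of common difference one or two, and hence that the corresponding cycles are admissible.
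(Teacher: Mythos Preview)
Your proposal is correct and follows essentially the same approach as the paper: concatenate each external $(X,Y)$-path $P_j$ with an $H$-internal path of length $a_i$ between its endpoints to produce cycles of all lengths $a_i+b_j$, then read off the size and structure of the sumset $A+B$ in each case. The paper's proof is in fact terser than yours on the sumset analysis (it simply asserts the value of $|L|$ in each case), so your parity bookkeeping for the mixed-difference case is if anything more explicit; just be careful when writing it out to choose $j-1$ large enough (not merely of the correct parity) so that $i-1=(m-(j-1))/2$ actually lands in $[0,s-1]$.
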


\begin{proof}
    Consider an arbitrary $(X,Y)$-path $P$ with all internal vertices in $V(G)\backslash V(H)$, say from $x\in X$ to $y\in Y$ and denote by $p=|P|\geqslant 2$. For any $i\in [1,s]$, since $H$ contains an $(x,y)$-path of length $a_i$, by concatenating such a path with $P$, we obtain a cycle of length $p+a_i$, implying $s$ cycles of lengths $p+a_1$, $p+a_2$, \ldots, $p+a_s$. Recall that, for any $j\in [1,t]$, $G$ contains an $(X,Y)$-path of length $b_j$ with all internal vertices in $V(G)\backslash V(H)$. So we can obtain cycles of all lengths in $\{a_i+b_j: i\in [1,s], j\in [1,t]\}:=L$. One can see that $L$ is an arithmetic progression with common difference 1 or 2, and
    \[
      |L| = 
      \begin{cases}
      s+t-1,    & \text{if the common differences of $A,B$ are both $1$ or both $2$}, \\
      2s + t-2, & \text{if the common differences of $A,B$ are $2$ and $1$, respectively}, \\
      s+2t-2,   & \text{if the common differences of $A,B$ are $1$ and $2$, respectively}.
      \end{cases}
    \]
    The statements then follow from the above observation and the fact that $2s+t-2\geqslant s+t-1$ for $s\geqslant 1$ and $s+2t-2\geqslant s+t-1$ for $t\geqslant 1$.
\end{proof}

\begin{lemma}\label{lemma: concatenating paths X}
    Let $G$ be a graph, $H$ a subgraph of $G$, and $X\subseteq V(H)$. Assume that $A=\{a_1,\ldots,a_s\}$, $s\geqslant 1$, is a set of positive integers forming an arithmetic progression, such that for every two distinct vertices $x,y\in X$ and $i\in[1,s]$, $H$ contains an $(x,y)$-path of length $a_i$. Assume also that $B=\{b_1,\ldots,b_t\}$, $t\geqslant 1$, is a set of positive integers forming an arithmetic progression with $b_1\geqslant 2$, such that for every $j\in [1,t]$, $G$ contains an $X$-path of length $b_j$ with all internal vertices in $V(G)\backslash V(H)$. 
    \begin{itemize}
        \item[(1)] If each of $A$, $B$ has common difference $1$ or $2$, then $G$ has $s+t-1$ admissible cycles.
        \item[(2)] If the common differences of $A,B$ are $2$ and $1$, respectively, then $G$ has $2s+t-2$ admissible cycles.
        \item[(3)] If the common differences of $A,B$ are $1$ and $2$, respectively, then $G$ has $s+2t-2$ admissible cycles.
    \end{itemize}
\end{lemma}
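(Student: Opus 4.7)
The plan is to mimic almost verbatim the proof of Lemma~\ref{lemma: concatenating paths}, exploiting the fact that the only structural change is that the connecting paths now have both endpoints in a single set $X$ rather than one endpoint in $X$ and one in~$Y$. The key observation is that an $X$-path is still, by definition, a path, so its two endpoints are distinct vertices of $X$ and can therefore be used as the distinct pair $(x,y)$ required by the hypothesis on $H$.

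Concretely, I would fix an arbitrary $X$-path $P$ with endpoints $x,y\in X$ of length $p=b_j\geqslant 2$, all of whose internal vertices lie in $V(G)\setminus V(H)$. Since $x\ne y$, the assumption on $H$ provides, for each $i\in[1,s]$, an $(x,y)$-path $Q_i$ of length $a_i$ inside $H$. Because $V(P)\setminus\{x,y\}\subseteq V(G)\setminus V(H)$ and $V(Q_i)\subseteq V(H)$, the paths $P$ and $Q_i$ meet only at the endpoints, so their concatenation is a cycle of length $a_i+b_j$. Letting $j$ range over $[1,t]$ produces cycles of every length in the sumset $L=\{a_i+b_j:i\in[1,s],\,j\in[1,t]\}$.

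It only remains to observe that $L$ is an arithmetic progression whose common difference and cardinality are determined by those of $A$ and $B$, exactly as in Lemma~\ref{lemma: concatenating paths}: if the common differences of $A$ and $B$ are both $1$ or both $2$, then $L$ is an arithmetic progression of length $s+t-1$ with common difference $1$ or $2$; if they are $2$ and $1$ respectively, then $L$ has common difference $1$ and size $2s+t-2$; and in the remaining case $L$ has common difference $1$ and size $s+2t-2$. Since $2s+t-2\geqslant s+t-1$ for $s\geqslant 1$ and $s+2t-2\geqslant s+t-1$ for $t\geqslant 1$, the three claimed lower bounds follow.

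There is no real obstacle in this proof; the only point that might warrant a sentence of justification is the internal disjointness of $P$ and $Q_i$, which is immediate from the containment of their internal vertices in complementary subsets of $V(G)$. Because the argument is word-for-word the same as in Lemma~\ref{lemma: concatenating paths} once this disjointness is noted, I would keep the write-up extremely short, essentially pointing the reader to the earlier proof and stating the sumset calculation.
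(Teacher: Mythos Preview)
Your proposal is correct and is precisely the paper's approach: the paper simply writes that the proof is similar to that of Lemma~\ref{lemma: concatenating paths} and omits the details. Your observation that an $X$-path has two distinct endpoints in $X$, and that its internal vertices lie outside $V(H)$ so the concatenation with an $(x,y)$-path in $H$ is indeed a cycle, is exactly the minor adaptation needed.
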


\begin{proof}
    The proof is similar to that of Lemma \ref{lemma: concatenating paths}, and we omit the details.
\end{proof}

\section{Admissible cycles}\label{section: adm cycles}

   In this section, we set 
   \[ 
      \mathcal{F}:=\{K_{s,t}: \min\{s,t\}\geqslant2\}\cup \{K_{s,t}^-: \min\{s,t\}\geqslant2\text{ and }\max\{s,t\}\geqslant3\}, 
   \]
   where $K_{s,t}^-$ is obtained from $K_{s,t}$ by removing an arbitrary edge. 

\begin{lemma}\label{lemma: s-con-odd-paths}
    Let $F$ be a graph in $\mathcal{F}$ with bipartition $(S,T)$. Set $s=|S|$, $t=|T|$ and $r=\min\{s,t\}$. For any two distinct vertices $u,v\in V(F)$, the following statements hold: 
    \begin{itemize}
        \item[(1)] If $u,v\in S$ (or $u,v\in T$), then $F$ has $(u,v)$-paths of consecutive even lengths from $2$ to $2r-2$. Moreover, if $s>r$ (or $t>r$), then $F$ has a $(u,v)$-path of length $2r$ as well.
        \item[(2)] If $u\in S$ and $v\in T$, then $F$ has $(u,v)$-paths of consecutive odd lengths from $1$ to $2r-1$, except that $uv\notin E(G)$, in which case $F$ has $(u,v)$-paths of consecutive odd lengths from $3$ to $2r-1$.
    \end{itemize}
\end{lemma}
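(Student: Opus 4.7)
The approach is direct construction. In a bipartite graph with parts $S,T$, any $(u,v)$-walk is specified by listing its vertices in order, giving an alternating sequence between $S$ and $T$. In $F=K_{s,t}$ every such alternating sequence of distinct vertices is automatically a path, because every edge between the parts is present; in $F=K_{s,t}^-$ with unique missing edge $xy$ (where $x\in S$, $y\in T$), the sequence fails to be a path iff $x$ and $y$ occupy consecutive positions of the sequence. Thus both parts of the lemma reduce to counting how many $S$- and $T$-vertices are needed, and (in the $K_{s,t}^-$ case) arranging them so that $x,y$ are non-adjacent on the sequence.

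First I would handle $F=K_{s,t}$. For part (1) with $u,v\in S$, a $(u,v)$-path of length $2k$ is an alternating sequence $u,t_1,s_1,\ldots,s_{k-1},t_k,v$ using $k$ distinct vertices of $T$ and $k-1$ distinct vertices of $S\setminus\{u,v\}$, which exists precisely for $1\leq k\leq \min(t,s-1)$. This upper bound equals $r-1$ when $s\leq t$ and equals $r$ when $s>t$, yielding lengths $2,4,\ldots,2r-2$ together with the extra length $2r$ exactly in the case $s>r$. The case $u,v\in T$ is symmetric. For part (2) with $u\in S$, $v\in T$, a path of length $2k+1$ needs $k$ distinct vertices from each of $T\setminus\{v\}$ and $S\setminus\{u\}$, so $k$ ranges over $\{0,1,\ldots,r-1\}$ and produces lengths $1,3,\ldots,2r-1$, with length $1$ supplied by the edge $uv$.

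For $F=K_{s,t}^-$, I would replay the constructions above and permute the vertex choices so that $x$ and $y$ never land in consecutive positions. If the chosen $T$-vertices (respectively $S$-vertices) can avoid $y$ (respectively $x$) entirely, then $xy$ does not appear and we are done; this covers every sub-maximum length because strictly more candidates than positions are available. In the extremal cases both $x$ and $y$ are forced into the path, and the strategy is to place $y$ at one end of the $T$-alternation and $x$ at the opposite end of the $S$-alternation, which separates them on the sequence by at least two positions provided the alternation has at least two $T$-slots and two $S$-slots. The hypothesis $\max(s,t)\geq 3$ in the definition of $\mathcal{F}$, which excludes $K_{2,2}^-$ (a $P_4$), supplies exactly this slack. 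The exceptional case in part (2), $uv\notin E(F)$, occurs precisely when $\{u,v\}=\{x,y\}$: length $1$ disappears, but the length-$3$ path $u,t,s,v$ for any $t\in T\setminus\{v\}$ and $s\in S\setminus\{u\}$ (both nonempty by $\min(s,t)\geq 2$) is unobstructed, and the longer odd paths are built as before with $x,y$ now appearing only as endpoints.

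The main obstacle is the case analysis at the extremal lengths in $K_{s,t}^-$: length $2r-2$ in part (1) with $s=t=r$, the added length $2r$ in the moreover clause ($s>t=r$), and length $2r-1$ in part (2) with $s=t=r$. Each reduces to placing $x$ and $y$ at non-adjacent positions of the alternating sequence, which is routine but must be checked configuration by configuration, tracking carefully whether $x,y$ are forced in as internal vertices or coincide with $u,v$ themselves. In every case the required separation follows from $r\geq 3$ or $\max(s,t)>r$, the two inequalities built into the definition of $\mathcal{F}$.
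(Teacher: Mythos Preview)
Your approach is essentially the paper's: settle $K_{s,t}$ by a direct count of the alternating positions, then for $K_{s,t}^-$ argue that any such alternating sequence can be permuted so that $x$ and $y$ are not consecutive. The paper organizes the second step a little differently---after disposing of $r\le 3$ by inspection, it takes an arbitrary $(u,v)$-path $P$ in $F'=F+xy=K_{s,t}$ of the desired length and, if $P$ uses $xy$, swaps $x$ with some $x^*\in S\setminus\{x,x',u,v\}$ (where $x'$ is the successor of $y$ on $P$), falling back to a swap on the $T$-side in the lone boundary case $s=r=4$---but this is the same vertex-replacement idea you describe, just packaged as one uniform swap rather than a length-by-length case analysis.

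One caution about your closing sentence: you assert that the needed separation always follows from ``$r\ge 3$ or $\max(s,t)>r$'', but the second disjunct alone does not rescue the ``moreover'' clause when $r=2$. Concretely, in $K_{3,2}^-$ with $S=\{u,v,x\}$, $T=\{y,y'\}$ and missing edge $xy$, there is \emph{no} $(u,v)$-path of length $2r=4$: any such path would have to read $u,t_1,x,t_2,v$, forcing $x$ adjacent to both vertices of $T$ and hence to $y$. The paper's ``one can check that the assertion holds for $r\le 3$'' glides past the same point. In every place the paper actually invokes the ``moreover'' clause it is working in $K_{s,t}$ rather than $K_{s,t}^-$, so the applications are unaffected; but strictly speaking neither your sketch nor the paper's proof covers this corner of the lemma as stated.
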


\begin{proof}
    It is not difficult to check the assertion for $F\cong K_{s,t}$. So assume that $F\cong K_{s,t}^-$, say $xy\notin E(F)$ with $x\in S$ and $y\in T$. One can check that the assertion holds for $r\leqslant 3$. So assume that $r\geqslant 4$. Let $F'=F+xy$ and $P=w_0w_1\ldots w_{p}$ an arbitrary $(u,v)$-path in $F'$, where $u=w_0$, $v=w_{p}$ and $p\geqslant 2$. It suffices to show the existence of a $(u,v)$-path of length $p$ in $F$. If $P$ does not pass through $xy$, then $P$ itself is a required $(u,v)$-path. So assume that $P$ passes through $xy$. Since $p\geqslant 2$, we can assume w.l.o.g. that $v\notin\{x,y\}$ and that $y$ is the successor of $x$ in $P$. Let $x'$ be the successor of $y$. If $S\backslash \{x,x',u,v\}\neq\emptyset$, then let $x^*\in S\backslash \{x,x',u,v\}$. Replacing $x$ with $x^*$ (if $x^*\notin V(P)$) or exchanging $x$ and $x^*$ (if $x^*\in V(P)$) yields a required $(u,v)$-path. So assume that $S\backslash \{x,x',u,v\}=\emptyset$. It follows that $s=r=4$ and $u,v\in S$. Thus, $T\backslash V(P)\neq \emptyset$. Let $y^*\in T\backslash V(P)$. Then replacing $y$ with $y^*$ in $P$ yields a required $(u,v)$-path.
\end{proof}

We recall the following result from \cite{GHM21} and prove a useful lemma.

\begin{theorem}[Gao, Huo and Ma \cite{GHM21}]\label{thm: 2-con-K3-k-cycles}
    Let $G$ be a $2$-connected graph with minimum degree at least $k\geqslant 2$. If $G$ contains a triangle, then $G$ contains $k$ cycles of consecutive lengths, unless $G\cong K_{k+1}$.
\end{theorem}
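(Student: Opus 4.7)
The strategy is to use the triangle $T = xyz$ as a parity switcher. It supplies an $(x,y)$-connection of length $1$ (the edge $xy$) and one of length $2$ (the path $x{-}z{-}y$), so any admissible family of $(x,y)$-paths that avoids $z$ and the edge $xy$ can be concatenated with these two short links to yield many cycles of consecutive lengths. Concretely, $s$ admissible $(x,y)$-paths of common difference $1$ in $G - z - xy$ yield $s+1$ consecutive-length cycles, while $s$ such paths of common difference $2$ interleave between the two parity classes and yield $2s$ consecutive-length cycles. Hence it suffices to locate $k-1$ admissible paths at common difference $1$, or $\lceil k/2 \rceil$ at common difference $2$, inside $G - z - xy$.

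To produce these paths I would apply Theorem~\ref{thm: k adm paths 2} to $H := G - z - xy$ with endpoints $x,y$, taking as the third exceptional vertex a vertex of $N(z) \setminus \{x,y\}$ whose degree in $H$ drops below $k$. Since $\delta(G) \geq k$, every other vertex of $H \setminus \{x,y\}$ has degree at least $k$ in $H$, so Theorem~\ref{thm: k adm paths 2} with parameter $k-2$ delivers $k-2$ admissible $(x,y)$-paths in $H$. Combined with the two triangle-links, the common-difference-$2$ case already gives $2k-4 \geq k$ consecutive-length cycles when $k \geq 4$, while the common-difference-$1$ case needs one extra cycle, obtained either by exploiting a vertex of degree strictly greater than $k$ or by switching the role of $z$ within the triangle and rerunning the argument. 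The small cases $k \in \{2,3\}$ are dispatched directly: for $k=2$ any vertex outside the triangle produces a $4$-cycle, and for $k=3$ a brief inspection yields a $4$-cycle together with a $5$-cycle using the triangle.

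I expect the main obstacle to be two-fold. First, Theorem~\ref{thm: k adm paths 2} requires $H + xy = G - z$ to be $2$-connected; when this fails, one must pass to the block decomposition of $G - z$ and run the argument inside an end-block, using that the $2$-connectivity of $G$ forces $z$ to have neighbours in every block of $G-z$. Second, one can only absorb a single low-degree vertex as the exceptional slot of Theorem~\ref{thm: k adm paths 2}; if $z$ has several neighbours of degree exactly $k$ in $G - z - xy$, the argument must be refined, for instance by re-selecting the triangle vertex $z$ among $\{x,y,z\}$ so as to minimise the number of low-degree neighbours, or by iterating to produce one extra admissible path. A careful case analysis should show that any graph in which all these obstructions bite simultaneously must be $K_{k+1}$ itself, which genuinely contains only the $k-1$ consecutive-length cycles $3, 4, \ldots, k+1$ and is therefore exactly the exceptional family in the statement.
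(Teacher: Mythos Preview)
The paper does not contain a proof of this theorem; it is quoted from \cite{GHM21} and used as a black box throughout. So there is no in-paper proof to compare your proposal against.

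On the merits of your sketch: the triangle-as-parity-switcher idea and the counts are correct, and in fact Theorem~\ref{thm: k adm paths} (no extra exceptional slot needed) already yields $k-2$ admissible $(x,y)$-paths in $H=G-z-xy$, since every vertex of $H\setminus\{x,y\}$ has degree at least $k-1$ there. This settles the common-difference-$2$ outcome for $k\geqslant 4$, \emph{provided} $G-z$ is $2$-connected. But the two obstructions you flag are genuine and are not disposed of by what you wrote. First, in the common-difference-$1$ case you are one cycle short, and upgrading to $k-1$ paths via Theorem~\ref{thm: k adm paths 2} fails because \emph{every} neighbour of $z$ that has $G$-degree exactly $k$ drops to degree $k-1$ in $H$, and there may be arbitrarily many such vertices --- the single exceptional slot cannot absorb them. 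Rotating the role of $z$ among $\{x,y,z\}$ does not obviously help, since all three triangle vertices can simultaneously have many tight neighbours. Second, when $G-z$ is not $2$-connected, passing to an end-block costs a further degree at that block's cut-vertex, compounding the first problem. Your closing sentence, that ``any graph in which all these obstructions bite simultaneously must be $K_{k+1}$'', is precisely where the real work lies, and it is far from clear without substantial additional structural argument; this is not a routine case analysis.
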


\begin{lemma}\label{LeGxy2-connected}
    Let $G$ be a graph with $x,y\in V(G)$ such that $G-\{x,y\}$ is connected and every end block of $G-\{x,y\}$ has an inner-vertex adjacent to either $x$ or $y$. If there exist two disjoint edges from $\{x,y\}$ to $V(G)\backslash\{x,y\}$, then $G+xy$ is $2$-connected.
\end{lemma}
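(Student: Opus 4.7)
The plan is to argue by contradiction that $H:=G+xy$ has no cut-vertex. Connectedness of $H$ follows because $G-\{x,y\}$ is connected and, by the two-disjoint-edges hypothesis, both $x$ and $y$ have a neighbor in $V(G)\setminus\{x,y\}$; in particular $|V(H)|\geqslant 4$. Suppose $H$ has a cut-vertex $w$. The case $w\in\{x,y\}$ is immediate: for $w=x$, the edge $yv$ still joins $y$ to the connected graph $G-\{x,y\}$ inside $H-x=G-x$, so $G-x$ is connected; the case $w=y$ is symmetric.

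Assume therefore that $w\notin\{x,y\}$. Since the edge $xy$ of $H$ keeps $x$ and $y$ together in $H-w=(G-w)+xy$, the disconnectedness of $H-w$ forces $G-w$ to have a component $D$ disjoint from $\{x,y\}$. Any edge from $D$ to $x$ or $y$ would survive the deletion of $w$, so \emph{no vertex of $D$ is adjacent to $x$ or $y$ in $G$}. I now split on whether $G-\{x,y\}-w$ is connected. If it is, then $V(G)\setminus\{x,y,w\}=D$, so the only vertex of $V(G)\setminus\{x,y\}$ that can be adjacent to $x$ or $y$ is $w$ itself, contradicting the existence of two disjoint edges $xu$ and $yv$ with $u\neq v$.

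In the remaining subcase, $w$ is a cut-vertex of $G-\{x,y\}$, and the goal is to exhibit an end block $B$ of $G-\{x,y\}$ all of whose inner-vertices lie in $D$; since no vertex of $D$ is adjacent to $x$ or $y$, this contradicts the hypothesis on end blocks. Consider the induced subgraph $H':=G[D\cup\{w\}]$, which is connected because $D$ is a component of $G-w$. If $H'$ is $2$-connected, then $H'$ itself is a block of $G-\{x,y\}$, and since $H'-w=D$ is connected, the only cut-vertex of $G-\{x,y\}$ lying in $H'$ is $w$; hence $H'$ is an end block with inner-vertex set $D$. If $H'$ is not $2$-connected, then $w$ is still not a cut-vertex of $H'$, so $w$ lies in exactly one block of $H'$, whereas $H'$ has at least two end blocks; I pick an end block $B$ of $H'$ avoiding $w$, so that $V(B)\subseteq D$.

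The main obstacle I anticipate is the final bookkeeping step: verifying that such a $B$ is genuinely an end block of the larger graph $G-\{x,y\}$. The decisive observation is that vertices of $R:=V(G)\setminus(\{x,y,w\}\cup D)$ lie in a different component of $G-w$ and therefore have no neighbor in $D$; hence the $R$-side attaches to $D\cup\{w\}$ only through $w$. This implies that the blocks of $G-\{x,y\}$ contained in $D\cup\{w\}$ coincide with the blocks of $H'$, and that any vertex $v\in V(B)\setminus\{c\}$, where $c$ is the cut-vertex of $H'$ attached to $B$, is not a cut-vertex of $G-\{x,y\}$ (because $H'-v$ is connected and still attached to the $R$-side through $w$), while $c$ itself is a cut-vertex of $G-\{x,y\}$. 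Thus $B$ is an end block of $G-\{x,y\}$ with inner-vertices in $D$, yielding the required contradiction and completing the proof that $H$ is $2$-connected.
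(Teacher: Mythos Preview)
Your proof is correct and follows the same approach as the paper's---assume a cut-vertex $w$, rule out $w\in\{x,y\}$, then for $w\notin\{x,y\}$ locate a component $D$ of $G-w$ disjoint from $\{x,y\}$ and produce an end block of $G-\{x,y\}$ whose inner-vertices lie in $D$---only you spell out the block-tree bookkeeping that the paper leaves implicit. One trivial edge case slips through your case split: if $|D|=1$ then $H'\cong K_2$ is neither $2$-connected nor has two end blocks, but then $H'$ itself is already an end block of $G-\{x,y\}$ with its single inner-vertex in $D$, so the contradiction is immediate.
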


\begin{proof}
    Clearly, $G+xy$ is connected. Assume that $G+xy$ is not $2$-connected. Then $G+xy$ has a cut-vertex $z$ in $\{x,y\}$ or in $V(G)\backslash \{x,y\}$. Since $G-\{x,y\}$ is connected and there exist two disjoint edges from $\{x,y\}$ to $V(G)\backslash\{x,y\}$, we get that $z$ is in $V(G)\backslash\{x,y\}$ and it is also a cut-vertex of $G-\{x,y\}$, a contradiction to the fact that any end block of $G-\{x,y\}$ has an inner-vertex adjacent to $x$ or $y$.
\end{proof}

   The following lemma states that containing a subgraph $F \in \mathcal{F}$, for example a 4-cycle, is beneficial for finding admissible cycles. 

\begin{lemma}\label{lemma: 2-con-bi-H}
    Let $G$ be a $2$-connected graph with minimum degree at least $k\geqslant 3$ that contains $F$ which is isomorphic to a graph in $\mathcal{F}$. Let $(S,T)$ be the bipartition of $V(F)$ and set $s=|S|$, $t=|T|$, $r=\min\{s,t\}$. If each vertex in $V(G)\backslash V(F)$ has at most $r-1$ neighbors in $F$, then $G$ contains $k$ admissible cycles, unless $G\cong K_{k+1}$ or $K_{k,n-k}$.
\end{lemma}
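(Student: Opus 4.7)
The plan is to use Lemmas~\ref{lemma: concatenating paths} and~\ref{lemma: concatenating paths X} as the engine: feed them a rich internal progression of path-lengths coming from $F$ via Lemma~\ref{lemma: s-con-odd-paths}, together with an external progression of lengths supplied by $2$-connectivity, the minimum-degree hypothesis, and the admissible-path theorems (Theorems~\ref{thm: k adm paths} and~\ref{thm: k adm paths 2}). Set $U:=V(G)\setminus V(F)$.

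First I would dispose of the case $U=\emptyset$. Here $G$ is $F$ plus possibly some intra-part edges. If any intra-part edge is present, it forms a triangle together with $F$, and Theorem~\ref{thm: 2-con-K3-k-cycles} gives $k$ cycles of consecutive lengths unless $G\cong K_{k+1}$. Otherwise $G$ is bipartite with parts $(S,T)$; the bound $\delta(G)\geqslant k$ forces $r\geqslant k$. When $r=k$, every vertex on the larger side must be adjacent to every vertex on the smaller side, so $G\cong K_{k,n-k}$. When $r\geqslant k+1$, pairing the same-part paths of lengths $2,4,\ldots,2r-2$ from Lemma~\ref{lemma: s-con-odd-paths}(1) yields admissible even cycles of lengths $6,8,\ldots,4r-6$, producing $2r-5\geqslant k$ admissible cycles.

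Now assume $U\neq\emptyset$. Let $H$ be a component of $G[U]$ and put $X:=N_{V(F)}(V(H))$; by $2$-connectivity of $G$ we have $|X|\geqslant 2$. Each $u\in V(H)$ has at most $r-1$ neighbours in $F$, so $d_{V(H)}(u)\geqslant k-r+1$. Choose $x,y\in X$ so that $G[V(H)\cup\{x,y\}]+xy$ is $2$-connected (verified via Lemma~\ref{LeGxy2-connected}, after possibly enlarging $H$ to cover all end-blocks). Theorem~\ref{thm: k adm paths} applied with parameter $k-r$ then delivers $k-r$ admissible $(x,y)$-paths of length $\geqslant 2$ through $V(H)$; this is the external progression $B$. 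On the internal side, Lemma~\ref{lemma: s-con-odd-paths} furnishes a progression $A$ between $x$ and $y$ with common difference $2$, of size $r-1$ if $x,y$ lie in the same part (plus the bonus $2r$ if $s>r$ or $t>r$) and of size $r$ if they lie in different parts. Feeding $A,B$ into Lemma~\ref{lemma: concatenating paths X} yields at least $(r-1)+(k-r)-1=k-2$ admissible cycles in the worst case, and the extra two cycles needed to reach $k$ come either from the bonus $2r$-length above, or from one of $A,B$ having common difference $1$ (which boosts the Lemma~\ref{lemma: concatenating paths X} output to $2s+t-2$ or $s+2t-2$).

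The main obstacle will be the boundary regime $r=2$, where $F\cong C_4$ or $F\cong K_{2,t}^-$ and the internal progression between two same-part vertices degenerates to a single length. In this case nearly all of the $k$ cycles must come externally, so I would widen the set $X$ to at least three vertices spanning both parts (which becomes available once the outside-degree restriction $r-1=1$ is saturated), pair several internal $(x,y)$-progressions with a common external path, and invoke Theorem~\ref{thm: k adm paths 2} in place of Theorem~\ref{thm: k adm paths} to recover an additional admissible external path despite a possibly low-degree anchor vertex. The combinatorics of Lemmas~\ref{lemma: concatenating paths} and~\ref{lemma: concatenating paths X} should then deliver at least $k$ admissible cycles, leaving $G\cong K_{k+1}$ or $G\cong K_{k,n-k}$ as the only exceptions.
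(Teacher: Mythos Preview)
Your overall strategy---an internal progression of path-lengths inside $F$ from Lemma~\ref{lemma: s-con-odd-paths}, an external progression through a component of $G-V(F)$ from Theorem~\ref{thm: k adm paths}, then Lemma~\ref{lemma: concatenating paths} or~\ref{lemma: concatenating paths X}---is exactly the paper's. But the arithmetic in your main step does not close, and the reason is a genuine missing idea.

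When you attach two \emph{specific} vertices $x,y\in V(F)$ to the component $H$, a vertex $u\in V(H)$ whose $r-1$ neighbours in $F$ all miss $\{x,y\}$ has degree only $k-r+1$ in $G[V(H)\cup\{x,y\}]$, so Theorem~\ref{thm: k adm paths} yields only $k-r$ external admissible paths. With $r$ internal odd paths (different parts) this gives $k-1$ cycles; with $r-1$ internal even paths (same part, $s=t=r$) it gives $k-2$. Your proposed rescues are not always available: the ``bonus $2r$'' path needs $s\neq t$, the internal progression $A$ always has common difference $2$ since $F$ is bipartite, and nothing forces $B$ to have common difference $1$.

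The paper recovers the missing path(s) by \emph{contracting} subsets of $V(F)$ into dummy vertices rather than fixing $x,y$. In the case $N_S(Q),N_T(Q)\neq\emptyset$ one collapses all of $S$ to $x^*$ and all of $T$ (or $T\setminus\{y_1\}$ when $F\cong K_{s,t}^-$) to $y^*$; now every $u$ with an $F$-neighbour gains at least one dummy edge, the minimum degree rises to $k-r+2$, and one obtains $k-r+1$ external paths, which with the $r$ odd internal paths gives $k$ cycles. The subtler case $N_S(Q)=\emptyset$ requires a further trick: partition $T=T_1\cup T_2$, add dummies $y_1^*,y_2^*$, and connect $u$ to $y_i^*$ if $u$ has a neighbour in $T_i$ \emph{or} if $d_T(u)\geqslant 2$. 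That last clause means a vertex with two $T$-neighbours receives both dummy edges, pushing the minimum degree to $k-r+3$ and the external count to $k-r+2$, exactly enough to pair with the $r-1$ same-part internal paths. Your sketch does not contain this device, and without it the count is short.

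Two further points: the paper first deduces that $G$ is $3$-connected (Lemma~\ref{lemma: high connectivity no k adm cycles}) and triangle-free (Theorem~\ref{thm: 2-con-K3-k-cycles}), and takes $F$ of \emph{minimum} order; these facts are used repeatedly (to guarantee disjoint attaching edges, to reduce $K_{2,t}^-$ to $K_{2,t-1}$, and to verify the hypotheses of Lemma~\ref{LeGxy2-connected}). Your $r=2$ paragraph remains a wish list rather than an argument; the paper handles it by the same contraction scheme, using $3$-connectivity to ensure $N_T(Q)\neq\emptyset$ and the minimality of $F$ to discard $K_{2,t}^-$.
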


\begin{proof}
   We prove by contradiction. By Lemma \ref{lemma: high connectivity no k adm cycles} and $k\geqslant 3$, we get that $G$ is 3-connected. If $G$ contains a triangle, then by Theorem \ref{thm: 2-con-K3-k-cycles}, $G$ contains $k$ cycles of consecutive lengths or $G\cong K_{k+1}$, a contradiction. So $G$ contains no triangle. Let $F\in \mathcal{F}$ be a subgraph of $G$ with minimum order satisfying the assumptions of the lemma. Recall that $F\cong K_{s,t}$ or $K_{s,t}^-$, implying that $F$ is an induced subgraph of $G$. 
   
   If $V(G)=V(F)$, then by the minimum degree condition, we have that $r\geqslant k$ and $G=F$. If $r\geqslant k+1$, then any edge $xy\in E(F)$ together with $(x,y)$-paths of consecutive odd lengths from 3 to $2r-1$ (by Lemma \ref{lemma: s-con-odd-paths}), form $r-1\geq k$ admissible cycles in $G$, a contradiction. So $r=k$ and $G\cong K_{k,n-k}$ (by the minimum degree condition), a contradiction. So we assume that $V(G)\backslash V(F)\neq\emptyset$. Let $Q$ be a component of $G-V(F)$. By the 2-connectivity of $G$, there are two disjoint edges from $Q$ to $F$. 
    
\begin{claim}\label{claim: at least 2 neighbors in H}
    $r\geqslant 3$.
\end{claim}

\begin{proof}
    Assume the opposite, say, $s=2$. Then every vertex in $V(G)\backslash V(F)$ has at most one neighbor in $F$. If $F\cong K_{2,t}^-$ for some $t\geqslant 3$, say $\{x_1,x_2\}=S$, $y_1\in T$ and $x_1y_1\notin E(F)$, then $x_2y_1\in E(F)$. It follows that $y_1$ has exactly one neighbor in $F$ and we can replace $F$ with $F-y_1$ (which is isomorphic to $K_{2,t-1}$), a contradiction to the minimality of $V(F)$. So $F\cong K_{2,t}$ for some $t\geqslant 2$. Recall that $G$ is 3-connected, implying that $N_T(Q)\neq\emptyset$.

    First assume that $N_S(Q)\neq \emptyset$. Let $G_1$ be the graph obtained from $Q$ by adding two vertices $x^*$ and $y^*$, and for each vertex $v\in V(Q)$, 
    \begin{itemize}
	    \item[(1)] adding an edge $vx^*$ if $v$ has a neighbor in $S$, and
        \item[(2)] adding an edge $vy^*$ if $v$ has a neighbor in $T$.
    \end{itemize}
    Since $G$ is triangle-free and 2-connected, there are two disjoint edges from $Q$ to $S$ and $T$, respectively, and every end block of $Q$ has an inner-vertex adjacent to $S$ or $T$. It follows that $G_1$ has two disjoint edges from $Q$ to $\{x^*,y^*\}$ and every end block of $Q$ has an inner-vertex adjacent to $x^*$ or $y^*$. By Lemma \ref{LeGxy2-connected}, $G_1+x^*y^*$ is 2-connected. Note that every vertex in $G_1$ other than $x^*,y^*$ has degree at least $k$. By Theorem \ref{thm: k adm paths}, $G_1$ has $k-1$ admissible $(x^*,y^*)$-paths. It follows that $G$ has $k-1$ admissible $(S,T)$-paths with all internal vertices in $Q$. By Lemma~\ref{lemma: s-con-odd-paths}, $F$ has two $(x,y)$-paths of lengths 1 and 3 for any $x\in S$, $y\in T$. By Lemma \ref{lemma: concatenating paths}, $G$ has $k$ admissible cycles, a contradiction. 
    
    Now assume that $N_S(Q)=\emptyset$, i.e., $N_F(Q)\subseteq T$. It follows that $t\geqslant3$ since $G$ is 3-connected. Thus, $G$ has two disjoint edges from $Q$ to $T$, say $y_1,y_2$ are two end-vertices in $T$ of such two edges. Let $(T_1,T_2)$ be a bipartition of $T$ with $y_1\in T_1$, $y_2\in T_2$. Let $G_2$ be the graph obtained from $Q$ by adding two vertices $y_1^*$ and $y_2^*$, and for each vertex $v\in V(Q)$, 
    \begin{itemize}
	      \item[(1)] adding an edge $vy_1^*$ if $v$ has a neighbor in $T_1$, and
        \item[(2)] adding an edge $vy_2^*$ if $v$ has a neighbor in $T_2$.
    \end{itemize}
    Clearly, $G_2$ has two disjoint edges from $Q$ to $\{y_1^*,y_2^*\}$, and each end block of $Q$ has an inner-vertex adjacent to $y_1^*$ or $y_2^*$. By Lemma \ref{LeGxy2-connected}, $G_2+y_1^*y_2^*$ is 2-connected. Note that every vertex in $G_2$ other than $y_1^*,y_2^*$ has degree at least $k$. By Theorem \ref{thm: k adm paths}, $G_2$ has $k-1$ admissible $(y_1^*,y_2^*)$-paths, implying that $G$ has $k-1$ admissible $(T_1,T_2)$-paths with all internal vertices in $Q$. Recall that $t\geqslant3$, $F$ has two $(y_1,y_2)$-paths of lengths 2 and 4 for any $y_1\in T_1$ and $y_2\in T_2$. By Lemma \ref{lemma: concatenating paths}, $G$ has $k$ admissible cycles, a contradiction. 
\end{proof}

    Recall that $Q$ is adjacent to $S$ or $T$. Now we show that $Q$ is adjacent to only one of $S$, $T$.

\begin{claim}\label{ClNSQorNTQEmpty}
    Either $N_S(Q)=\emptyset$ or $N_T(Q)=\emptyset$.
\end{claim}

\begin{proof}
    Assume the opposite that $Q$ is adjacent to both $S$ and $T$ (see Figure \ref{Fig: STQG'} (i)). If $F\cong K_{s,t}$, then we define $G_1$ as in the proof of Claim \ref{claim: at least 2 neighbors in H}. Similarly to the analysis in the proof of Claim \ref{claim: at least 2 neighbors in H}, we have that $G_1+x^*y^*$ is 2-connected. Now every vertex of $G_1$ other than $x^*,y^*$ has degree at least $k-r+2$; implying that $G_1$ has $k-r+1$ admissible $(x^*,y^*)$-paths. It follows that $G$ has $k-r+1$ admissible $(S,T)$-paths with all internal vertices in $Q$. By Lemma~\ref{lemma: s-con-odd-paths}, $F$ has $(x,y)$-paths of consecutive odd lengths from 1 to $2r-1$ for any $x\in S$, $y\in T$. By Lemma~\ref{lemma: concatenating paths}, $G$ has $k$ admissible cycles, a contradiction. 
    
    So assume that $F\cong K_{s,t}^-$, say $x_1y_1\notin E(F)$, where $x_1\in S$ and $y_1\in T$. Recall that $G$ is 3-connected, implying that $N_F(Q)\backslash\{x_1,y_1\}\neq\emptyset$. We can assume w.l.o.g. that $N_T(Q)\backslash\{y_1\}\neq\emptyset$. Let $G_3$ be the graph obtained from $Q$ by adding two vertices $x^*$ and $y^*$, and for each vertex $v\in V(Q)$,
    \begin{itemize}
	    \item[(1)] adding an edge $vx^*$ if $v$ has a neighbor in $S$, and
        \item[(2)] adding an edge $vy^*$ if $v$ has a neighbor in $T\backslash\{y_1\}$. 
    \end{itemize}
    Since $N_S(Q)\neq\emptyset$, $N_T(Q)\backslash\{y_1\}\neq\emptyset$ and $G$ is triangle-free, we see that $Q$ has two disjoint edges from $Q$ to $S$ and $T\backslash\{y_1\}$, respectively. By $G$ being 3-connected, every end block of $Q$ has an inner-vertex adjacent to $V(F)\backslash\{y_1\}$. By Lemma \ref{LeGxy2-connected}, $G_3+x^*y^*$ is 2-connected.  Note that every vertex in $G_3$ other than $x^*,y^*$ has degree at least $\min \{k-1,k-r+2\}=k-r+2$ (since $r\geq 3$). By Theorem \ref{thm: k adm paths}, $G_3$ has $k-r+1$ admissible $(x^*,y^*)$-paths, implying that $G$ has $k-r+1$ admissible $(S,T\backslash\{y_1\})$-paths with all internal vertices in $Q$. By Lemma \ref{lemma: s-con-odd-paths}, $F$ has $(x,y)$-paths of consecutive odd lengths from 1 to $2s-1$ for any $x\in S$, $y\in T\backslash\{y_1\}$. By Lemma \ref{lemma: concatenating paths}, $G$ has $k$ admissible cycles, a contradiction. 
\end{proof}

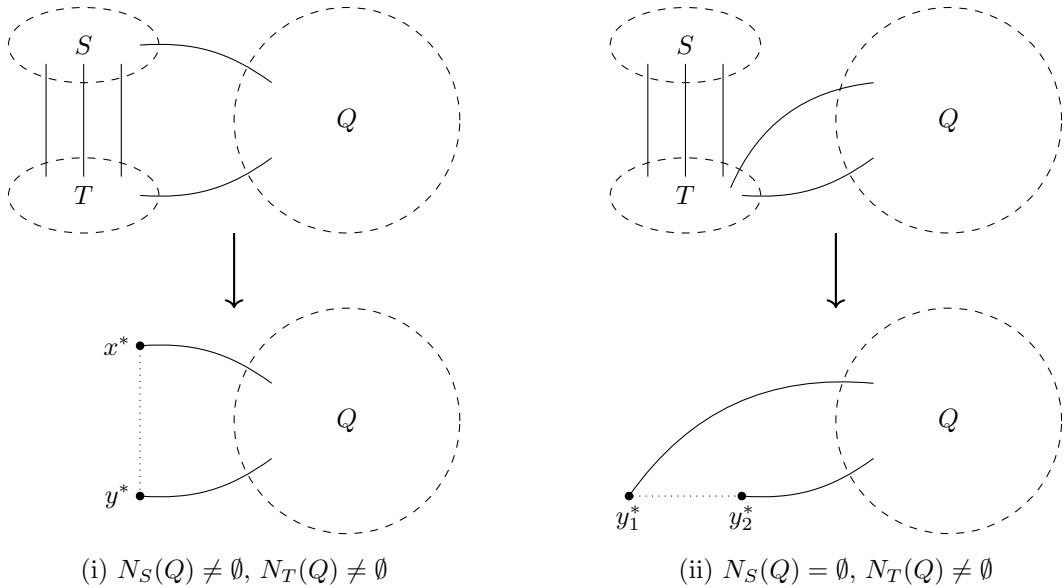
\begin{figure}[ht]
\centering
\begin{tikzpicture}[scale=0.5]

\begin{scope}[xshift=-8cm]
\draw[dashed] (-4,-2) {node {$T$}} ellipse [x radius=2, y radius=1];
\draw[dashed] (-4,2) {node {$S$}} ellipse [x radius=2, y radius=1];
\draw[dashed] (3,0) {node {$Q$}} circle (3);
\foreach \x in {-5,-4,-3} \draw (\x,-1.5)--(\x,1.5);
\draw (-2.5,-2) [bend right=20] to (1,-1);
\draw (-2.5,2) [bend left=20] to (1,1);
\end{scope}

\begin{scope}[xshift=8cm]
\draw[dashed] (-4,-2) {node {$T$}} ellipse [x radius=2, y radius=1];
\draw[dashed] (-4,2) {node {$S$}} ellipse [x radius=2, y radius=1];
\draw[dashed] (3,0) {node {$Q$}} circle (3);
\foreach \x in {-5,-4,-3} \draw (\x,-1.5)--(\x,1.5);
\draw (-2.5,-2) [bend right=20] to (1,-1);
\draw (-2.8,-1.8) [bend left=30] to (1,1);
\end{scope}

\begin{scope}[xshift=-8cm, yshift=-8cm]
\draw[fill=black] (-2.5,-2) {node[left] {$y^*$}} circle (0.1);
\draw[fill=black] (-2.5,2) {node[left] {$x^*$}} circle (0.1);
\draw[dashed] (3,0) {node {$Q$}} circle (3);
\draw (-2.5,-2) [bend right=20] to (1,-1);
\draw (-2.5,2) [bend left=20] to (1,1);
\draw[dotted] (-2.5,-2)--(-2.5,2);
\draw[->, thick] (0,5)--(0,3);
\node at (0,-4) {(i) $N_S(Q)\neq\emptyset$, $N_T(Q)\neq\emptyset$};
\end{scope}

\begin{scope}[xshift=8cm, yshift=-8cm]
\draw[fill=black] (-2.5,-2) {node[below] {$y_2^*$}} circle (0.1);
\draw[fill=black] (-5.5,-2) {node[below] {$y_1^*$}} circle (0.1);
\draw[dashed] (3,0) {node {$Q$}} circle (3);
\draw (-2.5,-2) [bend right=20] to (1,-1);
\draw (-5.5,-2) [bend left=30] to (1,1);
\draw[dotted] (-2.5,-2)--(-5.5,-2);
\draw[->, thick] (0,5)--(0,3);
\node at (0,-4) {(ii) $N_S(Q)=\emptyset$, $N_T(Q)\neq\emptyset$};
\end{scope}

\end{tikzpicture}
\caption{Constructions of $S$, $T$, $Q$ and $G_i$ for $i\in \{1,2,3,4\}$ in Lemma \ref{lemma: 2-con-bi-H}.}\label{Fig: STQG'}
\end{figure}

    By Claim \ref{ClNSQorNTQEmpty}, we can assume w.l.o.g. that $N_S(Q)=\emptyset$ (see Figure \ref{Fig: STQG'} (ii)). By $G$ being 2-connected, $G$ has two disjoint edges from $Q$ to $T$, say $y_1,y_2$ are the end-vertices of such two edges in $T$. Let $(T_1,T_2)$ be a bipartition of $T$ with $y_1\in T_1$, $y_2\in T_2$. 

    Let $G_4$ be the graph obtained from $Q$ by adding two vertices $y_1^*$ and $y_2^*$, and for each vertex $v\in V(Q)$, 
    \begin{itemize}
	    \item[(1)] adding an edge $vy_1^*$ if $d_{T_1}(v)\geq 1$ or $d_T(v)\geq 2$, and
        \item[(2)] adding an edge $vy_2^*$ if $d_{T_2}(v)\geq 1$ or $d_T(v)\geq 2$.
    \end{itemize}
    We remark that if $v\in V(Q)$ has at least two neighbors in $T$, then both $vy_1^*,vy_2^*\in E(G_4)$. Recall that $G$ has two disjoint edges from $Q$ to $T_1,T_2$, respectively, implying that $G_4$ has two disjoint edges from $Q$ to $\{y_1^*,y_2^*\}$. Also note that every end block of $Q$ has an inner-vertex adjacent to $T$ in $G$, implying that every end block of $Q$ has an inner-vertex adjacent to $y_1^*$ or $y_2^*$ in $G_4$. By Lemma \ref{LeGxy2-connected}, $G_4+y_1^*y_2^*$ is 2-connected. Note that every vertex in $G_4$ other than $y_1^*,y_2^*$ has degree at least $\min\{k,k-(r-1)+2\}=k-r+3$. By Theorem \ref{thm: k adm paths}, $G_4$ has $k-r+2$ admissible $(y_1^*,y_2^*)$-paths, implying that $G$ has $k-r+2$ admissible $T$-paths with all internal vertices in $Q$. By Lemma \ref{lemma: s-con-odd-paths}, $F$ has $(y'_1,y'_2)$-paths of consecutive even lengths from 2 to $2r-2$ for each two distinct vertices $y'_1,y'_2\in T$. By Lemma \ref{lemma: concatenating paths X}, $G$ has $k$ admissible cycles, a contradiction. 
\end{proof}

\begin{lemma}\label{lemma: 3-con-grith4}
    Let $G$ be a $2$-connected graph with minimum degree at least $k\geqslant 3$. If $G$ contains a $4$-cycle, then $G$ contains $k$ admissible cycles, except that $G\cong K_{k+1}$ or $K_{k,n-k}$.
\end{lemma}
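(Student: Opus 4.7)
The plan is to reduce Lemma~\ref{lemma: 3-con-grith4} to an application of Lemma~\ref{lemma: 2-con-bi-H} by locating a well-chosen $F \in \mathcal{F}$ inside $G$. First, I would dispose of the triangle case via Theorem~\ref{thm: 2-con-K3-k-cycles}: if $G$ contains a triangle, then either $G \cong K_{k+1}$ (an allowed exception) or $G$ has $k$ cycles of consecutive lengths, which are in particular $k$ admissible cycles. So henceforth assume $G$ is triangle-free; the hypothesized $4$-cycle provides $K_{2,2} \in \mathcal{F}$ as a subgraph of $G$.

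Next, I would pick $F \in \mathcal{F}$ inside $G$ maximizing $r = \min(|S|, |T|)$, and subject to that, maximizing $|V(F)|$. Write $|S| = s \leq t = |T|$, so $r = s \geq 2$. The heart of the proof is to verify $d_F(v) \leq r - 1$ for every $v \in V(G) \setminus V(F)$, so that Lemma~\ref{lemma: 2-con-bi-H} yields the conclusion. In the generic case $F \cong K_{s,t}$, triangle-freeness forces $S$ and $T$ to be independent in $G$ and prevents any $v \notin V(F)$ from having neighbors in both $S$ and $T$ simultaneously; hence $d_F(v) = \max(d_S(v), d_T(v))$. The maximality of $(r, |V(F)|)$ then rules out $d_T(v) \geq s$ in three subcases: if $d_T(v) \geq s + 1$, a subset $T' \subseteq N_T(v)$ of size $s + 1$ yields $(S \cup \{v\}, T') \cong K_{s+1, s+1}$, contradicting max~$r$; if $d_T(v) = s$ with $t > s$, then for any $y \in T \setminus N_T(v)$ the bipartite subgraph $(S \cup \{v\}, N_T(v) \cup \{y\}) \cong K_{s+1, s+1}^- \in \mathcal{F}$ again contradicts max~$r$; and if $d_T(v) = s = t$, then $(S \cup \{v\}, T) \cong K_{s+1, s}$ has order $2s + 1 > 2s = |V(F)|$, contradicting max~$|V(F)|$. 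A symmetric argument using $K_{s, t+1}$ and $K_{s, t+1}^-$ bounds $d_S(v) \leq s - 1$.

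The main obstacle is the corner sub-case $F \cong K_{s,t}^-$, which arises only when $G$ has no $K_{s,t}$ of the same order. Letting $x_0 y_0$ denote the missing edge, triangle-freeness no longer forbids a vertex $v \notin V(F)$ from being adjacent to both $x_0$ and $y_0$ (since $x_0 y_0 \notin E(G)$), producing $d_F(v) = 2$. This still satisfies $d_F(v) \leq r - 1$ whenever $s \geq 3$, but the bound fails precisely when $s = r = 2$. I would handle this delicate corner either by refining the choice of $F$---preferring $K_{s,t}$ over $K_{s,t}^-$ of equal order whenever possible and showing that any remaining such $v$, combined with the bridge $x_0 v y_0$, generates a strictly larger member of $\mathcal{F}$---or by constructing $k$ admissible cycles directly, using the $(x_0, y_0)$-path family in $F$ from Lemma~\ref{lemma: s-con-odd-paths} concatenated with the $2$-path $x_0 v y_0$ and augmented by cycles through neighbors of $v$ outside $V(F)$ that the minimum-degree condition $\delta(G) \geq k$ guarantees.
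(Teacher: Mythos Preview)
Your overall strategy---dispose of triangles via Theorem~\ref{thm: 2-con-K3-k-cycles}, then feed a maximal $F\in\mathcal{F}$ into Lemma~\ref{lemma: 2-con-bi-H}---is exactly the paper's. The gap is the corner case you yourself flag: your maximization can land on $F\cong K_{2,t}^-$, and neither proposed repair works. For fix~(a), take $F=K_{2,t}^-$ with $S=\{x_0,x_1\}$, $T=\{y_0,\dots,y_{t-1}\}$, $x_0y_0$ missing, and $v$ adjacent to $x_0,y_0$; triangle-freeness forces $N_F(v)=\{x_0,y_0\}$ exactly, and one checks that $G[V(F)\cup\{v\}]$ contains no member of $\mathcal{F}$ with $r\ge 3$ (only $x_0,x_1$ have degree~$\ge 3$ there) and none with more than $t+2$ vertices, so no ``strictly larger member of $\mathcal{F}$'' materialises. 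For fix~(b), Lemma~\ref{lemma: s-con-odd-paths} with $r=2$ and $x_0y_0\notin E(F)$ gives only a single $(x_0,y_0)$-path of length~$3$; concatenating with $x_0vy_0$ yields one $5$-cycle, not $k$ admissible cycles.

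The paper avoids the issue by a different extremal choice. Let $s$ be the largest integer with $K_{s,s}\subseteq G$ (so $s\ge 2$ and there is no $K_{s+1,s+1}$). Then:
\begin{itemize}
    \item If $G$ contains $K_{s+1,s+1}^-$, take that as $F$. Here $r=s+1\ge 3$; the absence of $K_{s+1,s+1}$ gives $d_S(v),d_T(v)\le s$, and if $v$ meets both sides then triangle-freeness forces $d_F(v)=2\le s$. Hence $d_F(v)\le s=r-1$.
    \item Otherwise, if $G$ contains $K_{s,s+1}$, take $F=K_{s,t}$ with $t$ maximal. Maximality of $t$ gives $d_S(v)\le s-1$, and $d_T(v)\ge s$ would produce a $K_{s+1,s+1}^-$, already excluded.
    \item Otherwise take $F=K_{s,s}$; now $d_S(v),d_T(v)\ge s$ would each give a $K_{s,s+1}$, excluded.
\end{itemize}
In every branch $F$ is either a genuine $K_{s,t}$ or has $r\ge 3$, so the troublesome $K_{2,t}^-$ never appears and Lemma~\ref{lemma: 2-con-bi-H} applies directly.
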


\begin{proof}
    If $G$ contains a triangle, then, by Theorem \ref{thm: 2-con-K3-k-cycles}, $G$ contains $k$ admissible cycles or $G\cong K_{k+1}$, as desired. So assume that $G$ does not contain a triangle. Assume also that $G$ does not contain $k$ admissible cycles. Let $s$ be the maximum integer such that $G$ contains a $K_{s,s}$. Then $s\geqslant 2$ and $G$ does not contain a $K_{s+1,s+1}$. 
    
    Suppose first that $G$ contains a subgraph $F \cong K_{s+1,s+1}^-$ with bipartition $(S,T)$. By the absence of $K_{s+1,s+1}$, every vertex in $V(G)\backslash V(F)$ has at most $s$ neighbors in $S$ and at most $s$ neighbors in $T$. If a vertex in $V(G)\backslash V(F)$ has neighbors in both $S$ and $T$, then it has exactly $2\leqslant s$ neighbors in $V(F)$ since $G$ is triangle-free. This implies that every vertex in $V(G)\backslash V(F)$ has at most $s$ neighbors in $V(F)$. By Lemma \ref{lemma: 2-con-bi-H}, $G$ contains $k$ admissible cycles or $G$ is isomorphic to $K_{k+1}$ or $K_{k,n-k}$, a contradiction. So $G$ does not contain a $K_{s+1,s+1}^-$. 
    
    Suppose now that $G$ contains a $K_{s,s+1}$. Let $F \cong K_{s,t}$ be a subgraph of $G$ with bipartition $(S,T)$ having maximum $t=|T|$, where $t\geqslant s+1$. By $G$ being triangle-free, every vertex in $V(G)\backslash V(F)$ cannot have neighbors in both $S$ and $T$. By the maximality of $t$, every vertex in $V(G)\backslash V(F)$ has at most $s-1$ neighbors in $S$. If a vertex in $V(G)\backslash V(F)$ has $s$ neighbors in $T$, then $G$ contains a $K_{s+1,s+1}^-$, a contradiction. So every vertex in $V(G)\backslash V(F)$ has at most $s-1$ neighbors in $T$. It follows that every vertex in $V(G)\backslash V(F)$ has at most $s-1$ neighbors in $V(F)$. By Lemma \ref{lemma: 2-con-bi-H}, $G$ contains $k$ admissible cycles or $G$ is isomorphic to $K_{k+1}$ or $K_{k,n-k}$, a contradiction. So $G$ does not contain a $K_{s,s+1}$. 
    
    Finally, let $F \cong K_{s,s}$ be a subgraph of $G$ with bipartition $(S,T)$. By $G$ being triangle-free, every vertex in $V(G)\backslash V(F)$ cannot have neighbors in both $S$ and $T$. If a vertex in $V(G)\backslash V(F)$ has $s$ neighbors in $S$ (or in $T$), then $G$ contains a $K_{s,s+1}$, a contradiction. This implies that every vertex in $V(G)\backslash V(F)$ has at most $s-1$ neighbors in $V(F)$. By Lemma \ref{lemma: 2-con-bi-H}, $G$ contains $k$ admissible cycles or $G$ is isomorphic to $K_{k+1}$ or $K_{k,n-k}$, a contradiction.
\end{proof}

By Theorem~\ref{thm: 2-con-K3-k-cycles} and Lemma~\ref{lemma: 2-con-bi-H} in order to prove Theorem~\ref{thm: k adm cycles min degree k}, we may assume that $G$ contains neither a triangle nor a 4-cycle. Moreover, by Lemma~\ref{lemma: high connectivity no k adm cycles} the connectivity of $G$ is at least $(k+2)/2 \geq 3$. Therefore, the following results imply that for $k\geqslant 6$ Theorem~\ref{thm: k adm cycles min degree k} holds.

\begin{theorem}[Gao, Huo, Liu and Ma \cite{G+22}]\label{thm: 3-con-bi}
    Let $k\geqslant 5$ be an integer and $G$ a $3$-connected bipartite graph of minimum degree at least $k$. If $G$ contains no $4$-cycles, then $G$ contains $k$ admissible cycles.
\end{theorem}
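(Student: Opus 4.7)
Since $G$ is bipartite, every cycle has even length, so an admissible family necessarily has common difference $2$: the goal is to produce $k$ cycles of consecutive even lengths. My plan is a two-step attack, first using Theorem~\ref{thm: k adm paths} to obtain $k-1$ admissible cycles through a fixed edge, and then using $3$-connectivity to extract one more cycle extending the arithmetic progression.

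Fix any edge $xy \in E(G)$. Since $G$ is $3$-connected, $(G - xy) + xy = G$ is $2$-connected, and every vertex of $G - xy$ distinct from $x, y$ has degree at least $k = (k-1)+1$. Applying Theorem~\ref{thm: k adm paths} to $G - xy$ with parameter $k-1$ produces $k-1$ admissible $(x,y)$-paths. Because $x$ and $y$ lie in opposite bipartition classes, every $(x,y)$-path has odd length, so these admissible paths form an arithmetic progression of common difference $2$, with lengths $\ell,\ \ell+2,\ \ldots,\ \ell+2(k-2)$ for some odd $\ell \geqslant 3$. Adjoining the edge $xy$ to each converts them into $k-1$ cycles of consecutive even lengths $\ell+1,\ \ell+3,\ \ldots,\ \ell+2k-3$. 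The forbidden $4$-cycle hypothesis forces the smallest, $\ell + 1$, to be at least $6$, so in fact $\ell \geqslant 5$.

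The remaining task is to produce the $k$-th admissible cycle, which must have length $\ell - 1$ or $\ell + 2k - 1$. (Note that when $\ell = 5$ the former equals $4$, so in that case we are forced to extend upward to $\ell + 2k - 1$.) My plan is to exploit $3$-connectivity via Menger's theorem: there exist two internally disjoint $(x,y)$-paths in $G$ beside the edge $xy$, each of odd length at least $3$. Select such an auxiliary $(x,y)$-path $Q$ and concatenate it with one of the admissible $(x,y)$-paths $R$ produced above, chosen to be internally disjoint from $Q$ via a rerouting argument; the resulting even-length cycle has length $|Q| + |R|$, and by tuning the choice of $Q$ (shortest versus longest among the two Menger paths) together with $R$ (smallest versus largest index in the admissible family) one realizes $\ell - 1$ or $\ell + 2k - 1$ as required.

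The main obstacle will be the rerouting step: $Q$ may initially share internal vertices with every admissible $(x,y)$-path, and one must use the full $3$-connectivity combined with the girth-$6$ bound (coming from ``no $4$-cycles and bipartite'') to reroute either $Q$ or some $R$ to restore disjointness while preserving the length count. Here the hypothesis $k \geqslant 5$ is essential, providing both a wide enough admissible family to choose from and large enough minimum degree to reroute freely. If the direct concatenation approach resists, a backup is to apply Theorem~\ref{thm: k adm paths 2} inside $G - v$ for a carefully chosen vertex $v$: the $3$-connectivity of $G$ keeps $G - v$ being $2$-connected, and the tolerance for one extra low-degree vertex in Theorem~\ref{thm: k adm paths 2} absorbs the degree drop at a neighbor of $v$, yielding the missing admissible path and hence the $k$-th cycle.
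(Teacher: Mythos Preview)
This theorem is not proved in the present paper; it is quoted from \cite{G+22} as a black box and used as an ingredient in the proof of Theorem~\ref{thm: k adm cycles min degree k}. So there is no in-paper proof to compare your proposal against.

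Regarding your proposal itself, there is a genuine gap. The first part is fine: applying Theorem~\ref{thm: k adm paths} to $G-xy$ correctly yields $k-1$ cycles of consecutive even lengths through $xy$. But the entire content of the theorem lies in producing the $k$-th cycle, and here your argument breaks down. Your ``rerouting'' step is a hope, not a proof: Theorem~\ref{thm: k adm paths} gives you $k-1$ admissible $(x,y)$-paths with no control whatsoever over where they sit in $G$, so there is no reason a Menger path $Q$ should be internally disjoint from any of them, and you offer no concrete mechanism to enforce this. Nor is it clear why ``tuning'' $|Q|+|R|$ would hit exactly $\ell-1$ or $\ell+2k-1$ rather than some length already in your list.

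Your backup via Theorem~\ref{thm: k adm paths 2} does not work either. Deleting a vertex $v$ drops the degree of \emph{every} neighbour of $v$, and since $\delta(G)\geqslant k\geqslant 5$ there are at least $k$ such neighbours, each now of degree only $k-1$ in $G-v$. Theorem~\ref{thm: k adm paths 2} tolerates exactly one extra low-degree vertex $z$ beyond $x,y$, not $k$ of them, so it cannot be invoked to recover $k-1$ admissible paths in $G-v$.

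The actual proof in \cite{G+22} is considerably more involved and does not follow the ``get $k-1$ cycles cheaply, then add one'' template; the point is precisely that this last cycle cannot be obtained by a local patch.
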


\begin{theorem}[Gao, Huo, Liu and Ma \cite{G+22}]\label{thm: 3-con-noK3-k-cycles}
    Let $G$ be a $3$-connected non-bipartite graph with minimum degree at least $k\geqslant 6$. If $G$ does not contain a triangle, then $G$ contains $k$ admissible cycles. Furthermore, if $k\geqslant 8$, then $G$ contains $k$ cycles of consecutive lengths; and if $k\in \{6, 7\}$, then $G$ contains cycles of all lengths modulo $k$.
\end{theorem}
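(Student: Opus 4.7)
I would first reduce to the case of girth at least $5$. Since $G$ is triangle-free we have $G \not\cong K_{k+1}$, and since $G$ is non-bipartite we have $G \not\cong K_{k,n-k}$; hence if $G$ contains a $4$-cycle, Lemma~\ref{lemma: 3-con-grith4} immediately yields $k$ admissible cycles, which already gives the first assertion. So I may assume that the girth of $G$ is at least $5$.

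Under that assumption I fix a longest cycle $C$ in $G$; by the usual $3$-connectivity and min degree arguments, $C$ is long. The main engine is then to produce many \emph{ears}---internally $C$-avoiding paths with both endpoints on $C$---whose lengths form a long arithmetic progression of common difference $1$ or $2$. Given such a family of ears, each ear with endpoints $x, y \in V(C)$ and length $\ell$ gives two cycles, of lengths $d_C(x,y) + \ell$ and $|C| - d_C(x,y) + \ell$. Applying Lemma~\ref{lemma: concatenating paths} or Lemma~\ref{lemma: concatenating paths X} converts the progression of ear lengths into a progression of cycle lengths. The ears themselves arise from the degree condition: either $V(G) = V(C)$ and we use chords (ears of length $1$, combined with short detours of length $2$ through a common neighbor), or $V(G)\backslash V(C) \neq \emptyset$ and we use a vertex $v$ outside $C$, whose $\geq k$ neighbors on $C$ are, by girth $\geq 5$, at pairwise cyclic distance at least $2$. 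Either way, a set of at least $k$ ears of consecutive lengths is obtained, establishing the first conclusion.

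For the second and third conclusions, non-bipartiteness enters via an additional \emph{odd} ear extracted from some odd cycle of $G$ (using $3$-connectivity to route it onto $C$); concatenating it through Lemma~\ref{lemma: concatenating paths X}(2) converts a progression of common difference $2$ into one of common difference $1$. For $k \geq 8$, the surplus absorbs the loss of a few cycles in the parity shift, yielding $k$ cycles of consecutive lengths. For $k \in \{6,7\}$ the counts are tighter, and one only obtains an arithmetic progression of common difference at most $2$ that is long enough to hit every residue class mod $k$, which is exactly what the statement claims.

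The principal obstacle is not the existence of cycles but arithmetic control of their length set. The girth-$5$ constraint prevents us from invoking Lemma~\ref{lemma: 3-con-grith4} and removes the flexibility a short $4$-cycle would have given; one must carefully tune the ear construction so that the resulting lengths form the desired progression with prescribed common difference. The split at $k = 8$ versus $k \in \{6,7\}$ reflects precisely whether the parity-shift cost from the odd ear can be absorbed while still leaving $k$ cycles of consecutive lengths, or only enough cycles to cover every residue modulo $k$.
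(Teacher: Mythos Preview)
This theorem is \emph{not} proved in the present paper; it is quoted from Gao, Huo, Liu and Ma \cite{G+22} and used as a black box. So there is no ``paper's own proof'' to compare against here. That said, your sketch has genuine gaps that would prevent it from becoming a proof.

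First, your reduction via Lemma~\ref{lemma: 3-con-grith4} only delivers $k$ \emph{admissible} cycles when $G$ contains a $4$-cycle. That suffices for the first assertion, but not for the ``furthermore'' part: for $k\geqslant 8$ you need $k$ cycles of \emph{consecutive} lengths, and for $k\in\{6,7\}$ you need every residue class modulo $k$. Admissible cycles with common difference $2$ do not give either of these conclusions in general. So you cannot assume girth $\geqslant 5$ for the stronger statements without a separate argument in the $4$-cycle case.

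Second, the ear construction is not sound as written. A vertex $v\in V(G)\backslash V(C)$ has degree $\geqslant k$ in $G$, but there is no reason its neighbors lie on $C$; in a girth-$5$ graph the components of $G-V(C)$ can be large. Even if $v$ did have many neighbors on $C$, pairing them gives ears of length $2$ with \emph{varying} endpoints, not ears of \emph{consecutive lengths}; the resulting cycle lengths depend on the cyclic positions of the endpoints, which you do not control. Likewise, when $V(G)=V(C)$, chords are all ears of length $1$, and your ``detours of length $2$ through a common neighbor'' do not exist once the girth is at least $5$.

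Third, Lemmas~\ref{lemma: concatenating paths} and~\ref{lemma: concatenating paths X} require that $H$ contain an $(x,y)$-path of \emph{each} prescribed length for \emph{every} pair $x\in X$, $y\in Y$ (respectively every pair in $X$). A single cycle $C$ does not have this property: the two $(x,y)$-arcs of $C$ have lengths that depend on $x,y$. So you cannot feed a family of ears with differing endpoints into these lemmas and read off a single arithmetic progression.

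The actual argument in \cite{G+22} does not proceed via a longest cycle and ears; it uses an extremal-path framework together with breadth-first-search layering and a careful parity analysis to manufacture paths of many consecutive lengths between two fixed vertices, which is substantially more delicate than the outline you propose.
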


To prove Theorem~\ref{thm: k adm cycles min degree k} in the remaining case $k \in \{4,5\}$, we use the following results.

\begin{lemma}\label{Le3ConnectedContract}
    Let $G$ be a $3$-connected graph, $C$ a cycle of $G$, and $Q$ a component of $G-V(C)$. If $G'$ is the graph obtained from $G$ by contracting all edges in $Q$, then $G'$ is $3$-connected.
\end{lemma}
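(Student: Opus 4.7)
The plan is to argue by contradiction: assume that $G'$ has a vertex cut $S' \subseteq V(G')$ with $|S'| \leq 2$, and let $q$ denote the vertex of $G'$ obtained by identifying all vertices of $V(Q)$. The argument splits into two cases according to whether or not $q \in S'$. If $q \notin S'$, then $S' \subseteq V(G) \setminus V(Q)$ with $|S'| \leq 2$, so $G - S'$ is connected by the $3$-connectivity of $G$. Since contracting the edges of the connected subgraph $Q$ preserves connectivity, $G' - S'$ is connected, a contradiction.

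The meat of the proof is the case $q \in S'$. Set $X = S' \setminus \{q\}$, so $|X| \leq 1$. I would observe that $G' - S'$ is exactly the subgraph of $G$ induced on $V(G) \setminus V(Q) \setminus X$, hence it suffices to prove that this induced subgraph is connected. Let $Q = Q_1, Q_2, \ldots, Q_m$ denote the components of $G - V(C)$. The set $V(C) \setminus X$ induces a connected subgraph, being a cycle minus at most one vertex. It remains to show that every component $Q'$ of the subgraph induced on $V(Q_i) \setminus X$, for any $i \geq 2$, has at least one neighbor in $V(C) \setminus X$. For this, I would apply $3$-connectivity of $G$ to the set $V(Q')$: since there are no edges between $V(Q_i)$ and $V(Q)$, we have $V(Q) \subseteq V(G) \setminus (V(Q') \cup N_G(V(Q')))$, which is therefore nonempty, so $N_G(V(Q')) \setminus V(Q')$ is a genuine vertex cut of $G$ and has size at least $3$. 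By the definition of $Q'$ as a component of $V(Q_i) \setminus X$, one also has $N_G(V(Q')) \setminus V(Q') \subseteq V(C) \cup X$; combining these two facts yields
\[
   |N_G(V(Q')) \cap (V(C) \setminus X)| \;\geq\; 3 - 2|X| \;\geq\; 1,
\]
which gives the claim. Consequently every such $Q'$ is joined to the connected set $V(C) \setminus X$, and the induced subgraph on $V(G) \setminus V(Q) \setminus X$ is connected, contradicting the choice of $S'$.

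The only subtle point I anticipate is that when $X$ happens to lie inside some component $V(Q_j)$ for $j \geq 2$, the set $V(Q_j) \setminus X$ may split into several components and one must work with each such component $Q'$ separately, rather than with $V(Q_j)$ as a whole. The key observation that makes this harmless is that $V(Q)$ itself always serves as a nonempty witness showing that $N_G(V(Q')) \setminus V(Q')$ is a bona fide vertex cut of $G$, so $3$-connectivity can be applied uniformly in every subcase; everything else is a short counting argument.
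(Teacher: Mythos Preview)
Your proof is correct and follows essentially the same approach as the paper: both argue by contradiction via a case split on whether the contracted vertex lies in a supposed small cut of $G'$. The paper's proof is considerably terser---in the case $w\in\{x,y\}$ it simply asserts that $G-V(Q)=G'-w$ having a cut-vertex is a contradiction, leaving to the reader exactly the verification you spell out (that $V(C)\setminus X$ is connected and every piece of each remaining $Q_i\setminus X$ reaches it). Your bound $3-2|X|$ is slightly loose (one actually gets $3-|X|$, since $(V(C)\cup X)\setminus(V(C)\setminus X)=X$), but the conclusion $\geq 1$ is unaffected.
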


\begin{proof}
    Denote by $w$ the vertex of $G'$ obtained by contracting $Q$. Suppose that $\{x,y\}$ is a 2-cut of $G'$. If $w\notin\{x,y\}$, then $\{x,y\}$ is also a 2-cut of $G$, a contradiction. So we have that $w\in\{x,y\}$. It follows that $G-Q=G'-w$ has a cut-vertex, a contradiction.
\end{proof}

\begin{theorem}[Gao, Li, Ma and Xie \cite{GLMX}]\label{thm: adm paths degree sum}
    Let $G$ be a graph with $x,y\in V(G)$ such that $G+xy$ is $2$-connected. If every vertex of $G$ other than $x$ and $y$ has degree at least $k$, and any edge $uv\in E(G)$ with $\{u,v\}\cap \{x, y\}=\emptyset$ has degree sum $d(u)+d(v)\geqslant 2k+1$, then $G$ has $k$ admissible $(x,y)$-paths of consecutive lengths or $k-1$ admissible $(x,y)$-paths whose lengths form an arithmetic progression with common difference two.
\end{theorem}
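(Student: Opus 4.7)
\medskip

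\noindent\textbf{Proof proposal.} The plan is to adapt the proof of Theorem~\ref{thm: k adm paths} by exploiting the Ore-type edge degree sum condition to compensate for the fact that the minimum degree assumption has been weakened from $k+1$ to $k$. Let $W=\{v\in V(G)\setminus\{x,y\}: d(v)=k\}$. The first key observation is that, by the hypothesis $d(u)+d(v)\geqslant 2k+1$ on every edge $uv$ with $\{u,v\}\cap\{x,y\}=\emptyset$, the set $W$ is an independent set in $G-\{x,y\}$, and every neighbor of a vertex of $W$ in $V(G)\setminus\{x,y\}$ has degree at least $k+1$. Thus, locally around each ``deficient'' vertex $w\in W$, there is plenty of room, and all other non-terminal vertices already meet the stronger minimum degree assumption of Theorem~\ref{thm: k adm paths}.

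If $W=\emptyset$, then the minimum degree condition of Theorem~\ref{thm: k adm paths} is satisfied and we are done. Otherwise, I would proceed by induction on $|V(G)|$ (or equivalently on $|W|$). The natural reduction is to pick $w\in W$ together with a neighbor $v\in N(w)\setminus\{x,y\}$ of $w$ with $d(v)\geqslant k+1$; such a $v$ exists unless both of the ``extra'' neighbors of $w$ are $x$ and $y$, which is a structurally very restricted case that can be settled directly using Theorem~\ref{thm: k adm paths 2} with $z=w$ as the single exceptional vertex of low degree. Assuming otherwise, the strategy is to analyze a longest $(x,y)$-path $P$ and use Pósa-type rotations around $w$: since the neighbors of $w$ have large degree, every rotation step that the proof of Theorem~\ref{thm: k adm paths} performs using a vertex of degree $\geqslant k+1$ can still be performed when the argument reaches $w$, by rerouting through one of its high-degree neighbors. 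In other words, whenever the original argument requires a vertex to have degree at least $k+1$, either we are at a vertex satisfying this or we can transfer the step to an adjacent vertex of degree at least $k+1$ provided by the degree sum condition.

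Carrying out this rotation/extension carefully, one obtains admissible $(x,y)$-paths of as many lengths as the original proof, except that one possible length may be lost whenever the ``rerouting'' through a high-degree neighbor of $w\in W$ forces a parity change of $2$ in the path length. This is precisely the source of the dichotomy in the conclusion: if no such forced parity jump occurs, the rotations yield $k$ admissible $(x,y)$-paths of consecutive lengths, while if a parity obstruction does occur (e.g. when the relevant local structure around some $w\in W$ is bipartite-like so that every length change must be even), then the rotations deliver $k-1$ admissible $(x,y)$-paths whose lengths form an arithmetic progression with common difference two.

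The main obstacle, in my view, lies in verifying that the rotation argument near vertices of $W$ does not lose more than one length: one must track exactly where in the proof of Theorem~\ref{thm: k adm paths} the assumption $d(v)\geqslant k+1$ is used, and show that in each such step the Ore-type condition provides a direct substitute for a single-vertex rotation or, failing that, a two-step rotation that only changes the path length by $2$. A secondary technical point is the base case where $W$ intersects the endpoints' neighborhoods in a delicate way, which I would handle via Theorem~\ref{thm: k adm paths 2} as indicated above, absorbing the single degree-deficient vertex as the permitted exceptional $z$.
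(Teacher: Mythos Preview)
This theorem is not proved in the present paper at all: it is quoted verbatim from Gao, Li, Ma and Xie~\cite{GLMX} and used as a black box (in Claims~\ref{ClNoB3}, \ref{ClNoPetersen}, \ref{Clx1y1z3x3}, \ref{ClQxyAdmissiblePath}). There is therefore no ``paper's own proof'' to compare your proposal against; if you want the actual argument you must consult~\cite{GLMX}.

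As for the proposal itself, the opening observations are correct and useful: the set $W$ of degree-$k$ vertices is independent in $G-\{x,y\}$, the case $W=\emptyset$ is Theorem~\ref{thm: k adm paths}, and the case $|W|=1$ is immediately covered by Theorem~\ref{thm: k adm paths 2} (which already yields $k$ admissible $(x,y)$-paths, hence the desired conclusion). However, once $|W|\geqslant 2$ your plan becomes a heuristic rather than a proof. The phrase ``reroute through a high-degree neighbor'' does not by itself guarantee that a Pósa rotation step can be salvaged: when the argument arrives at some $w\in W$ on a longest path, passing to a neighbor $v$ of $w$ with $d(v)\geqslant k+1$ changes which segment of the path is being rotated, and you have not shown that the resulting new endpoint still lies on the path or that the length bookkeeping loses at most one value globally (you only argue it locally, per rotation). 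In particular, the claimed dichotomy ``either $k$ consecutive lengths or $k-1$ lengths with common difference two'' is asserted as the outcome of a parity obstruction, but you give no mechanism explaining why at most \emph{one} length is lost over the whole process when $|W|$ is large. This is exactly the gap you yourself flag as the ``main obstacle,'' and nothing in the proposal closes it.
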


\begin{lemma}\label{lemma: concatenating paths a1a2k}
    Let $k\in\{4,5\}$. Let $G$ be a graph, $H$ a subgraph of $G$ and $X,Y$ two disjoint subsets of $V(H)$. Assume that $a_1,a_2$ are two integers with $a_2-a_1=1$ or $4$, such that for every $x\in X$, $y\in Y$, $i\in \{1,2\}$, $H$ has an $(x,y)$-path of length $a_i$. Assume also that $G$ has $k-1$ admissible $(X,Y)$-paths of consecutive lengths or $k-2$ admissible $(X,Y)$-paths whose lengths form an arithmetic progression with common difference two, with all internal vertices in $V(G)\backslash V(H)$. Then $G$ has $k$ admissible cycles.
\end{lemma}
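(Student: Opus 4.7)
The plan is to prove the lemma by a direct case analysis on the two parameters in the hypothesis: the value of $a_2 - a_1 \in \{1, 4\}$ and whether the $(X,Y)$-paths in $G \setminus V(H)$ are $k-1$ admissible paths of consecutive lengths or $k-2$ admissible paths with common difference $2$. In each of the four resulting subcases, I would concatenate each of the two guaranteed $(x,y)$-paths in $H$ (of lengths $a_1$ and $a_2$) with each admissible $(X,Y)$-path in $G$, and explicitly compute the set of resulting cycle lengths to verify that it contains at least $k$ terms of an arithmetic progression with common difference $1$ or $2$.

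First I would dispose of $a_2 - a_1 = 1$. When paired with $k-1$ admissible $(X,Y)$-paths of consecutive lengths $b, b+1, \ldots, b+k-2$, the resulting cycle lengths form the set $\{a_1+b, a_1+b+1, \ldots, a_1+b+k-1\}$ of $k$ consecutive integers. When paired with $k-2$ admissible $(X,Y)$-paths of lengths $b, b+2, \ldots, b+2(k-3)$, the two translates by $a_1$ and $a_1+1$ fill in opposite parities, yielding the set $\{a_1+b, a_1+b+1, \ldots, a_1+b+2k-5\}$ of $2k-4 \geq k$ consecutive integers.

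Next I would handle $a_2 - a_1 = 4$. Paired with $k-1$ admissible $(X,Y)$-paths of consecutive lengths, the intervals $[a_1+b,\, a_1+b+k-2]$ and $[a_1+b+4,\, a_1+b+k+2]$ combine into a single block of $k+3$ consecutive integers for $k=4$ and $k+3 = 8$ consecutive integers for $k=5$ (note the two intervals meet or overlap precisely because $k-1 \geq 3 \geq 4 - 1$), in particular at least $k$ consecutive lengths. Paired with $k-2$ admissible $(X,Y)$-paths of lengths $b, b+2, \ldots, b+2(k-3)$, the two translates by $a_1$ and $a_1+4$ now have the same parity; they are disjoint for $k=4$ and overlap in the single value $a_1+b+4$ for $k=5$, so the union is an arithmetic progression with common difference $2$ containing exactly $k$ terms in each case.

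Since each of the four subcases directly produces at least $k$ lengths forming an arithmetic progression with common difference $1$ or $2$, the lemma follows. There is essentially no obstacle — the only point that requires a moment's care is the last subcase ($k=5$, $a_2-a_1=4$, common difference $2$), where one must check that two $3$-term arithmetic progressions of common difference $2$, offset by $4$, share exactly one element and hence unite to a $5$-term progression; this is an immediate arithmetic verification. In contrast to Lemma~\ref{lemma: concatenating paths}, the larger gap $a_2 - a_1 = 4$ between the two path lengths in $H$ is precisely compensated by the stronger hypothesis of having $k-1$ or $k-2$ admissible $(X,Y)$-paths on the other side, which is why the statement is restricted to $k \in \{4,5\}$.
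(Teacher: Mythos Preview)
Your overall strategy matches the paper's: reduce $a_2-a_1=1$ to Lemma~\ref{lemma: concatenating paths}, and for $a_2-a_1=4$ explicitly list the cycle lengths obtained. Three of your four subcases are correct and essentially identical to the paper's treatment.

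There is, however, a genuine slip in the subcase $a_2-a_1=4$ with $k-1$ admissible $(X,Y)$-paths of consecutive lengths when $k=4$. You assert that the two intervals $[a_1+b,\,a_1+b+k-2]$ and $[a_1+b+4,\,a_1+b+k+2]$ ``meet or overlap precisely because $k-1\geqslant 3\geqslant 4-1$'', giving $k+3$ consecutive lengths. But the correct condition for these two blocks of $k-1$ integers, offset by $4$, to merge into a single interval is $k-1\geqslant 4$, i.e.\ $k\geqslant 5$. For $k=4$ the intervals are $\{a_1+b,\,a_1+b+1,\,a_1+b+2\}$ and $\{a_1+b+4,\,a_1+b+5,\,a_1+b+6\}$, and the value $a_1+b+3$ is \emph{missing}; you do not obtain $7$ consecutive lengths. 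The fix is immediate and is exactly what the paper does: from the six lengths you have, extract the $4$-term arithmetic progression $a_1+b,\,a_1+b+2,\,a_1+b+4,\,a_1+b+6$ with common difference $2$. So the conclusion survives, but your stated reasoning for this subcase is incorrect and needs this adjustment.
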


\begin{proof}
    If $a_2-a_1=1$, then the assertion is deduced by Lemma \ref{lemma: concatenating paths}. Now assume that $a_2-a_1=4$. Consider an arbitrary $(X,Y)$-path $P$ with all internal vertices in $V(G)\backslash V(H)$, say of length $p$ and with end-vertices $x\in X, y\in Y$. Together with two $(x,y)$-paths in $H$ of lengths $a_1$ and $a_2=a_1+4$, respectively, we can get two cycles of $G$ of lengths $p+a_1$ and $p+a_1+4$. 

    Suppose first that $G$ has $k-1$ admissible $(X,Y)$-paths of consecutive lengths, say of lengths $p+i$ for $i=0,\ldots,k-2$. If $k=4$, then $G$ has cycles of lengths $p+a_1, p+a_1+2, p+a_1+4, p+a_1+6$. If $k=5$, then $G$ has cycles of lengths $p+a_1, p+a_1+1, p+a_1+2, p+a_1+3, p+a_1+4$. Suppose now that $G$ has $k-2$ admissible $(X,Y)$-paths whose lengths form an arithmetic progression with common difference two, say of lengths $p+2i$, $i=0,\ldots,k-3$. If $k=4$, then $G$ has cycles of lengths $p+a_1, p+a_1+2, p+a_1+4, p+a_1+6$. If $k=5$, then $G$ has cycles of lengths $p+a_1, p+a_1+2, p+a_1+4, p+a_1+6, p+a_1+8$. In each case, $G$ has $k$ admissible cycles, as desired.       
\end{proof}

Let $C$ be a cycle. If $C$ is even, then we say two vertices $x,y \in V(C)$ are {\em diagonal} if $d_C(x,y)=|C|/2$; {\em quasi-diagonal} if $d_C(x,y)=|C|/2-1$; and {\em sub-quasi-diagonal} if $d_C(x,y)=|C|/2-2$. While if $C$ is odd, then we say $x,y\in V(C)$ are {\em quasi-diagonal} if $d_C(x,y)=(|C|-1)/2$. Note that for any vertex $x\in V(C)$, $|C|\geqslant 5$, $x$ has exactly two quasi-diagonal vertices; and if $|C|$ is even, then $x$ has exactly two sub-quasi-diagonal vertices. 

\begin{lemma}\label{lemma: k adm cycles for k=4,5}
    Let $G$ be a $2$-connected graph with minimum degree at least $k$, where $k\in \{4,5\}$. Then $G$ contains $k$ admissible cycles, unless $G\cong K_{k+1}$ or $K_{k,n-k}$.
\end{lemma}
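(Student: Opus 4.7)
The plan is to reduce the problem to a narrow residual range using the results already established in the paper, and to finish that range by a longest-cycle analysis. Assume for contradiction that $G$ contains no $k$ admissible cycles and $G\not\cong K_{k+1},K_{k,n-k}$. Theorem~\ref{thm: 2-con-K3-k-cycles} forces $G$ to be triangle-free (else $G\cong K_{k+1}$), and Lemma~\ref{lemma: 3-con-grith4} forces $G$ to contain no $4$-cycle, so $G$ has girth at least $5$. Lemma~\ref{lemma: high connectivity no k adm cycles} applied with $r=0$ then yields that $G$ is $\lceil(k+2)/2\rceil$-connected, i.e. $3$-connected when $k=4$ and $4$-connected when $k=5$. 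In the case $k=5$, if $G$ is bipartite then Theorem~\ref{thm: 3-con-bi} already delivers $5$ admissible cycles, so from now on I may also assume $G$ is non-bipartite when $k=5$.

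Thus it suffices to treat (A) $k=4$, $G$ $3$-connected of girth $\geqslant 5$, and (B) $k=5$, $G$ $4$-connected non-bipartite of girth $\geqslant 5$. For both I would fix a longest cycle $C$ of $G$ and split according to whether $V(G)=V(C)$. Suppose $V(G)\neq V(C)$, and let $Q$ be a component of $G-V(C)$; by the connectivity of $G$, $Q$ has at least three distinct attachment vertices on $C$, and by Lemma~\ref{Le3ConnectedContract} the graph obtained from $G$ by contracting $Q$ stays $3$-connected. Pick two attachments $x,y$ of $Q$ on $C$ for which the two $(x,y)$-arcs of $C$ have lengths differing by $1$ or $4$ (such a pair exists by the diagonal, quasi-diagonal or sub-quasi-diagonal classification introduced just before the statement, combined with the maximality of $C$ which forces $|C|\geqslant 2\delta(G)\geqslant 2k$). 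Apply Theorem~\ref{thm: adm paths degree sum} inside $G[V(Q)\cup\{x,y\}]+xy$ to obtain either $k$ admissible $(x,y)$-paths of consecutive lengths or $k-1$ such paths of common difference $2$, all with internal vertices in $Q$. Feeding these two arcs of $C$ and the path family into Lemma~\ref{lemma: concatenating paths a1a2k} produces $k$ admissible cycles in $G$, the desired contradiction.

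The main obstacle is the Hamiltonian subcase $V(G)=V(C)$, where every admissible cycle must be built from chords of the Hamilton cycle $C$. The strategy is to analyse chord jump-lengths along $C$: each chord $uv$ with $d_C(u,v)=d$ splits $C$ into arcs of lengths $d$ and $|C|-d$ and therefore yields two cycles whose lengths differ by $|C|-2d$, so the diagonal, quasi-diagonal and sub-quasi-diagonal labels track precisely which chord pairs can be combined into an admissible progression. With $\delta(G)\geqslant k$ and girth $\geqslant 5$, each vertex of $C$ has at least $k-2$ chord endpoints, and a pigeonhole argument on the multiset of jump-lengths, together with girth restrictions (to prevent short closed walks from forcing a triangle or $4$-cycle), will produce $k$ consecutive or admissible cycle lengths. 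In case (B), non-bipartiteness is used to guarantee the existence of a chord joining two vertices of the same parity along $C$, which supplies the odd-length differences needed to complete the progression modulo $2$; in case (A) the tightness $k=4$ allows the weaker output of Lemma~\ref{lemma: concatenating paths a1a2k} (with $a_2-a_1=4$) to suffice. The delicate combinatorial bookkeeping of chord patterns, in particular showing that no pattern survives without either producing $k$ admissible cycles or collapsing $G$ into $K_{k+1}$ or $K_{k,n-k}$, is where the real work lies.
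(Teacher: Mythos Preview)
Your reduction to girth $\geqslant 5$ and the connectivity bounds are correct, and the idea of combining two $(x,y)$-arcs of $C$ with a family of $(x,y)$-paths through $Q$ via Lemma~\ref{lemma: concatenating paths a1a2k} is exactly the mechanism the paper uses. However, the execution has two genuine gaps.

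First, your application of Theorem~\ref{thm: adm paths degree sum} to $G[V(Q)\cup\{x,y\}]+xy$ does not go through. You need every vertex of $Q$ to have degree at least $k-1$ there, and adjacent pairs to have degree sum at least $2k-1$; but with $C$ a \emph{longest} cycle, a vertex of $Q$ may have many neighbours on $C$ other than $x,y$, so its degree in $G[V(Q)\cup\{x,y\}]$ can be arbitrarily small. The paper avoids this by taking $C$ to be a \emph{shortest} cycle: once $|C|\geqslant 6$, any vertex outside $C$ has at most one neighbour on $C$ (else a shorter cycle appears), and once $|C|\geqslant 7$, no two adjacent vertices of $Q$ can both touch $C$. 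These two facts are precisely what feed the degree and degree-sum hypotheses of Theorem~\ref{thm: adm paths degree sum}. With a longest cycle you have no such control, and there is no evident substitute.

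Second, the assertion that one can always choose attachments $x,y\in N_C(Q)$ whose two arcs on $C$ differ in length by $1$ or $4$ is unsupported. The paper spends several claims on this: it analyses the graph $Qdi(C)$ of quasi-diagonal (or sub-quasi-diagonal) pairs, shows that components of $G-V(C)$ are $2$-connected, proves there are exactly two such components, and finally pins down $|C|\bmod 4$ to locate vertices $y_1,y_2,z_1,z_2$ with the required arc-length relation. None of this is automatic from $|N_C(Q)|\geqslant 3$. Finally, your Hamiltonian subcase is, as you acknowledge, only a sketch; note that with the paper's shortest-cycle choice this case simply does not arise (a chordless shortest cycle cannot span a graph with $\delta\geqslant 4$), which is another reason the shortest-cycle framing is the right one.
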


\begin{proof}
   We prove by contradiction. Let $G$ be a 2-connected graph with minimum degree at least $k$, that does not contain $k$ admissible cycles, and is not isomorphic to neither $K_{k+1}$ nor $K_{k,n-k}$. 
   By Lemma \ref{lemma: high connectivity no k adm cycles}, $G$ is $3$-connected.
   By Theorem \ref{thm: 2-con-K3-k-cycles} and Lemma \ref{lemma: 3-con-grith4}, we can assume that $G$ does not contain a triangle or a 4-cycle. We now show that $G$ does not contain a 5-cycle.

\setcounter{claim}{0}
\begin{claim}\label{ClNoC5}
    $G$ contains no $5$-cycle.
\end{claim}

\begin{proof}   
    Assume the opposite that $G$ contains a 5-cycle. Define $F_r$ to be the graph obtained by joining two vertices $x_1,x_2$ with $r$ paths of length 3 and one path of length 2 such that all the paths are pairwise internally disjoint, see Figure \ref{FiBr} for an illustration. Note that $F_1$ is a 5-cycle.
    
\begin{figure}[ht]
\centering
\begin{tikzpicture}[scale=0.5]
\draw[fill=black] (0,0) {coordinate (x)} {node[right] {$x$}} circle (0.1);
\draw[fill=black] (0,3) {coordinate (u)} {node[above] {$x_1$}} circle (0.1);
\draw[fill=black] (0,-3) {coordinate (v)} {node[below] {$x_2$}} circle (0.1);
\draw[thick] (u)--(v);
\foreach \b in {-1,1} \foreach \c in {-1,1}
{\draw[fill=black] (\b*3,\c*1.5) circle (0.05);
\draw[fill=black] (\b*3-0.5,\c*1.5) circle (0.05);
\draw[fill=black] (\b*3+0.5,\c*1.5) circle (0.05);}
\foreach \a in {1,2,4,5}
{\draw[fill=black] (\a*2-6,-1.5) {coordinate (z\a)} circle (0.1);
\draw[fill=black] (\a*2-6,1.5) {coordinate (y\a)} circle (0.1);
\draw[thick] (u)--(y\a)--(z\a)--(v);}
\node[left] at (y1) {$y_1$}; \node[left] at (z1) {$z_1$};
\node[right] at (y5) {$y_r$}; \node[right] at (z5) {$z_r$};
\end{tikzpicture}
\caption{Graph $F_r$.}\label{FiBr}
\end{figure}
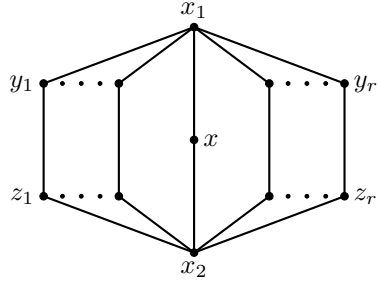

    \begin{subclaim}\label{ClInducedOneNeighbor}
        If $C$ is a $5$-cycle of $G$, then 
        \begin{itemize}
            \item[(1)] $C$ is an induced cycle of $G$,
            \item[(2)] every vertex outside $C$ has at most one neighbor in $C$, and
            \item[(3)] if $P$ is a $V(C)$-path in $G$ of length $3$, then its two end-vertices have distance exactly $2$ in $C$ (that is, $G[V(C)\cup V(P)]\cong F_2$).
        \end{itemize}
    \end{subclaim}

    \begin{proof}
        All three statements can be deduced from the fact that $G$ has neither a triangle nor a 4-cycle.
    \end{proof}
    
    \begin{subclaim}\label{ClNoB3}
        $G$ contains no $F_3$.
    \end{subclaim}

    \begin{proof}
        Suppose that $F\cong F_r$ is a subgraph of $G$ with $r$ maximum and $r\geqslant 3$. We label the vertices of $F$ as in Figure \ref{FiBr}. By Claim \ref{ClInducedOneNeighbor}, $F$ is an induced subgraph of $G$ and any vertex outside $F$ can have at most one neighbor in any 5-cycle in $F$. We further claim that any vertex outside $F$ can have at most one neighbor in $F$. If $v\in V(G)\backslash V(F)$ is adjacent to both $y_1$ and $y_2$, then a 4-cycle appears; if $v$ is adjacent to both $y_1$ and $z_2$, then we can get a contradiction by finding five cycles of lengths from 5 to 9, namely,
        \[
        x_1y_1z_1x_2xx_1,~
        x_1y_1vz_2x_2xx_1,~ 
        x_1y_1vz_2x_2z_3y_3x_1,~
        x_1y_2z_2vy_1z_1x_2xx_1,~
        x_1y_2z_2vy_1z_1x_2z_3y_3x_1,
        \]
        a contradiction. The other cases can be proved similarly. So we conclude that every vertex outside $F$ has at most one neighbor in $F$.

        Set $T=\{y_i,z_i: 1\leqslant i\leqslant r\}$. Recall that $G$ is 3-connected. There is a path from $x$ to $T$ avoiding $x_1,x_2$. It follows that $G-V(F)$ has a component $Q$ such that $x\in N(Q)$ and $N_T(Q)\neq\emptyset$. 

        We claim that every end block of $Q$ has an inner-vertex adjacent to $\{x\}\cup T$. Suppose otherwise that $B$ is an end block of $Q$, $c$ is the cut-vertex of $Q$ contained in $B$, and $B-c$ is not adjacent to $\{x\}\cup T$. Since $G$ is $3$-connected, $N_F(B-c)=\{x_1,x_2\}$. Recall that every vertex outside $F$ has at most one neighbor in $F$. There are two disjoint edges from $B-c$ to $\{x_1,x_2\}$. Let $G_0=G[V(B)\cup\{x_1,x_2\}]$. It follows that $G_0+x_1x_2$ is 2-connected and every vertex in $B-c$ has degree at least $k$ in $G_0$. By Theorem \ref{thm: k adm paths 2}, $G_0$ has $k-1$ admissible $(x_1,x_2)$-paths. Note that $F$ has two $(x_1,x_2)$-paths of lengths 2 and 3, respectively. By Lemma \ref{lemma: concatenating paths}, $G$ has $k$ admissible cycles, a contradiction. Now we conclude that every end block of $Q$ has an inner-vertex adjacent to $\{x\}\cup T$.    
        
        Let $G_1$ be the graph obtained from $G[V(Q)\cup\{x\}]$ by adding a vertex $x^*$ and all edges in $\{vx^*: v\in N_Q(T)\}$. By Lemma \ref{LeGxy2-connected}, $G_1+xx^*$ is 2-connected. Recall that every vertex in $Q$ has at most one neighbor in $F$. If there is an edge $v_1v_2$ such that $v_1x_1,v_2x_2\in E(G)$, then $F\cup x_1v_1v_2x_2$ is a $F_{r+1}$, contradicting the choice of $F$. It follows that every vertex in $Q$ has degree at least $k-1$ in $G_1$, and every pair of adjacent vertices in $Q$ has degree sum at least $2k-1$ in $G_1$. 
        
        By Theorem \ref{thm: adm paths degree sum}, $G_1$ has $k-1$ admissible $(x,x^*)$-paths of consecutive lengths or $k-2$ admissible $(x,x^*)$-paths whose lengths form an arithmetic progression with common difference 2. This implies that $G$ has $k-1$ admissible $(x,T)$-paths of consecutive lengths or $k-2$ admissible $(x,T)$-paths whose lengths form an arithmetic progression with common difference 2, with all internal vertices in $Q$. Note that $F$ contains two $(x,x')$-paths of lengths 2 and 3 for each $x'\in T$. By Lemma \ref{lemma: concatenating paths a1a2k}, $G$ contains $k$ admissible cycles, a contradiction.
    \end{proof}
    
    \begin{subclaim}\label{ClNoPetersen}
        $G$ contains no Petersen graph.
    \end{subclaim}

    \begin{proof}
        Let $F$ be a subgraph of $G$ isomorphic to the Petersen graph and let $Q$ be a component of $G-V(F)$. Since $G$ is $3$-connected, $N_F(Q)$ has at least three vertices, and two of them are nonadjacent. Thus, there exists $x\in N_F(Q)$ such that for $T:=V(F)\backslash(\{x\}\cup N_F(x))$ it holds $N_T(Q)\neq\emptyset$.

        Note that each two vertices of $F$ are contained in a common 5-cycle. By Claim \ref{ClInducedOneNeighbor}, every vertex outside $F$ has at most one neighbor in $F$. Now we claim that for each two adjacent vertices outside $F$, at least one of them has no neighbor in $F$. Suppose that $v_1v_2\in E(Q)$ such that $v_1x_1,v_2x_2\in E(G)$ with $x_1,x_2\in V(F)$. Since $G$ has neither a triangle nor a 4-cycle, $x_1,x_2$ are at distance 2 in $F$. Then $F\cup x_1v_1v_2x_2$ contains a $F_3$, contradicting Claim \ref{ClNoB3}. So we conclude that for each two adjacent vertices outside $F$, at least one of them has no neighbor in~$F$.

        Now we claim that every end block of $Q$ has an inner-vertex adjacent to $\{x\}\cup T$. Suppose that $B$ is an end block of $Q$, $c$ is the cut-vertex of $Q$ contained in $B$, such that $B-c$ is not adjacent to $\{x\}\cup T$. Then $N_F(B-c)\subseteq N_F(x)$. Set $N_F(x)=\{y_1,y_2,y_3\}$ such that $y_1\in N_F(B-c)$. Let $G_0$ the graph obtained from $G[V(B)\cup\{y_1\}]$ by adding a vertex $y^*$ and all edges in $\{vy^*: v\in N_B(y_2)\cup N_B(y_3)\}$. It follows that $G_0+y_1y^*$ is 2-connected and every vertex in $B-c$ has degree at least $k$ in $G_0$. By Theorem \ref{thm: k adm paths 2}, $G_0$ has $k-1$ admissible $(y_1,y^*)$-paths, implying that $G$ has $k-1$ admissible $(y_1,\{y_2,y_3\})$-paths with all internal vertices in $B$. Note that $F$ contains two $(y_1,y_i)$-paths of lengths 2 and 3, respectively, for each $i=2,3$. By Lemma \ref{lemma: concatenating paths}, $G$ has $k$ admissible cycles, a contradiction. Now we conclude that every end block of $Q$ has an inner-vertex adjacent to $\{x\}\cup T$. 
        
        Let $G_1$ be the graph obtained from $G[V(Q)\cup\{x\}]$ by adding a vertex $x^*$ and all edges in $\{vx^*: v\in N_Q(T)\}$. Recall that $G$ does not contain a 4-cycle and $N_Q(T)\neq \emptyset$. This implies the existence of two disjoint edges from $\{x,x^*\}$ to $Q$ in $G_1$. By Lemma \ref{LeGxy2-connected}, $G_1+xx^*$ is 2-connected. Recall that every vertex in $Q$ has at most one neighbor in $F$, and that for every two adjacent vertices in $Q$, at most one of them has a neighbor in $F$. It follows that every vertex in $Q$ has degree at least $k-1$ in $G_1$, and every pair of adjacent vertices in $Q$ has degree sum at least $2k-1$ in $G_1$. By Theorem \ref{thm: adm paths degree sum}, $G_1$ has $k-1$ admissible $(x,x^*)$-paths of consecutive lengths or $k-2$ admissible $(x,x^*)$-paths whose lengths form an arithmetic progression with common difference 2. It follows that $G$ has $k-1$ admissible $(x,T)$-paths of consecutive lengths or $k-2$ admissible $(x,T)$-paths whose lengths form an arithmetic progression with common difference 2, with all internal vertices in $Q$. Note that $F$ contains two $(x,x')$-paths of lengths 2 and 3 for each $x'\in T$. By Lemma \ref{lemma: concatenating paths a1a2k}, $G$ contains $k$ admissible cycles, a contradiction.         
    \end{proof}

    Now we let $C$ be a 5-cycle of $G$, say $C=x_1x_2x_3x_4x_5x_1$. 

    \begin{subclaim}\label{Clx1y1z3x3}
        There are two vertices $y_5,z_2\in V(G)\backslash V(C)$ such that $x_5y_5,y_5z_2,z_2x_2\in E(G)$. 
    \end{subclaim}

    \begin{proof}
        Suppose otherwise. Since $G$ is 3-connected, there is a component $Q$ of $G-V(C)$ such that $x_1\in N_C(Q)$ and $\{x_3,x_4\}\cap N_C(Q)\neq\emptyset$. Let $G_1$ be the graph obtained from $G[V(Q)\cup\{x_1\}]$ by adding a vertex $x^*$ and all edges in $\{vx^*: v\in N_Q(\{x_3,x_4\})\}$. Similarly to the proof in Claim \ref{ClNoPetersen}, we see that $G_1+x_1x^*$ is 2-connected. Recall that every vertex in $Q$ has at most one neighbor in $C$, and at least one of each pair of adjacent vertices in $Q$ is not adjacent to $\{x_2,x_5\}$. It follows that every vertex in $Q$ has degree at least $k-1$ in $G_1$, and every pair of adjacent vertices in $Q$ has degree sum at least $2k-1$ in $G_1$. By Theorem \ref{thm: adm paths degree sum}, $G_1$ has $k-1$ admissible $(x_1,x^*)$-paths of consecutive lengths or $k-2$ admissible $(x_1,x^*)$-paths whose lengths form an arithmetic progression with common difference 2. It follows that $G$ has $k-1$ admissible $(x_1,\{x_3,x_4\})$-paths of consecutive lengths or $k-2$ admissible $(x_1,\{x_3,x_4\})$-paths whose lengths form an arithmetic progression with common difference two, with all internal vertices in $Q$. Note that $C$ has two $(x_1,x')$-paths of lengths 2 and 3 for each $x'\in\{x_3,x_4\}$. By Lemma \ref{lemma: concatenating paths a1a2k}, $G$ contains $k$ admissible cycles, a contradiction.         
    \end{proof}

    By symmetry and Claim \ref{Clx1y1z3x3}, we obtain that for each $i=1,\ldots,5$, there are two vertices $y_i,z_{i+2}\in V(G)\backslash V(C)$ such that $x_iy_i,y_iz_{i+2},z_{i+2}x_{i+2}\in E(G)$, where the indices are taken modulo 5. We remark that possibly $y_i=z_i$ (and for distinct $i,j$, $\{y_i,z_i\}\cap\{y_j,z_j\}=\emptyset$). 
    
    We define the hypo-Petersen graphs as graphs depicted in Figure \ref{FiHypoPetersen}. We say a hypo-Petersen graph is a proper one if it is not a Petersen graph. We remark that every proper hypo-Petersen graph contains cycles of consecutive lengths from 5 to 9, see Figure \ref{FiCycle6789Hypo}.
    
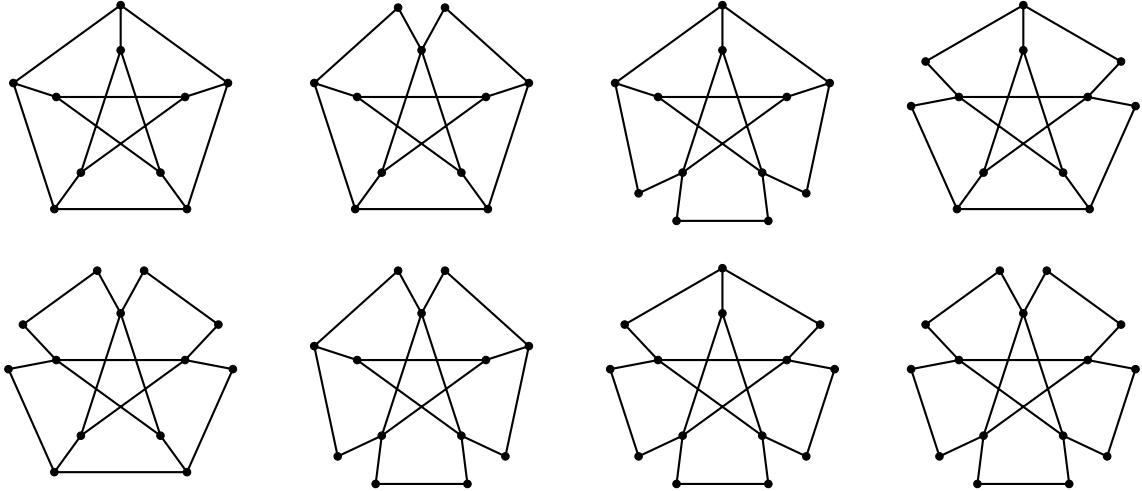
\begin{figure}[ht]
\centering
\begin{tikzpicture}[scale=0.5]

\begin{scope}[xshift=-12cm]
\foreach \x in {1,2,...,5} \draw[fill=black] (\x*72+18:1.8) {coordinate (x\x)} circle (0.1);
\draw[thick] (x1)--(x3)--(x5)--(x2)--(x4)--(x1);
\foreach \y in {1,2,3,4,5} 
{\draw[fill=black] (\y*72+18:3) {coordinate (y\y)} {coordinate (z\y)} circle (0.1);
\draw[thick] (x\y)--(y\y);}
\draw[thick] (z1)--(y2) (z2)--(y3) (z3)--(y4) (z4)--(y5) (z5)--(y1);
\end{scope}

\begin{scope}[xshift=-4cm]
\foreach \x in {1,2,...,5} \draw[fill=black] (\x*72+18:1.8) {coordinate (x\x)} circle (0.1);
\draw[thick] (x1)--(x3)--(x5)--(x2)--(x4)--(x1);
\foreach \y in {1} 
{\draw[fill=black] (\y*72+6:3) {coordinate (y\y)} circle (0.1);
\draw[fill=black] (\y*72+30:3) {coordinate (z\y)} circle (0.1); 
\draw[thick] (x\y)--(y\y) (x\y)--(z\y);}
\foreach \y in {2,3,4,5} 
{\draw[fill=black] (\y*72+18:3) {coordinate (y\y)} {coordinate (z\y)} circle (0.1);
\draw[thick] (x\y)--(y\y);}
\draw[thick] (z1)--(y2) (z2)--(y3) (z3)--(y4) (z4)--(y5) (z5)--(y1);
\end{scope}

\begin{scope}[xshift=4cm]
\foreach \x in {1,2,...,5} \draw[fill=black] (\x*72+18:1.8) {coordinate (x\x)} circle (0.1);
\draw[thick] (x1)--(x3)--(x5)--(x2)--(x4)--(x1);
\foreach \y in {3,4} 
{\draw[fill=black] (\y*72+6:3) {coordinate (y\y)} circle (0.1);
\draw[fill=black] (\y*72+30:3) {coordinate (z\y)} circle (0.1); 
\draw[thick] (x\y)--(y\y) (x\y)--(z\y);}
\foreach \y in {1,2,5} 
{\draw[fill=black] (\y*72+18:3) {coordinate (y\y)} {coordinate (z\y)} circle (0.1);
\draw[thick] (x\y)--(y\y);}
\draw[thick] (z1)--(y2) (z2)--(y3) (z3)--(y4) (z4)--(y5) (z5)--(y1);
\end{scope}

\begin{scope}[xshift=12cm]
\foreach \x in {1,2,...,5} \draw[fill=black] (\x*72+18:1.8) {coordinate (x\x)} circle (0.1);
\draw[thick] (x1)--(x3)--(x5)--(x2)--(x4)--(x1);
\foreach \y in {2,5} 
{\draw[fill=black] (\y*72+6:3) {coordinate (y\y)} circle (0.1);
\draw[fill=black] (\y*72+30:3) {coordinate (z\y)} circle (0.1); 
\draw[thick] (x\y)--(y\y) (x\y)--(z\y);}
\foreach \y in {1,3,4} 
{\draw[fill=black] (\y*72+18:3) {coordinate (y\y)} {coordinate (z\y)} circle (0.1);
\draw[thick] (x\y)--(y\y);}
\draw[thick] (z1)--(y2) (z2)--(y3) (z3)--(y4) (z4)--(y5) (z5)--(y1);
\end{scope}

\begin{scope}[xshift=-12cm, yshift=-7cm]
\foreach \x in {1,2,...,5} \draw[fill=black] (\x*72+18:1.8) {coordinate (x\x)} circle (0.1);
\draw[thick] (x1)--(x3)--(x5)--(x2)--(x4)--(x1);
\foreach \y in {1,2,5} 
{\draw[fill=black] (\y*72+6:3) {coordinate (y\y)} circle (0.1);
\draw[fill=black] (\y*72+30:3) {coordinate (z\y)} circle (0.1); 
\draw[thick] (x\y)--(y\y) (x\y)--(z\y);}
\foreach \y in {3,4} 
{\draw[fill=black] (\y*72+18:3) {coordinate (y\y)} {coordinate (z\y)} circle (0.1);
\draw[thick] (x\y)--(y\y);}
\draw[thick] (z1)--(y2) (z2)--(y3) (z3)--(y4) (z4)--(y5) (z5)--(y1);
\end{scope}

\begin{scope}[xshift=-4cm, yshift=-7cm]
\foreach \x in {1,2,...,5} \draw[fill=black] (\x*72+18:1.8) {coordinate (x\x)} circle (0.1);
\draw[thick] (x1)--(x3)--(x5)--(x2)--(x4)--(x1);
\foreach \y in {1,3,4} 
{\draw[fill=black] (\y*72+6:3) {coordinate (y\y)} circle (0.1);
\draw[fill=black] (\y*72+30:3) {coordinate (z\y)} circle (0.1); 
\draw[thick] (x\y)--(y\y) (x\y)--(z\y);}
\foreach \y in {2,5} 
{\draw[fill=black] (\y*72+18:3) {coordinate (y\y)} {coordinate (z\y)} circle (0.1);
\draw[thick] (x\y)--(y\y);}
\draw[thick] (z1)--(y2) (z2)--(y3) (z3)--(y4) (z4)--(y5) (z5)--(y1);
\end{scope}

\begin{scope}[xshift=4cm, yshift=-7cm]
\foreach \x in {1,2,...,5} \draw[fill=black] (\x*72+18:1.8) {coordinate (x\x)} circle (0.1);
\draw[thick] (x1)--(x3)--(x5)--(x2)--(x4)--(x1);
\foreach \y in {2,3,4,5} 
{\draw[fill=black] (\y*72+6:3) {coordinate (y\y)} circle (0.1);
\draw[fill=black] (\y*72+30:3) {coordinate (z\y)} circle (0.1); 
\draw[thick] (x\y)--(y\y) (x\y)--(z\y);}
\foreach \y in {1} 
{\draw[fill=black] (\y*72+18:3) {coordinate (y\y)} {coordinate (z\y)} circle (0.1);
\draw[thick] (x\y)--(y\y);}
\draw[thick] (z1)--(y2) (z2)--(y3) (z3)--(y4) (z4)--(y5) (z5)--(y1);
\end{scope}

\begin{scope}[xshift=12cm, yshift=-7cm]
\foreach \x in {1,2,...,5} \draw[fill=black] (\x*72+18:1.8) {coordinate (x\x)} circle (0.1);
\draw[thick] (x1)--(x3)--(x5)--(x2)--(x4)--(x1);
\foreach \y in {1,2,3,4,5} 
{\draw[fill=black] (\y*72+6:3) {coordinate (y\y)} circle (0.1);
\draw[fill=black] (\y*72+30:3) {coordinate (z\y)} circle (0.1); 
\draw[thick] (x\y)--(y\y) (x\y)--(z\y);}
\draw[thick] (z1)--(y2) (z2)--(y3) (z3)--(y4) (z4)--(y5) (z5)--(y1);
\end{scope}

\end{tikzpicture}
\caption{Hypo-Petersen graphs.}\label{FiHypoPetersen}
\end{figure}

    Now by Claims \ref{Clx1y1z3x3} and \ref{ClNoPetersen}, $H=G[\{x_i,y_i,z_i: i=1,\ldots,5\}]$ is a proper hypo-Petersen graph. It follows that $H$, and then, $G$, has cycles of lengths 5, 6, 7, 8, 9, a contradiction.
\end{proof}
    
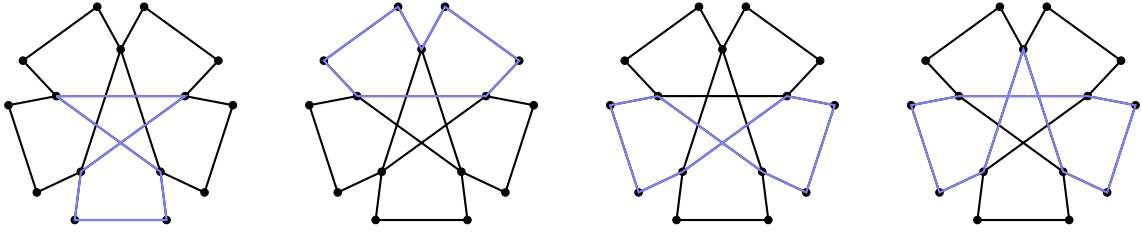
\begin{figure}[ht]
\centering
\begin{tikzpicture}[scale=0.5]

\begin{scope}[xshift=-12cm]
\foreach \x in {1,2,...,5} \draw[fill=black] (\x*72+18:1.8) {coordinate (x\x)} circle (0.1);
\draw[thick] (x1)--(x3)--(x5)--(x2)--(x4)--(x1);
\foreach \y in {1,2,3,4,5} 
{\draw[fill=black] (\y*72+6:3) {coordinate (y\y)} circle (0.1);
\draw[fill=black] (\y*72+30:3) {coordinate (z\y)} circle (0.1); 
\draw[thick] (x\y)--(y\y) (x\y)--(z\y);}
\draw[thick] (z1)--(y2) (z2)--(y3) (z3)--(y4) (z4)--(y5) (z5)--(y1);
\draw[thick, blue!50] (x2)--(x5)--(x3)--(z3)--(y4)--(x4)--(x2);
\end{scope}

\begin{scope}[xshift=-4cm]
\foreach \x in {1,2,...,5} \draw[fill=black] (\x*72+18:1.8) {coordinate (x\x)} circle (0.1);
\draw[thick] (x1)--(x3)--(x5)--(x2)--(x4)--(x1);
\foreach \y in {1,2,3,4,5} 
{\draw[fill=black] (\y*72+6:3) {coordinate (y\y)} circle (0.1);
\draw[fill=black] (\y*72+30:3) {coordinate (z\y)} circle (0.1); 
\draw[thick] (x\y)--(y\y) (x\y)--(z\y);}
\draw[thick] (z1)--(y2) (z2)--(y3) (z3)--(y4) (z4)--(y5) (z5)--(y1);
\draw[thick, blue!50] (x2)--(x5)--(z5)--(y1)--(x1)--(z1)--(y2)--(x2);
\end{scope}

\begin{scope}[xshift=4cm]
\foreach \x in {1,2,...,5} \draw[fill=black] (\x*72+18:1.8) {coordinate (x\x)} circle (0.1);
\draw[thick] (x1)--(x3)--(x5)--(x2)--(x4)--(x1);
\foreach \y in {1,2,3,4,5} 
{\draw[fill=black] (\y*72+6:3) {coordinate (y\y)} circle (0.1);
\draw[fill=black] (\y*72+30:3) {coordinate (z\y)} circle (0.1); 
\draw[thick] (x\y)--(y\y) (x\y)--(z\y);}
\draw[thick] (z1)--(y2) (z2)--(y3) (z3)--(y4) (z4)--(y5) (z5)--(y1);
\draw[thick, blue!50] (x2)--(x4)--(z4)--(y5)--(x5)--(x3)--(y3)--(z2)--(x2);
\end{scope}

\begin{scope}[xshift=12cm]
\foreach \x in {1,2,...,5} \draw[fill=black] (\x*72+18:1.8) {coordinate (x\x)} circle (0.1);
\draw[thick] (x1)--(x3)--(x5)--(x2)--(x4)--(x1);
\foreach \y in {1,2,3,4,5} 
{\draw[fill=black] (\y*72+6:3) {coordinate (y\y)} circle (0.1);
\draw[fill=black] (\y*72+30:3) {coordinate (z\y)} circle (0.1); 
\draw[thick] (x\y)--(y\y) (x\y)--(z\y);}
\draw[thick] (z1)--(y2) (z2)--(y3) (z3)--(y4) (z4)--(y5) (z5)--(y1);
\draw[thick, blue!50] (x2)--(x5)--(y5)--(z4)--(x4)--(x1)--(x3)--(y3)--(z2)--(x2);
\end{scope}

\end{tikzpicture}
\caption{The 6-, 7-, 8-, 9-cycles in a hypo-Petersen graph.}\label{FiCycle6789Hypo}
\end{figure}

    Let $C$ be a shortest cycle of $G$. By Claim \ref{ClNoC5}, $|C|\geqslant 6$. Note that any vertex outside $C$ has at most one neighbor in $C$ as otherwise it creates a cycle of length at most $|C|/2+2 < |C|$, contradicting the minimality of $C$. Since $G$ is 3-connected, every end block of a component of $G-V(C)$ has at least two neighbors in $C$. 
    
\begin{claim}\label{ClComponentAdmissiblePath}
    Let $Q$ be a component of $G-V(C)$ and $x_1,x_2\in N_C(Q)$, say $x_1y_1,x_2y_2\in E(G)$ with $y_1,y_2\in V(Q)$. 
    \begin{itemize}
        \item[(1)] If $Q$ is $2$-connected, then $G$ has $k-2$ admissible $(x_1,x_2)$-paths with all internal vertices in $Q$.
        \item[(2)] If $Q$ is not $2$-connected, $B$ is an end block of $Q$ and $y_1,y_2\in V(B)$, then $G$ has $k-2$ admissible $(x_1,x_2)$-paths with all internal vertices in $B$.
        \item[(3)] If $y_1,y_2$ are inner-vertices of two distinct end blocks of $Q$, then $G$ has $2k-5$ admissible $(x_1,x_2)$-paths with all internal vertices in $Q$.
    \end{itemize}
\end{claim}

\begin{proof}
    Note that every vertex in $Q$ has at most one neighbor in $C$. It follows that $y_1\neq y_2$ and every vertex in $Q$ has degree at least $k-1$ in $Q$. 

    (1) Set $G_1=G[V(Q)\cup\{x_1,x_2\}]$. Then $G_1+x_1x_2$ is 2-connected, and every vertex in $Q$ has degree at least $k-1$ in $G_1$. By Lemma \ref{thm: k adm paths}, $G_1$ contains $k-2$ admissible $(x_1,x_2)$-paths, as desired.

    (2) Let $c$ be the cut-vertex of $Q$ contained in $B$. Set $G_1=G[V(B)\cup\{x_1,x_2\}]$. Then $G_1+x_1x_2$ is 2-connected, and every vertex in $B-c$ has degree at least $k-1$ in $G_1$. By Lemma \ref{thm: k adm paths 2}, $G_1$ contains $k-2$ admissible $(x_1,x_2)$-paths, as desired.

    (3) Let $B_i$ be the end block of $Q$ containing $y_i$, let $c_i$ be the cut-vertex of $Q$ contained in $B_i$, and let $G_i=G[V(B_i)\cup\{x_i\}]$, $i=1,2$. So $G_i+x_ic_i$ is 2-connected, and every vertex in $B_i-c_i$ has degree at least $k-1$ in $G_i$. By Lemma \ref{thm: k adm paths}, $G_i$ contains $k-2$ admissible $(x_i,c_i)$-paths, $i=1,2$. Together with a $(c_1,c_2)$-path in $Q$, we can get $2k-5$ admissible $(x_1,x_2)$-paths of $G$ with all internal vertices in $Q$, as desired.
\end{proof}

\begin{claim}\label{claim: 2-connected components}
    Each component of $G-V(C)$ is $2$-connected. 
\end{claim}

\begin{proof}
    Let $Q$ be a component of $G-V(C)$ that is not 2-connected. If $k=5$, then by Claim \ref{ClComponentAdmissiblePath} (3), $G$ has $2k-5\geqslant 5$ admissible $(x_1,x_2)$-paths with all internal vertices in $Q$, where $x_1,x_2$ are two vertices in $C$ as in Claim \ref{ClComponentAdmissiblePath} (3). Together with an $(x_1,x_2)$-path in $C$, we get 5 admissible cycles in $G$, a contradiction. So we assume that $k=4$. 
    
    Let $B_1,B_2$ be two end blocks of $Q$, and let $c_1,c_2$ be the cut-vertices of $Q$ contained in $B_1,B_2$, respectively. We choose $B_1,B_2$ such that the distance of $c_1,c_2$ in $Q$ is as small as possible. 

    \begin{subclaim}\label{ClOneComponent}
        $G-V(C)$ has only one component $Q$, and if $Q$ has a third end block other than $B_1,B_2$, then $Q$ has only one cut-vertex $c_1(=c_2)$.
    \end{subclaim}

    \begin{proof}
        If $G-V(C)$ has a second component $Q'$, then let $B'=Q'$ if $Q'$ is 2-connected, and let $B'$ be an end block of $Q'$ if $Q'$ is not 2-connected. If $Q$ has a third end block other that $B_1,B_2$ and $c_1$ is not the only cut-vertex of $Q$, then we can choose an end block $B'$ of $Q$ that is disjoint with $B_1,B_2$. Let $P$ be a shortest path of $Q$ from $c_1$ to $c_2$. By the minimality of the distance between $c_1$ and $c_2$, the path $P$ does not pass through the cut-vertex of $Q$ contained in $B'$ (for the case $B'$ is contained in $Q$). It follows that $P$ and $B'$ are disjoint.

        By the 2-connectivity of $G$, let $x_1,x_2\in V(C)$ that has neighbors in $B_1-c_1$ and $B_2-c_2$, respectively, $x_1\neq x_2$; and let $x'_1,x'_2$ be two neighbors of $B'$ in $C$. By Claim \ref{ClComponentAdmissiblePath}, there are $2k-5=3$ admissible $(x_1,x_2)$-paths with all internal vertices in $B_1\cup B_2\cup P$, and there are $k-2=2$ admissible $(x'_1,x'_2)$-paths with all internal vertices in $B'$. Together with two disjoint paths in $C$ from $\{x_1,x_2\}$ to $\{x'_1,x'_2\}$, we can get four admissible cycles in $G$, a contradiction.
    \end{proof}

    \begin{subclaim}\label{ClTwoEndBlock}
        $Q$ has exactly two end blocks $B_1,B_2$.
    \end{subclaim}

    \begin{proof}
        Suppose otherwise. Let $B_1,B_2,\ldots,B_s$, $s\geqslant 3$, be the end blocks of $Q$. By Claim \ref{ClOneComponent}, $c_1=c_2:=c$ is the only cut-vertex of $G$. 
        
        Let $x_1,x_2\in V(C)$ be two quasi-diagonal vertices. We claim that $N_Q(x_1)\cup N_Q(x_2)\subseteq V(B_i)$ for some $i\in[1,s]$. Since $\delta(G)\geqslant 4$ and $Q$ is the only component of $G-V(C)$, every vertex in $C$ has at least two neighbors in $Q$, and at least one neighbor in $Q-c$. Suppose $x_1y_1,x_2y_2\in E(G)$ with $y_1\in V(B_i)\backslash\{c\}$ and $y_2\in V(B_j)\backslash\{c\}$, $i\neq j$. By Claim \ref{ClComponentAdmissiblePath} (3), $G$ has $2k-5=3$ admissible $(x_1,x_2)$-paths with all internal vertices in $Q$. Note that $C$ has two admissible $(x_1,x_2)$-paths. By Lemma \ref{lemma: concatenating paths}, $G$ has four admissible cycles, a contradiction. Thus, as we claimed, for each two quasi-diagonal vertices in $C$, their neighbors in $Q$ are contained in one end block of $Q$.

        Define $Qdi(C)$ the graph on $V(C)$ such that two vertices are adjacent in $Qdi(C)$ if and only if they are quasi-diagonal in $C$. If $C$ is odd or $|C|\equiv 0\bmod 4$, then $Qdi(C)$ is a cycle; and if $|C|\equiv 2\bmod 4$, then $Qdi(C)$ is the union of two disjoint cycles of length $|C|/2$. By the analysis above, we see that for each component $D$ of $Qdi(C)$, $N_Q(V(D))\subseteq V(B_i)$ for some end block $B_i$ of $Q$. It follows that $|C|\equiv 2\bmod 4$ and $Q$ has exactly two end blocks, a contradiction.
    \end{proof}

    \begin{subclaim}\label{Clz1z2CutEdge}
        Either $c_1=c_2$ or $c_1c_2\in E(Q)$ is a cut-edge of $Q$.
    \end{subclaim}

    \begin{proof}
        $G$ is 2-connected, so it contains two disjoint edges $x_1y_1,x_2y_2\in E(G)$ with $x_1,x_2\in V(C)$, $y_1\in V(B_1)\backslash\{c_1\}$ and $y_2\in V(B_2)\backslash\{c_2\}$. Suppose that $c_1\neq c_2$ and $c_1c_2$ is not a cut-edge of $Q$. Then $V(Q)\backslash(V(B_1)\cup V(B_2))\neq\emptyset$.       
        
        Let $G_1=G[V(B_1)\cup\{x_1\}]$, $G_2=G[V(B_2)\cup\{x_2\}]$, and $G_3=G[(V(Q)\backslash(V(B_1)\cup V(B_2)))\cup\{c_1,c_2\}]$. Note that $G_1+x_1c_1$ and $G_2+x_2c_2$ are 2-connected. By Claim \ref{ClTwoEndBlock}, $Q$ is a block-chain, implying that $G_3+c_1c_2$ is 2-connected. Since every vertex in $Q$ has degree at least 3, $G_1$ has two admissible $(x_1,c_1)$-paths, $G_3$ has two admissible $(c_1,c_2)$-paths, and $G_2$ has two admissible $(c_2,x_2)$-paths. It follows that $G$ has four admissible $(x_1,x_2)$-paths with all internal vertices in $Q$. Together with an $(x_1,x_2)$-path in $C$, we get four admissible cycles of $G$, a contradiction.
    \end{proof}

    We set $U_i=V(B_i)\cup\{c_1,c_2\}$, $i=1,2$. We claim that for each two quasi-diagonal vertices $x_1,x_2\in V(C)$, $N_Q(x_1)\cup N_Q(x_2)\subseteq U_i$ for some $i=1,2$. Recall that every vertex in $C$ has at least two neighbors in $Q$ and $G$ does not contain a triangle. It follows that every vertex in $C$ has at least one neighbor in $V(Q)\backslash\{c_1,c_2\}$. Suppose $x_1y_1,x_2y_2\in E(G)$ with $y_i\in V(B_i)\backslash\{c_i\}$, $i=1,2$. By Claim \ref{ClComponentAdmissiblePath} (3), $G$ has three admissible $(x_1,x_2)$-paths with all internal vertices in $Q$. Since $C$ has two admissible $(x_1,x_2)$-paths, by Lemma \ref{lemma: concatenating paths}, $G$ has four admissible cycles, a contradiction. Thus, as we claimed, $N_Q(x_1)\cup N_Q(x_2)\subseteq U_i$. It follows, as in the proof of Claim~\ref{ClTwoEndBlock}, that for each component $D$ of $Qdi(C)$, $N_Q(V(D))\subseteq U_i$ for some $i=1,2$. This implies that $Qdi(Q)$ has two components $D_1,D_2$ (and $|C|\equiv 2\bmod 4$), such that $N_Q(V(D_1))\subseteq U_1$ and $N_Q(V(D_2))\subseteq U_2$.

    Let $x_1,x_2$ be two sub-quasi-diagonal vertices in $C$, say $x_1\in V(D_1)$, $x_2\in V(D_2)$. It follows that $x_1$ has a neighbor $y_1\in V(B_1)\backslash\{c_1\}$ and $x_2$ has a neighbor $y_2\in V(B_2)\backslash\{c_2\}$. By Claim \ref{ClComponentAdmissiblePath} (3), $G$ has three admissible $(x_1,x_2)$-paths with all internal vertices in $Q$. Note that $C$ has two $(x_1,x_2)$-paths whose lengths differ by four. By Lemma \ref{lemma: concatenating paths a1a2k}, $G$ has four admissible cycles, a contradiction.
\end{proof}

\begin{claim}\label{ClCgeq7}
    $|C|\geqslant 7$.
\end{claim}

\begin{proof}
    Suppose that $|C|=6$. Let $Q$ be a component of $G-V(C)$. By Claim \ref{claim: 2-connected components}, $Q$ is 2-connected. First suppose that $N_C(Q)$ contains two adjacent vertices (that is, two sub-quasi-diagonal vertices). Let $x\in N_C(Q)$, $T=N_C(x)$ such that $N_T(Q)\neq\emptyset$. Let $G_1$ be the graph obtained from $G[V(Q)\cup\{x\}]$ by adding one vertex $x^*$ and all edges in $\{vx^*: v\in V(Q), N_T(v)\neq\emptyset\}$. Clearly $G_1+xx^*$ is 2-connected, and every vertex in $Q$ has degree at least $k-1$ in $G_1$. Moreover, for every edge $v_1v_2\in E(Q)$, either $v_1$ or $v_2$ has no neighbors in $V(C)\backslash(\{x\}\cup T)$; otherwise a cycle of length at most 5 appears. This implies that $v_1,v_2$ have degree sum at least $2k-1$ in $G_1$. By Theorem \ref{thm: adm paths degree sum}, $G_1$ has $k-1$ admissible $(x,x^*)$-paths of consecutive lengths or $k-2$ admissible $(x,x^*)$-paths whose lengths form an arithmetic progression with common difference two. It follows that $G$ has $k-1$ admissible $(x,T)$-paths of consecutive lengths or $k-2$ admissible $(x,T)$-paths whose lengths form an arithmetic progression with common difference two, with all internal vertices in $Q$. Notice that $C$ has two $(x,x')$-paths of lengths 1 and 5, respectively, for each $x'\in T$. By Lemma \ref{lemma: concatenating paths a1a2k}, $G$ has $k$ admissible cycles, a contradiction.
    
    Now suppose that $N_C(Q)$ contains no two adjacent vertices. Since $G$ is 3-connected, $|N_C(Q)|\geqslant 3$. It follows that $N_C(Q)$ consists of three vertices in $C$ each two of which have distance 2 (and thus quasi-diagonal in $C$). Set $N_C(Q)=\{x_1,x_2,x_3\}$. Let $G_2$ be the graph obtained from $G[V(Q)\cup\{x_1\}]$ by adding one vertex $x^*$ and all edges in $\{vx^*: v\in N_Q(x_2)\cup N_Q(x_3)\}$. It follows that $G_2+xx^*$ is 2-connected, and every vertex in $Q$ has degree at least $k$ in $G_2$. Thus, $G_2$ has $k-1$ admissible $(x,x^*)$-paths, implying that $G$ has $k-1$ admissible $(x,\{x_2,x_3\})$-paths. Note that for each $i\in\{2,3\}$, $C$ has two $(x_1,x_i)$-paths of lengths 2 and 4 respectively. By Lemma \ref{lemma: concatenating paths}, $G$ has $k$ admissible cycles, a contradiction.
\end{proof}

\begin{claim}\label{ClQxyAdmissiblePath}
    Let $Q$ be a component of $G-V(C)$ and $x,y\in N_C(Q)$. Then $G[V(Q)\cup\{x,y\}]$ has either $k-1$ admissible $(x,y)$-path of consecutive lengths, or $k-2$ admissible $(x,y)$-paths whose lengths form an arithmetic progression with common difference two.
\end{claim}

\begin{proof}
    By Claim \ref{ClCgeq7}, $|C|\geqslant 7$. Note that every vertex in $Q$ has degree at least $k-1$ in $Q$. For each edge $v_1v_2\in E(Q)$, either $N_C(v_1)=\emptyset$ or $N_C(v_2)=\emptyset$; otherwise we find a cycle shorter than $C$. It follows that every two adjacent vertices in $Q$ have degree sum at least $2k-1$ in $Q$. Now the assertion can be deduced by Theorem \ref{thm: adm paths degree sum}.
\end{proof}

\begin{claim}\label{ClSemiQuasiNQ}
    If $C$ is odd, then let $x_1,x_2\in V(C)$ be two quasi-diagonal vertices, while if $C$ is even, then let $x_1,x_2\in V(C)$ be two sub-quasi-diagonal vertices. For a component $Q$ of $G-V(C)$, either $N_Q(x_1)=\emptyset$ or $N_Q(x_2)=\emptyset$.
\end{claim}

\begin{proof}
    Suppose that $N_Q(x_1)\neq\emptyset$ and $N_Q(x_2)\neq\emptyset$. Let $G_1=G[V(Q)\cup\{x_1,x_2\}]$. By Claim \ref{ClQxyAdmissiblePath}, $G_1$ has either $k-1$ admissible $(x_1,x_2)$-path of consecutive lengths, or $k-2$ admissible $(x_1,x_2)$-path whose lengths form an arithmetic progression with common difference two. Note that $C$ contains two $(x_1,x_2)$-paths whose lengths differ by 1 in the odd case and 4 in the even case. By Lemma \ref{lemma: concatenating paths a1a2k}, $G$ has $k$ admissible cycles, a contradiction.   
\end{proof}

\begin{claim}\label{ClExactlyTwoComponent}
    $G-V(C)$ has exactly two components.
\end{claim}

\begin{proof}
    If $G-V(C)$ has only one component, then every vertex in $C$ has a neighbor this component, contradicting Claim \ref{ClSemiQuasiNQ}.

    Now suppose that $G-V(C)$ has at least three components, say $Q_1,Q_2,Q_3$. Let $G_0$ be the graph obtained from $G$ by contracting all edges in $Q_1$, $Q_2$ and $Q_3$. By Lemma \ref{Le3ConnectedContract}, $G_0$ is 3-connected. Let $w_i$ be the vertex in $G_0$ obtained by contracting $Q_i$, $i=1,2,3$. It follows that $G_0$ has a cycle $C_0$ containing $w_1,w_2,w_3$. Let $x_i$ and $y_i$ be the predecessor and successor of $w_i$ in $C_0$, $i=1,2,3$. So $x_i,y_i\in N_C(Q_i)$. Let $G_i=G[V(Q_i)\cup\{x_i,y_i\}]$, $i=1,2,3$. By Claim \ref{ClComponentAdmissiblePath}, $G_i$ has $k-2$ admissible $(x_i,y_i)$-paths. Together with the three paths in $C_0-\{w_1,w_2,w_3\}$, we can get $3k-8\geqslant k$ admissible cycles of $G$, a contradiction. 
\end{proof}

By Claim \ref{ClExactlyTwoComponent}, let $Q_1,Q_2$ be the two components of $G-V(C)$. We fix an orientation of $C$. 

\begin{claim}\label{Clx1x2y1y2}
    $C$ is even and there exist $y_1,y_2,z_1,z_2\in V(C)$ in this order along $C$ such that $y_1,z_1\in N_C(Q_1)$, $y_2,z_2\in N_C(Q_2)$ and $|\overrightarrow{C}[y_1,y_2]|+|\overrightarrow{C}[z_1,z_2]|=|C|/2-1$.
\end{claim}

\begin{proof}
    By Claim \ref{ClSemiQuasiNQ}, every vertex in $C$ has neighbors in exactly one of $Q_1$, $Q_2$. Set $C=x_1x_2\ldots x_{|C|}x_1$. For convenience, we color all the vertices of $N_C(Q_1)$ red and all the vertices in $N_C(Q_2)$ blue. 
    
    If $C$ is odd, then by Claim~\ref{ClSemiQuasiNQ}, any two quasi-diagonal vertices have distinct colors. Since for any $i$, vertices $x_i$ and $x_{i+1}$ (the indices are taken modulo $|C|$) have a common quasi-diagonal vertex, they have the same color. Thus, all vertices have the same color, contradicting Claim~\ref{ClSemiQuasiNQ}. 
    
    Similarly, if $C$ is even, then by Claim~\ref{ClSemiQuasiNQ}, for any $i$, vertices $x_i$ and $x_{i+4}$ are of the same color, because they have a common sub-quasi-diagonal vertex. 

    If $|C|\equiv 2\bmod 4$, then all vertices $x_i$ with $i$ odd have the same color, say red, and all other vertices in $C$ have the same color blue. We can choose $y_1=x_1$, $y_2=x_2$, $z_1=x_{|C|/2}$, $z_2=x_{|C|-2}$. One can compute that $y_1,y_2,z_1,z_2$ meet the conclusion of the assertion. So we assume that $|C|\equiv 0\bmod 4$.

    If $|C|\equiv 4\bmod 8$, then $y_1$ and $y_{|C|/2+3}$ have the same color. However, $y_1$ and $y_{|C|/2+3}$ are sub-quasi-diagonal in $C$, a contradiction.

    Now assume that $|C|\equiv 0\bmod 8$. Set $X_i=\{x_j: j\equiv i\bmod 4\}$, $i=1,2,3,4$. So all vertices in $X_i$ have the same color. Since $x_1\in X_1$ and $x_{|C|/2+3}\in X_3$ are sub-quasi-diagonal, the vertices of $X_1$ and of $X_3$ have distinct colors, and similarly, the vertices of $X_2$ and of $X_4$ have distinct colors. We can assume w.l.o.g. that the vertices of $X_1\cup X_2$ are red and of $X_3\cup X_4$ are blue. Let $y_1=x_2$, $y_2=x_3$, $z_1=x_{|C|/2+2}$, $z_2=x_{|C|}$. One can compute that $y_1,y_2,z_1,z_2$ meet the conclusion of the assertion. 
\end{proof}

Let $y_1,y_2,z_1,z_2\in V(C)$ be vertices as in Claim \ref{Clx1x2y1y2}. Set $G_1=G[V(Q_1)\cup\{y_1,z_1\}]$ and $G_2=G[V(Q_2)\cup\{y_2,z_2\}]$. Then $G_i$ contains $k-2$ admissible $(y_i,z_i)$-paths for $i=1,2$. For a $(y_1,z_1)$-path $P_1$ in $G_1$ and a $(y_2,z_2)$-path $P_2$ in $G_2$, we can get two cycles $P_1\cup P_2\cup\overrightarrow{C}[y_1,y_2]\cup\overrightarrow{C}[z_1,z_2]$ and $P_1\cup P_2\cup\overrightarrow{C}[y_2,z_1]\cup\overrightarrow{C}[z_2,y_1]$, which are of lengths $|P_1|+|P_2|+|C|/2-1$ and $|P_1|+|P_2|+|C|/2+1$, respectively. It follows that we can get $k$ admissible cycles of $G$, a contradiction. 

This finishes the proof of Lemma~\ref{lemma: k adm cycles for k=4,5}.
\end{proof}

\begin{proof}[\bf Proof of Theorem \ref{thm: k adm cycles min degree k}]
    By Lemma \ref{lemma: high connectivity no k adm cycles}, we can assume that the connectivity of $G$ is at least $(k+2)/2\geqslant 3$. If $G$ has a triangle or a 4-cycle, then we are done by Theorem \ref{thm: 2-con-K3-k-cycles} or Lemma \ref{lemma: 3-con-grith4}. If $k\in\{4,5\}$, then we are done by Lemma \ref{lemma: k adm cycles for k=4,5}; and if $k\geqslant 6$, then we are done by Theorems \ref{thm: 3-con-bi} and \ref{thm: 3-con-noK3-k-cycles}.
\end{proof}

\section{Cycles of lengths $\ell$ modulo $k$ for all even $\ell$}\label{section: even ell}

In this section, we prove Theorems \ref{thm: l mod k min degree k} and \ref{thm: l mod k min degree k-1}. Corollary \ref{corollary: l mod k} can be deduced from Theorem \ref{thm: l mod k min degree k}. The following results will be used in our proofs.

\begin{theorem}[Bondy and Vince \cite{BV98}]\label{thm: 2 adm cycles}
    With the exception of $K_1$ and $K_2$, every connected graph having at most two vertices of degree less than $3$ contains two admissible cycles.
\end{theorem}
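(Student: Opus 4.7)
The plan is to proceed by contradiction with a minimum counterexample and then exploit $\Theta$-subgraph structure. Suppose $G$ is a connected graph, $G\notin\{K_1,K_2\}$, with at most two vertices of degree less than $3$ and with no two admissible cycles; choose such a $G$ of minimum order.

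First, I would reduce to the $2$-connected case. If $G$ has a cut-vertex $c$, pick an end-block $B$ of $G$. Every vertex of $V(B)\setminus\{c\}$ has the same degree in $B$ as in $G$, so $B$ has at most two vertices of degree less than $3$ in $B$ (namely $c$ itself, plus possibly one of the two permitted low-degree vertices of $G$). Since $|V(B)|<|V(G)|$ and $B\ne K_1,K_2$ (as $B$ contains a cycle), the minimality of $G$ supplies two admissible cycles in $B$, hence in $G$, a contradiction. Thus $G$ is $2$-connected, and in particular $G$ contains a cycle together with an ear, forming a $\Theta$-subgraph whose three internally disjoint paths between two branch vertices have lengths $a\leq b\leq c$. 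Such a $\Theta$-subgraph yields cycles of lengths $a+b$, $a+c$, $b+c$; two of them are admissible as soon as $c-b\leq 2$ or $b-a\leq 2$.

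The main step is to show that $G$ contains a $\Theta$-subgraph with $\min\{b-a,\,c-b\}\leq 2$. I would choose a $\Theta$-subgraph of $G$ minimizing $c-b$, breaking ties by minimizing $b-a$, and argue by contradiction that this minimum is at least $3$. Since $c\geq b\geq a\geq 1$ and $c-b\geq 3$, the longest path $P_3$ has at least three internal vertices; picking one, call it $v$, that avoids the (at most two) exceptional low-degree vertices of $G$, we obtain a vertex of degree at least $3$ having a neighbor $v'\notin V(P_3)$. Using the $2$-connectivity of $G$, there is a path $Q$ from $v'$ to some vertex of the $\Theta$-subgraph other than $v$ whose internal vertices lie outside the $\Theta$-subgraph. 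A case analysis on where $Q$ re-enters the $\Theta$-subgraph---whether on $P_1$, on $P_2$, on $P_3$ (before or after $v$), or at a branch vertex---would show that in each case either an admissible pair of cycles already appears among the several sub-$\Theta$-graphs thus created, or a new $\Theta$-subgraph arises with strictly smaller $c-b$, contradicting the minimality of the chosen $\Theta$-subgraph.

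The main obstacle is the case analysis in the last step: each re-entry location of $Q$ requires reconstructing one or more new $\Theta$-subgraphs and carefully comparing their three path lengths with the original $a,b,c$, especially when the rerouting only marginally shortens $P_3$ or splits $P_3$ near one of its endpoints. A secondary technical concern is guaranteeing an interior vertex $v$ of $P_3$ of degree at least $3$; when $c$ is close to $3$ the two permitted exceptional vertices could exactly exhaust the interior of $P_3$, so this edge case must be handled separately, most likely by showing that in such a configuration a direct construction using the short paths $P_1$ and $P_2$ already yields two cycles whose lengths differ by at most $2$.
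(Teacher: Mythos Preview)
The paper does not prove this statement; it is quoted as a known result of Bondy and Vince \cite{BV98} and used as a black box. So there is no proof in the paper to compare your proposal against.

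As for the proposal itself, the overall strategy (reduce to $2$-connected, then minimize over $\Theta$-subgraphs) is a reasonable line of attack, but two points need tightening. First, in the reduction to the $2$-connected case, your claim that an end-block $B$ has at most two vertices of degree less than $3$ is not automatic: both exceptional vertices of $G$ could lie in $V(B)\setminus\{c\}$, giving $B$ three such vertices. You need to choose $B$ more carefully (there are at least two end-blocks, so by pigeonhole one of them has at most one exceptional inner vertex; one must also rule out that this block is $K_2$). Second, in the $\Theta$-subgraph step you assume the extra neighbour $v'$ of $v$ lies outside $V(P_3)$, but it could be a chord of $P_3$; this is an additional case that must be handled. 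Your concern about the interior of $P_3$ being too short is actually vacuous: from $b-a\geqslant 3$ and $c-b\geqslant 3$ you get $c\geqslant 7$, so $P_3$ has at least six interior vertices and the two exceptional vertices cannot exhaust them. The case analysis you flag as the main obstacle is genuinely where the work is, and you have not carried it out; so as written this is a plausible outline rather than a proof.
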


\begin{theorem}[Gao, Li, Ma and Xie \cite{GLMX}]\label{thm: 3-con-2-consecutive-even-cycles}
    Every $3$-connected graph with order at least $6$ contains two cycles of consecutive even lengths.
\end{theorem}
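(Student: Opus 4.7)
I would argue by contradiction. Suppose $G$ is a $3$-connected graph on at least $6$ vertices that contains no two cycles of consecutive even lengths. Since every vertex of $G$ has degree at least $3$, Theorem~\ref{thm: 2 adm cycles} produces two admissible cycles $C_1,C_2$ with $|C_2|-|C_1|\in\{1,2\}$. If $G$ is bipartite, both cycles are even and parity forces $|C_2|-|C_1|=2$, already giving the conclusion. Hence we may assume $G$ is non-bipartite and fix a shortest odd cycle $C$, say of length $2m+1$. A standard short argument shows $C$ is chordless: otherwise, a chord would partition $C$ into two sub-cycles whose lengths sum to $2m+3$, so one would be odd of length at least $2m+1$ (by the minimality of $C$) and the other of length at most $2$, impossible. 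Combined with $\delta(G)\geqslant3$, chordlessness forces $V(G)\setminus V(C)\neq\emptyset$.

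Pick any $v\in V(G)\setminus V(C)$. By $3$-connectivity and the Fan Lemma, there are three internally disjoint paths $P_1,P_2,P_3$ from $v$ to distinct attachment vertices $u_1,u_2,u_3\in V(C)$, with interiors off~$C$. For each pair $\{i,j\}$ the two arcs of $C$ joining $u_i$ and $u_j$ have lengths summing to the odd number $2m+1$; consequently, combining $P_i\cup P_j$ with each of the two arcs yields two cycles of opposite parities. Thus we extract three even cycles $E_{12},E_{13},E_{23}$ whose lengths are explicit arithmetic expressions in the path-lengths $|P_i|$ and the arc-lengths $d_C(u_i,u_j)$.

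The core of the proof—and the main obstacle—is to show that among these even cycles (together with variants obtained by rerouting), two must have lengths differing by exactly $2$. A first pass handles the cases where some pair of the $E_{ij}$ already differ by $2$, using the relation that for each pair the even and odd cycles produced satisfy $|E_{ij}|+|O_{ij}|=|P_i|+|P_j|+2m+1$, so flipping the arc choice shifts the length by the odd quantity $2m+1-2d_C(u_i,u_j)$, whose absolute value we can control by choosing the attachments carefully. When the three even lengths are pairwise far apart, I would use the flexibility afforded by $3$-connectivity to either reroute one path $P_i$ through an alternative $(v, V(C))$-path obtained by Menger's theorem, or to slide an attachment $u_i$ to an adjacent vertex of $C$, monitoring parity and tracking the resulting length changes modulo~$2$. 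The hypothesis $n\geqslant 6$ is used precisely to guarantee room for these modifications: it excludes the genuine small counterexample $K_5$, in which only a single even cycle-length exists.

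The final step is to close the case analysis by showing that in every remaining parity pattern of $(|P_1|,|P_2|,|P_3|,d_C(u_1,u_2),d_C(u_2,u_3),d_C(u_3,u_1))$, at least one rerouting or reattachment produces an even cycle whose length is exactly $2$ away from one of $E_{12},E_{13},E_{23}$, contradicting our assumption. I expect the bookkeeping of parities in this case analysis, not any single conceptual step, to be the real difficulty.
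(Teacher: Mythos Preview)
This theorem is not proved in the present paper; it is quoted from Gao, Li, Ma and Xie~\cite{GLMX}, whose paper is devoted precisely to this statement, so there is no in-paper argument to compare against. On its own merits, your bipartite case is fine: Bondy--Vince (Theorem~\ref{thm: 2 adm cycles}) gives two admissible cycles, and bipartiteness forces the difference to be~$2$.

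The non-bipartite case, however, is a plan rather than a proof, and the plan has a real gap. You anchor everything at a single vertex $v\notin V(C)$ and a $3$-fan to the shortest odd cycle $C$, obtain three even cycles $E_{12},E_{13},E_{23}$, and then assert that ``sliding an attachment'' or ``rerouting a path'' will eventually force two of the even lengths to differ by exactly~$2$. But you never show that such moves are available, nor that they shift the even length by the required amount. The arithmetic constraints you write down are perfectly compatible with even-cycle lengths such as $4,8,8$, and nothing in your setup manufactures a cycle of length $6$ or $10$ in that situation; a single external vertex and local perturbations of its fan do not visibly carry enough information about the rest of $G$. Judging from the auxiliary Lemma~\ref{lemma: G-C is con}, which this paper also imports from~\cite{GLMX}, the actual proof proceeds quite differently: it selects a suitable \emph{even} cycle $C$ (one that maximizes a designated component of $G-V(C)$) and exploits the resulting global control over chords and over the connectivity of $G-V(C)$. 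Starting instead from a shortest odd cycle and one outside vertex, as you do, does not give that structural leverage, and the case analysis you defer is where the genuine content of the theorem lies.
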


\begin{theorem}[Dean, Lesniak and Saito \cite{DLS93}]\label{thm: 0 mod 4 min deg 2}
    Every graph with minimum degree at least $2$ and with at most two vertices of degree $2$ contains a cycle of length $0$ modulo $4$. 
\end{theorem}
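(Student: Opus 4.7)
The plan is to reduce the problem to the 2-connected setting and then exploit a theta-subgraph. Since every cycle of $G$ lives in a single block, and since for any cut vertex $w$ one has $d_G(w)=\sum_B d_B(w)$ summed over the blocks $B$ containing $w$, a counting argument on the block-cut tree shows that some block $B$ of $G$ inherits the property "minimum degree at least $2$ and at most two vertices of degree $2$" (the non-cut vertices of $G$ in $B$ keep their degrees, and there are only enough cut vertices to introduce a bounded number of new degree-$2$ vertices, distributed among the blocks). So I may assume $G$ is 2-connected.

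Since $G$ is 2-connected with at least one vertex of degree $\geqslant 3$, $G$ contains a $\varTheta$-subgraph: two branch vertices $u,v$ joined by three internally disjoint paths $P_1,P_2,P_3$ of lengths $a\leqslant b\leqslant c$. These produce cycles of lengths $a+b$, $a+c$, $b+c$. Assume none is $\equiv 0\pmod 4$; then the three residues lie in $\{1,2,3\}$. A parity check shows that if $a+b+c$ is odd then exactly one of the three sums is even, so that sum must be $\equiv 2\pmod 4$, while the two odd sums give residues in $\{1,3\}$; conversely if $a+b+c$ is even, all three sums are even and hence all $\equiv 2\pmod 4$, which forces $a\equiv b\equiv c\pmod 4$ with $a,b,c$ all of the same parity. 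In either situation there is a rigid congruence structure on $(a,b,c)$ modulo $4$ that I can leverage.

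In the rigid case I would use the 2-connectivity of $G$ to produce a $V(\varTheta)$-path $R$ of length $r$ whose endpoints are internal vertices of two distinct paths $P_i,P_j$. Such an $R$ exists unless every internal vertex of $\varTheta$ has degree $2$ in $G$; because $G$ has at most two degree-$2$ vertices, this can happen only when $\varTheta$ has at most two interior vertices, in which case $|V(G)|$ is bounded by a small constant and the theorem can be verified directly. The path $R$ splits $P_i$ into pieces of lengths $a_1+a_2=a$ (or analogously for $b$ or $c$) and $P_j$ similarly, yielding several new cycles whose lengths modulo $4$ are affine combinations of $a_1,a_2,b_1,b_2,c,r$. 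A straightforward case analysis — comparing the new lengths to the three original residues $\equiv 2\pmod 4$ (or $\{1,2,3\}$) — shows at least one of the new cycles has length $\equiv 0\pmod 4$, contradicting the assumption.

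The main obstacle will be the residual case analysis near the two exceptional degree-$2$ vertices of $G$: if they happen to lie on $\varTheta$ in a position that obstructs every candidate detour $R$, the path-splitting lengths are constrained and we may not obtain a favourable residue. To resolve this I would take $G$ to be a minimal counterexample and $\varTheta$ to minimise $a+b+c$, so that the degree-$2$ vertices cannot cluster badly on $\varTheta$; alternatively, Theorem~\ref{thm: 2 adm cycles} of Bondy–Vince guarantees two admissible cycles in the 2-connected graph (which has at most two degree-$2$ vertices), and I would combine these with the $\varTheta$-cycles to enlarge the collection of attainable residues modulo $4$ and force one to equal $0$.
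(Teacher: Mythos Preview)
The paper does not prove this theorem; it is quoted from Dean, Lesniak and Saito \cite{DLS93} and used as a black box (together with Lemma~\ref{lemma: 2 mod 4 cycle}) to handle the case $k=4$ in the proofs of Theorems~\ref{thm: l mod k min degree k} and~\ref{thm: l mod k min degree k-1}. So there is no ``paper's own proof'' to compare against.

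As for your outline, the block reduction is essentially right (with at least two end blocks, one of them avoids both degree-$2$ vertices among its inner vertices and therefore has at most two degree-$2$ vertices, namely possibly its cut vertex plus at most one other), but from that point on the argument is only a plan. Two places are genuine gaps rather than routine details. First, the existence of a useful $V(\varTheta)$-path $R$: having an internal vertex of $\varTheta$ of degree $\geqslant 3$ gives you an edge leaving $\varTheta$, but to land on an \emph{internal} vertex of a \emph{different} $P_j$ you need more, and in a $2$-connected graph the detour $R$ may well return to the same $P_i$ or to one of the branch vertices $u,v$; you have not argued why the resulting cycles help in those sub-cases. Second, and more seriously, the ``straightforward case analysis'' is not carried out: once you split $P_i$ and $P_j$ into pieces $a_1+a_2$ and $b_1+b_2$ and add the detour of length $r$, you have many new cycle lengths, but in the rigid situation $a\equiv b\equiv c\pmod 4$ there are choices of $a_1,a_2,b_1,b_2,r$ for which none of the obvious new cycles hits $0\pmod 4$, so you would need to iterate or choose $\varTheta$ extremally, and neither iteration nor the minimality hypothesis is shown to terminate. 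Invoking Theorem~\ref{thm: 2 adm cycles} at the end does not close the gap either: two admissible cycles give lengths differing by $1$ or $2$, which need not include a multiple of $4$. In short, the strategy is plausible but the proof is not there yet; the actual argument in \cite{DLS93} is more delicate than a single $\varTheta$-plus-detour step.
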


\begin{lemma}\label{lemma: two paths differing by 2 mod 4}
    Let $G$ be a graph and $x,y$ two vertices of $G$. If $G+xy$ is $2$-connected and every vertex of $G$ other than $x,y$ has degree at least $3$, then at least one of the following holds:
    \begin{itemize}
        \item[(1)] $G$ contains a cycle of length $2$ modulo $4$;
        \item[(2)] $G$ contains two $(x,y)$-paths whose lengths differ by $2$ modulo $4$; or
        \item[(3)] $G$ has the construction $L_i(x,y)$ for some $i=1,\ldots,4$ (see Figure \ref{Fig: 4 possibilities of G}).
    \end{itemize}
\end{lemma}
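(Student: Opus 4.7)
The plan is to argue by contradiction: assume $G$ satisfies the degree and connectivity hypotheses but fails both (1) and (2), and deduce that $G$ has the structure $L_i(x,y)$ for some $i \in \{1,2,3,4\}$. Let $R \subseteq \mathbb{Z}/4\mathbb{Z}$ be the set of residues modulo $4$ of lengths of $(x,y)$-paths in $G$. Since no two $(x,y)$-paths have lengths differing by $2$ modulo $4$, $R$ is contained in some set $\{a,a+1\}$. The central combinatorial observation is that, for any two internally disjoint $(x,y)$-paths $P_1,P_2$, the concatenation $P_1\cup P_2$ is a cycle, and by assumption $|P_1|+|P_2|\not\equiv 2\pmod 4$. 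Splitting into the cases $|P_1|\equiv|P_2|\equiv a$, $|P_1|\equiv|P_2|\equiv a+1$, and $\{|P_1|,|P_2|\}\equiv\{a,a+1\}\pmod 4$, exactly one of the two ``pure parity'' cases is forbidden depending on whether $a$ is odd or even ($2a\equiv 2\pmod 4$ iff $a$ is odd; $2a+2\equiv 2\pmod 4$ iff $a$ is even). This rigid restriction on theta-subgraphs based at $\{x,y\}$ will drive the whole argument.

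First I would apply Theorem~\ref{thm: k adm paths} with $k=2$ (the hypothesis $\delta(G)\geqslant 3$ on $V(G)\setminus\{x,y\}$ gives $\delta\geqslant k+1$ there) to obtain two admissible $(x,y)$-paths $Q_1,Q_2$ in $G$. If their common difference is $2$, then their lengths differ by $2$ modulo $4$ and (2) holds, so I may assume they have consecutive lengths $p$ and $p+1$. In particular, $R\supseteq\{p,p+1\}\pmod 4$, so $R=\{a,a+1\}$ with $a\equiv p\pmod 4$ is fully realized, and the theta-obstruction above becomes a genuine constraint on how further paths may attach.

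Next I would analyze the bridges of $G$ with respect to $H:=Q_1\cup Q_2$. Every interior vertex of a bridge has degree at least $3$ in $G$, so each bridge has enough local structure to produce either (i) an alternative $(x,y)$-path obtained by rerouting through the bridge, or (ii) a closed walk missing $\{x,y\}$ that yields a cycle in $G$. By tracking residues modulo $4$ of these new paths and cycles against $\{a,a+1\}$, I would show that in almost every case the bridge forces either a $(2\bmod 4)$-cycle or an $(x,y)$-path whose length modulo $4$ lies outside $\{a,a+1\}$ and at distance $2$ from one of them. The only bridge-shapes that escape this dichotomy are a small explicit list, and assembling these together with the base $Q_1\cup Q_2$ in each of the four residue classes for $a$ yields precisely the four exceptional graphs $L_1(x,y),\ldots,L_4(x,y)$.

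The main obstacle will be the bridge case analysis, since bridges can attach to $Q_1\cup Q_2$ at more than two vertices and can interact with one another; in particular, combinations of two small bridges can produce effects that no single bridge does. To control this, I would run the argument in parallel for the four values of $a\in\mathbb{Z}/4\mathbb{Z}$ and use the rigidity of the theta-obstruction to eliminate most multi-bridge configurations globally, rather than trying to reduce one bridge at a time. Verifying that each of the four $L_i(x,y)$ does avoid both (1) and (2) is then a direct check from the (short) lists of cycles and $(x,y)$-paths in each $L_i$, and closes the classification.
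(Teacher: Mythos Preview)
Your opening observations are sound: the residue set $R$ of $(x,y)$-path lengths sits in some $\{a,a+1\}\subset\mathbb{Z}/4\mathbb{Z}$, and for any two \emph{internally disjoint} $(x,y)$-paths the cycle constraint $|P_1|+|P_2|\not\equiv 2\pmod 4$ holds. But the argument breaks down at the step where you set $H=Q_1\cup Q_2$ and analyse bridges. Theorem~\ref{thm: k adm paths} gives two paths of lengths $p,p+1$ with no guarantee that they are internally disjoint, so $H$ need not be a $\varTheta$-graph at all; it may be an arbitrary union of two $(x,y)$-paths sharing several subpaths, and ``bridges of $H$'' then loses its intended meaning. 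Even if you repair this (say by taking two internally disjoint $(x,y)$-paths from Menger), you lose control of their lengths modulo $4$, and the theta-obstruction alone is far too weak to force $|V(G)|\leqslant 6$: a bridge with several vertices of degree $\geqslant 3$ can attach at three or more points of $H$ and can contain its own theta-subgraphs, and nothing in your outline explains how the residue constraint propagates into such a bridge or across two interacting bridges. You explicitly flag this as ``the main obstacle'' and then propose to ``run the argument in parallel for the four values of $a$'', which is not a method but a restatement of the problem.

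The paper's proof supplies exactly the missing inductive device: it takes an edge-minimal counterexample, so that any proper subgraph satisfying the hypotheses already enjoys one of (1)--(3). This lets the paper peel off pieces of $G$ (components of $G-V(C)$ for an extremally chosen even cycle $C$, or $G-z'$ after pinning down a $K_4^-$) and invoke the lemma on them, forcing each piece to be one of the tiny $L_i$ shapes; the global structure then collapses in a short sequence of claims. Your approach has no analogue of this reduction, and without it the bridge case analysis is open-ended rather than finite.
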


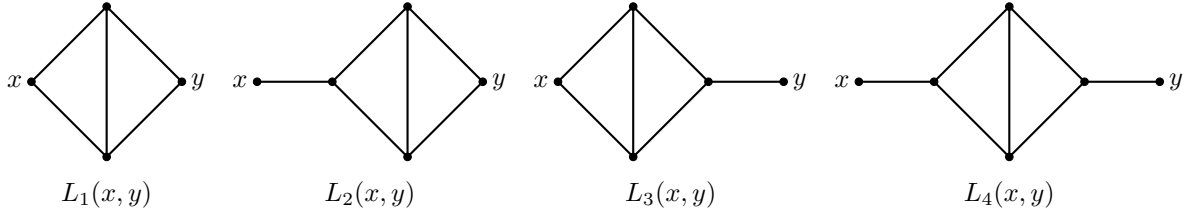
\begin{figure}[ht]
\centering
\begin{tikzpicture}[scale=0.5]

\begin{scope}[xshift=-12cm]
\draw[fill=black] (-2,0) {coordinate (x)} {node[left] {$x$}} circle (0.1);
\draw[fill=black] (2,0) {coordinate (y)} {node[right] {$y$}} circle (0.1);
\draw[fill=black] (0,-2) {coordinate (v)} circle (0.1);
\draw[fill=black] (0,2) {coordinate (u)} circle (0.1);
\draw[thick] (x)--(u)--(y) (x)--(v)--(y) (u)--(v);
\node at (0,-3) {$L_1(x,y)$};
\end{scope}

\begin{scope}[xshift=-4cm]
\draw[fill=black] (-4,0) {coordinate (x)} {node[left] {$x$}} circle (0.1);
\draw[fill=black] (-2,0) {coordinate (x1)} circle (0.1);
\draw[fill=black] (2,0) {coordinate (y)} {node[right] {$y$}} circle (0.1);
\draw[fill=black] (0,-2) {coordinate (v)} circle (0.1);
\draw[fill=black] (0,2) {coordinate (u)} circle (0.1);
\draw[thick] (x)--(x1) (x1)--(u)--(y) (x1)--(v)--(y) (u)--(v);
\node at (-1,-3) {$L_2(x,y)$};
\end{scope}

\begin{scope}[xshift=2cm]
\draw[fill=black] (-2,0) {coordinate (x)} {node[left] {$x$}} circle (0.1);
\draw[fill=black] (2,0) {coordinate (y1)} circle (0.1);
\draw[fill=black] (4,0) {coordinate (y)} {node[right] {$y$}} circle (0.1);
\draw[fill=black] (0,-2) {coordinate (v)} circle (0.1);
\draw[fill=black] (0,2) {coordinate (u)} circle (0.1);
\draw[thick] (x)--(u)--(y1) (x)--(v)--(y1) (y1)--(y) (u)--(v);
\node at (1,-3) {$L_3(x,y)$};
\end{scope}

\begin{scope}[xshift=12cm]
\draw[fill=black] (-4,0) {coordinate (x)} {node[left] {$x$}} circle (0.1);
\draw[fill=black] (-2,0) {coordinate (x1)} circle (0.1);
\draw[fill=black] (2,0) {coordinate (y1)} circle (0.1);
\draw[fill=black] (4,0) {coordinate (y)} {node[right] {$y$}} circle (0.1);
\draw[fill=black] (0,-2) {coordinate (v)} circle (0.1);
\draw[fill=black] (0,2) {coordinate (u)} circle (0.1);
\draw[thick] (x)--(x1) (x1)--(u)--(y1) (x1)--(v)--(y1) (y1)--(y) (u)--(v);
\node at (0,-3) {$L_4(x,y)$};
\end{scope}

\end{tikzpicture}
\caption{Constructions of $L_i(x,y)$ ($i=1,\ldots,4$) in Lemma \ref{lemma: two paths differing by 2 mod 4}.}\label{Fig: 4 possibilities of G}
\end{figure}

\begin{proof}
Suppose that $G$ is a counterexample with the smallest number of edges. Since $G+xy$ is 2-connected, we have that $G$ is 2-connected or a block-chain.

\setcounter{claim}{0}
\begin{claim}\label{claim: xynotinE}
    $xy\notin E(G)$.    
\end{claim}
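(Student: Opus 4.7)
The plan is to argue from the minimality of $G$. Assume for contradiction that $xy\in E(G)$ and set $G':=G-xy$. Since $xy$ is already in $G$, the hypothesis that $G+xy$ is $2$-connected simply says that $G$ itself is $2$-connected, hence $G'+xy=G$ is $2$-connected. Deletion of the edge $xy$ does not change the degree of any vertex other than $x$ and $y$, so every vertex of $G'$ other than $x,y$ still has degree at least $3$. Thus $G'$ satisfies all hypotheses of the lemma and has one fewer edge than $G$, so by the minimality of $G$ one of the alternatives (1), (2), (3) holds for $G'$.

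If $G'$ satisfies (1) or (2), then the same cycle, respectively the same pair of $(x,y)$-paths, witnesses (1) or (2) in $G$ as well, contradicting the assumption that $G$ is a counterexample. We may therefore suppose that $G'\cong L_i(x,y)$ for some $i\in\{1,2,3,4\}$, i.e., $G$ is $L_i(x,y)$ together with the added edge $xy$. In each of these four small explicit graphs it suffices to exhibit in $G$ either a cycle of length $2$ modulo $4$, or two $(x,y)$-paths whose lengths differ by $2$ modulo $4$. For $L_1(x,y)+xy$ the two $(x,y)$-paths $xy$ and $xuvy$ have lengths $1$ and $3$, so (2) holds; for $L_2(x,y)+xy$ and, symmetrically, for $L_3(x,y)+xy$, the paths $xy$ and $xx_1uy$ (respectively $xuy_1y$) have lengths $1$ and $3$, so (2) again holds. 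For $L_4(x,y)+xy$ every $(x,y)$-path has length in $\{1,4,5\}$, so (2) is unavailable; however, $x\,x_1\,u\,v\,y_1\,y\,x$ is a cycle of length $6$, settling (1). In each case we reach a contradiction, so $xy\notin E(G)$.

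The only mildly tricky step is the $L_4$ case, where the shortcut via alternative (2) genuinely fails because $L_4$ does not contain an $(x,y)$-path of length $3$; one has to remember to traverse the central edge $uv$ in order to produce a $6$-cycle. The other three cases are immediate from the triangle formed by $xy$ together with one of the two internally disjoint length-$2$ halves of the $\Theta$-subgraph.
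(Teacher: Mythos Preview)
Your proof is correct and follows essentially the same approach as the paper: delete $xy$, apply minimality to $G-xy$, lift alternatives (1) and (2) trivially, and for alternative (3) exhibit in each $L_i(x,y)+xy$ either a length-$3$ $(x,y)$-path paired with $xy$ (cases $i=1,2,3$) or the $6$-cycle through $uv$ (case $i=4$). One small quibble: your closing summary sentence about ``the triangle formed by $xy$ together with one of the two internally disjoint length-$2$ halves'' does not match what you actually did---the length-$3$ paths you used (e.g.\ $xuvy$ in $L_1$) traverse the chord $uv$ rather than a length-$2$ half, and a triangle would not help anyway---but this is only commentary and does not affect the argument.
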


\begin{proof}
    Suppose that $xy\in E(G)$. Let $G_0=G-xy$. Then $G_0$ satisfies the condition of the theorem. Note that $|E(G_0)|<|E(G)|$. It follows that $G_0$ satisfies (1), (2) or (3). If $G_0$ contains a cycle of length 2 modulo 4, or $G_0$ contains two $(x,y)$-paths whose lengths differ by 2 modulo 4, then so does $G$, a contradiction. Now assume that $G_0$ has the construction $L_i(x,y)$ for some $i=1,\ldots,4$. If $i=1, 2$ or 3, then $G_0$ has an $(x,y)$-path of length 3, together with $xy$, we get two $(x,y)$-paths in $G$ of lengths differing by 2 modulo 4, a contradiction. So $i=4$. Then $G_0$ has an $(x,y)$-path of length 5, together with $xy$, we get a cycle of length 6 in $G$, a contradiction.
\end{proof}

\begin{claim}\label{claim: no two odd paths}
    If $C$ is an even cycle of $G$ and $P^x,P^y$ two disjoint paths from $C$ to $\{x,y\}$, say with origins $x',y'$, respectively, then $d_C(x',y')$ is even. 
\end{claim}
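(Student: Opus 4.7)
The plan is to argue by contradiction: assume $d_C(x',y')$ is odd and derive a contradiction to the failure of both (1) and (2) (which hold since $G$ is a minimum counterexample).

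First I would set up notation. Fix an orientation of $C$ and let
\[
P_1=\overrightarrow{C}[x',y'],\qquad P_2=\overleftarrow{C}[x',y'],
\]
so that $\{P_1,P_2\}$ are the two arcs of $C$ between $x'$ and $y'$, with $|P_1|+|P_2|=|C|$. Since $C$ is even and we are assuming $d_C(x',y')=\min\{|P_1|,|P_2|\}$ is odd, both $|P_1|$ and $|P_2|$ are odd. Concatenating the disjoint paths $P^x$ and $P^y$ with each $P_i$ produces two genuine $(x,y)$-paths $Q_1,Q_2$ in $G$ (they are paths because $P^x$ and $P^y$ are vertex-disjoint from each other and meet $C$ only at their origins $x',y'$), with
\[
|Q_2|-|Q_1|=|P_2|-|P_1|.
\]

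Next I would split on $|C|\bmod 4$. If $|C|\equiv 2\pmod 4$, then $C$ itself is a cycle of length $2$ modulo $4$, contradicting the fact that outcome (1) of the lemma fails for our minimum counterexample $G$. Otherwise $|C|\equiv 0\pmod 4$; since $|P_1|$ and $|P_2|$ are odd and sum to $0\pmod 4$, one of them is $\equiv 1\pmod 4$ and the other is $\equiv 3\pmod 4$, so $|Q_2|-|Q_1|=|P_2|-|P_1|\equiv 2\pmod 4$. This gives two $(x,y)$-paths in $G$ whose lengths differ by $2$ modulo $4$, contradicting the failure of outcome (2).

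Both cases give a contradiction, proving $d_C(x',y')$ is even. There is essentially no main obstacle here: the entire content of the claim is the observation that the parities (mod $4$) of the two complementary arcs of an even cycle with odd‐distance endpoints can be controlled using the dichotomy $|C|\equiv 0$ or $2\pmod 4$, and in either subcase we directly violate one of the two forbidden configurations (1) or (2).
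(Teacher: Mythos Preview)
Your proof is correct and follows essentially the same approach as the paper: dispose of the case $|C|\equiv 2\pmod 4$ using the failure of outcome~(1), and in the remaining case $|C|\equiv 0\pmod 4$ observe that the two odd arcs must be $1$ and $3$ modulo~$4$, so the two concatenated $(x,y)$-paths violate the failure of outcome~(2). The paper's write-up is slightly terser (it opens by noting $|C|\equiv 0\pmod 4$ before assuming $d_C(x',y')$ is odd), but the content is identical.
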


\begin{proof}
    Since $G$ has no cycles of length 2 modulo 4, we have $|C|\equiv 0\bmod 4$.
    Fix an orientation of $C$.
    If $d_C(x',y')$ is odd, then both $\overrightarrow{C}[x',y']$ and $\overleftarrow{C}[x',y']$ are odd. We can assume w.l.o.g. that $|\overrightarrow{C}[x',y']|\equiv 1\bmod~4$ and $|\overleftarrow{C}[x',y']|\equiv 3\bmod~4$. Now $P^xx'\overrightarrow{C}[x',y']y'P^y$ and $P^xx'\overleftarrow{C}[x',y']y'P^y$ are two $(x,y)$-paths of lengths differing by 2 modulo 4, a contradiction.
\end{proof}

We claim that $G$ contains an even cycle. If not, then $G$ is 2-connected or a block-chain such that each block is an odd cycle or an edge. This implies that $G$ contains a vertex, different from $x,y$, with degree 2, a contradiction. Let $G^*$ be the graph obtained from $G$ by adding a new vertex $w$ and two edges $xw,yw$. By $G+xy$ being 2-connected, $G^*$ is 2-connected. Let $C$ be an even cycle of $G$ such that the component $H$ of $G^*-V(C)$ containing $w$ is as large as possible and subject to this, $|C|$ is as small as possible.

Now let $P^x, P^y$ be two disjoint paths from $C$ to $\{x,y\}$, say with origins $x',y'$, respectively. We remark that possibly $x=x'$ or $y=y'$ or both.

\begin{claim}\label{claim: evendistance}
    Every two vertices in $N_C(H)$ have an even distance in $C$.
\end{claim}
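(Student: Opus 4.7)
The plan is to argue by contradiction. Suppose that $u, v \in N_C(H)$ satisfy $d_C(u, v)$ odd. The goal is to produce two internally vertex-disjoint paths $P^x, P^y$ in $G$ with origins in $\{u, v\}$ (one each) and termini $x, y$ respectively, with internal vertices in $V(G) \setminus V(C)$. Applying Claim~\ref{claim: no two odd paths} to $P^x, P^y$ then forces $d_C(u, v)$ to be even, a contradiction.

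To construct such paths, I would exploit the structure of $H$ as a connected subgraph of $G^*$ containing the auxiliary vertex $w$, whose only neighbors in $G^*$ are $x$ and $y$. First, $G^*$ itself is 2-connected: it is precisely the subdivision of the 2-connected graph $G+xy$ along the edge $xy$ with subdivision vertex $w$, and subdivision preserves 2-connectedness. Pick a neighbor $u^* \in V(H)$ of $u$ and a neighbor $v^* \in V(H)$ of $v$. Since $H$ is connected and contains $w$ adjacent to $x, y$, there are paths in $H$ from $u^*$ and $v^*$ to both $x$ and $y$ (via $w$). The real question is whether such paths can be selected so as to be vertex-disjoint and to avoid $w$, thereby living in $G$.

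The cleanest situation is when the components of $G - V(C)$ containing $x$ and $y$ are distinct, say $Q_x$ and $Q_y$, so that $H = Q_x \cup Q_y \cup \{w\}$. If $u$ has a neighbor in $Q_x$ and $v$ has a neighbor in $Q_y$ (or vice versa), then $P^x$ and $P^y$ can be built entirely inside the disjoint pieces $Q_x$ and $Q_y$, and automatically avoid $w$. Symmetrically, when $x$ and $y$ lie in the same component $Q$ of $G - V(C)$, one works inside $Q$ and uses a Menger-style argument together with the minimum-degree hypothesis $\delta(G - \{x, y\}) \geq 3$ to produce the desired disjoint paths. The main obstacle arises in the degenerate configurations: when $x$ or $y$ itself lies on $V(C)$; when $u$ or $v$ coincides with $x$ or $y$; and, most delicately, when $x, y$ lie in different components $Q_x, Q_y$ but both $u^*$ and $v^*$ land on the same side (say both in $Q_x$). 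In that last scenario any $P^y$ would have to route through $w$, which is impossible in $G$; the remedy will be to either locate a different neighbor of $u$ or $v$ in the opposite component using the 2-connectedness of $G + xy$, or else directly produce a cycle of length $2 \bmod 4$ or two $(x, y)$-paths whose lengths differ by $2 \bmod 4$, contradicting the standing counterexample hypothesis. Navigating this case analysis is the principal technical hurdle.
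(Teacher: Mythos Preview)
Your instinct to invoke Claim~\ref{claim: no two odd paths} is correct, but the plan to build two disjoint $(V(C),\{x,y\})$-paths with origins at an \emph{arbitrary} pair $u,v\in N_C(H)$ runs into exactly the obstacle you flag, and that obstacle is fatal rather than merely technical. If $x$ and $y$ lie in distinct components $Q_x,Q_y$ of $G-V(C)$ (so $H=Q_x\cup Q_y\cup\{w\}$) and both $u$ and $v$ have all their $H$-neighbours in $Q_x$, then no path from $\{u,v\}$ to $y$ with internal vertices outside $V(C)$ exists in $G$ at all, so the desired pair of disjoint paths simply cannot be built. Your proposed remedies do not resolve this: $2$-connectedness of $G+xy$ does not force $u$ or $v$ to have a neighbour in $Q_y$, and there is no evident mechanism for producing a $(2\bmod 4)$-cycle or two $(x,y)$-paths with the right length difference from this configuration alone.

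The paper sidesteps the hurdle by \emph{not} trying to connect two arbitrary vertices of $N_C(H)$ to $\{x,y\}$ simultaneously. It keeps the already-chosen paths $P^x,P^y$ (with origins $x',y'$) fixed and, for a single $z\in N_C(H)$, follows a path inside $H$ from $z$ to the first vertex $z'$ it meets in $(V(P^x)\cup V(P^y))\setminus\{x',y'\}$. If $z'\in V(P^x)$, the concatenation $z\to z'\to x$ replaces $P^x$ while $P^y$ stays intact, and Claim~\ref{claim: no two odd paths} gives $d_C(z,y')$ even; symmetrically one obtains $d_C(z,x')$ even. Since $d_C(x',y')$ is already even and $C$ is an even cycle (so parity of distance corresponds to a $2$-colouring of $V(C)$), every $z\in N_C(H)$ lies in the same colour class as $x'$ and $y'$, hence any two vertices of $N_C(H)$ have even distance to each other. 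The idea you are missing is to compare each vertex of $N_C(H)$ to the fixed anchors $x',y'$ one at a time, rerouting only one of the two paths; this makes the disjointness automatic and eliminates the case analysis entirely.
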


\begin{proof}
    By Claim \ref{claim: no two odd paths}, $d_C(x',y')$ is even. In the following, we will show that every vertex in $N_C(H)$ has an even distance to $x'$ or to $y'$ in $C$, following which the assertion can be deduced directly.

    Let $z\in N_C(H)$. Then there is a path $P$ from $z$ to $P^x\cup P^y-\{x',y'\}$ with all internal vertices in $H$. Let $z'$ be the end-vertex of $P$ and assume w.l.o.g. that $z'\in V(P^x)\backslash\{x'\}$. Now $Pz'P^x[z',x]$ and $P^y$ are two disjoint paths from $C$ to $\{x,y\}$. By Claim \ref{claim: no two odd paths}, $d_C(z,y')$ is even.
\end{proof}

\begin{claim}\label{claim: mostonechord}
    $C$ has at most one chord.
\end{claim}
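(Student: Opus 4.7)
To prove Claim \ref{claim: mostonechord} I argue by contradiction, assuming $C$ has two distinct chords $e_1$ and $e_2$. Since $G$ has no cycle of length $2 \pmod{4}$ by the negation of (1), we have $|C| \equiv 0 \pmod{4}$. The key parity observation is that for every chord $uv$ of $C$, the two subcycles $C_1, C_2$ formed by $uv$ together with the two arcs of $C$ between $u$ and $v$ satisfy $|C_1| + |C_2| = |C|+2 \equiv 2 \pmod{4}$; since neither length can be $\equiv 2 \pmod{4}$ (else (1) is violated), both must be odd. Hence each arc of $C$ cut off by a chord has even length (in particular, length at least $2$).

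I then split into three cases depending on the configuration of the chords: (i) $e_1$ and $e_2$ share an endpoint; (ii) $e_1$ and $e_2$ are vertex-disjoint and non-crossing along $C$; (iii) $e_1$ and $e_2$ are vertex-disjoint and crossing along $C$. In each case the parity restriction on arcs lets me build a new even cycle $C'$ from the two chords and one or two arcs of $C$ whose vertex set is a proper subset of $V(C)$. Concretely, in case (i) I take $C'$ to be the two chords together with the single arc opposite their common vertex; in case (ii) I take $C'$ to be the two chords together with the two ``side'' arcs $B_2, B_4$ flanking them; in case (iii) I take one of the two theta-style cycles $A_1 \cup e_2 \cup A_3 \cup e_1$ or $A_2 \cup e_1 \cup A_4 \cup e_2$, whose lengths are $|A_1|+|A_3|+2$ and $|A_2|+|A_4|+2$. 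In each instance $|C'|$ is even; if $|C'| \equiv 2 \pmod{4}$ we contradict (1), and otherwise $C'$ is an even cycle with $V(C') \subsetneq V(C)$, so the component of $G^* - V(C')$ containing $w$ contains all of $H$, and typically $|C'| < |C|$, contradicting the maximality of $H$ or the minimality of $|C|$. A short arc-length count shows this works whenever $|C| > 4$ in cases (i)-(ii), and in case (iii) whenever not all four arcs between chord endpoints have length exactly $1$.

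The one remaining configuration is case (iii) with all four arcs of length $1$, which forces $|C| = 4$ and $G[V(C)] \supseteq K_4$. Here no shorter cycle helps, and I instead violate (2) directly. Inside $K_4$, any two distinct vertices are joined by paths of lengths $1$ and $3$ (the edge itself, and the path through the other two vertices). Combining such internal $K_4$-paths with the disjoint paths $P^x, P^y$ from $V(C)$ to $\{x,y\}$ (whose origins $x', y' \in V(C)$ are necessarily distinct by disjointness) yields two $(x,y)$-paths in $G$ whose lengths differ by exactly $2$, contradicting (2). When $x$ or $y$ itself lies in $V(C)$ the corresponding $P^x$ or $P^y$ is trivial and the same reasoning applies, while the borderline scenario in which, say, $x \in V(C)$ and no vertex of $H$ has a neighbor in $V(C) \setminus \{x\}$ is ruled out because it would make $x$ a cut vertex of $G^*$, contradicting the $2$-connectivity of $G^*$ inherited from $G + xy$. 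The main obstacle in the argument is the parity bookkeeping in case (iii) and cleanly isolating this $|C|=4$ exception so that the $K_4$-paths fallback takes over.
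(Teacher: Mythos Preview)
Your proof is correct. The overall strategy matches the paper's: for $|C|=4$ use the $K_4$ paths of lengths $1$ and $3$ together with $P^x,P^y$ to violate (2), and for $|C|\geqslant 8$ exhibit an even cycle on a proper subset of $V(C)$ to contradict the extremal choice of $C$.

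The only difference is in executing the $|C|\geqslant 8$ step. The paper argues in one line that two chords on a cycle of length at least $8$ yield a $\varTheta$-graph $T$ with $V(T)\subsetneq V(C)$, and then invokes the standard fact that any $\varTheta$-graph contains an even cycle; it never needs your arc-parity observation. You instead split into the three chord configurations and build the even cycle explicitly in each case, using the arc-parity to certify evenness directly. Your route is a bit longer but makes the proper-subset and length-drop claims completely transparent, whereas the paper's one-liner leaves the construction of $T$ to the reader. One minor point: your closing worry about the scenario where $x\in V(C)$ and $H$ has no neighbor in $V(C)\setminus\{x\}$ is unnecessary, since the disjoint paths $P^x,P^y$ with distinct origins $x',y'$ are already fixed in the setup (the paper even remarks that possibly $x=x'$ or $y=y'$), so you may simply concatenate them with the internal $K_4$-paths.
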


\begin{proof}
    Suppose otherwise that $C$ has at least two chords. If $|C|=4$, then $G[V(C)]\cong K_4$, implying that there are two $(x',y')$-paths in $G[V(C)]$ of lengths 1 and 3, respectively. Together with $P^x,P^y$, we get two $(x,y)$-paths of lengths differing by 2 modulo 4, a contradiction. Now assume that $|C|\geqslant 8$. Then there is a $\varTheta$-graph $T$ with $V(T)\subset V(C)$, which contains an even cycle $C_1$. Hence $|C_1|<|C|$ and the component of $G^*-V(C_1)$ containing $w$ will also contain $H$, contradicting the choice of $C$. 
\end{proof}

\begin{claim}\label{claim: onecomponentH}
    $G^*-V(C)$ has only one component.
\end{claim}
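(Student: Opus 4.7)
My plan is to argue by contradiction: suppose $G^*-V(C)$ has a second component $H'$ besides the one $H$ containing $w$. By the $2$-connectivity of $G^*$, we have $|N_C(H')|\geqslant 2$; fix $u,v\in N_C(H')$ together with a $(u,v)$-path $Q$ whose internal vertices all lie in $H'$ (so $|Q|\geqslant 2$), and let $A_1,A_2$ denote the two arcs of $C$ between $u$ and $v$. Consider the two alternative cycles $C_1:=A_1\cup Q$ and $C_2:=A_2\cup Q$. The high-level aim is to reach a contradiction by one of three routes: (a) producing a cycle of length $\equiv 2\pmod 4$, contradicting conclusion~(1); (b) producing a new even cycle $C^*$ whose associated component of $G^*-V(C^*)$ strictly contains $H$, contradicting the maximality of $|V(H)|$; or (c) producing two $(x,y)$-paths whose lengths differ by $2\pmod 4$, contradicting conclusion~(2).

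For route~(a) I would first observe that since (1) fails we must have $|C|\equiv 0\pmod 4$, and hence $|A_1|\equiv|A_2|\pmod 2$. A short computation using $|C|=|A_1|+|A_2|$, $|C_i|=|A_i|+|Q|$ shows that in every combination of parities for $|A_i|$ and $|Q|$ except the subcase where $|A_1|$, $|A_2|$, $|Q|$ are \emph{all} even, one of $C,C_1,C_2$ already has length $\equiv 2\pmod 4$. Thus I can reduce to that last subcase, in which $C_1$ and $C_2$ are both bona fide new even cycles.

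In this reduced situation I would invoke route~(b): since $V(G^*)\setminus V(C_i)$ contains the interior of $A_{3-i}$, any vertex of $N_C(H)$ lying in that interior would be adjacent to $H$ inside $G^*-V(C_i)$, making the component containing $w$ strictly larger than $H$ and contradicting maximality. Hence $N_C(H)\subseteq V(A_1)\cap V(A_2)=\{u,v\}$. By Claim~\ref{claim: evendistance} any two vertices of $N_C(H)$ have even distance in $C$, which combined with $x',y'\in N_C(H)$ forces $\{x',y'\}=\{u,v\}$.

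From here I would pivot to route~(c): the $(x,y)$-paths $P^x\cup\overrightarrow{C}[x',y']\cup P^y$ (staying in $H\cup C$) and $P^x\cup Q\cup P^y$ (detouring through $H'$, well-defined because $\{x',y'\}=\{u,v\}$ and $H\cap H'=\emptyset$) have lengths differing by $|Q|-d_C(x',y')$; under the standing modular constraints, an appropriately chosen $|Q|$ makes this difference $\equiv 2\pmod 4$, yielding the contradiction with~(2). I expect the delicate obstacle to sit precisely here: when $H'$ is bipartite with $u,v$ in the same part, every $(u,v)$-path through $H'$ has fixed parity, so producing a $Q$ of the needed parity will require exploiting $|N_C(H')|\geqslant 2$ together with the at-most-one-chord structure from Claim~\ref{claim: mostonechord}, possibly rerouting via a third neighbor of $H'$ on $C$ to flip parities. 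Reconciling the rigidity of parities of $H'$-paths with the parity demands of the cycle analysis is the subtle point I would need to spend the most care on.
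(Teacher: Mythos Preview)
There are two genuine gaps in the plan.

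First, the parity reduction in route~(a) is incorrect. When $|A_1|,|A_2|$ are both odd and $|Q|$ is even (or when $|A_1|,|A_2|$ are both even and $|Q|$ is odd), the cycles $C_1,C_2$ are both \emph{odd}, while $|C|\equiv 0\pmod 4$; so none of $C,C_1,C_2$ has length $\equiv 2\pmod 4$. In these parity configurations route~(b) is also unavailable, since it needs $C_1,C_2$ to be even in order to invoke the maximality of $|V(H)|$. Your ``short computation'' therefore does not reduce to the all-even subcase.

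Second, even in the all-even subcase, route~(c) stalls exactly where you flag it. The conclusion $\{x',y'\}=\{u,v\}$ via route~(b) is fine, but then the absence of a $(2\bmod 4)$-cycle forces $|Q|\equiv |A_1|\equiv |A_2|\pmod 4$ (check this using $|C_1|,|C_2|\not\equiv 2\pmod 4$), so every pair among your three $(x,y)$-paths differs in length by a multiple of~$4$. Your proposed fix---picking up a third vertex of $N_C(H')$---presupposes $|N_C(H')|\geqslant 3$, which is not given; if $H'$ is bipartite with $u,v$ in the same colour class and $N_C(H')=\{u,v\}$, then every $(u,v)$-path through $H'$ has the same parity and no rerouting is possible. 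The paper spends an entire subclaim ruling out $|N_C(H')|=2$, and it does so by applying the lemma \emph{inductively} to the smaller graph $G[V(H')\cup\{u,v\}]$, exploiting that $G$ is a minimal counterexample. This inductive leverage is the key ingredient missing from your outline: without it, neither the parity obstructions left over from route~(a) nor the deadlock in route~(c) can be broken. The paper then continues in this inductive style through several further subclaims (pinning $|N_C(H')|$ to exactly~$3$, showing $H'$ is unique, forcing $|C|=4$ with a specific chord, and finally deleting a vertex to apply induction again), none of which your direct parity/maximality strategy anticipates.
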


\begin{proof}
    Let $Q$ be a component of $G^*-V(C)$ other than $H$.

    \begin{subclaim}\label{claim: NCQgeq3}
        $|N_C(Q)|\geqslant 3$.
    \end{subclaim}

    \begin{proof}
        Assuming otherwise, by the 2-connectivity of $G$, we have $|N_C(Q)|=2$, say $N_C(Q)=\{x_1,y_1\}$. Let $G_1=G[V(Q)\cup\{x_1,y_1\}]$. Then $G_1+x_1y_1$ is 2-connected and every vertex of $G_1$ other than $x_1,y_1$ has the same degree as in $G$. It follows that $G_1$ satisfies the conclusion (1), (2) or (3). If $G_1$ contains a cycle of length 2 modulo 4, then so does $G$, a contradiction. If $G_1$ contains two $(x_1,y_1)$-paths of lengths differing by 2 modulo 4, then together with two disjoint paths from $\{x_1,y_1\}$ to $\{x,y\}$, we get two $(x,y)$-path in $G$ of lengths differing by 2 modulo 4, a contradiction. Now we conclude that $G_1$ has construction $L_i(x_1,y_1)$ for some $i=1,\ldots,4$.

        Let $C_1$ be the 4-cycle of $G_1$. If $\{x_1,y_1\}\neq\{x',y'\}$, or if $G_1$ has construction $L_i(x_1,y_1)$ with $i\in\{2,3,4\}$, then the component of $G^*-V(C_1)$ containing $w$ will contain $H$ and at least one vertex in $\{x',y'\}$, contradicting the choice of $C$. So we conclude that $\{x_1,y_1\}=\{x',y'\}$ and $G_1$ has construction $L_1(x_1,y_1)$.

        Let $G_2=G-Q$. If $x'\neq x$, then $x'$ has two neighbors in $C$ and one neighbor in $P^x$. If $y'\neq y$, then $y'$ has two neighbors in $C$ and one neighbor in $P^y$. In any case we see that every vertex of $G_2$ other than $x,y$ has degree at least 3 in $G_2$. It follows that $G_2$ satisfies the conclusion (1), (2) or (3). If $G_2$ contains a cycle of length 2 modulo 4, or $G_2$ contains two $(x,y)$-paths of lengths differing by 2 modulo 4, then so does $G$, a contradiction. Now we conclude that $G_2$ has construction $L_i(x,y)$ for some $i=1,\ldots,4$.

        Note that $C$ is an even cycle in $G_2$, so $|C|=4$ and $C$ has a chord $zz'$, where $\{z,z'\}=V(C)\backslash\{x',y'\}$. Also note that $G_1$ has construction $L_1(x',y')$, i.e., $G_1$ is a 4-cycle with a chord $z_1z'_1$, where $\{z_1,z'_1\}=V(G_1)\backslash\{x',y'\}$. Now $x'zz'y'z_1z'_1x'$ is a 6-cycle of $G$, a contradiction. 
    \end{proof}

    \begin{subclaim}\label{claim: NCQeq3}
        $|N_C(Q)|=3$ and $N_C(H)=\{x',y'\}\subset N_C(Q)$.
    \end{subclaim}

    \begin{proof}
        If $|N_C(Q)|\geqslant 4$, or $N_C(H)\backslash N_C(Q)\neq\emptyset$, then there are three vertices $u_1,u_2,u_3\in N_C(Q)$ and one vertex $z\in N_C(H)\backslash\{u_1,u_2,u_3\}$. Assume w.l.o.g. that $u_1,u_2,u_3,z$ appear in this order along $C$. It follows that there are three internally-disjoint paths $P_1,P_2,P_3$ from some $u\in V(Q)$ to $u_1,u_2,u_3$, respectively, with all internal vertices in $Q$. Now $P_1\cup P_2\cup P_3\cup\overrightarrow{C}[u_1,u_3]$ is a $\varTheta$-graph, which contains an even cycle $C_1$. The component of $G^*-C_1$ containing $w$ will contain $H$ and $z$, a contradiction. So we conclude that $|N_C(Q)|=3$ and $N_C(H)\subseteq N_C(Q)$.

        If $|N_C(H)|=3$, say $N_C(H)=\{x',y',z\}$, then $N_C(Q)=\{x',y',z\}$ as well. We can assume w.l.o.g. that $x',y',z$ appear in this order along $C$. By Claim \ref{claim: evendistance}, all three paths $\overrightarrow{C}[x',y']$, $\overrightarrow{C}[y',z]$, $\overrightarrow{C}[z,x']$ are even. Now let $P_1,P_2,P_3$ be three internally-disjoint paths from some $u\in V(Q)$ to $x',y',z$, respectively, with all internal vertices in $Q$. It follows that one of the three paths $P_1uP_2$, $P_2uP_3$, $P_3uP_1$ is even, implying that one of the three cycles $P_2uP_1x'\overrightarrow{C}[x',y']$, $P_3uP_2y'\overrightarrow{C}[y',z]$, $P_1uP_3z\overrightarrow{C}[z,x']$ is even. Let $C_1$ be such an even cycle. The component of $G^*-C_1$ containing $w$ will contain $H$ and one vertex in $\{x',y',z\}$, a contradiction. Hence we conclude that $|N_C(H)|=2$, i.e., $N_C(H)=\{x',y'\}$.
    \end{proof}

    By Claim \ref{claim: NCQeq3}, we can assume that $N_C(Q)=\{x',y',z\}$. We assume w.l.o.g. that $z\in V(\overrightarrow{C}[x',y'])$.

    \begin{subclaim}\label{claim: onecomponentQ}
        $Q$ is the only component of $G^*-V(C)$ other than $H$.
    \end{subclaim}

    \begin{proof}
        Let $Q'$ be a component of $G^*-V(C)$ other than $H$ and $Q$. By the analysis in Claims \ref{claim: NCQgeq3} and \ref{claim: NCQeq3}, $N_C(Q')=\{x',y',z'\}$ for some $z'\in V(C)\backslash\{x',y'\}$. Let $P$ be an $(x',z)$-path with all internal vertices in $Q$; and let $P'$ be an $(x',z')$-path with all internal vertices in $Q'$.

        First we assume that $z'\in V(\overrightarrow{C}[x',y'])$. We can assume w.l.o.g. that $x',z,z',y'$ appear in this order along $C$. Now $\overrightarrow{C}[x',z']\cup P\cup P'$ is a $\varTheta$-graph, which contains an even cycle $C_1$. The component of $G^*-C_1$ containing $w$ will contain $H$ and $y'$, a contradiction.

        Now we assume that $z'\in V(\overleftarrow{C}[x',y'])$. If the cycle $\overrightarrow{C}[x',z]zPx'$ is of even length, then the component of $G^*-(V(\overrightarrow{C}[x',z])\cup V(P))$ containing $w$ will contain $H$ and $y'$, contradicting the choice of $C$. So we conclude that $\overrightarrow{C}[x',z]zPx'$ is odd, implying that $Pz\overrightarrow{C}[z,y']$ is an odd $(x',y')$-path. By a similar analysis we see that $P'z'\overleftarrow{C}[z',y']$ is an odd $(x',y')$-path, contradicting Claim \ref{claim: no two odd paths} for the cycle $x'Pz\overrightarrow{C}[z,y']\overrightarrow{C}[y',z']z'P'x'$.
    \end{proof}

    \begin{subclaim}\label{claim: Ceq4}
        $|C|=4$ and $C$ has exactly one chord $zz'$, where $\{z'\}=V(C)\backslash\{x',y',z\}$.
    \end{subclaim}

    \begin{proof}
        By Claim \ref{claim: mostonechord}, $C$ has at most one chord. By Claims \ref{claim: NCQeq3} and \ref{claim: onecomponentQ}, every vertex in $V(C)\backslash\{x',y',z\}$ has no neighbors outside $C$. If $|C|\geqslant 8$, then some vertices in $V(C)\backslash\{x',y',z\}$ are not incident to a chord of $C$ and have degree 2, a contradiction. So we conclude that $|C|=4$.
        
        Set $C=x'zy'z'x'$. Notice that $z'\notin N_C(H)\cup N_C(Q)$. We have that $z'$ is incident to the chord of $C$, i.e., $zz'\in E(G)$.
    \end{proof}

    Let $P$ be a path from $x'$ to $z$ with all internal vertices in $Q$. Then $P\cup \{x'z,x'z',zz'\}$ is a $\varTheta$-graph, which contains an even cycle $C'$ not passing through $y'$. Consequently, the component of $G - C'$ containing $w$ is larger than $H$, which gives a contradiction.
\end{proof}

\begin{claim}\label{claim: Ceq4-noQ}
    $|C|=4$ and $C$ has exactly one chord $zz'$, where $\{z,z'\}=V(C)\backslash\{x',y'\}$.
\end{claim}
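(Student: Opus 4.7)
The plan is a two-step counting argument based on the claims already proved. First, by Claim~\ref{claim: onecomponentH}, $G^*-V(C)$ has only one component $H$, so any vertex $v\in V(C)\setminus N_C(H)$ has all of its $G$-neighbors on $C$. Such a vertex cannot be $x$ or $y$: if $x$ were on $C$, then since $xw\in E(G^*)$ and $w\in H$ we would have $x\in N_C(H)$, and similarly for $y$. Hence every $v\in V(C)\setminus N_C(H)$ has degree at least $3$ in $G$, which together with it having exactly two neighbors along $C$ forces $v$ to be incident to a chord of $C$. By Claim~\ref{claim: mostonechord}, $C$ has at most one chord, so at most two vertices of $C$ can lie outside $N_C(H)$, giving $|C|-|N_C(H)|\leqslant 2$.

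Second, Claim~\ref{claim: evendistance} says any two vertices of $N_C(H)$ are at even distance in $C$, so $N_C(H)$ is contained in a single color class of the proper $2$-coloring of the even cycle $C$; in particular, $|N_C(H)|\leqslant |C|/2$. Combining the two inequalities gives $|C|/2\leqslant |C|-|N_C(H)|\leqslant 2$, and since $C$ is an even cycle we have $|C|\geqslant 4$, hence $|C|=4$.

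With $|C|=4$ both bounds become equalities, so $|N_C(H)|=2$; as $x',y'\in N_C(H)$ this forces $N_C(H)=\{x',y'\}$. The two vertices $x',y'$ are at even distance in the $4$-cycle $C$, hence diagonal, so the remaining two vertices $z,z'$ satisfy $\{z,z'\}=V(C)\setminus\{x',y'\}$. Since $z,z'\notin N_C(H)$, both must be incident to a chord of $C$ by the first step, and the only chord incident to both of them is $zz'$. Combined with Claim~\ref{claim: mostonechord}, $C$ has exactly one chord, namely $zz'$, as claimed. The main obstacle I anticipate is bookkeeping around the degree condition, since $x,y$ themselves are allowed to have degree less than $3$; but the observation that any of $x,y$ lying on $C$ automatically joins $N_C(H)$ through $w$ resolves this cleanly.
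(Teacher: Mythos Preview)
Your proof is correct and follows essentially the same approach as the paper's own argument: both combine Claim~\ref{claim: evendistance} (forcing $|N_C(H)|\leqslant |C|/2$) with Claim~\ref{claim: mostonechord} (forcing at most two vertices of $C$ to lie outside $N_C(H)$) to conclude $|C|=4$, and then observe that the two remaining vertices $z,z'$ must each be incident to the unique chord. Your treatment is in fact slightly more careful than the paper's, which asserts that vertices in $V(C)\setminus N_C(H)$ not incident to a chord have degree~$2$ without explicitly ruling out that such a vertex could be $x$ or $y$; your observation that $x,y\in V(C)$ would force $x,y\in N_C(H)$ via the edge to $w$ fills this small gap cleanly.
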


\begin{proof}
    By Claim \ref{claim: mostonechord}, $C$ has at most one chord. By Claims \ref{claim: evendistance} and \ref{claim: onecomponentH}, there are at least $|C|/2$ vertices in $C$ that have no neighbors outside $C$. If $|C|\geqslant 8$, then some vertices in $V(C)\backslash N_C(H)$ are not incident to a chord of $C$ and have degree 2, a contradiction. So we conclude that $|C|=4$.
        
    Set $C=x'zy'z'x'$. Notice that $z,z'\notin N_C(H)$. We have that $zz'$ is a chord of $C$.
\end{proof}

\begin{claim}\label{claim: nopathfromPxtoPy}
    $G-\{z,z'\}$ contains no path from $P^x$ to $P^y$.
\end{claim}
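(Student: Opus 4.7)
The plan is to argue by contradiction: suppose $R$ is a path in $G-\{z,z'\}$ from $u \in V(P^x)$ to $v \in V(P^y)$, chosen of minimum length $r$, and write $\alpha = |P^x[x,u]|$, $\beta = |P^x[u,x']|$, $\gamma = |P^y[y',v]|$, $\delta = |P^y[v,y]|$. The goal is to use $R$, together with the structure of $C = x'zy'z'x'$ and its chord $zz'$ established in Claim~\ref{claim: Ceq4-noQ}, to produce either a cycle of length $2 \bmod 4$ or two $(x,y)$-paths whose lengths differ by $2$ modulo~$4$, contradicting the minimum-counterexample hypothesis.

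First I would form the two cycles
\[
C_1 = R \cup P^x[u,x'] \cup x'zy' \cup P^y[y',v], \qquad C_2 = R \cup P^x[u,x'] \cup x'zz'y' \cup P^y[y',v],
\]
of lengths $r+\beta+\gamma+2$ and $r+\beta+\gamma+3$. Since these differ by one, one of them is even, and by the absence of a $(2\bmod 4)$-cycle in $G$ this even cycle has length $\equiv 0 \pmod 4$, which forces $r+\beta+\gamma \equiv 2 \pmod 4$ or $r+\beta+\gamma \equiv 1 \pmod 4$. I would then compare the three $(x,y)$-paths $P_1 = P^x \cdot x'zy' \cdot P^y$, $P_2 = P^x \cdot x'zz'y' \cdot P^y$, and $P_3 = P^x[x,u] \cdot R \cdot P^y[v,y]$, whose length differences are $|P_1|-|P_3| = \beta+\gamma+2-r$ and $|P_2|-|P_3| = \beta+\gamma+3-r$. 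A direct modular computation, combined with the cycle constraint, then shows that if $r$ is odd one of these differences is $\equiv 2 \pmod 4$, the desired contradiction.

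For the case $r$ even I would perform a sliding move. If $\beta \geq 1$, letting $q$ be the neighbor of $u$ on $P^x$ towards $x'$, the extended path $R' = qu \cdot R$ is a new $V(P^x)$-to-$V(P^y)$ path of length $r+1$, and the corresponding $(x,y)$-path $P_3' = P^x[x,q] \cdot R' \cdot P^y[v,y]$ has length $|P_3|+2$. Recomputing modulo~$4$, the differences become $|P_1|-|P_3'| = \beta+\gamma-r$ and $|P_2|-|P_3'| = \beta+\gamma-r+1$; using that $2r \equiv 0 \pmod 4$ together with the cycle constraint, exactly one of these is $\equiv 2 \pmod 4$, giving the contradiction. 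A symmetric sliding move along $P^y$ handles the case $\gamma \geq 1$.

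The main obstacle is the residual case $\beta=\gamma=0$, that is $u=x'$ and $v=y'$, where neither the basic modular computation nor the sliding move produces a contradiction. Here I would inspect an internal vertex $w$ of $R$: since $d_G(w)\geq 3$ and $N(z),N(z') \subseteq V(C)$, the vertex $w$ has a neighbor $w' \in V(H) \setminus V(R)$. If $w' \in (V(P^x) \cup V(P^y)) \setminus \{x',y'\}$, I would reroute $R$ through $w$ to obtain an alternative $V(P^x)$-$V(P^y)$ path with $\beta>0$ or $\gamma>0$ and invoke the sliding argument. Otherwise $w'$ lies deeper inside $H$, and I would iterate along neighbors inside $V(H) \setminus (V(R) \cup V(P^x) \cup V(P^y))$, using the $2$-connectivity of $G+xy$ and the degree-$3$ condition to guarantee that the iteration eventually reaches $(V(P^x) \cup V(P^y)) \setminus \{x',y'\}$. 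Making this tracing step precise while tracking the parity of the resulting path is the main technical difficulty I anticipate.
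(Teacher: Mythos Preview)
Your treatment of the odd case $r$ is fine and matches the paper's first step (their Subclaim shows every $V(P^x)$--$V(P^y)$ path in $G-\{z,z'\}$ is even, by essentially the same modular computation you do). The genuine gap is in the even case.

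The sliding move is invalid: with $q$ the neighbor of $u$ on $P^x$ towards $x'$, the walk $P^x[x,q]\cdot R'\cdot P^y[v,y]$ traverses $x,\ldots,u,q,u,\ldots,v,\ldots,y$ and revisits $u$, so $P_3'$ is not a path at all. Nor does using $R'$ in place of $R$ in the cycle argument give anything new, since $|R'|+|P^x[q,x']|=(r+1)+(\beta-1)=r+\beta$ and the cycles $C_1,C_2$ are literally unchanged. So neither an $(x,y)$-path of length $|P_3|+2$ nor a path of odd length to which your odd-$r$ analysis applies has been produced. The $\beta=\gamma=0$ subcase you flag is therefore not the main obstacle; the argument already fails whenever $r$ is even.

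The paper resolves the even case by a completely different mechanism: it re-chooses $P^x,P^y$ so that the distance between them in $G-\{z,z'\}$ is minimum, lets $P$ be a shortest such connecting path (necessarily even, hence with nonempty interior), and takes $Q$ to be the component of $G-(\{z,z'\}\cup V(P^x)\cup V(P^y))$ containing the interior of $P$. A short rerouting argument shows $N_{P^x\cup P^y}(Q)=\{x_1,y_1\}$, so $G_4=G[V(Q)\cup\{x_1,y_1\}]$ is a strictly smaller graph satisfying the hypotheses of the lemma. By minimality of $G$, $G_4$ must satisfy one of conclusions (1)--(3); (1) and (2) immediately contradict the counterexample assumption, and (3) forces $G_4$ to contain an odd $(x_1,y_1)$-path, contradicting the already-established fact that every $V(P^x)$--$V(P^y)$ path is even. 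This inductive use of the minimal-counterexample hypothesis is the missing idea.
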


\begin{proof}
    Suppose otherwise that $G-\{z,z'\}$ contains a path from $P^x$ to $P^y$.

    \begin{subclaim}\label{claim: pathx-y-even}
        Every path of $G-\{z,z'\}$ from $P^x$ to $P^y$ is even.
    \end{subclaim}

    \begin{proof}
        Suppose that $P$ is an odd path of $G-\{z,z'\}$ from $P^x$ to $P^y$. Let $x_1$, $y_1$ be the end-vertices of $P$. Notice that either $P^x[x_1,x']x'zy'P^y[y',y_1]$ or $P^x[x_1,x']x'zz'y'P^y[y',y_1]$ is an odd $(x_1,y_1)$-path, together with $P$ we get an even cycle, say $C_1$. Now $P^x[x_1,x]$ and $P^y[y_1,y]$ are two disjoint paths from $C_1$ to $\{x,y\}$ such that $d_{C_1}(x_1,y_1)$ is odd, contradicting Claim \ref{claim: no two odd paths}. 
    \end{proof}
    
    We may assume that $P^x$ and $P^y$ are disjoint paths from $x$ to $x'$ and $y$ to $y'$, respectively, such that the distance between $P^x$ and $P^y$ in $G-\{z,z'\}$ is the shortest. Let $P$ be the shortest path of $G-\{z,z'\}$ from $P^x$ to $P^y$, say from $x_1$ to $y_1$. By Claim \ref{claim: pathx-y-even}, $P$ is even, implying that $V(P)\backslash\{x_1,y_1\}\neq\emptyset$. Let $Q$ be the component of $G-(\{z,z'\}\cup V(P^x)\cup V(P^y))$ containing $P-\{x_1,y_1\}$. 
    
    \begin{subclaim}\label{claim: NPxPyQ2}
        $N_{P^x\cup P^y}(Q)=\{x_1,y_1\}$.
    \end{subclaim}

    \begin{proof}
        Suppose that $|N_{P^x\cup P^y}(Q)|\geqslant 3$, say $u\in N_{P^x\cup P^y}(Q)\backslash\{x_1,y_1\}$. We can assume w.l.o.g. that $u\in V(P^x)$. There is a path $P'$ from a vertex $u' \in V(P) \backslash \{x_1,y_1\}$ to $u$ with all internal vertices in $Q$. Now let $P_1^x$ be the path obtained from $P^x$ by using $P[x_1,u']u'P'$ instead of $P^x[x_1,u]$. Then $P_1^x$ and $P^y$ are two disjoint paths of $G$ from $C$ to $\{x,y\}$, and $P[u',y_1]$ is a path from $P_1^x$ to $P^y$ in $G-\{z,z'\}$. Clearly $|P[u',y_1]|<|P|$, contradicting the choice of $P^x$ and $P^y$.
    \end{proof}
    
    Let $G_4=G[V(Q)\cup\{x_1,y_1\}]$. By Claim \ref{claim: NPxPyQ2}, $G_4+x_1y_1$ is 2-connected and every vertex of $G_4$ other than $x_1,y_1$ has the same degree as that in $G$. It follows that $G_4$ satisfies the conclusion (1), (2) or (3). If $G_4$ contains a cycle of length 2 modulo 4, then so does $G$, a contradiction. If $G_4$ contains two $(x_1,y_1)$-paths of lengths differing by 2 modulo 4, then together with the two paths $P^x[x_1,x]$ and $P^y[y_1,y]$, we get two $(x,y)$-path in $G$ of lengths differing by 2 modulo 4, a contradiction. Now we conclude that $G_4$ has construction $L_i(x_1,y_1)$ for some $i=1,\ldots,4$. It follows that $G_4$ contains an odd path from $P^x$ to $P^y$, contradicting Claim \ref{claim: pathx-y-even}.
\end{proof}

\begin{claim}\label{claim: x=x'orNx=x'}
    Either $x=x'$ or $N(x)=\{x'\}$. Similarly, either $y=y'$ or $N(y)=\{y'\}$.
\end{claim}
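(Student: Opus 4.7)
My plan is to prove Claim~\ref{claim: x=x'orNx=x'} by contradiction. By symmetry it suffices to handle $x$; I will assume $x\neq x'$ and $x$ has a neighbor $u\neq x'$, and derive a contradiction. First, I localize $u$: Claim~\ref{claim: evendistance} together with $|C|=4$ forces $N_C(H)=\{x',y'\}$, so no vertex of $H$ is adjacent to $z$ or $z'$, ruling out $u\in\{z,z'\}$; and $u=y'$ or $u\in V(P^y)$ would give an edge from $V(P^x)$ to $V(P^y)$ in $G-\{z,z'\}$, contradicting Claim~\ref{claim: nopathfromPxtoPy}. Hence $u\in V(H-w)\setminus V(P^y)$, and $u$ lies in the component $H_x$ of $G-\{z,z'\}$ containing $x$ and $x'$.

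Next, I reduce to the case $xx'\in E(G)$. Choose $P^x$ of minimum length. If $|P^x|\geq 2$, let $x_1\neq x'$ be the predecessor of $x$ on $P^x$; since $d_G(x_1)\geq 3$, a localization analogous to the above places an ``extra'' neighbor $u'$ of $x_1$ in $V(H_x)\setminus V(P^x\cup P^y)$, as any $u'\in V(P^x)$ outside the $P^x$-neighbors of $x_1$ would let us shortcut $P^x$ and contradict its minimum length. An alternative $(x',x)$-path through the edge $x_1u'$ together with a path from $u'$ back to $V(P^x)$ in $G-\{x,x_1\}$, combined with the original $P^x$, produces both a cycle through $x_1$ and two distinct $(x',x)$-paths. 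Combining these with the two $(x',y')$-paths of lengths $2$ and $3$ in $G[V(C)]$ (using $C$ and the chord $zz'$) and with $P^y$, a case-check on $|P^x|\bmod 4$ together with the length of the new cycle yields conclusion (1) or (2) in $G$, contradicting $G$'s status as a counterexample. Hence $xx'\in E(G)$.

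Finally, with $xx'\in E(G)$ and $u\in V(H_x)\setminus(\{x,x'\}\cup V(P^y))$, the $2$-connectivity of $G+xy$ makes $G-x$ connected, so there is an $(x',u)$-path $R$ in $G-x$. If $R$ were forced to use $\{z,z'\}$, then $N_G(x')\cap V(H_x)=\{x\}$, forcing $N_G(x')=\{x,z,z'\}$; but then $x'$ and $u$ would be separated in $G-x$ (since $x'$'s only $G-x$-neighbors $z,z'$ only reach $V(C)\cup V(H_y)$, while any route to $u\in V(H_x)\setminus\{x\}$ would have to pass through $x$), contradicting the connectivity of $G-x$. So $R$ can be chosen inside $H_x$, giving a cycle $C_R:=xx'\cup R\cup xu$ of length $|R|+2$ and a second $(x',x)$-path $R\cup xu$ of length $|R|+1$. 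Combining the two $(x',x)$-paths $xx'$ and $R\cup xu$ with the two $(x',y')$-paths of lengths $2$ and $3$ in $G[V(C)]$ and with $P^y$ produces four $(x,y)$-paths; a routine check on $|R|\bmod 4$ shows that for $|R|\not\equiv 0\bmod 4$ some pair of these lengths differs by $\equiv 2\bmod 4$, giving conclusion (2), while for $|R|\equiv 0\bmod 4$ we have $|C_R|\equiv 2\bmod 4$, giving conclusion (1); in either case we contradict our counterexample assumption. The argument for $y$ is entirely symmetric.

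The main technical obstacles are carrying out the Step~2 reduction rigorously (constructing the alternative $(x',x)$-path and handling all residues of $|P^x|$ modulo $4$), and ruling out the ``pendant-$x'$'' degenerate configuration in Step~3; both rely crucially on the $2$-connectivity of $G+xy$ together with the hypothesis that every vertex of $G$ other than $x,y$ has degree at least $3$.
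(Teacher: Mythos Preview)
Your Step~2 is where the argument breaks down. You assert that once you have the extra neighbor $u'$ of $x_1$ and ``a path from $u'$ back to $V(P^x)$ in $G-\{x,x_1\}$'', a case-check on $|P^x|\bmod 4$ together with the resulting cycle gives conclusion (1) or (2). But you neither justify the existence of that return path nor carry out the check, and in fact the check fails. Writing $a=|P^x|$, $b=|P^x[x',v]|$ (where $v$ is the landing point of the return path), and $c$ for the length of the return path, the two $(x',x)$-paths have lengths $a$ and $b+c+2$, and the cycle has length $a-b+c$. Combining with the $(x',y')$-paths of lengths $2,3$ and with $P^y$, the nontrivial length differences among the four $(x,y)$-paths are $b+c+1-a,\ b+c+2-a,\ b+c+3-a$. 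When $a-b-c\equiv 2\pmod 4$ none of these is $\equiv 2\pmod 4$, and if in addition $c$ is odd then $a-b+c\equiv 0\pmod 4$, so the cycle does not help either. Concretely, $a=3,\ b=0,\ c=1$ (i.e.\ $u'$ adjacent to both $x_1$ and $x'$) already kills the argument. Since your Step~3 genuinely requires $|P^x|=1$ (otherwise the path $R$ may meet $P^x$ and $R\cup ux$ need not be simple), the whole proof collapses here. Separately, in Step~3 the implication ``if every $(x',u)$-path in $G-x$ meets $\{z,z'\}$ then $N_G(x')\cap V(H_x)=\{x\}$'' is false as stated; what is true (and what you need) is that then the component of $H_x-x$ containing $u$ has $\{x\}$ as its full $G$-neighborhood, making $x$ a cut-vertex of $G+xy$.

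The paper avoids all of this with one structural observation you did not use: Claim~\ref{claim: nopathfromPxtoPy} together with $N_C(H)=\{x',y'\}$ forces $x'$ to be a cut-vertex of $G$ separating $x$ from $C-x'$. One then takes $G_5=G[V(H')\cup\{x'\}]$, where $H'$ is the component of $G-x'$ containing $x$, and applies the minimality of the counterexample $G$ to $G_5$ (with the pair $x,x'$). Conclusions (1) and (2) for $G_5$ transfer directly to $G$, while conclusion (3) gives two $(x,x')$-paths of lengths differing by $1$, which combined with the two $(x',y')$-paths in $G[V(C)]$ of lengths $2$ and $3$ and with $P^y$ yield two $(x,y)$-paths of lengths differing by $2$. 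This inductive use of minimality is exactly what lets the paper sidestep the ad hoc path-building you attempt in Step~2.
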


\begin{proof}
    Suppose that $x\neq x'$. By Claim \ref{claim: nopathfromPxtoPy}, $x'$ is a cut-vertex of $G$ separating $x$ with $C-x'$. Let $H'$ be the component of $G-x'$ containing $x$ and let $G_5=G[V(H')\cup\{x'\}]$. If $N(x)\neq\{x'\}$, then $|V(G_5)|\geqslant 3$, and every vertex of $G_5$ other than $x,x'$ has degree at least 3 in $G_5$. It follows that $G_5$ satisfies the conclusion (1), (2) or (3). If $G_5$ contains a cycle of length 2 modulo 4, then so does $G$, a contradiction. If $G_5$ contains two $(x,x')$-paths of lengths differing by 2 modulo 4, then together with $\overrightarrow{C}[x',y']y'P^yy$, we get two $(x,y)$-paths in $G$ of lengths differing by 2 modulo 4, a contradiction. Now we conclude that $G_5$ has construction $L_i(x,x')$ for some $i=1,\ldots,4$. It follows that $G_5$ contains two $(x,x')$-paths of lengths differing by 1. Notice that $G[V(C)]$ contains two $(x',y')$-paths of lengths differing by 1. It follows that $G$ contains two $(x,y)$-paths of lengths differing by 2, a contradiction.

    The second assertion can be proved similarly.
\end{proof}

By Claims \ref{claim: onecomponentH}, \ref{claim: Ceq4-noQ} and \ref{claim: x=x'orNx=x'}, we see that $G$ has construction $L_i(x,y)$ for some $i=1,\ldots,4$, a contradiction.
\end{proof}

\begin{lemma}\label{lemma: 2 mod 4 cycle}
    Let $G$ be a $2$-connected graph on at least $6$ vertices with minimum degree at least $3$. Then $G$ contains a cycle of length $2$ modulo $4$.
\end{lemma}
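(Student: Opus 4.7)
My plan is to apply Lemma~\ref{lemma: two paths differing by 2 mod 4} to $G$ with an arbitrary edge $xy\in E(G)$. Since $G+xy=G$ is $2$-connected and every vertex of $G$ has degree at least~$3$, the hypothesis of that lemma is satisfied. Among its three conclusions, case~(3) is ruled out immediately, because each of the graphs $L_i(x,y)$ has $\deg(x)\leqslant 2$, contradicting $\delta(G)\geqslant 3$. Case~(1) yields the desired $(2\bmod 4)$-cycle directly, so it remains to treat case~(2).

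In case~(2), $G$ contains two $(x,y)$-paths $P_1,P_2$ with $|P_2|-|P_1|\equiv 2\pmod 4$; in particular $|P_1|$ and $|P_2|$ have the same parity. Treating the edge $xy$ itself as a third $(x,y)$-path of length~$1$, I form three candidate cycles with lengths
\[
|P_1|+1,\qquad |P_2|+1,\qquad |P_1|+|P_2|.
\]
A direct residue calculation shows that at least one of these values is always $\equiv 2\pmod 4$: if $|P_1|$ is odd, then the first two are both even and differ by $2\pmod 4$, so one of them lies in the target class; if $|P_1|$ is even, then the third equals $2|P_1|+2\equiv 2\pmod 4$. This completes the argument whenever the three unions are genuine simple cycles, i.e.\ whenever $P_1,P_2\ne xy$ for the first two and $P_1,P_2$ are internally disjoint for the third.

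The main obstacle is to resolve the two degenerate configurations in which a candidate fails to be a simple cycle: (a)~$|P_1|=1$, so $P_1=xy$, and (b)~$|P_1|,|P_2|$ are both even but share an internal vertex. For (a), I plan to re-apply Lemma~\ref{lemma: two paths differing by 2 mod 4} to $G-xy$; its case~(3) now forces $G-xy\cong L_4(x,y)$, since smaller $L_i$ are excluded by $|V(G)|\geqslant 6$, and then $G=L_4+xy$ contains an explicit $6$-cycle, while cases~(1) and~(2) of that lemma either give the conclusion directly or yield a pair of $(x,y)$-paths of length at least~$2$, which feed back into the preceding computation. For (b), I exploit $2$-connectedness of $G$ together with Menger's theorem to extract two internally disjoint $(x,y)$-paths from the overlapping structure of $P_1,P_2$; by analysing the common segments of $P_1,P_2$ as a subdivided theta-configuration, I expect to preserve a length difference of $2\pmod 4$, or else exhibit a shorter $(2\bmod 4)$-cycle directly within the overlap. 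This extraction, crucially using $\delta(G)\geqslant 3$ and $|V(G)|\geqslant 6$, is the technical heart of the argument.
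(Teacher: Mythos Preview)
Your approach is genuinely different from the paper's, and the difference matters. The paper first invokes Theorem~\ref{thm: 3-con-2-consecutive-even-cycles} to reduce to the case where $G$ has a $2$-cut $\{x,y\}$, and then applies Lemma~\ref{lemma: two paths differing by 2 mod 4} to each side $G_i=G[V(Q_i)\cup\{x,y\}]$ separately. The point of working across a $2$-cut is that any $(x,y)$-path in $G_1$ is automatically internally disjoint from any $(x,y)$-path in $G_2$, so concatenations are honest cycles. Your choice of an edge $xy\in E(G)$ forfeits this structural guarantee, and that is precisely what creates your case~(b).

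Your handling of case~(b) is not a proof. You write that you will ``extract two internally disjoint $(x,y)$-paths from the overlapping structure of $P_1,P_2$'' via Menger and a ``subdivided theta-configuration,'' and that you ``expect to preserve a length difference of $2\pmod 4$.'' But $P_1\cup P_2$ need not be a theta graph, and there is no reason the two internally disjoint $(x,y)$-paths supplied by Menger in $G$ should inherit any length constraint from $P_1,P_2$. Nothing in $\delta(G)\geqslant 3$ or $|V(G)|\geqslant 6$ speaks to the residues of path lengths inside $P_1\cup P_2$. Your treatment of case~(a) is fine as far as it goes, but note that it only reduces to ``both paths have length $\geqslant 2$''; when those paths are even you are thrown straight back into~(b). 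So the entire argument hinges on a step you have not supplied. The paper's $2$-cut reduction is exactly the device that makes this step unnecessary.
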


\begin{proof}
    Assume that $G$ does not contain a $(2\bmod 4)$-cycle. By Theorem \ref{thm: 3-con-2-consecutive-even-cycles}, the connectivity of $G$ is 2, say $\{x,y\}$ is a vertex cut of $G$. Let $Q_1,Q_2$ be two of the components of $G-\{x,y\}$. We can choose the cut $\{x,y\}$ such that the component $Q_1$ is as small as possible. Denote by $G_i=G[V(Q_i)\cup \{x,y\}]$, where $i=1,2$. By $G$ being 2-connected, $G_i+xy$ is 2-connected, and every vertex of $G_i$ other than $x,y$ has degree at least 3 in $G_i$, $i=1,2$. Notice that $G_i$ contains no cycle of length 2 modulo 4. 
    
    We suppose first that $G_1$ contains two $(x,y)$-paths of lengths differing by 2 modulo 4, say $P_1, P_2$. If $G_2$ contains an odd cycle $C$, then $G_2$ contains two $(x,y)$-paths of different parities. Let $P$ be an $(x,y)$-path of $G_2$ such that $P,P_1$ have lengths of the same parity. Now $P\cup P_1$ and $P\cup P_2$ are two even cycles of lengths differing by 2 modulo 4, one of which is of length 2 modulo 4, a contradiction. Now we assume that $G_2$ contains no odd cycles, implying that $G_2$ is bipartite. By Theorem \ref{thm: 2 adm cycles}, $G_2$ contains two admissible cycles, which have consecutive even lengths, and one of which is a $(2\bmod 4)$-cycle, a contradiction. So we conclude that $G_1$ contains no two $(x,y)$-paths of lengths differing by 2 modulo 4.

    By Lemma \ref{lemma: two paths differing by 2 mod 4}, $G_1$ has construction $L_i(x,y)$ for some $i=1,\ldots,4$. By a similar analysis we can prove that $G_2$ has construction $L_j(x,y)$ for some $j=1,\ldots,4$. Recall that we choose $\{x,y\}$ such that $Q_1$ is as small as possible. So $G_1$ has construction $L_1(x,y)$ and it contains two $(x,y)$-paths of lengths 2 and 3, respectively. Also note that $G_2$ has an $(x,y)$-path of length either 3 or 4. It follows that $G$ contains a cycle of length 6, a contradiction.
\end{proof}

The following result will be used in our proof.

\begin{theorem}\label{thm: ChenSaitoDeanetal}
    Let $G$ be a $2$-connected graph with minimum degree at least $3$. Then
    \begin{itemize}
        \item[(1)] (Chen and Saito \cite{CS94}) $G$ has a cycle of length $0$ modulo $3$;
        \item[(2)] (Dean et al. \cite{D+91}) $G$ has a cycle of length $1$ modulo $3$ unless $G$ is a Petersen graph;
        \item[(3)] (Dean et al. \cite{D+91}) $G$ has a cycle of length $2$ modulo $3$ unless $G$ is isomorphic to $K_4$ or $K_{3,n-3}$.
    \end{itemize}
\end{theorem}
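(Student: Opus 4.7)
Part (1) is immediate from Theorem~\ref{thm: 0 mod 3}: a $2$-connected graph with $\delta(G)\geq 3$ trivially satisfies Chen and Saito's hypothesis that every vertex (with at most one exception) has degree at least $3$, and thus contains a $(0\bmod 3)$-cycle. For parts (2) and (3), my plan is a contradiction argument, first reducing to the $3$-connected case and then splitting on whether $G$ contains a triangle.

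Suppose $G$ is a counterexample to (2) or (3), and assume first that $G$ has a $2$-cut $\{x,y\}$ with components $Q_1,Q_2$ of $G-\{x,y\}$. Set $G_i=G[V(Q_i)\cup\{x,y\}]$. Then $G_i+xy$ is $2$-connected, and every vertex of $G_i$ other than $x,y$ has the same degree in $G_i$ as in $G$, hence at least $3$. By Theorem~\ref{thm: k adm paths} applied with $k=2$, each $G_i$ contains two admissible $(x,y)$-paths. The four concatenations of such paths through $\{x,y\}$ give cycles of $G$ whose lengths either form three consecutive integers (when at least one common difference is $1$) or form the arithmetic progression $a, a+2, a+4$ (when both common differences are $2$); in either case the three residues modulo $3$ are all attained, contradicting the assumption. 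Hence $G$ must be $3$-connected.

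Next, assume $G$ contains a triangle. Theorem~\ref{thm: 2-con-K3-k-cycles} applied with $k=3$ yields either $G\cong K_4$ or $3$ cycles of consecutive lengths in $G$, which again cover all residues modulo $3$. Since $K_4$ contains a $C_4\equiv 1\pmod 3$ but no $(2\bmod 3)$-cycle, $K_4$ is precisely the exception listed in (3) and is not a counterexample to (2). In the remaining case $G$ is $3$-connected, triangle-free, and has $\delta(G)\geq 3$. I would now take a shortest cycle $C$ (necessarily induced, of girth $g\geq 4$) and exploit $3$-connectivity to attach short paths off $C$, producing $\varTheta$-subgraphs whose three constituent cycles all have to avoid the forbidden residue modulo $3$. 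These congruence restrictions, iterated over all possible attachments and combined with the minimum-degree condition, force severe rigidity on the neighborhood structure of vertices both on and off $C$.

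The main obstacle is the final classification step. For (3) one must show that every triangle-free $3$-connected graph with $\delta\geq 3$ avoiding $(2\bmod 3)$-cycles is isomorphic to some $K_{3,n-3}$: the girth must be pinned at $4$, and then the bipartition of $G$ must be shown to have a side of size exactly $3$. For (2) the analysis is considerably more delicate: one must first prove that $G$ is cubic (otherwise a high-degree vertex provides enough $\varTheta$-subgraphs to force a $(1\bmod 3)$-cycle), then that its girth equals $5$ (since $(1\bmod 3)$-lengths $4,7,10,\ldots$ are all forbidden), and finally identify $G$ with the Petersen graph via the classical uniqueness of the $3$-regular graph of girth $5$ on $10$ vertices whose cycle-length spectrum is contained in $\{5,6,8,9\}$. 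This Petersen-identification is the technical heart of the proof and the reason part (2) requires the detailed case analysis carried out by Dean, Kaneko, Ota and Toft in~\cite{D+91}.
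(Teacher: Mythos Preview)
The paper does not prove this theorem at all: it is stated as a cited result from \cite{CS94} and \cite{D+91} and used as a black box (see the sentence ``The following result will be used in our proof'' immediately preceding it). So there is no ``paper's own proof'' to compare against.

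That said, your reductions are sound and neatly reuse the paper's toolkit. The $2$-cut argument via Theorem~\ref{thm: k adm paths} with $k=2$ is correct (the four concatenated cycle lengths do hit all three residues modulo $3$ in every case), and the triangle case via Theorem~\ref{thm: 2-con-K3-k-cycles} with $k=3$ is correct as well. These two steps genuinely reduce the problem to the $3$-connected triangle-free case, which is a cleaner starting point than the original papers had available.

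However, your proposal does not actually prove that remaining case; it only outlines what must be shown (girth pinned at $4$ and a part of size $3$ for (3); cubic, girth $5$, then Petersen-uniqueness for (2)) and then explicitly defers to \cite{D+91}. That is the entire content of the theorem once the easy reductions are done, so as written this is a proof \emph{plan} rather than a proof. If your goal is a self-contained argument, the $\varTheta$-graph congruence analysis you allude to needs to be carried out in full; if your goal is just to record why the statement holds, a bare citation to \cite{CS94,D+91}, as the paper does, is the honest option.
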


Now we are ready to prove Theorems \ref{thm: l mod k min degree k} and \ref{thm: l mod k min degree k-1}.

\begin{proof}[\bf Proof of Theorem \ref{thm: l mod k min degree k}] 
    If $k$ is odd, then the existence of $k$ admissible cycles implies the existence of $(\ell\bmod k)$-cycles for all even $\ell$. The assertion follows by Theorem \ref{thm: k adm cycles min degree k}. So assume that $k$ is even. By Theorem \ref{thm: 0 mod 4 min deg 2} and Lemma \ref{lemma: 2 mod 4 cycle}, the assertion holds for $k=4$. So assume that $k\geqslant 6$. 
    
    Assume that $G$ does not contain an $(\ell\bmod k)$-cycle for some even $\ell$ and $G$ is isomorphic to neither $K_{k+1}$ nor $K_{k,n-k}$. By Lemma \ref{lemma: high connectivity no l mod k cycle}, the connectivity of $G$ is at least $(k+2)/2\geqslant 4$. By Theorem \ref{thm: k adm cycles min degree k}, $G$ contains $k$ admissible cycles whose lengths form an arithmetic progression of common difference two. If $G$ is non-bipartite, then a contradiction is caused by Theorems \ref{thm: 2-con-K3-k-cycles} and \ref{thm: 3-con-noK3-k-cycles}. So $G$ is bipartite. Recall that $k$ and $\ell$ are even. It clearly holds that any $k$ admissible cycles in a bipartite graph contain cycles of all even lengths modulo $k$, a contradiction.  
\end{proof}

\begin{proof}[\bf Proof of Theorem \ref{thm: l mod k min degree k-1}]
    Let $G$ be a counterexample to the theorem. By Theorem \ref{thm: 0 mod 4 min deg 2} and Lemma \ref{lemma: 2 mod 4 cycle}, $k\geqslant 6$. By Lemma \ref{lemma: high connectivity no l mod k cycle}, the connectivity of $G$ is at least $k/2\geqslant 3$. Let $H$ be a bipartite subgraph of $G$ such that the number of edges of $H$ is maximal. 
    It is not difficult to see that $H$ is connected and has minimum degree at least $\lceil(k-1)/2\rceil=k/2$. 

\setcounter{claim}{0}
\begin{claim}\label{claim: HNot2Connected}
    $H$ is not $2$-connected.    
\end{claim}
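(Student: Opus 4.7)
My plan is to assume, for contradiction, that $H$ is $2$-connected, and then produce cycles of $G$ whose lengths cover every even residue modulo $k$. Because $H$ is bipartite, every cycle of $H$ is even, so any family of $k/2$ admissible cycles of $H$ automatically has common difference $2$ and lengths $2m, 2m+2, \ldots, 2m+k-2$; this arithmetic progression meets every even residue modulo $k$, so it is enough to exhibit $k/2$ admissible cycles inside $H$.

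I will treat the cases $k \geq 8$ and $k = 6$ in parallel, since both reduce to the same extremal configuration. When $k \geq 8$, $\delta(H) \geq k/2 \geq 4$, and Theorem~\ref{thm: k adm cycles min degree k} applied to $H$ with parameter $k/2$ delivers either the desired $k/2$ admissible cycles of $H$, or $H \cong K_{k/2+1}$, or $H \cong K_{k/2, n-k/2}$; the first exception is ruled out since $K_{k/2+1}$ is non-bipartite. When $k=6$, $\delta(H) \geq 3$, and I would instead invoke Theorem~\ref{thm: ChenSaitoDeanetal} on $H$: its three parts yield cycles of $H$ of lengths $\equiv 0, 1, 2 \pmod 3$ with exceptions being the Petersen graph, $K_4$, and $K_{3,n-3}$; the first two are non-bipartite, and in the bipartite $H$ the three mod-$3$ residues correspond modulo $6$ to $0$, $4$, $2$ respectively, so every even residue mod $6$ is realised unless $H \cong K_{3, n-3} = K_{k/2, n-k/2}$.

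In both regimes the remaining case is the extremal configuration $H \cong K_{k/2, n-k/2}$. Here $H$ is spanning with bipartition $(S, T)$, $|S| = k/2$, $|T| = n - k/2 \geq k/2 + 2$, and $H$ already realises every even residue modulo $k$ except $2 \bmod k$ via its cycles of lengths $4, 6, \ldots, k$. To find a $(2 \bmod k)$-cycle of $G$ I will use the extra edges in $G[T]$: since $d_H(t) = k/2$ for $t \in T$ while $\delta(G) \geq k-1$, we get $d_{G[T]}(t) \geq k/2 - 1 \geq 2$; combined with $|T| \geq 5$, a short argument (no single vertex can cover every edge in a graph of minimum degree $\geq 2$ on at least four vertices) produces two disjoint edges $t_1 t_2$ and $t_3 t_4$ in $G[T]$. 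Inside $K_{k/2, n-k/2}$ I then pick a $(t_2, t_3)$-path $P_1$ of length $2$ and a $(t_4, t_1)$-path $P_2$ of length $k-2$, internally vertex-disjoint from each other and from $\{t_1, t_2, t_3, t_4\}$, together consuming all $k/2$ vertices of $S$ and $k/2 - 2$ further vertices of $T$; this fits since $|T| - 4 \geq k/2 - 2$. Concatenating $t_1 t_2$, $P_1$, $t_3 t_4$, $P_2$ yields a cycle of length $k+2 \equiv 2 \pmod k$, the required contradiction.

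The main obstacle is the extremal case $H \cong K_{k/2, n-k/2}$: $H$ itself fails to realise residue $2 \bmod k$, so the proof must descend into $G \setminus H$ and exploit the surplus edges inside $T$ that $\delta(G) \geq k-1$ forces. The vertex budget for routing the two $H$-paths disjointly is tight, so the hypothesis $n \geq k+2$ (giving $|T| \geq k/2 + 2$) is used in an essentially nonnegotiable way.
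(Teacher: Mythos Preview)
Your proof is correct and follows essentially the same approach as the paper: reduce via Theorem~\ref{thm: k adm cycles min degree k} (for $k\geq 8$) and Theorem~\ref{thm: ChenSaitoDeanetal} (for $k=6$) to the extremal configuration $H\cong K_{k/2,n-k/2}$, and then use the surplus edges inside the large side $T$ forced by $\delta(G)\geq k-1$ to build a $(k+2)$-cycle. The only difference is cosmetic: in the extremal case the paper takes a single vertex $y\in T$ with two $T$-neighbours $y_1,y_2$ and closes the cycle with a $(y_1,y_2)$-path of length $k$ in $H-y$, whereas you use two disjoint $T$-edges and two $H$-paths of lengths $2$ and $k-2$; both consume exactly $|S|=k/2$ vertices of $S$ and exactly $k/2+2\leq |T|$ vertices of $T$, so the vertex budget is identical and the hypothesis $n\geq k+2$ is used in the same way.
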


\begin{proof}
    Assume the opposite that $H$ is 2-connected. If $k=6$, then by Theorem~\ref{thm: ChenSaitoDeanetal}, $H$ has cycles of lengths 0, 1, 2 modulo 3 (which are of lengths 0, 4, 2 modulo 6 since $H$ is bipartite), or $H\cong K_{3,n-3}$. If $k\geqslant 8$, then by Theorem~\ref{thm: k adm cycles min degree k}, $H$ contains $k/2$ admissible cycles (which are of lengths $\ell$ modulo $k$ for all even $\ell$) or $H\cong K_{k/2,n-k/2}$. 
    
    Suppose now $H\cong K_{k/2,n-k/2}$, say with bipartition $(X,Y)$ such that $|X|=k/2$ and $|Y|=n-k/2$. This implies the existence of $(\ell\bmod k)$-cycles for any even $\ell\not\equiv 2\bmod k$. By $n\geqslant k+2$, we have $|Y|\geqslant k/2+2$. Since $\delta(G)\geqslant k-1$, there exists $y\in Y$ that has $k/2-1\geqslant 2$ neighbors in $G[Y]$ (recall that $k\geqslant 6$). Let $y_1$ and $y_2$ be two neighbors of $y$ in $Y$. By $H\cong K_{k/2,n-k/2}$ and $n\geqslant k+2$, there exists a $(y_1,y_2)$-path of length $k$ in $H-y$. Together with the edges $yy_1$ and $yy_2$, we get a cycle of length 2 modulo $k$ in $G$, a contradiction. 
\end{proof}

\begin{claim}\label{claim: B-z2connected}
    Let $B$ be an end block of $H$ and $c$ the cut-vertex of $H$ contained in $B$. Then $B-c$ is $2$-connected.
\end{claim}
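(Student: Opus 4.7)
The plan is to argue by contradiction. Suppose $B - c$ has a cut-vertex $c'$, so that $\{c, c'\}$ is a $2$-cut of $B$. I first note that $|V(B)| \geq 4$: every vertex of $V(B) \setminus \{c\}$ has all its $H$-neighbours inside $B$ (since $B$ is an end block), so $\delta(H) \geq k/2 \geq 3$ forbids $B$ from being a single edge, making $B$ itself $2$-connected. I then pick two distinct components $Q_1, Q_2$ of $B - \{c, c'\}$. The $2$-connectivity of $B$ forces both $c$ and $c'$ to have a neighbour in each $Q_i$, since otherwise a single vertex from $\{c, c'\}$ would already separate $Q_i$ from the rest of $B$.

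Next I set $H_i := H[V(Q_i) \cup \{c, c'\}]$, and if $cc' \in E(H)$ I further replace $H_i$ by $H_i - cc'$ so that no $(c, c')$-path of length $1$ appears. I then verify that $H_i + cc'$ is $2$-connected: removing $c$ (resp.\ $c'$) leaves $Q_i$ connected and still attached through $c'$ (resp.\ $c$); removing any $v \in Q_i$ keeps $(H_i + cc') - v$ connected because $B - v$ is connected and any path there from $Q_i \setminus \{v\}$ to $Q_{3-i}$ must enter $\{c, c'\}$ first. Every vertex of $H_i$ other than $c, c'$ still has degree $\geq k/2$ in $H_i$, so Theorem~\ref{thm: k adm paths} yields $k/2 - 1$ admissible $(c, c')$-paths of length $\geq 2$ in each $H_i$; since $H$ is bipartite all $(c, c')$-paths share the same parity, so the common difference is forced to be $2$.

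Finally I concatenate the two families through Lemma~\ref{lemma: concatenating paths}(1) with $X = \{c\}$ and $Y = \{c'\}$ — the hypothesis $b_1 \geq 2$ is exactly what the previous step secured. This produces $(k/2 - 1) + (k/2 - 1) - 1 = k - 3$ admissible cycles of common difference $2$ in $G$. Since $k \geq 6$ gives $k - 3 \geq k/2$, those $k - 3$ consecutive even integers hit every even residue modulo $k$, contradicting the standing assumption that $G$ misses an $(\ell \bmod k)$-cycle for some even $\ell$. The main technical point I anticipate is carefully verifying the $2$-connectivity of $H_i + cc'$ and handling the borderline case $cc' \in E(H)$, where the edge-removal trick in the definition of $H_i$ is exactly what ensures the concatenation step outputs $k - 3$ genuinely distinct cycles rather than degenerate closed walks.
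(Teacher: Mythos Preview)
Your proof is correct and follows essentially the same approach as the paper: assume a cut-vertex $c'$ of $B-c$, apply Theorem~\ref{thm: k adm paths} in each piece $H_i$ to obtain $k/2-1$ admissible $(c,c')$-paths, and concatenate to get $k-3\geqslant k/2$ admissible even cycles covering all even residues modulo $k$. Your extra care in removing the edge $cc'$ when present is harmless but unnecessary, since admissible paths have length at least $2$ by definition, so the concatenation already yields genuine cycles; the paper simply asserts $H_i+cz$ is $2$-connected and proceeds directly.
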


\begin{proof}
    Suppose that $B-c$ has a cut-vertex $z$. Let $Q_1$, $Q_2$ be two components of $B-\{c,z\}$, and for $i=1,2$, let $H_i=H[V(Q_i)\cup\{c,z\}]$. Note that $H_i+cz$ is 2-connected and every vertex in $Q_i$ has degree at least $k/2$ in $H_i$. By Theorem \ref{thm: k adm paths}, $H_i$ contains $k/2-1$ admissible $(c,z)$-paths, $i=1,2$. It follows that $B$ contains $k-3\geqslant k/2$ admissible cycles, which are even cycles since $H$ is bipartite. Thus, $G$ contains a cycle of length $\ell$ modulo $k$, a contradiction.
\end{proof}

\begin{claim}\label{claim: OneCutVertex}
    $H$ has exactly one cut-vertex.
\end{claim}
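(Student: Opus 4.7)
The plan is to derive a contradiction by assuming $H$ has at least two cut-vertices and then constructing enough admissible even cycles in $G$ to cover every even residue modulo $k$. First, I would argue from the block-cut tree of $H$ that if $H$ has at least two cut-vertices then there exist two end blocks $B_1,B_2$ with distinct cut-vertices $c_1,c_2$: otherwise every end block would share a single cut-vertex, which together with connectivity forces $H$ to have exactly one cut-vertex. By Claim~\ref{claim: B-z2connected} each $B_i-c_i$ is $2$-connected, hence $B_i$ itself is $2$-connected on at least four vertices and $d_{B_i}(v)\geq k/2$ for every $v\in V(B_i)\setminus\{c_i\}$.

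Next, Theorem~\ref{thm: k adm paths 2} applied to $B_i$ with $c_i$ as the permitted low-degree exception vertex should yield, for each $u_i\in V(B_i)\setminus\{c_i\}$, a family of $k/2-1$ admissible $(c_i,u_i)$-paths of common difference $2$ (since $B_i$ is bipartite). I would then let $\hat H=H-(V(B_1-c_1)\cup V(B_2-c_2))$, which is connected and contains both $c_1,c_2$, and fix a $(c_1,c_2)$-path $Q$ in $\hat H$. Using that $G$ is $(k/2)$-connected with $k/2\geq 3$, I would exhibit a $(u_1,u_2)$-path $P$ in $G$ with $u_1\in V(B_1)\setminus\{c_1\}$, $u_2\in V(B_2)\setminus\{c_2\}$, whose interior avoids $V(B_1)\cup V(B_2)\cup V(Q)$; if no interior room is left when $\hat H$ coincides with $Q$, I would invoke instead a direct edge of $E(G)\setminus E(H)$ between $V(B_1)\setminus\{c_1\}$ and $V(B_2)\setminus\{c_2\}$, whose existence is forced by the $(k/2)$-connectivity of $G$ together with the fact that $c_1$ alone cannot separate $B_1-c_1$ from the rest of $G$.

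Setting $H^\star=B_1\cup Q\cup B_2$, the $(k/2-1)\times(k/2-1)$ combinations of admissible $B_1$- and $B_2$-paths joined through $Q$ supply $k-3$ admissible $(u_1,u_2)$-paths in $H^\star$ whose lengths form an arithmetic progression of common difference $2$. Applying Lemma~\ref{lemma: concatenating paths}(1) with $s=k-3$ internal paths and the single external connector $P$ produces $k-3$ admissible cycles in $G$ of common difference $2$. Choosing the parts of $u_1$ and $u_2$ in the bipartitions of $B_1$ and $B_2$ judiciously makes all these cycles even; since $k-3\geq k/2$ for $k\geq 6$, they realize every even residue modulo $k$, contradicting our standing assumption.

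The hard part will be securing the external connector $P$ with the required vertex-disjointness and the correct parity. Both points are handled by leveraging the $(k/2)$-connectivity of $G$, the freedom to choose $Q$ among several $(c_1,c_2)$-paths in $\hat H$ and $u_i$ in either part of $B_i$'s bipartition, and, in degenerate configurations, the structural fact that every edge of $E(G)\setminus E(H)$ lies within a single part of the bipartition of $H$.
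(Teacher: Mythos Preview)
Your overall strategy --- assume two cut-vertices, take two disjoint end blocks $B_1,B_2$, and build $k-3\geq k/2$ admissible even cycles by combining $k/2-1$ admissible paths in each $B_i$ --- is the right shape, and it matches the paper's approach in outline. However, the way you propose to close the cycle is where the argument breaks down.

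You route one side of the cycle through $H$ via $u_1\to c_1\to Q\to c_2\to u_2$ and then ask for an external connector $P$ from $u_2$ back to $u_1$ whose interior avoids $V(B_1)\cup V(B_2)\cup V(Q)$. Three-connectivity of $G$ does \emph{not} hand you such a $P$: it gives disjoint $(B_1,B_2)$-paths, but nothing prevents every such path from meeting your pre-chosen $Q$ inside $\hat H$. Your fallback --- a direct edge of $E(G)\setminus E(H)$ from $B_1-c_1$ to $B_2-c_2$ --- is also not forced; $G$ may achieve $3$-connectivity entirely through extra edges from $B_1-c_1$ and $B_2-c_2$ into the middle piece $\hat H$, with no edge jumping straight across. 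The ``freedom to choose $Q$'' does not obviously help, since $P$ must be found \emph{after} $Q$ (and after $u_1,u_2$) are fixed, and vice versa. The parity step is entangled with the same circularity: you cannot adjust the bipartition classes of $u_1,u_2$ after $P$ has been found, because $P$ has those specific endpoints.

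The paper sidesteps both problems with one move. Instead of anchoring one side of the cycle to the $H$-path through $c_1,c_2$, it uses $3$-connectivity to take \emph{three} pairwise disjoint $(B_1,B_2)$-paths $P_0^1,P_0^2,P_0^3$ in $G$, links their endpoints inside $B_1$ and inside $B_2$ to form a $\varTheta$-graph, and extracts an even cycle from it. That even cycle necessarily uses exactly two of the three paths, say $P^1,P^2$, and the evenness forces $B_1\cup B_2\cup P^1\cup P^2$ to be bipartite. Now the endpoints $u_i^1,u_i^2$ of $P^1,P^2$ in $B_i$ are already given, and Theorem~\ref{thm: k adm paths 2} applied in $B_i$ (with $c_i$ as the third exceptional vertex) yields $k/2-1$ admissible $(u_i^1,u_i^2)$-paths. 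Concatenating produces $k-3$ admissible cycles that are automatically even. The disjointness of $P^1,P^2$ and the parity both come for free from the $\varTheta$-graph argument; this is the idea your proposal is missing.
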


\begin{proof}
    If $H$ has at least two cut vertices, then $H$ contains two disjoint end blocks. Let $B_1$, $B_2$ be two disjoint end blocks of $H$, and let $c_1,c_2$ two cut vertices of $H$ contained in $B_1,B_2$, respectively.

    \begin{subclaim}\label{claim: DisjointPathFromB1toB2}
        $G$ contains two disjoint paths $P^1,P^2$ from $B_1$ to $B_2$ such that $B_1\cup B_2\cup P^1\cup P^2$ is bipartite.
    \end{subclaim}

    \begin{proof}
        Recall that $G$ is 3-connected. Let $P_0^1,P_0^2,P_0^3$ be three disjoint paths from $B_1$ to $B_2$. Let $u_i^j$ be the end-vertex of $P_0^j$ in $B_i$, $j=1,2,3$, $i=1,2$. Then $B_i$ contains three internally disjoint paths from some vertex $u_i\in V(B_i)$ to $u_i^1, u_i^2, u_i^3$, respectively, say $P_i^j$, $j=1,2,3$, $i=1,2$. Now $\bigcup\{P_i^j: i\in\{0,1,2\}, j\in\{1,2,3\}\}$ is a $\varTheta$-graph, which contains an even cycle. We assume w.l.o.g. that $u_1P_1^1u_1^1P_0^1u_2^1P_2^1u_2P_2^2u_2^2P_0^2u_1^2P_1^2u_1$ is an even cycle. Since both $B_1$ and $B_2$ are bipartite, we see that $B_1\cup B_2\cup P_0^1\cup P_0^2$ is bipartite, as desired.
    \end{proof}

    Let $P^1,P^2$ be two disjoint paths of $G$ from $B_1$ to $B_2$ as in Claim \ref{claim: DisjointPathFromB1toB2}, and let $u_i^j$ be the end-vertex of $P^j$ in $B_i$, $j=1,2$, $i=1,2$. Set $G_1=B_1\cup B_2\cup P^1\cup P^2$. Note that every vertex of $B_i$ other than $c_i$ has degree at least $k/2$. By Theorem \ref{thm: k adm paths 2}, $B_i$ contains $k/2-1$ admissible $(u_i^1,u_i^2)$-paths. It follows that $G_1$ contains $k-3\geqslant k/2$ admissible cycles, which are $(\ell\bmod k)$-cycles for all even $\ell$ since $G_1$ is bipartite, a contradiction.
\end{proof}

By Claims \ref{claim: HNot2Connected} and \ref{claim: OneCutVertex}, we can assume that $c$ is the unique cut-vertex of $H$. So $c$ is contained in every block of~$H$.

\begin{claim}\label{claim: TwoBlocks}
    $H$ has exactly two blocks.
\end{claim}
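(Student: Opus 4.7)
The plan is to derive a contradiction by exhibiting cycles of every even residue modulo~$k$. Suppose $H$ has at least three blocks $B_1, B_2, B_3$, all meeting at the unique cut-vertex~$c$. Since the connectivity of $G$ is at least $k/2 \geq 3$, the graph $G - c$ is at least $(k/2-1)$-connected, hence $2$-connected for $k = 6$ and $3$-connected for $k \geq 8$.

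First I would locate pairwise internally disjoint paths $P_1, P_2$ (and $P_3$ when $k \geq 8$) in $G - c$ from $V(B_1) - c$ to $V(B_2) - c$ whose interior vertices lie in $V(G) \setminus V(B_1 \cup B_2)$. This is feasible because $V(B_3) - c$ provides the needed routing space: if a single vertex $r \notin V(B_1 \cup B_2)$ separated $V(B_1) - c$ from $V(B_2) - c$ in $G - c$, then $\{c, r\}$ would form a $2$-cut of $G$, contradicting $3$-connectivity. Label the endpoints of $P_i$ by $u_i \in V(B_1) - c$ and $v_i \in V(B_2) - c$.

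The key input is the maximality of $H$: every edge of $E(G) \setminus E(H)$ is monochromatic under the bipartition of $H$, since otherwise adding it would yield a larger bipartite subgraph. Writing $c_x \in \{0, 1\}$ for the color of $x$, the number of $H$-edges on any $(x, y)$-path in $G$ is $\equiv c_x + c_y \pmod 2$. A short parity computation then shows that for any $(u_i, u_j)$-path $R_1$ in $B_1$ and $(v_i, v_j)$-path $R_2$ in $B_2$, the cycle $R_1 \cup P_i \cup R_2 \cup P_j$ has length
\[
|R_1| + |R_2| + |P_i| + |P_j| \equiv m_{P_i} + m_{P_j} \pmod{2},
\]
where $m_P$ counts the non-$H$ edges in $P$. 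With three disjoint paths at hand, a pigeonhole choice of $P_i, P_j$ with $m_{P_i} \equiv m_{P_j} \pmod 2$ forces the cycle to be even. I would then invoke Theorem~\ref{thm: k adm paths 2} on each of $B_1, B_2$ (with $c$ as the degree-exceptional vertex) to obtain $k/2 - 1$ admissible $(u_i, u_j)$-paths in $B_1$ and $(v_i, v_j)$-paths in $B_2$, all of a single parity by bipartiteness, hence forming arithmetic progressions with common difference~$2$. Combining these via the Minkowski-sum reasoning of Lemma~\ref{lemma: concatenating paths} yields $2(k/2 - 1) - 1 = k - 3 \geq k/2$ admissible even cycle lengths in an arithmetic progression with common difference~$2$, covering every even residue modulo~$k$ and giving the desired contradiction.

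The main obstacles are twofold. First, the cycle $R_1 \cup P_i \cup R_2 \cup P_j$ must be simple, and the only possible collision is that $c$ lies in both $R_1$ and $R_2$; I would avoid this by restricting $R_2$ to lie inside $B_2 - c$, which is $2$-connected by Claim~\ref{claim: B-z2connected}, and applying Theorem~\ref{thm: k adm paths} there at the cost of one admissible length, leaving still enough cycles to cover every even residue modulo~$k$ for $k \geq 8$. Second, the parity pigeonhole demands three disjoint paths, guaranteed only when $k \geq 8$; the borderline case $k = 6$ would need separate treatment, for instance by a direct appeal to Theorem~\ref{thm: ChenSaitoDeanetal} on a block $B_i$ of minimum degree at least~$3$ (yielding cycles of all three residues modulo~$3$, equivalently all even residues modulo~$6$, inside $B_i$), or a small case analysis of the low-degree configurations of $c$ across the three blocks.
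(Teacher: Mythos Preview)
Your approach differs substantially from the paper's, and while the parity framework you set up is correct, the $k=6$ case is a genuine gap. The paper's argument is simpler and uniform in $k$: instead of running disjoint $(B_1-c,B_2-c)$-paths through the rest of $G-c$ and pigeonholing on parity, it selects one block $B_0$ to serve only as a \emph{bridge}. Using the $3$-connectivity of $G$, it finds two nonadjacent edges $u_1v_1,u_2v_2$ from $B_0-c$ to $B_1-c$ and $B_2-c$, takes a path $P\subset B_0-c$ between $u_1,u_2$, and observes that $G_2=B_1\cup B_2\cup P\cup\{u_1v_1,u_2v_2\}$ is automatically bipartite (flip the $H$-colours on $V(P)$; the two monochromatic edges $u_iv_i$ then become bichromatic). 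Cycles are assembled from $(v_1,c)$-paths in $B_1$ and $(c,v_2)$-paths in $B_2$, closed through $c$ on one side and through the bridge $v_1u_1Pu_2v_2$ on the other. Theorem~\ref{thm: k adm paths} yields $k/2-1$ admissible lengths in each $B_i$, hence $k-3\geq k/2$ admissible even cycles, with no parity pigeonhole and no simplicity issue since $c$ is visited exactly once.

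Your argument for $k\geq 8$ is essentially fine: three disjoint paths exist, two agree in $m_P$-parity, and after confining $R_2$ to $B_2-c$ you still obtain $k-4\geq k/2$ even lengths. For $k=6$, however, only two disjoint paths are guaranteed and your pigeonhole collapses. Neither proposed backup closes the gap: Theorem~\ref{thm: ChenSaitoDeanetal} needs $\delta(B_i)\geq 3$, but $c$ may well have degree exactly $2$ in every block (this is compatible with $d_H(c)\geq 3$ once there are three or more blocks), and even where it applies the exceptional case $B_i\cong K_{3,m}$ produces no $(2\bmod 6)$-cycle. The ``small case analysis'' is not an argument. Replacing your two-sided construction by the paper's route-through-$c$ trick eliminates all of these difficulties at once.
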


\begin{proof}
    Suppose that $H$ has at least three blocks. Since $G$ is 3-connected, there exist blocks $B_0,B_1,B_2$ such that $E(G)\backslash E(H)$ contains two nonadjacent edges from $B_0-c$ to $B_1-c$ and $B_2-c$, respectively. Let $u_1v_1, u_2v_2$ be such two edges, and let $P$ be a path of $B_0-c$ from $u_1$ to $u_2$. 
    Let $G_2:=B_1\cup B_2\cup P\cup \{u_1v_1, u_2v_2\}$. For any path $P'$ in $B_1\cup B_2$ from $v_1$ to $v_2$, the cycle $u_2Pu_1v_1P'v_2u_2$ has all but exactly two edges in $H$ and thus is even. This implies that $G_2$ is bipartite. By Theorem \ref{thm: k adm paths}, $B_i$ contains $k/2-1$ admissible $(v_i,c)$-paths, for $i=1,2$. It follows that $G_2$ contains $k-3\geqslant k/2$ admissible cycles, which are $(\ell\bmod k)$-cycles for all even $\ell$, a contradiction.
\end{proof}

By Claim \ref{claim: TwoBlocks}, we can assume that $B_1$, $B_2$ are the two blocks of $H$. Recall that $G$ is 3-connected, there are two non-adjacent edges in $E(G)\backslash E(H)$ from $B_1-c$ to $B_2-c$, say $u_1u_2$, $v_1v_2$ with $u_1,v_1\in V(B_1)\backslash\{c\}$, $u_2,v_2\in V(B_2)\backslash\{c\}$. Let $H'=(B_1-c)\cup B_2\cup \{u_1u_2,v_1v_2\}$. For any two paths $P_1$ in $B_1-c$ from $u_1$ to $v_1$ and $P_2$ in $B_2$ from $u_2$ to $v_2$, the cycle $u_1P_1v_1v_2P_2u_2u_1$ is even. This implies that $H'$ is bipartite. By Claim \ref{claim: B-z2connected}, $B_1-c$ is 2-connected. It follows that $H'$ is 2-connected.

Suppose first that $k\geqslant 8$. Note that every vertex in $B_1-c$ has degree at least $k/2-1$. By Theorem \ref{thm: k adm paths}, $B_1-c$ contains $k/2-2$ admissible $(u_1,v_1)$-paths. Note that every vertex of $B_2$ other than $c$ has degree at least $k/2$ in $B_2$. By Theorem \ref{thm: k adm paths 2}, $B_2$ contains $k/2-1$ admissible $(u_1,v_1)$-paths. It follows that $H'$ contains $(k/2-2)+(k/2-1)-1=k-4\geqslant k/2$ admissible cycles, which are of length $\ell$ modulo $k$ for all even $\ell$, a contradiction. Thus, we conclude that $k=6$.

If $d_{B_1}(c)=2$, then $e(H')=e(H)$. Recall that $H'$ is 2-connected. By the analysis as in Claim \ref{claim: HNot2Connected}, $G$ contains cycles of lengths $\ell$ modulo $k$, a contradiction. So we conclude that $d_{B_1}(c)\geqslant 3$ and analogously $d_{B_2}(c)\geqslant 3$. That is, $\delta(B_1)\geqslant 3$ and $\delta(B_2)\geqslant 3$. 

By Theorem \ref{thm: ChenSaitoDeanetal}, $B_i$ ($i=1,2$) contains cycles of lengths 0, 1, 2 modulo 3 (which are of lengths 0, 4, 2 modulo~6), unless $B_i\cong K_{3,n_i-3}$, in which case $B_i$ contains no $(2\bmod 6)$-cycle, where $n_i=|V(B_i)|\geq 6$. Let $B'_1$ be a 4-cycle in $B_1-c$ containing $u_1,v_1$, and $B'_2$ be a $K_{3,3}$ in $B_2$ containing $u_2,v_2$. So $H''=B'_1\cup B'_2\cup\{u_1u_2,v_1v_2\}$ is a subgraph of $H'$. One can check that $H''$ contains a 8-cycle, a contradiction.
\end{proof}

\section{Cycles of lengths $\ell$ modulo $k$ for all $\ell$}\label{section: all ell}

In this section, we prove Theorem \ref{thm: l mod k min degree non-bi}. The following lemma is needed.

\begin{lemma}[Gao, Li, Ma and Xie \cite{GLMX}]\label{lemma: G-C is con}
    Let $G$ be a $3$-connected graph and $D$ a connected subgraph of $G$ such that $G-V(D)$ contains an even cycle. If $C$ is an even cycle of $G-V(D)$ such that the component of $G-V(C)$ containing $D$ is maximal, then $G-V(C)$ is connected. Moreover, such a $C$ has at most one chord or $G[V(C)]$ is isomorphic to $K_4$, and if $u_0v_0$ is a chord of $C$, then $d_C(u_0,v_0)$ is even.
\end{lemma}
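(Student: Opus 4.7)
The plan is to prove the three assertions -- that $G - V(C)$ is connected, that $C$ has at most one chord unless $G[V(C)] \cong K_4$, and that every chord of $C$ joins vertices at even $C$-distance -- by a uniform contradiction strategy. In each case, the failure of the assertion lets us build an even cycle $C' \subseteq G - V(D)$ whose removal leaves the component of $G - V(C')$ containing $D$ strictly larger than the component $H$ of $G - V(C)$ containing $D$, contradicting the chosen maximality of $C$. Throughout, the 3-connectivity of $G$ gives $|N_C(H)| \geq 3$, and similarly $|N_C(Q)| \geq 3$ for any other component $Q$ of $G - V(C)$.

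I would tackle chord parity first, as it is the cleanest. Assume a chord $u_0 v_0$ of $C$ satisfies $d_C(u_0, v_0)$ odd; then, since $|C|$ is even, both arcs $A, B$ of $C$ between $u_0$ and $v_0$ have odd length, so $A \cup \{u_0 v_0\}$ and $B \cup \{u_0 v_0\}$ are even cycles of lengths $|A|+1$ and $|B|+1$, both strictly smaller than $|C|$. Since $|N_C(H)| \geq 3 > |\{u_0, v_0\}|$, some $y \in N_C(H)$ lies in the interior of $A$ or of $B$; set $C'$ to be whichever of the two cycles avoids $y$. Then $y \in V(C) \setminus V(C')$ is adjacent to $H$, so the component of $G - V(C')$ containing $D$ contains $H \cup \{y\} \supsetneq H$, contradicting maximality. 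For the chord-count claim, assume $C$ has two distinct chords; by the parity result just established, each chord joins vertices at even $C$-distance. When $|C| = 4$, the two chords must be the two diagonals of $C$, so $G[V(C)] \cong K_4$. When $|C| \geq 6$, the two chords combine with arcs of $C$ to produce smaller subcycles of $G[V(C)]$; a direct parity check yields an even subcycle of length at most $|C|-2$, and since $|V(C)| - |V(C')| \geq 2$ and $|N_C(H)| \geq 3$, I can select among the candidate subcycles one whose complement in $V(C)$ meets $N_C(H)$, reproducing the contradiction.

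The hardest part is the connectedness assertion. Suppose $G - V(C)$ contains a component $Q$ distinct from $H$. I would pick $x_1, x_2, x_3 \in N_C(Q)$ and apply Menger's theorem inside $G[V(Q) \cup \{x_1, x_2, x_3\}]$ to get a fan $P_1, P_2, P_3$ of internally disjoint paths from some $u \in V(Q)$ to the $x_i$; together with $C$ they form a theta-subgraph with three subcycles $C_{ij} := P_i \cup P_j \cup \overrightarrow{C}[x_i, x_j]$. Since $|C_{12}| + |C_{23}| + |C_{13}| = 2(|P_1| + |P_2| + |P_3|) + |C|$ is even, at least one $C_{ij}$ is an even cycle. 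The crux is to choose the $x_i$ so that the resulting even $C_{ij}$ excludes some vertex of $N_C(H)$, which then enlarges the component of $G - V(C_{ij})$ containing $D$ beyond $H$. The main obstacle is the degenerate configuration where $N_C(H)$ lies entirely inside a single $C$-arc between two consecutive vertices of $N_C(Q)$ and only the "wrong" $C_{ij}$ (the one containing that arc) happens to be even; here I would exploit the symmetry between $H$ and $Q$ by building a symmetric fan through $H - V(D)$, producing an even cycle that contains part of $H$ but excludes a vertex of $N_C(Q)$ in the symmetric role. The small degenerate sub-case $V(H) = V(D)$, in which the symmetric fan is unavailable, is handled separately by a direct argument based on the structure of $D$ inside $H$. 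All remaining parity arithmetic is routine.
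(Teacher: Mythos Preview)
The paper does not prove this lemma; it is cited from \cite{GLMX} without proof, so there is no in-paper argument to compare against. On its own merits your overall strategy is the right one and the chord-parity step is clean and correct, but the connectedness proof has a genuine gap.

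Your fix for the ``degenerate configuration'' via a symmetric fan through $H\setminus V(D)$ does not work. The roles of $H$ and $Q$ are \emph{not} symmetric: the extremal property concerns the component containing $D\subseteq H$, and an even cycle $C'$ built from arcs of $C$ together with paths inside $H$ \emph{removes} vertices of $H$, so the $D$-component of $G-V(C')$ may well be smaller than $H$, not larger. (Nor is there any reason a $3$-fan to $N_C(H)$ should exist inside $H\setminus V(D)$, and the hand-wave for the sub-case $V(H)=V(D)$ is not a proof.) Fortunately the degenerate case dissolves by a one-line observation you missed. Once some $C_{ij}=P_i\cup P_j\cup\overrightarrow{C}[x_i,x_j]$ is even, so is $C'_{ij}=P_i\cup P_j\cup\overleftarrow{C}[x_i,x_j]$, since their lengths differ by $|C|-2\,|\overrightarrow{C}[x_i,x_j]|$, which is even. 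Now $(V(C)\setminus V(C_{ij}))\cup(V(C)\setminus V(C'_{ij}))=V(C)\setminus\{x_i,x_j\}$, and $|N_C(H)|\geqslant 3$ forces one of the two even cycles to omit a vertex of $N_C(H)$; no symmetry argument is needed.

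Two smaller issues. First, you must let the fan determine $x_1,x_2,x_3$ rather than prescribing them: Menger (fan version) applied to $u\in V(Q)$ versus $V(C)$ in $G$ yields a $3$-fan landing on \emph{some} triple in $N_C(Q)$, but for a pre-chosen triple the graph $G[V(Q)\cup\{x_1,x_2,x_3\}]$ need not admit one. Second, in the chord-count step the inference ``$|V(C)\setminus V(C')|\geqslant 2$ and $|N_C(H)|\geqslant 3$, hence some candidate $C'$ has $N_C(H)\not\subseteq V(C')$'' is not valid as stated. The clean route is to prove connectedness first; then every vertex of $C$ not incident to a chord has a neighbour in $H$ by $\delta(G)\geqslant 3$, and one checks directly that for $|C|\geqslant 6$ two even-distance chords always bound an even proper subcycle of $G[V(C)]$ avoiding such a vertex.
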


Let $C$ be an even cycle. Recall that we say two vertices $u,v\in V(C)$ are diagonal if $d_C(u,v)=|C|/2$; and $u,v$ are quasi-diagonal if $d_C(u,v)=|C|/2-1$. Note that if $u,v$ are quasi-diagonal on $C$, then the two paths $\overrightarrow{C}[u,v]$ and $\overleftarrow{C}[u,v]$ have lengths differing by 2.

\begin{lemma}\label{lemma: TwoPathsTwoCycles}
    Let $G$ be a graph, $C_1$ an even cycle, and $C_2$ an odd cycle of $G$. If one of the following statements holds, then $G$ contains two cycles of consecutive odd lengths.
    \begin{itemize}
        \item[(1)] $C_1$ and $C_2$ are disjoint and there exist two disjoint paths $P^1$, $P^2$ from $C_1$ to $C_2$ such that their origins in $C_1$ are quasi‐diagonal.
        \item[(2)] $C_1$ and $C_2$ have exactly one common vertex $u$ and there exists a path $P$, from $C_1-u$ to $C_2-u$ avoiding $u$ such that its origin in $C_1$ is quasi‐diagonal to $u$.
    \end{itemize}
\end{lemma}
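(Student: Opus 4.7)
My plan is to prove both cases by exhibiting four natural cycles and performing a short parity count; no deeper machinery is required.

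For case (1), fix orientations of $C_1$ and $C_2$. Let $u_1, u_2 \in V(C_1)$ be the origins of $P^1, P^2$, and let $v_1, v_2 \in V(C_2)$ be their other endpoints. Since $u_1, u_2$ are quasi-diagonal on $C_1$, the two $(u_1,u_2)$-arcs of $C_1$ have lengths $|C_1|/2 - 1$ and $|C_1|/2 + 1$; in particular they differ by $2$ and share the same parity. The two $(v_1,v_2)$-arcs of $C_2$ have lengths summing to $|C_2|$, which is odd, so these two arcs have different parities. For each pair $(A_1, A_2)$ of a $C_1$-arc and a $C_2$-arc between the corresponding endpoints, the concatenation $A_1 \cup P^2 \cup A_2 \cup P^1$ is a genuine cycle by the disjointness hypothesis, of length $|A_1| + |A_2| + |P^1| + |P^2|$. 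Fix the $C_2$-arc $A_2$ for which this length is odd (which exists and is unique because the two $C_2$-arcs have opposite parities). Varying the $C_1$-arc while keeping $A_2$ fixed then yields two cycles of odd lengths differing by exactly $2$, i.e., two cycles of consecutive odd lengths.

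For case (2), let $w_1 \in V(C_1)\backslash\{u\}$ and $w_2 \in V(C_2)\backslash\{u\}$ be the endpoints of $P$. The two $(u,w_1)$-arcs of $C_1$ again have lengths $|C_1|/2 - 1$ and $|C_1|/2 + 1$, and the two $(u,w_2)$-arcs of $C_2$ have lengths summing to the odd number $|C_2|$. For any choice $(B_1, B_2)$ of a $C_1$-arc and a $C_2$-arc, the union $B_1 \cup P \cup B_2$ is a genuine cycle: $V(C_1) \cap V(C_2) = \{u\}$, and since $P$ avoids $u$ its internal vertices lie outside $V(C_1) \cup V(C_2)$, so the only shared vertex of $B_1$ and $B_2$ is $u$ itself, which is an endpoint of both arcs. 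Exactly the same parity bookkeeping as in case (1) then produces two cycles of consecutive odd lengths.

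The only subtle point is verifying that each of the four combinations is a single cycle rather than a union of shorter cycles, which is immediate from the disjointness assumptions (the two paths $P^1, P^2$ are disjoint in case (1); and $P$ avoids $u$, the unique intersection of $C_1$ and $C_2$, in case (2)). The parity count itself is routine, so I do not expect any real obstacle in turning this sketch into a formal proof.
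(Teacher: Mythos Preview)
Your proof is correct and follows essentially the same approach as the paper's: combine the two connecting paths with one arc of $C_2$ chosen to give the correct parity, then swap between the two $C_1$-arcs (of lengths $|C_1|/2\pm 1$) to obtain two odd cycles differing by exactly $2$. The paper phrases the parity choice as picking the $R_i$ with $|R_i|\equiv |C_1|/2\bmod 2$, which is the same condition you state as ``fix the $C_2$-arc for which the length is odd,'' and it likewise dismisses case (2) as identical to (1).
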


\begin{proof}
    For (1), let $u_i^j$ be the end-vertex of $P^j$ on $C_i$, where $j=1,2$ and $i=1,2$. Since $|C_2|$ is odd, the lengths of the two paths $R_1=P^1\cup P^2\cup\overrightarrow{C_2}[u_2^1,u_2^2]$ and $R_2=P^1\cup P^2\cup\overleftarrow{C_2}[u_2^1,u_2^2]$ differ in parity. Then one of these paths, say $R_1$, satisfies $|R_1|\equiv|C_1|/2\bmod 2$. Since $u_1^1,u_1^2$ are quasi‐diagonal in $C_1$, the two cycles $R_1\cup\overrightarrow{C_1}[u_1^1,u_1^2]$ and $R_1\cup\overleftarrow{C_1}[u_1^1,u_1^2]$ are of consecutive odd lengths. The proof for (2) is identical to (1).
\end{proof}

\begin{lemma}\label{lemma: TwoCycleEvenOdd}
    Let $G$ be a $3$-connected graph and $C_1, C_2$ two cycles of $G$. If one of the following statements holds, then $G$ contains two cycles of lengths $1$ and $3$ modulo $4$, respectively.
    \begin{itemize}
        \item[(1)] $C_1$ is even, $C_2$ is odd, and $C_1,C_2$ are disjoint.
        \item[(2)] $C_1$ is even, $C_2$ is odd, and $C_1,C_2$ intersect at exactly one vertex.
        \item[(3)] $C_1$ and $C_2$ are disjoint and both odd, and $G$ is not a Petersen graph.
    \end{itemize}
\end{lemma}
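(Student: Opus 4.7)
The plan is to use $3$-connectivity to find internally disjoint paths between $C_1$ and $C_2$ and combine them with arcs of $C_1$ and $C_2$ to construct two odd cycles whose lengths lie in different residue classes modulo $4$. Two cycles of consecutive odd lengths automatically satisfy this, so in parts (1) and (2) the main vehicle is Lemma~\ref{lemma: TwoPathsTwoCycles}.

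For parts (1) and (2), I would invoke Lemma~\ref{Le3ConnectedContract} together with Menger's theorem to produce three internally disjoint paths $P^1,P^2,P^3$ from $C_1$ to $C_2$ (or from $C_1-u$ to $C_2-u$ in case (2)), with origins $u_i$ on $C_1$ and $v_i$ on $C_2$. In case (1), if some pair $\{u_i,u_j\}$ is quasi-diagonal on $C_1$, Lemma~\ref{lemma: TwoPathsTwoCycles}(1) applies directly to $P^i,P^j$; otherwise the three arcs of $C_1$ all have lengths different from $|C_1|/2\pm 1$, and I would reroute one arc via $P^i\cup \alpha \cup P^j$, where $\alpha$ is the arc of $C_2$ between $v_i$ and $v_j$ chosen so that the new cycle $C_1'$ remains even. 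The origin $u_k$ of the third path then yields a quasi-diagonal pair on $C_1'$, to which Lemma~\ref{lemma: TwoPathsTwoCycles}(1) applies, paired with an odd cycle formed from the unused arc of $C_2$, the path $P^k$, and an arc of $C_1'$. In case (2), $C_1$ has exactly two vertices quasi-diagonal to $u$, and $3$-connectivity of $G$ guarantees, via Menger inside $G-u$, a path from at least one such vertex to $V(C_2)\setminus\{u\}$ that is internally disjoint from $V(C_1)\cup V(C_2)$ after a routine rerouting along arcs of $C_1, C_2$; Lemma~\ref{lemma: TwoPathsTwoCycles}(2) then closes the case.

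For part (3), both cycles are odd and Lemma~\ref{lemma: TwoPathsTwoCycles} does not apply. I would again take three internally disjoint paths $P^1,P^2,P^3$, and for each pair $\{P^i,P^j\}$ analyse the four cycles obtained by combining one of the two arcs of $C_1$ between $u_i,u_j$ (lengths $a,|C_1|-a$) with one of the two arcs of $C_2$ between $v_i,v_j$ (lengths $b,|C_2|-b$). A direct parity computation shows that exactly two of the four are odd, and they lie in different residues modulo $4$ precisely when the chosen $a,b$ satisfy $a+b\not\equiv(|C_1|+|C_2|)/2\pmod 2$, while the odd-cycle constraint forces $a+b\equiv 1+p_i+p_j\pmod 2$. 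These two conditions are simultaneously realisable for the pair $\{i,j\}$ unless $p_i+p_j$ has a specific forbidden parity relative to $(|C_1|+|C_2|)/2$. If every pair fails, then $p_1,p_2,p_3$ all share the same parity and $(|C_1|+|C_2|)/2$ is odd; combined with rerouting arguments that use $3$-connectivity to exchange arcs of $C_1$ or $C_2$ along the $P^i$'s, this rigidity forces $|C_1|=|C_2|=5$, each $p_i=1$, and the subgraph on $C_1\cup C_2\cup P^1\cup P^2\cup P^3$ to be precisely the Petersen graph.

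The main obstacle is the final step of part (3): verifying that the rigid configuration produced when the parity argument fails for all three pairs is exactly the Petersen graph, and that every non-Petersen $3$-connected graph admits a rerouting that breaks the parity obstruction. The modular arithmetic is routine, but the combinatorial bookkeeping needed to rule out all other rigid configurations is the crux of the argument.
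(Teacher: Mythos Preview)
Your approach via three fixed Menger paths differs substantially from the paper's, and as stated it has genuine gaps in all three parts.

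In part~(1), three internally disjoint paths from $C_1$ to $C_2$ need not produce a quasi-diagonal pair of origins on $C_1$: for large $|C_1|$ the three origins may all lie on a short arc. Your rerouting claim---that after replacing an arc of $C_1$ by $P^i\cup\alpha\cup P^j$ the third origin $u_k$ becomes quasi-diagonal on the new even cycle $C_1'$---is not justified; the length of $C_1'$ and the position of $u_k$ on it are essentially uncontrolled. The paper avoids this by invoking Lemma~\ref{lemma: G-C is con} to choose $C_1$ so that $G-V(C_1)$ is \emph{connected} and $C_1$ has at most one chord (of even $C_1$-distance). Connectedness means \emph{every} vertex of $C_1$ with a neighbour outside can reach $C_2$, so one can analyse all quasi-diagonal pairs simultaneously via the graph $Qdi(C_1)$ and the block/branch structure of $G-V(C_1)$; any obstruction then contradicts $3$-connectivity.

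In part~(2), Menger in $G-u$ gives two disjoint paths from $C_1-u$ to $C_2-u$, but nothing forces either to start at a vertex quasi-diagonal to $u$, and ``routine rerouting'' does not fix this. The paper again uses Lemma~\ref{lemma: G-C is con}: if the two vertices quasi-diagonal to $u$ have no neighbour in $G-V(C_1)$, then the guaranteed chord of $C_1$ must join exactly those two vertices, and one manufactures a disjoint even/odd pair to reduce to case~(1).

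In part~(3) your parity computation is correct, but the step you flag as the crux---showing that failure for all three pairs forces the Petersen graph---is not just bookkeeping. The paper takes a different route: choose $C_1,C_2$ with $|C_1|+|C_2|$ minimal (so neither has a chord), argue that any component of $G-V(C_1)\cup V(C_2)$ or any vertex with two neighbours across would create an even cycle disjoint from an odd one (reducing to (1) or (2)), hence $V(G)=V(C_1)\cup V(C_2)$ with a perfect matching between them. Then a short distance argument on adjacent matched pairs forces $|C_1|=|C_2|=5$ and the Petersen structure. This extremal choice of $C_1,C_2$ is the missing idea that replaces your undetermined rigidity analysis.
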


\begin{proof}
    (1) By Lemma \ref{lemma: G-C is con}, we can suppose that $G-V(C_1)$ is connected, $C_1$ has at most one chord, except that $G[V(C_1)]\cong K_4$, and if $u_0v_0$ is a chord of $C_1$, then $d_{C_1}(u_0,v_0)$ is even.

    If $|C_1|=4$, then there are three disjoint paths from $C_1$ to $C_2$, two of which have their origins quasi-diagonal in $C_1$. By Lemma \ref{lemma: TwoPathsTwoCycles}, there are two consecutive odd cycles, which are of lengths 1 and 3 modulo 4, as desired. So we conclude that $|C_1|\geqslant 6$, which follows that $C_1$ has at most one chord. 

    Set $H=G-V(C_1)$, and let $B$ be the block of $H$ containing $C_2$ (if $H$ is 2-connected, then $B=H$). Note that for each two vertices $v_1,v_2\in V(B)$, there are two disjoint paths from $\{v_1,v_2\}$ to $C_2$ in $B$. For every vertex $v\in V(B)$, we let $H_v$ be the component of $H-E(B)$ containing $v$. Possibly $V(H_v)=\{v\}$ (in the case $v$ is an inner-vertex of $B$). We call the graphs $H_v$, $v\in V(B)$, the $B$-branches.

    We claim that for each two vertices $u_1,u_2\in V(H)$, there are two disjoint paths from $\{u_1,u_2\}$ to $C_2$ in $H$ if and only if $u_1,u_2$ are contained in different $B$-branches. Firstly, if $u_1,u_2$ are contained in a common $B$-branch $H_v$, then every path from $\{u_1,u_2\}$ to $C_2$ passes through $v$. Secondly, suppose that $u_1,u_2$ are contained in $B$-branches $H_{v_1},H_{v_2}$, respectively, where $v_1\neq v_2$. Let $P_i$ be a $(u_i,v_i)$-path in $H_{v_i}$, $i=1,2$, and let $P'_1,P'_2$ be two disjoint paths from $\{v_1,v_2\}$ to $C_2$ in $B$. Now $P_1v_1P'_1$ and $P_2v_2P'_2$ are two disjoint paths from $\{u_1,u_2\}$ to $C_2$ in $H$.

    For two quasi-diagonal vertices $x_1,x_2$ in $C_1$, if $x_1$ has a neighbor $u_1$ in $H$, $x_2$ has a neighbor $u_2$ in $H$ such that $u_1,u_2$ are in different $B$-branches, then there are two disjoint paths from $C_1$ to $C_2$ with origins $x_1,x_2$. By Lemma \ref{lemma: TwoPathsTwoCycles}, there are two cycles of consecutive odd lengths. So we can assume that if both $x_1,x_2$ have neighbors in $H$, then their neighbors are contained in a common $B$-branch. 

    As in the proof of Lemma \ref{lemma: k adm cycles for k=4,5}, we define the graph $Qdi(C_1)$ on $V(C_1)$ such that two vertices are adjacent in $Qdi(C_1)$ if and only if they are quasi‐diagonal in $C_1$. Note that if $|C_1|\equiv 0\bmod 4$, then $Qdi(C_1)$ is a cycle, and if $|C_1|\equiv 2\bmod 4$, then $Qdi(C_1)$ is the union of two odd cycles. We consider two cases as follows.

    First we assume that $|C_1|\equiv 2\bmod 4$. If $C_1$ has a chord $u_1u_2$, then $d_{C_1}(u_1,u_2)$ is even. It follows that the two cycles $C'_1=\overrightarrow{C_1}[u_1,u_2]u_2u_1$ and $C'_2=\overleftarrow{C_1}[u_1,u_2]u_2u_1$ are odd. Since $|D_1|+|D_2|=|C_1|+2\equiv 0\bmod 4$, the two cycles $C'_1,C'_2$ have lengths 1 and 3 modulo 4, as desired. So we conclude that $C_1$ has no chord.
    By $G$ being 3-connected, every vertex in $C_1$ has a neighbor in $H$. Let $D_1,D_2$ be the two components of $Qdi(C_1)$. It follows that all vertices in $N_H(V(D_1))$ are in a common $B$-branch $H_{v_1}$, and all vertices in $N_H(V(D_2))$ are in a common $B$-branch $H_{v_2}$. Now $\{v_1,v_2\}$ is a vertex cut of $G$, a contradiction.

    Second we assume that $|C_1|\equiv 0\bmod 4$. Recall that $Qdi(C_1)$ is a cycle. If $C_1$ has no chord, then all vertices in $N_H(V(C_1))$ are in a common $B$-branch $H_v$, and $v$ is a cut-vertex of $G$, a contradiction. So we conclude that $C_1$ has a chord $u_1u_2$. Since $d_{C_1}(u_1,u_2)$ is even, we have $u_1u_2\notin E(Qdi(C_1))$.
    If $N_H(u_1)\cup N_H(u_2)\neq\emptyset$, then all the neighbors in $H$ of vertices in $C_1$ are in a common $B$-branch, implying that $G$ has a cut-vertex, a contradiction. So assume that $u_1$ and $u_2$ have no neighbor in $H$.
    Let $D_1,D_2$ be the two components of $Qdi(C_1)-\{u_1,u_2\}$. Then $D_1,D_2$ are two paths. If both of the paths are nontrivial, then $N_H(V(D_1))\subseteq V(H_{v_1})$, and $N_H(V(D_2))\subseteq V(H_{v_2})$ for some $B$-branches $H_{v_1}$, $H_{v_2}$. Now $\{v_1,v_2\}$ is a vertex cut of $G$, a contradiction.
    Now assume w.l.o.g. that $D_2$ is trivial, say $V(D_2)=\{u_0\}$. It follows that $N_H(V(D_1))\subseteq V(H_{v_1})$ for a $B$-branch $H_{v_1}$. Recall that $u_1,u_2$ have no neighbor in $H$. We have that $\{v_1,u_0\}$ is a vertex cut of $G$, a contradiction.    

    (2) Let $u$ be the common vertex of $C_1$ and $C_2$. We choose $C_1$ such that the component of $G-V(C_1)$ containing $C_2-u$ is maximal. By Lemma \ref{lemma: G-C is con}, $G-V(C_1)=:H$ is connected.

    Let $u_1,u_2$ be the two vertices that are quasi-diagonal with $u$ in $C_1$, and let $u_0$ be the vertex that is diagonal with $u$ in $C_1$. If $N_H(u_1)\cup N_H(u_2)\neq\emptyset$, then there is a path from $C_1-u$ to $C_2-u$ avoiding $u$ with origin in $\{u_1,u_2\}$. By Lemma \ref{lemma: TwoPathsTwoCycles}, $G$ has two consecutive odd cycles, as desired. So we conclude that $N_H(u_1)\cup N_H(u_2)=\emptyset$. If $|C_1|=4$, then $\{u,u_0\}$ is a vertex cut of $G$, contradicting the fact that $G$ is 3-connected. So we assume that $|C_1|\geqslant 6$. 
    
    By $G$ being 3-connected, we have that $u_1u_2$ is the unique chord of $C_1$. Let $u_3$ be a neighbor of $u$ on $C_1$. Then $N_H(u_3)\neq\emptyset$, and there is a path $P$ from $C_1-u$ to $C_2-u$ avoiding $u$ with origin $u_3$. Thus, $C_2\cup P\cup\{uu_3\}$ is a $\varTheta$-graph, which contains an even cycle disjoint with the odd cycle $u_0u_1u_2u_0$. Now we are done by (1).

    (3) We choose the two odd cycles $C_1,C_2$ such that the sum of its lengths is as small as possible. So neither $C_1$ nor $C_2$ has a chord. 
    
    If there is a component $H$ of $G-V(C_1)\cup V(C_2)$, then $|N_{C_1\cup C_2}(H)|\geqslant 3$ since $G$ is 3-connected. It follows that $H$ has at least two neighbors in $C_i$ for some $i=1,2$. Thus, there is a path $P$ with two end-vertices in $C_i$ and all internal vertices in $H$. We see that $C_i\cup P$ is a $\varTheta$-graph, which contains an even cycle disjoint with $C_{3-i}$, and we are done by (1). Now we assume that $G-V(C_1)\cup V(C_2)$ has no component, which is, $V(G)=V(C_1)\cup V(C_2)$.

    Let $u$ be an arbitrary vertex in $C_1$. If $u$ has two neighbors in $C_2$, then $G[V(C_2)\cup\{u\}]$ contains a $\varTheta$-graph, which contains an even cycle intersecting with $C_1$ at $u$. Then we are done by (2). So we conclude that every vertex in $C_1$ has exactly one neighbor in $C_2$, and similarly, every vertex in $C_2$ has exactly one neighbor in $C_1$. It follows that $|C_1|=|C_2|$, and the edges between $C_1$ and $C_2$ form a perfect matching of $G$.

    Let $u_1,u_2$ be two adjacent vertices in $C_1$, and $u'_1,u'_2$ their neighbors in $C_2$. If $u'_1u'_2\in E(C_2)$, then $C_1$ and $(C_1-\{u_1u_2\})\cup u_1u'_1u'_2u_2$ are two consecutive odd cycles, as desired. So we assume that $d_{C_2}(u'_1,u'_2)\geqslant 2$.

    If $|C_1|=|C_2|=3$, then every two vertices in $C_2$ have distance 1, a contradiction. If $|C_1|=|C_2|=5$, then for each two adjacent vertices in $C_1$, their neighbors in $C_2$ are of distance exactly 2, implying that $G$ is a Petersen graph, a contradiction. Therefore, $|C_1|=|C_2|\geqslant 7$.

    We choose two adjacent vertices $u_1,u_2\in V(C_1)$ such that $d_{C_2}(u'_1,u'_2)$ is as small as possible. We assume w.l.o.g. that $\overrightarrow{C_2}[u'_1,u'_2]$ is even and $\overleftarrow{C_2}[u'_1,u'_2]$ is odd. Now $C'_1=u_1u'_1\overrightarrow{C_2}[u'_1,u'_2]u'_2u_2u_1$ is an odd cycle. Since $\overleftarrow{C_2}[u'_1,u'_2]$ is odd and $u_1'$ and $u_2'$ are not adjacent, there are two adjacent vertices $v'_1,v'_2$ in $\overleftarrow{C_2}[u'_1,u'_2]$ different than $u_1'$ and $u_2'$. Let $v_1,v_2$ be the neighbors of $v'_1,v'_2$ in $C_1$. Clearly $\{v_1,v_2\}\cap\{u_1,u_2\}=\emptyset$. Let $P$ be the $(v_1,v_2)$-path in $C_1$ avoiding $u_1,u_2$. Then $C'_2=v'_1v_1Pv_2v'_2v'_1$ is a cycle disjoint with $C'_1$.

    If $C'_2$ is even, then we are done by (1). So we assume that $C'_2$ is odd. By the choice of $C_1,C_2$, we see that $V(G)=V(C'_1)\cup V(C'_2)$. It follows that $d_{C_2}(u'_1,u'_2)=3$ (i.e., $\overleftarrow{C_2}[u'_1,u'_2]=u'_1v'_1v'_2u'_2$), and $v_1,v_2$ are the two vertices adjacent to $u_1,u_2$ in $C_1$. Now $u_1v_1\in E(C_1)$ or $u_1v_2\in E(C_1)$ and their neighbors in $C_2$ have distance less than 3, contradicting the choice of $u_1,u_2$.
\end{proof}

\begin{lemma}\label{lemma: two consecutive odd cycles}
    Let $G$ be a $3$-connected non-bipartite graph. Then $G$ contains two cycles of lengths $1$ and $3$ modulo~$4$, respectively, unless $G$ is a $K_4$ or a Petersen graph.
\end{lemma}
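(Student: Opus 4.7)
The plan is to apply Lemma~\ref{lemma: TwoCycleEvenOdd} to a well-chosen pair of cycles in $G$. Let $C$ be a shortest odd cycle of $G$. First I would observe that $C$ is chordless: any chord of an odd cycle $C$ would split it into two cycles whose lengths sum to $|C|+2$ (an odd number), so one of them would be a smaller odd cycle, contradicting the minimality of $C$. Combined with 3-connectivity, this forces $V(G)\neq V(C)$ (otherwise $G$ is just the cycle $C$, which is not 3-connected), so $F:=G-V(C)$ is a nonempty subgraph.

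The argument then proceeds by a case analysis on the structure of $F$. If $F$ contains an odd cycle $C'$, then $C$ and $C'$ are disjoint odd cycles, and Lemma~\ref{lemma: TwoCycleEvenOdd}(3) supplies the required cycles because $G$ is not the Petersen graph. If $F$ contains an even cycle $D$, then $D$ and $C$ form a disjoint even-odd pair, and Lemma~\ref{lemma: TwoCycleEvenOdd}(1) applies. Otherwise $F$ is a forest; this is the crux of the proof. In the forest case, I would fix a component $T$ of $F$ with bipartition $(A_T,B_T)$: if some vertex $u\in V(C)$ has two neighbors in $T$ lying in the same color class of $(A_T,B_T)$, the unique $T$-path between them has even length, so together with the two edges at $u$ it forms an even cycle meeting $C$ exactly in the vertex $u$, and Lemma~\ref{lemma: TwoCycleEvenOdd}(2) finishes the proof.

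The main obstacle is the remaining sub-case in which every $u\in V(C)$ has at most one neighbor in each color class of every component of $F$. Here no single-vertex-intersection even cycle exists, so I would switch to a theta-graph approach: for two well-chosen vertices $u,v\in V(C)$, combine two internally-disjoint paths through $F$ (possibly passing through different components) with an arc of $C$ between $u$ and $v$ to form a theta-subgraph, whose three cycles have lengths summing to an even number and whose path-lengths can be arranged of mixed parities so that representatives of both residues $1$ and $3$ modulo $4$ appear. The delicate parity bookkeeping is to be combined with the degree-counting bound $\sum_{t\in V(F)}d_C(t)\geq |V(F)|+2c(F)$, where $c(F)$ is the number of tree components, coming from $\delta(G)\geq 3$ and $|E(F)|\leq|V(F)|-c(F)$. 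I expect that performing this analysis carefully either produces the required cycle pair or collapses the graph to $K_4$, which is the unique exception arising from Case~3 (the Petersen exception arising from Case~1).
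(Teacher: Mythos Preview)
Your setup matches the paper exactly: take a shortest odd cycle $C$, note it is chordless, set $F=G-V(C)$, and dispose of the cases where $F$ contains a cycle via Lemma~\ref{lemma: TwoCycleEvenOdd}. Your bipartition observation in the forest case is also used by the paper (in the form ``if the $(u_1,u_2)$-path in $T$ is even, apply Lemma~\ref{lemma: TwoCycleEvenOdd}(2)'').

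The gap is in what you call the ``main obstacle''. Your proposed theta-graph approach with degree-counting is not an argument; it is a hope (``I expect that performing this analysis carefully\ldots''). The paper does something much more concrete, and the key idea you are missing is this: because $C$ is a \emph{shortest} odd cycle, any vertex of $T$ with two neighbors on $C$ must have them at distance exactly~$2$ on $C$ when $|C|\geq 5$ (distance $1$ gives a triangle, distance $\geq 3$ gives a shorter odd cycle). In particular each leaf of $T$, needing two neighbors on $C$ by $3$-connectivity, attaches at two vertices $v_1,v_2$ with $d_C(v_1,v_2)=2$. This ``$\pm 2$'' attachment is exactly what produces two \emph{consecutive} odd cycles: take a leaf $u_1$ with neighbors $v_1,v_2$ and any other $u_3\in T$ with a neighbor $v_3\in V(C)\setminus\{v_0,v_1,v_2\}$ (where $v_0$ is the common $C$-neighbor of $v_1,v_2$); the $T$-path $P$ from $u_1$ to $u_3$ together with an arc of $C$ gives a cycle whose length changes by exactly $2$ when you swap the endpoint $v_1\leftrightarrow v_2$. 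One of the two arc choices yields an odd cycle, and then both swapped versions are odd and consecutive. If no such $v_3$ exists, then $N_C(T)\subseteq\{v_0,v_1,v_2\}$ and a second leaf of $T$ is forced to also attach at $\{v_1,v_2\}$, after which the same swap trick (or the even-cycle case you already covered) finishes. The case $|C|=3$ is handled separately and is where $K_4$ appears.

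Your degree-counting inequality and vague theta-subgraph plan do not obviously produce this length-difference-$2$ phenomenon; without the distance-$2$ attachment observation, there is no clear mechanism to force two odd cycles whose lengths differ by exactly $2$ rather than by some unspecified amount.
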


\begin{proof}
    By $G$ being non-bipartite, let $C$ be the shortest odd cycle of $G$. Then $C$ has no chord. If $G-V(C)$ has a cycle, then we are done by Lemma \ref{lemma: TwoCycleEvenOdd}. So we assume that $G-V(C)$ is a forest. Let $T$ be a component of $G-V(C)$ with order as large as possible. Then $T$ is a tree.

    Suppose first that $|C|=3$. If $T$ is trivial, say $V(T)=\{u\}$, then $N(u)=V(C)$. By the choice of $T$, every component of $G-V(C)$ is trivial. If $G-V(C)$ has only one component $T$, then $G\cong K_4$, as desired. So let $u'$ be the vertex of a component of $G-V(C)$ other than $T$. We have $N(u')=V(C)$ as well. Clearly $G[V(C)\cup\{u,u'\}]$ contains a 3-cycle and a 5-cycle, as desired.

    Now assume that $T$ is nontrivial. Let $u_1,u_2$ be two leaves of $T$ and let $P$ be the $(u_1,u_2)$-path of $T$. Since $G$ is 3-connected, $u_i$ has two neighbors in $V(C)$, for $i=1,2$. It follows that $u_1,u_2$ have a common neighbor in $C$, say $v_1$. If $P$ is even, then $v_1u_1Pu_2v_1$ is an even cycle that intersects with $C$ at exactly one vertex, and we are done by Lemma \ref{lemma: TwoCycleEvenOdd}. So assume that $P$ is odd. Let $V(C)=\{v_1,v_2,v_3\}$ and assume $v_2$ is a second neighbor of $u_2$ in $C$. Then $C_1=v_1u_1Pu_2v_1$ and $C_2=v_1u_1Pu_2v_2v_3v_1$ are two consecutive odd cycles, as desired.

    Suppose now that $|C|\geqslant 5$. We claim that if a vertex in $T$ has two neighbors in $C$, then the two neighbors have distance exactly 2 in $C$. To show this, we assume that $u\in V(T)$ has two neighbors $v_1,v_2\in V(C)$. If $v_1v_2\in E(C)$, then $uv_1v_2u$ is a triangle, contradicting the choice of $C$. Now assume $d_C(v_1,v_2)\geqslant 3$. Note that either $\overrightarrow{C}[v_1,v_2]$ or $\overleftarrow{C}[v_1,v_2]$ is odd, say $\overrightarrow{C}[v_1,v_2]$ is odd. Since $|\overleftarrow{C}[v_1,v_2]|\geqslant 3$, $uv_1\overrightarrow{C}[v_1,v_2]v_2u$ is an odd cycle shorter than $C$, a contradiction. Thus, as we claimed, for every vertex in $T$, its two neighbors in $C$ have distance exactly 2 in $C$. It follows that every vertex in $T$ has at most two neighbors in $C$.
    If $T$ is trivial, then the vertex in $T$ has three neighbors in $C$, a contradiction. So we assume that $T$ is nontrivial. 
    
    Let $u_1$ be a leaf of $T$. Since $G$ is 3-connected, $u_1$ has two neighbors in $V(C)$. Let $v_1,v_2$ be the two neighbors of $u_1$ in $C$, and let $v_0$ be the common neighbor of $v_1,v_2$ in $C$ (recall that $d_C(v_1,v_2)=2$). Suppose that there is a vertex $v_3\in V(C)\backslash\{v_0,v_1,v_2\}$ that has a neighbor $u_3$ in $T$. We can assume w.l.o.g. that $v_1,v_2,v_3$ appear in this order along $C$. Let $P$ be the $(u_1,u_3)$-path in $T$. Note that either $C_1=\overrightarrow{C}[v_1,v_3]v_3u_3Pu_1v_1$ or $C_2=\overleftarrow{C}[v_1,v_3]v_3u_3Pu_1v_1$ is odd. If $C_1$ is odd, then $C_1$ and $C'_1=\overrightarrow{C}[v_2,v_3]v_3u_3Pu_1v_2$ are two consecutive odd cycles, as desired. While, if $C_2$ is odd, then $C_2$ and $C'_2=\overleftarrow{C}[v_2,v_3]v_3u_3Pu_1v_2$ are two consecutive odd cycles, as desired. So we conclude that there is no vertex in $V(C)\backslash\{v_0,v_1,v_2\}$ that has a neighbor in $T$. It follows that $N_C(T)=\{v_0,v_1,v_2\}$.

    Let $u_2$ be a leaf of $T$ other than $u_1$ and let $P$ be the $(u_1,u_2)$-path in $T$. Recall that $u_2$ has two neighbors in $C$ which have distance exactly 2, and that $N_C(T)=\{v_0,v_1,v_2\}$. This implies that $N_C(u_2)=\{v_1,v_2\}$. If $P$ is even, then $v_1u_1Pu_2v_1$ is an even cycle that intersects with $C$ at exactly one vertex, and we are done by Lemma~\ref{lemma: TwoCycleEvenOdd}. So assume that $P$ is odd. Then $C_1=v_1u_1Pu_2v_1$ and $C_2=v_1u_1Pu_2v_2v_0v_1$ are two consecutive odd cycles, as desired.     
\end{proof}

    We would like to remark that while writing this paper, we noticed that Lin, Wang and Zhou \cite{LWZ24} have showed that every 2-connected non-bipartite graph with minimum degree at least $k\geqslant 2$ contains $\lceil(k-1)/2\rceil$ cycles of consecutive odd lengths. Although Lemma \ref{lemma: two consecutive odd cycles} can be deduced from this result, our proof is different. For the sake of completeness, we kept the proof of Lemma \ref{lemma: two consecutive odd cycles} in this paper.

\begin{proof}[\bf Proof of Theorem \ref{thm: l mod k min degree non-bi}]
    Note that when $k$ is odd, the existence of cycles of all lengths modulo $k$ is equivalent to the existence of cycles of all even lengths modulo $k$. If $k=3$, then the assertion holds by Theorem \ref{thm: ChenSaitoDeanetal}. So assume that $k\geqslant 4$. By Theorem \ref{thm: l mod k min degree k}, it suffices to consider the case that $k$ is even and $\ell$ is odd. 
    
    If $G$ is 3-connected, then for $k=4$ the assertion holds by Lemma \ref{lemma: two consecutive odd cycles}. For $k \geqslant 6$, if $G$ contains a triangle, then we are done by Theorem \ref{thm: 2-con-K3-k-cycles}, while if $G$ does not contain a triangle, then we are done by Theorem \ref{thm: 3-con-noK3-k-cycles}. Therefore, we may assume that $G$ has a vertex cut with two vertices, say $\{x,y\}$.
    
    Denote by $Q_1,Q_2$ two components of $G-\{x,y\}$. Let $G_i=G[V(Q_i)\cup \{x,y\}]$, $i=1,2$. By Theorem \ref{thm: k adm paths}, there exist $k-1$ admissible $(x,y)$-paths in $G_i$ for each $i\in \{1,2\}$. If $G_i$ contains $k-1$ admissible $(x,y)$-paths of consecutive lengths for some $i\in \{1,2\}$, then we can get $2(k-1)-1\geqslant k$ cycles of consecutive lengths, which contain $k/2$ cycles of consecutive odd lengths in $G$, as desired. So $G_i$ has $k-1$ admissible $(x,y)$-paths of lengths forming an arithmetic progression with common difference two for any $i\in \{1,2\}$. 
    
    If $G_i$ contains an odd cycle for some $i\in \{1,2\}$, then there exist two $(x,y)$-paths with different parities in $G_i$. By concatenating the $k-1$ admissible $(x,y)$-paths with common difference two in $G_{3-i}$, we get $k-1$ cycles of consecutive odd lengths, as desired. So $G_i$ is bipartite for each $i\in \{1,2\}$. Since $G$ is non-bipartite, by symmetry, we can assume that $x,y$ are in different parts in $G_1$ and in the same part in $G_2$. It follows that $G_1$ has $k-1$ admissible $(x,y)$-paths of consecutive odd lengths and $G_2$ has $k-1$ admissible $(x,y)$-paths of consecutive even lengths, implying the existence of $2k-3\geqslant k$ cycles of consecutive odd lengths, as desired. 
\end{proof}

\section{Maximum number of edges}\label{section: edge bound}

The following two theorems on cycles in graphs will be used in the proof of Theorem~\ref{thm: general bound}.

\begin{theorem}[Woodall \cite{W72}]\label{thm: (n+3)/2}
    Every $n$-vertex graph with more than $\lfloor n^2/4\rfloor$ edges contains cycles of lengths from $3$ to $(n+3)/2$.
\end{theorem}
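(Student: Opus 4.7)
The plan is to prove Woodall's theorem by combining a circumference estimate with induction, appealing to Bondy's pancyclicity theorem in the high-density case.

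First, since $e(G) > \lfloor n^2/4 \rfloor$, Mantel's theorem guarantees a triangle, handling $\ell = 3$. The classical Erd\H{o}s--Gallai theorem, which says that an $n$-vertex graph with no cycle of length exceeding $c$ has at most $c(n-1)/2$ edges, yields circumference at least $\lceil n^2/(2(n-1)) \rceil \geq (n+3)/2$. So $G$ possesses a cycle $C$ of length $c \geq (n+3)/2$, which takes care of the largest required length.

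For the intermediate lengths, I would proceed by induction on $n$, with small base cases checked directly. Let $v$ be a vertex of minimum degree $\delta = \delta(G)$. If $\delta < n/2$, then $e(G - v) > \lfloor n^2/4 \rfloor - \delta \geq \lfloor (n-1)^2/4 \rfloor$ (for both parities of $n$, after a short computation), so the induction hypothesis applied to $G - v$ yields cycles of every length in $\{3, 4, \ldots, \lfloor (n+2)/2 \rfloor\}$ inside $G - v$, and hence in $G$. For the remaining length $\lceil (n+3)/2\rceil$, I would take the long cycle from Erd\H{o}s--Gallai and, if strictly longer than $(n+3)/2$, shorten it using a chord to hit exactly this length; such a chord exists because $e(G[V(C)])$ must exceed $|V(C)|$, which follows from the density condition and a bound on the number of edges incident to $V(G) \setminus V(C)$.

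If instead $\delta \geq n/2$, then $G$ is Hamiltonian by Dirac's theorem, and Bondy's pancyclicity theorem applies: either $G$ is pancyclic (contains cycles of all lengths $3, 4, \ldots, n$) or $G \cong K_{n/2, n/2}$. The latter has exactly $n^2/4$ edges and is excluded by the strict inequality $e(G) > \lfloor n^2/4 \rfloor$, so $G$ is pancyclic and the conclusion is immediate. The main obstacle is the boundary bookkeeping in the inductive case: producing a cycle of \emph{exactly} the length $\lceil (n+3)/2\rceil$ from a possibly longer cycle via chord-shortening requires that the chord can be chosen with the right span. This may need a local case split, and in particular one may need to combine the chord argument on $C$ with a detour through a vertex outside $C$ (using the density to guarantee such a vertex has two suitably placed neighbors on $C$) to adjust the length by one when a chord of the exact span is absent.
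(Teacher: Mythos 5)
The paper cites this result from Woodall (1972) without giving a proof, so there is no in-paper argument to compare against; I am assessing your proposal on its own.

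Your skeleton is reasonable, and two of its pieces are sound: the Erd\H{o}s--Gallai circumference bound does give a cycle of length at least $(n+3)/2$ (for $n$ odd; $(n+2)/2$ for $n$ even, which suffices), and the arithmetic $\lfloor n^2/4\rfloor - \lfloor (n-1)^2/4\rfloor = \lfloor n/2\rfloor$ confirms that deleting a vertex of degree $\delta < n/2$ preserves the edge-density hypothesis. The $\delta \ge n/2$ branch via Dirac and Bondy is also fine. However, there is a genuine gap in the last remaining case: $n$ odd and $\delta < n/2$. Here the induction applied to $G-v$ yields cycles of lengths $3,\dots,(n+1)/2$ only (since $\lfloor ((n-1)+3)/2\rfloor = (n+1)/2$ when $n$ is odd), so a cycle of length exactly $(n+3)/2$ is still missing, and the Erd\H{o}s--Gallai cycle may well be strictly longer than $(n+3)/2$. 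Your plan to "shorten using a chord" is not established: the density count only shows that $G[V(C)]$ contains \emph{some} chord, which is far from producing a chord whose two arcs have the precise lengths needed, and the fallback "detour through a vertex outside $C$" is likewise asserted without showing that an outside vertex with two suitably spaced neighbours on $C$ exists. You flag this as "the main obstacle," and you are right to: this length-bridging step is precisely the substance of Woodall's theorem, and a min-degree deletion argument alone does not resolve it. Woodall's proof (and proofs of the closely related Bondy-style "density implies pancyclicity up to $n/2$" results) handle it by a careful structural analysis of the long cycle together with the edges leaving it, not by the single local repair you propose. As written, the proposal is incomplete.
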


\begin{theorem}[Bondy \cite{Bon71}]\label{thm: pancyclic}
    Every $n$-vertex graph with minimum degree more than $n/2$ is pancyclic, i.e., contains cycles of lengths from $3$ to $n$.
\end{theorem}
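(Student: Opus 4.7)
The plan is to derive pancyclicity by combining Dirac's Hamilton cycle theorem with Bondy's edge-density dichotomy for Hamiltonian graphs.

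First, since $\delta(G)>n/2\geqslant n/2$, Dirac's theorem yields a Hamilton cycle $C=v_0 v_1\cdots v_{n-1}v_0$ in $G$, settling the length $\ell=n$ outright. Summing degrees gives $2e(G)=\sum_v d(v)>n\cdot n/2$, hence $e(G)>n^2/4$. The core task then reduces to showing that a Hamiltonian graph on $n$ vertices with more than $n^2/4$ edges contains cycles of every length $\ell\in\{3,4,\ldots,n-1\}$.

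To find an $\ell$-cycle I would fix $C$ and an arbitrary vertex $v_0\in V(C)$, and examine the set $S=N(v_0)\subseteq V(C)\setminus\{v_0\}$ together with its cyclic shift $T=\{v_{i+\ell-2\bmod n}:v_i\in S\}$. Both $S$ and $T$ have size $d(v_0)>n/2$ inside the $n$-element set $V(C)$, so by pigeonhole $S\cap T\neq\emptyset$; any $v_j\in S\cap T$ witnesses two chord edges $v_0 v_j$ and $v_0 v_i$ with $v_j=v_{i+\ell-2}$, and the triple $v_0\to v_i\to v_{i+1}\to\cdots\to v_j\to v_0$ closes into an $\ell$-cycle through $v_0$. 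A short technical check is needed to guarantee the arc $v_i,\ldots,v_j$ does not wrap through $v_0$; this is handled by allowing the shift to run in either direction along $C$ and by varying the base vertex, using the strict degree bound at each step. One may also feed the small-length regime $3\leqslant\ell\leqslant(n+3)/2$ directly into the preceding theorem of Woodall, which applies because $e(G)>n^2/4\geqslant\lfloor n^2/4\rfloor$, and restrict the sum-set argument to the remaining lengths $\ell>(n+3)/2$.

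The one situation in which this pigeonhole collapses is the balanced bipartite extremal graph $K_{n/2,n/2}$, which is Hamiltonian, has exactly $n^2/4$ edges, and contains no odd cycles. This extremal case is ruled out by our hypothesis, since $K_{n/2,n/2}$ has minimum degree exactly $n/2$, contradicting the strict inequality $\delta(G)>n/2$. The main obstacle in the above plan is the middle paragraph: the sum-set/chord analysis must simultaneously produce every length $\ell$ and handle parity (odd versus even $\ell$) and the boundary cases $\ell\in\{3,n-1\}$, while certifying that any failure of pancyclicity forces exactly the balanced-bipartite configuration; the strict minimum-degree hypothesis is exploited only at the very end to eliminate that unique exception.
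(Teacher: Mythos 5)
The paper does not prove Theorem~\ref{thm: pancyclic}; it invokes it as a known result of Bondy \cite{Bon71}, so there is no internal proof to compare your proposal against. On its own terms, your scaffolding is sound: Dirac gives a Hamilton cycle, averaging degrees gives $e(G)>n^2/4$, Woodall (Theorem~\ref{thm: (n+3)/2}) supplies every length up to $(n+3)/2$, and the strict inequality $\delta(G)>n/2$ cleanly excludes the balanced bipartite exception. But note that the statement you reduce to, namely that a Hamiltonian graph on $n$ vertices with more than $n^2/4$ edges is pancyclic, is precisely Bondy's theorem from the same paper, so citing the "edge-density dichotomy" as a black box would be circular; you must actually prove it for the lengths Woodall leaves open.

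The shift/pigeonhole step does not do this. Pigeonhole yields $S\cap T\neq\emptyset$ only after reduction modulo $n$, so a witness $v_j\in S\cap T$ may satisfy $j=i+\ell-2-n$ rather than $j=i+\ell-2$, in which case the arc from $v_i$ to $v_j$ along $C$ passes through $v_0$ and no cycle results. This is not a removable technicality but the crux: take $\ell=n-1$ and a vertex $v_0$ with $N(v_0)=\{v_1,v_{n-1}\}\cup\{v_3,v_4,\ldots,v_{n-3}\}$ (degree $n-3>n/2$ once $n\geqslant 7$, realizable with the remaining vertices forming a clique). A non-wrapping pair requires $i\leqslant n-\ell+1=2$; but $i=2\notin N(v_0)$ and $i=1$ forces $j=n-2\notin N(v_0)$, so every element of $S\cap T$ has a wrapping preimage and the construction produces no $(n-1)$-cycle through $v_0$ whatsoever. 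You then need to argue over all choices of base vertex and show that simultaneous failure forces the $K_{n/2,n/2}$ structure — that global argument is the actual content of Bondy's proof, a genuine induction with case analysis rather than a single counting step. As written, your middle paragraph restates what must be proved for $\ell>(n+3)/2$ rather than proving it.
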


We start with a useful lemma. 

\begin{lemma}\label{lem: from edge number to degree}
Let $a\geq 1$ be an integer and $G$ a graph on $n \geq 2a+1$ vertices satisfying $e(G) > a(n-a)$. Then $G$ contains cycles of all lengths from $3$ to $a+2$ or a $2$-connected subgraph $H$ such that $\delta(H) \geq a+1$. 
\end{lemma}

\begin{proof}
    We proceed by induction on $n$. If $n=2a+1$, then $n$ is odd and
   \[
     e(G)>a(n-a)
     =\frac{n-1}{2}\cdot \frac{n+1}{2}
     =\left\lfloor\frac{n}{2}\right\rfloor \cdot \left\lceil\frac{n}{2}\right\rceil
     =\left\lfloor\frac{n^2}{4}\right\rfloor.
   \]
   By Theorem \ref{thm: (n+3)/2}, we get that $G$ contains cycles of all lengths from $3$ to $a+2$, as wanted. 
   
    Assume now that $n>2a+1$ and the statement holds for any graph with order in $[2a+1,n-1]$. Suppose that $G$ contains a vertex $v$ of degree at most $a$. Then
\[
e(G-v)
=e(G)-d(v)
>a(n-a)-a
=a(n-1-a).
\]
    Thus, by induction hypothesis, $G-v$ contains wanted cycles or a subgraph, so does $G$. Therefore, we may assume that $\delta(G) \geq a+1$. 

    It is enough to consider the case when $G$ is connected, because otherwise we can add some cut edges, which do not influence existence of cycles or a 2-connected subgraph. 
    If $G$ is $2$-connected, then $H=G$ satisfies the thesis. Hence, let $c$ be a cut vertex of $G$. This means that $G$ is a union of two subgraphs $G_1,G_2$ with $V(G_1)\cap V(G_2)=\{c\}$. For $i=1,2$, let $n_i=|V(G_i)|$, where $n+1=n_1+n_2$. Since $\delta(G)\geq a+1$, $n_i\geq a+2$. If $n_i=a+2$, then $G_i \cong K_{a+2}$ and contains cycles of all lengths from $3$ to $a+2$, as wanted. So $n_i \geq a+3$.     
    If $n_i<2a+1$, then $\delta(G_i-c)\geq a>(n_i-1)/2$. By Theorem \ref{thm: pancyclic}, $G_i-c$ contains cycles of lengths from $3$ to $n_i-1 \geq a+2$, and we are done. So we may assume that $n_i\geq 2a+1$. By induction hypothesis $G_i$ contains wanted cycles or a subgraph, unless $e(G_i) \leq a(n_i-a)$. But then it follows that
    \[
     e(G)
     =e(G_1)+e(G_2)
     \leqslant a(n_1-a)+a(n_2-a)=a(n+1-2a) \leq a(n-a),
    \]
    a contradiction to the assumption that $e(G)>a(n-a)$.
\end{proof}

\begin{proof}[\bf Proof of Theorem \ref{thm: general bound}]
    As mentioned in the introduction, the result was proven earlier for $k=3$, hence we may assume that $k \geqslant 4$. 
    Consider first the case $\ell \not= 2$. Let $G$ be a minimal counterexample, that is a graph on $n \geq 2k-1$ vertices not containing $(\ell \bmod k)$-cycle satisfying $e(G) > (k-1)(n-k+1)$. By Lemma~\ref{lem: from edge number to degree} for $a=k-1$, $G$ contains a 2-connected subgraph $H$ with $\delta(H) \geq k$. 
    If $k$ is odd, then by Theorem~\ref{thm: k adm cycles min degree k}, $H$ contains $k$ admissible cycles, $H \cong K_{k+1}$, or $H \cong K_{k,|H|-k}$. In each case $H$, and so $G$, contains a cycle of length $\ell \bmod k$, because $\ell \not=2$ and $k$ is odd. 
    If $k$ is even, then by Theorem~\ref{thm: l mod k min degree k}, $H$ contains $(\ell \bmod k)$-cycle, $H \cong K_{k+1}$, or $H \cong K_{k,|H|-k}$. In each case, $G$ contains $(\ell \bmod k)$-cycle.  

    Now consider the case $\ell=2$. Similarly as before, let $G$ be a minimal counterexample, that is a graph on $n \geq 2k+1$ vertices not containing $(2 \bmod k)$-cycle satisfying $e(G) > k(n-k)$. By Lemma~\ref{lem: from edge number to degree} for $a=k$, $G$ contains a 2-connected subgraph $H$ with $\delta(H) \geq k+1$.
    By Theorem~\ref{thm: base} (2), $H$ contains a $(2 \bmod k)$-cycle, a contradiction.
\end{proof}

\section*{Acknowledgement}

The research of Yandong Bai and Binlong Li was supported by the National Key Research and Development Program of China (No. 2026YFE0151700), the National Natural Science Foundation of China (Nos. 12542043, 12242111, 12131013), and Guangdong Basic and Applied Basic Research Foundation (No. 2023A1515030208).  
The research of Andrzej Grzesik and Magdalena Prorok was supported by the National Science Centre (Grant No. 2021/42/E/ST1/00193). 
Part of this work was done during the 15th Emléktábla Workshop in Hungary and the hospitality was appreciated.

\end{spacing}

\begin{thebibliography}{0}

\bibitem{B+25}
Y. Bai, B. Li, Y. Pan, S. Zhang,
On graphs without cycles of length 1 modulo 3,
arXiv:2503.03504.

\bibitem{Bol77}
B. Bollob\'{a}s, 
Cycles modulo $k$, 
Bull. London Math. Soc. 9 (1) (1977) 97--98.

\bibitem{Bon71}
J. A. Bondy, 
Pancyclic graphs I, 
J. Combin. Theory Ser. B 11 (1) (1971) 80--84.

\bibitem{BV98}
J. A. Bondy, A. Vince, 
Cycles in a graph whose lengths differ by one or two, 
J. Graph Theory 27 (1998) 11--15.

\bibitem{CS94}
G. Chen, A. Saito,
Graphs with a cycle of length divisible by three,
J. Combin. Theory, Ser. B 60 (2) (1994) 277--292.

\bibitem{COY22}
S. Chiba, K. Ota, T. Yamashita,
Minimum degree conditions for the existence of a sequence of cycles whose lengths differ by
one or two, J. Graph Theory 103 (2) (2023) 340--358.

\bibitem{D88}
N. Dean, 
Which graphs are pancyclic modulo $k$,
In Sixth International Conference on the Theory of Applications of Graphs, 315–26. Kalamazoo, Michigan, 1988.


\bibitem{D+91}
N. Dean, A. Kaneko, K. Ota, B. Toft,
Cycles modulo 3,
Dimacs Technical Report 91 (32) (1991).


\bibitem{DLS93}
N. Dean, L. Lesniak, A. Saito,
Cycles of length 0 modulo 4 in graphs,
Discrete Math. 121 (1-3) (1993) 37--49.

\bibitem{Erd76}
P. Erd\H{o}s, 
Some recent problems and results in graph theory, combinatorics and number theory,
Proceedings of the Seventh Southeastern Conference on Combinatorics, Graph Theory, and Computing (Louisiana State Univ., Baton Rouge, La., 1976), 
Congress. Numer. XVII (1976) 3--14.

\bibitem{FG15}
Z. F\"{u}redi, D.S. Gunderson, 
Extremal numbers for odd cycles, 
Combinatorics, Probability and Computing 24 (2015) 641--645.

\bibitem{GHM21}
J. Gao, Q. Huo, J. Ma, 
A strengthening on odd cycles in graphs of given chromatic number, 
SIAM J. Discrete Math. 35 (4) (2021) 2317--2327.

\bibitem{G+22}
J. Gao, Q. Huo, C. Liu, J. Ma, 
A Unified Proof of Conjectures on Cycle Lengths in Graphs, 
Int. Math. Res. Not. 2022 (10) (2022) 7615--7653.

\bibitem{GLMX}
J. Gao, B. Li, J. Ma, T. Xie,
On two cycles of consecutive even lengths,
J. Graph Theory 106 (2) (2024) 225--238.

\bibitem{G+23}
E. Gy\H{o}ri, B. Li, N. Salia, C. Tompkins, K. Varga, M. Zhu,
On graphs without cycles of length 0 modulo 4,
J. Combin. Theory, Ser. B 176 (2026) 7--29.

\bibitem{LWZ24}
H. Lin, G. Wang, W. Zhou,
A strengthening on consecutive odd cycles in graphs of given minimum degree,
J. Graph Theory 110 (4) (2025) 431--436.


\bibitem{LM18}
C. Liu, J. Ma,
Cycle lengths and minimum degree of graphs,
J. Combin. Theory, Ser. B 128 (2018) 66--95.

\bibitem{Sai92}
A. Saito, 
Cycles of length 2 modulo 3 in graphs, 
Discrete Math. 101 (1992) 285--289.

\bibitem{SV17}
B. Sudakov, J. Verstra\"{e}te,
The extremal function for cycles of length $\ell$ mod $k$,
Electron. J. Combin. 24 (1) (2017) \#P1.7.

\bibitem{T83}
C. Thomassen, 
Graph decomposition with applications to subdivisions and path systems modulo $k$,
J. Graph Theory 7 (1983) 261--271.

\bibitem{W72}
D. R. Woodall,
Sufficient Conditions for Circuits in Graphs,
Proc. Lond. Math. Soc. 3 (4) (1972) 739--755.

\end{thebibliography}
\end{document}